\documentclass[12pt, leqno]{amsart}

\usepackage{indentfirst}
\usepackage{amstext}
\usepackage{amsthm}
\usepackage{amsopn}
\usepackage{amsfonts}
\usepackage{amsmath}
\usepackage{latexsym}
\usepackage[francais,english]{babel}
\usepackage{amscd}
\usepackage{amssymb}
\usepackage{amsmath}
\usepackage[all,cmtip]{xy}
\usepackage{graphicx}
\usepackage{etoolbox}
\patchcmd{\section}{\normalfont\scshape\centering}{\normalfont\bfseries}{}{}
\patchcmd{\subsection}{-.5em}{.5em}{}{}
\renewenvironment{proof}{{\noindent\bfseries Proof.}}{}
\usepackage{hyperref}

\setcounter{secnumdepth}{3} \setcounter{section}{-1}

\topmargin -0.1in

\headsep 0.4in

\oddsidemargin 0.4in

\evensidemargin 0.25in

\textwidth 5.5in \textheight 9in
\newtheorem{theo}{{Theorem}}[section]
\newtheorem{coro}[theo]{{Corollary}}
\newtheorem{lemma}[theo]{{Lemma}}
\newtheorem{prop}[theo]{Proposition}

\newtheorem*{claim}{{Claim}}

\theoremstyle{definition}
\newtheorem{remark}[theo]{\textbf{Remark}}
\newtheorem{defn}[theo]{Definition}
\newtheorem{example}[theo]{Example}
\numberwithin{equation}{section}

\newtheorem{notation}[theo]{Notation}
\newtheorem{question}[theo]{Question}

\newcommand{\End}{\operatorname{End}}

\newcommand{\rT}{\mathrm{T}}

\newcommand{\TT}{\mathrm{T}}

\newcommand{\Aut}{\mathrm{Aut}}

\newcommand{\id}{\mathrm{id}}

\newcommand{\disc}{\mathrm{disc}}

\newcommand{\fQ}{{\mathbf{Q}}}

\newcommand{\QQ}{{\bf{Q}}}
\newcommand{\CC}{{\bf C}}
\newcommand{\PP}{{\bf P}}
\newcommand{\RR}{{\bf R}}
\newcommand{\ZZ}{{\bf Z}}

\newcommand{\Pic}{\operatorname{Pic}}

\DeclareFontEncoding{OT2}{}{} 
  \newcommand{\textcyr}[1]{%
    {\fontencoding{OT2}\fontfamily{wncyr}\fontseries{m}\fontshape{n}%
     \selectfont #1}}
\newcommand{\sha}{{\mbox{\textcyr{Sh}}}}


\begin{document}
\tolerance 400 \pretolerance 200 \selectlanguage{english}


\title{Non-projective K3 surfaces with real or Salem multiplication}

\author{Eva  Bayer-Fluckiger}

\address{EPFL-FSB-MATH, Station 8, 1015 Lausanne, Switzerland}
\email{eva.bayer@epfl.ch}

\author{Bert van Geemen}

\address{Dipartimento di Matematica, Universit\`a di Milano,
Via Saldini 50, I-20133 Milano, Italia}
\email{lambertus.vangeemen@unimi.it}

\author{Matthias Sch\"utt}

\address{Institut f\"ur Algebraische Geometrie, Leibniz Universit\"at
  Hannover, Welfengarten 1, 30167 Hannover, Germany\newline
  Riemann Center for Geometry and Physics, Leibniz Universit\"at
  Hannover, Appelstrasse 2, 30167 Hannover, Germany}
\email{schuett@math.uni-hannover.de}

\date{\today}

\begin{abstract} We determine the Hodge endomorphism algebras of non-projective
complex K3 surfaces (and more generally, hyperk\"ahler manifolds). 
We show that they are either totally real fields or number fields generated by Salem
numbers.
This is unlike the projective case, where the endomorphism fields are
either totally real or CM.
We also develop precise existence criteria and explore
the relations to number theory and dynamics.

\end{abstract}

\maketitle

\small{} \normalsize

\selectlanguage{english}
\section{Introduction}

The Hodge endomorphisms of a complex K3 surface are the $\QQ$-linear endomorphisms that preserve
the rational transcendental Hodge structure on the second cohomology group.
It is well-known that the algebra of Hodge endomorphisms of a projective K3 surface 
is a totally real number field or a CM field; this was proved by Zarhin in
\cite {Z}. 
Accordingly, one refers to real multiplication (RM) and complex multiplication (CM).
In \cite{GS} the RM case was investigated and explicit geometric examples were given. In \cite{BGS} we gave a complete classification of the RM and CM fields that occur
as Hodge endomorphism algebras of  projective K3 surfaces and, more generally, of the known 
hyperk\"ahler manifolds (HK manifolds).

\medskip
As we will see, the Hodge endomorphisms show quite different behaviour according to the algebraic dimension of the K3 surface. A complex K3 surface has algebraic dimension 0, 1 or 2.
The latter corresponds to projective surfaces, which we do not consider again.
In case the algebraic dimension is 1, we show in Proposition \ref{prop:hodge 1}
that the Hodge endomorphisms are the scalar multiples of the identity.

\medskip
This paper is thus mainly dedicated to the case that the algebraic dimension is zero.
The algebra of Hodge endomorphisms can then be a totally real field, and it can also be a {\it Salem field}, defined as follows:

\begin{defn}  
\label{def:Salem'}
We say that an algebraic number field  is a {\it Salem  field} if it 
has two real embeddings, the other embeddings being complex, its degree  is $\geqslant 4$, and it has a $\QQ$-linear involution with totally real fixed field.

\end{defn}

We say that the K3 surfaces with algebra of Hodge endomorphisms 
a Salem field  have
{\it Salem  multiplication}, denoted by SM; the reason for this terminology is explained below.

\medskip 
If $X$ is a K3 surface or a   hyperk\"ahler manifold, we denote by $A_X$ its algebra of Hodge endomorphisms,
by $T_X$ its transcendental lattice, and we set $T_{X,\QQ} = T_X \otimes_{\ZZ}{\QQ}$. We start by defining the notion of
{\it pseudo-polarized Hodge structure} of K3 type (see Definition \ref{def:T}). This notion will be central in the whole paper; indeed,  the usual notion of polarized Hodge structure is
not appropriate for our purposes, as we will see in \S \ref{s:K3}.

\medskip

The first main result of the paper is the following

\begin{theo}\label{endos}
Let $X$ be a K3 surface with $a(X) = 0$,
or more generally a hyperk\"ahler manifold $X$ with pseudo-polarized Hodge structure $T_{X,\QQ}$. 
Let $E =A_X$ be the algebra of Hodge endomorphisms of $T_{X,\QQ}$, and set
 $m = {\rm dim}_E(T_{X,\QQ})$.
Then $E$ is a number field with a $\bf Q$-linear involution
and $E$ is either totally real, with trivial involution and $m\geq 3$, or a Salem field, with $m=1$.
%
\end{theo}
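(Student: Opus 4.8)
The plan is to follow Zarhin's strategy for the projective case, but feeding in the extra positivity coming from the hypothesis $a(X)=0$. Write $V=T_{X,\QQ}$, let $\psi$ be the symmetric polarizing form on $V$, let $V\otimes\CC=V^{2,0}\oplus V^{1,1}\oplus V^{0,2}$ be the Hodge decomposition, and set $n=\dim_\QQ V$. First I would check that $E$ is a number field with an involution. Since $V$ carries no nonzero rational $(1,1)$-class, any proper nonzero sub-Hodge-structure would be purely of type $(1,1)$, which is impossible; hence $V$ is irreducible, and as the kernel and image of any Hodge endomorphism are sub-Hodge-structures, every nonzero element of $E$ is an isomorphism, so $E$ is a division algebra. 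Because $E$ preserves the Hodge decomposition and $\dim V^{2,0}=1$, each $a\in E$ acts on the line $V^{2,0}$ by a scalar $\lambda(a)\in\CC$; the ring homomorphism $\lambda\colon E\to\CC$ is nonzero, hence injective, forcing $E$ commutative and therefore a number field. The $\psi$-adjoint gives the involution $\iota$ with $\psi(ax,y)=\psi(x,\iota(a)y)$, and evaluating both sides on $\omega\in V^{2,0}$, $\overline\omega\in V^{0,2}$ using $\psi(V^{2,0},V^{2,0})=0$ and $\psi(\omega,\overline\omega)>0$ yields the key compatibility $\lambda\circ\iota=\overline\lambda$.

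Next I would record the geometric input. The real plane $P$ spanned by $\operatorname{Re}\omega,\operatorname{Im}\omega$ satisfies $\psi|_P>0$, while $P^\perp=V^{1,1}\cap V_\RR$; both are stable under $E\otimes_\QQ\RR$ and $\psi$-orthogonal. The point where $a(X)=0$ (equivalently, the pseudo-polarization of Definition \ref{def:T}) enters is the signature: $\psi$ has signature $(3,n-3)$, so after removing the positive $2$-plane $P$ the form $\psi|_{P^\perp}$ has signature $(1,n-3)$, i.e. \emph{exactly one} positive direction. I would then write $\psi=\Tr_{E/\QQ}(h)$ for an appropriately ($\iota$-)symmetric or Hermitian $E$-valued form $h$ on $V$, and compute the real signature place by place, the governing constraint being that no archimedean place may contribute more than one positive direction to $P^\perp$.

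If $\iota$ is trivial, then $\lambda\circ\iota=\overline\lambda$ forces $\lambda$ real, so $\lambda$ is a real place $\sigma_0$ with $V^{2,0}\oplus V^{0,2}$ sitting in the $\sigma_0$-eigenspace; each complex place of $E$ would contribute a split (hence $\geq 2$ positive) factor to $P^\perp$, so $E$ is totally real, and since its $\sigma_0$-component must contain the two-dimensional $V^{2,0}\oplus V^{0,2}$ one gets $m\geq 2$. The value $m=2$ is then excluded: it makes the period rigid (the isotropic condition in the two-dimensional $\sigma_0$-eigenspace has only finitely many solutions), which forces an extra endomorphism and contradicts the maximality of the totally real $E$; hence $m\geq 3$. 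If $\iota$ is nontrivial, then $\lambda$ is a complex place of $E$, lying over an inert real place of $E_0=E^\iota$ whose Hermitian factor carries $P$. Here a short parity argument is decisive: the single positive direction of $P^\perp$ is odd, split real places of $E_0$ contribute $m$ each and every Hermitian factor contributes an even number, so reaching $1$ forces $m$ odd with exactly one split place, whence $m=1$; moreover a complex place of $E_0$ would inject two positive directions into $P^\perp$, so $E_0$ is totally real. Counting places then gives $E$ exactly two real embeddings, all others complex, with $[E:\QQ]\geq 4$, that is, a Salem field.

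The hard part will be the archimedean bookkeeping in the nontrivial-involution case: one must combine the Hermitian signature at every place with the unique positive direction available in $P^\perp$ to force simultaneously $m=1$, the total reality of $E_0$, and precisely one split place (hence exactly two real embeddings of $E$). A secondary delicate point is the exclusion of $m=2$ in the totally real case, which is not a signature obstruction but a rigidity phenomenon and must be argued through the enlargement of the endomorphism algebra rather than through a counting argument.
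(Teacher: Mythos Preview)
Your proposal follows essentially the same route as the paper: the decomposition $T_\RR=P\oplus P^\perp$ with $P^\perp$ of signature $(1,n-3)$, followed by a place-by-place signature analysis of the transfer $\psi=\Tr_{E/\QQ}(h)$, is exactly the content of Theorem~\ref{thm:U^0}, and your parity count in the nontrivial-involution case is a clean rephrasing of the paper's building-block argument from Section~\ref{building blocks}.

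The one place where your sketch stops short of a proof is the exclusion of $m=2$ in the totally real case. You correctly identify the phenomenon --- only two isotropic lines in the $2$-dimensional $\sigma_0$-eigenspace, so the period is rigid --- and you correctly say that the conclusion must come from enlarging the endomorphism algebra, but you do not say \emph{how} the extra endomorphism is produced; ``rigidity forces an extra endomorphism'' is an assertion, not an argument. The paper makes this concrete in Proposition~\ref{prop:RM-SM}: when $m=2$, the commutant $F\subset\End_\QQ(T_\QQ)$ of $\mathrm{SO}(T_\QQ)(\CC)\cong\prod_\sigma\mathrm{SO}(2,\CC)$ satisfies $\dim_\QQ F=\dim_\QQ T_\QQ$, and since $\mathrm{SO}(2,\CC)$ already preserves each of the two isotropic lines in each $2$-dimensional block, $F$ preserves $T^{2,0}$ and $T^{0,2}$, hence $F\subset A_T$; this gives $E\subsetneq F=A_T$ with $\dim_F T_\QQ=1$, so $A_T$ is Salem rather than totally real. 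Equivalently and explicitly, if $W=\langle a,b\rangle$ over $E$ then the enlarged field is $K=E(\sqrt{-ab})$, the splitting field of the binary form. You should replace your rigidity sentence with one of these two constructions.
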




The proof of this result, an analog of Zarhin's result \cite{Z} in the non-projective case, 
is the subject matter of the first part of the paper, comprising the first five sections of this paper.
It builds on the interplay of Hodge theory, quadratic and hermitian forms, 
and in particular their {\it signatures}; an outline of the proof is
given in \S  \ref{pp and hermitian}.

\medskip
This first result raises the question to decide which fields and which ranks of transcendental lattices actually occur; in the projective case, this
is answered in \cite{BGS}. 
The second part of the paper,
from Section \ref{s:period} to \ref{s:HK}, is devoted to the answer to this question in the {\it real multiplication} case. For simplicity,
we give the result in the case of K3 surfaces of algebraic dimension 0, similar results are obtained in the more general case of HK manifolds.

\begin{theo}\label{RM}
Let $E$ be a totally real number field of degree $d$ and let $m$ be an integer with $3\leq m \leq 22/d$.
Then there exists an $(m-2)$-dimensional family of
K3 surfaces of algebraic dimension $0$ such that a very general member $X$ has the properties
$A_X \simeq E$ and ${\rm dim}_E(T_{X,\QQ}) = m$.
\end{theo}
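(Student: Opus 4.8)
The plan is to build the desired family by the classical strategy of constructing an $E$-linear pseudo-polarized Hodge structure of K3 type on an abstract lattice, embedding that lattice primitively into the K3 lattice $\Lambda = U^3 \oplus E_8(-1)^2$, and then invoking the surjectivity of the period map to produce the K3 surfaces; the dimension count and the genericity statement come from the associated period domain.

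First I would produce the lattice with its endomorphism structure. Since $E$ is totally real of degree $d$ with real embeddings $\sigma_1, \dots, \sigma_d$, one has $E \otimes_\QQ \RR \cong \RR^d$. I would fix an $E$-bilinear symmetric form $h$ on an $m$-dimensional $E$-vector space $V$ whose signature at $\sigma_1$ is $(3, m-3)$ --- this is exactly where the hypothesis $m \geq 3$ enters --- and which is negative definite at $\sigma_2, \dots, \sigma_d$. The trace form $b = \Tr_{E/\QQ} \circ h$ is then a rational quadratic form of signature $(3, dm-3)$ on $T_{X,\QQ} := V$, regarded as a $\QQ$-vector space of dimension $dm$, and $E$ acts on it by $b$-self-adjoint endomorphisms. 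Using the hermitian/quadratic form machinery developed in the earlier sections I would choose an order $\mathcal{O}$ in $E$ and an $\mathcal{O}$-lattice $T \subset V$ on which $b$ is even and integral, so that $(T, b)$ is an even lattice of signature $(3, dm-3)$ carrying an action of $\mathcal{O} \subset E$.

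Next I would embed $T$ primitively into $\Lambda$. Because $\operatorname{rank} T = dm \leq 22 = \operatorname{rank}\Lambda$ (this is the content of $m \leq 22/d$) and the positive parts of the signatures agree, both being equal to $3$, Nikulin's theory of discriminant forms provides a primitive embedding $T \hookrightarrow \Lambda$ once the invariants of $T$ are suitably controlled; the orthogonal complement $N = T^{\perp}$ is then a negative definite even lattice of rank $22 - dm$. The $\sigma_1$-eigenspace $V_1 \subset T \otimes_\RR \CC$ has complex dimension $m$, and on it the complexified form has signature inherited from $(3, m-3)$; I set
\[
\Omega = \{\, [\omega] \in \PP(V_1) : b(\omega, \omega) = 0,\ b(\omega, \bar\omega) > 0 \,\},
\]
an open subset of a smooth quadric in $\PP(V_1) \cong \PP^{m-1}$, hence a complex manifold of dimension $m - 2$. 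Each $[\omega] \in \Omega$ defines a Hodge structure on $T$ with $H^{2,0} = \CC\omega$, $H^{0,2} = \CC\bar\omega$, and $H^{1,1} = (\CC\omega \oplus \CC\bar\omega)^{\perp}$, compatible with the $E$-action because $\omega$ lies in a single eigenspace; this is a pseudo-polarized Hodge structure of K3 type in the sense of Definition \ref{def:T}. Since $\omega \in V_1 \subset \Lambda_\CC$ satisfies $b(\omega,\omega) = 0$ and $b(\omega,\bar\omega) > 0$, the surjectivity of the period map for K3 surfaces yields a K3 surface $X$ with $H^2(X,\ZZ) \cong \Lambda$ realizing this Hodge structure.

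Finally I would establish the genericity. The loci in $\Omega$ where $\operatorname{NS}(X)$ strictly contains $N$, or where $A_X$ strictly contains $E$, form a countable union of proper analytic subsets (each cut out by the condition that $\omega$ be orthogonal to some extra class, or commute with some extra endomorphism). For a very general $[\omega] \in \Omega$ one therefore has $\operatorname{NS}(X) = N$, which is negative definite and hence forces $a(X) = 0$, together with $T_X = N^{\perp} = T$, giving $A_X \simeq E$ and $\dim_E T_{X,\QQ} = m$. As $\dim_\CC \Omega = m - 2$, this produces the asserted $(m-2)$-dimensional family. I expect the main obstacle to be the lattice-theoretic step: one must realize the prescribed local signatures by an even integral $\mathcal{O}$-lattice whose discriminant form is small enough to admit a primitive embedding into $\Lambda$, and the boundary cases where $dm$ is close to $22$, forcing $T$ to be nearly unimodular, will require the most care.
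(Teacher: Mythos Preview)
Your overall architecture---construct an $E$-linear quadratic space with the right signatures, embed into the K3 lattice, parametrize Hodge structures by the quadric $\Omega$, invoke surjectivity of the period map, and run a countable-union genericity argument---matches the paper's proof exactly. The divergence is precisely in the step you flag as the obstacle: you propose to build an even integral $\mathcal{O}$-lattice $T$ and then primitively embed it into $\Lambda_{K3}$ via Nikulin, whereas the paper works entirely over $\QQ$. Concretely, the paper uses Corollary~\ref{for theorem 1} and Proposition~\ref{md=22} (which rest on Witt-group and transfer arguments, notably a result of Kr\"uskemper) to produce a quadratic form $W$ over $E$ with $V_{K3} \simeq \TT(W)\oplus V'$ as \emph{rational} quadratic spaces; the primitive sublattice is then simply $T:=\Lambda_{K3}\cap \TT(W)$, with no need to control its discriminant form.

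This is a genuine methodological difference, and it is exactly what dissolves the difficulty you anticipate. Nikulin's criterion typically needs $\ell(A_T)\leq 22-\mathrm{rk}\,T$, so for $dm=21$ or $dm=22$ you would have to manufacture an $\mathcal{O}$-lattice whose trace form is nearly unimodular, and there is no obvious mechanism for this. The paper's rational approach replaces that problem with the (still nontrivial, but tractable) question of when $\TT(W)$ is an orthogonal summand of $V_{K3}$ over $\QQ$; the cases $dm\leq 20$, $dm=21$, and $dm=22$ are then handled by Theorem~\ref{real}, Theorem~\ref{new}, and Proposition~\ref{md=22} respectively. Your proposal is therefore correct in outline but leaves unresolved the one step the paper's method is specifically designed to handle.
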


This is proved in Section  \ref{ss:RM}, and the result concerning hyperk\"ahler manifolds 
can be found in Section \ref{s:HK}. Both require 
arithmetic results concerning quadratic forms over totally real fields and their ``transfers'' (see \S  \ref{pp and hermitian} for a brief outline of the strategy,
and Sections \ref{s:period}
 - \ref{s:HK} for details). 


\medskip
The third part of the paper (Sections \ref{Salem multiplication section} -- \ref{s:SM}) continues with the results concerning 
\emph{Salem multiplication}. The following is proved in \S \ref{prescribed Salem}, see Theorem \ref{K3}.

\begin{theo}\label{SM} Let $E$ be a Salem field of degree $d$. Then there exists a
K3 surface $X$ of algebraic dimension $0$ such that
$A_X \simeq E$  if an only if either $d \leqslant 20$, or $d = 22$ and
the discriminant of $E$ is a square.
\end{theo}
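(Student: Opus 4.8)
The plan is to reduce the theorem to the arithmetic of a single one-dimensional Hermitian form, using the correspondence between Salem multiplication and lattice data set up earlier in this part (Theorem \ref{K3}). By Theorem \ref{endos}, Salem multiplication forces $m = \dim_E T_{X,\QQ} = 1$, so $T_{X,\QQ} \simeq E$ as an $E$-module and the Hodge form is a symmetric $\QQ$-bilinear form $b$ on $E$ satisfying $b(\lambda x, y) = b(x, \sigma(\lambda) y)$ for the adjoint involution $\sigma$, whose fixed field $F$ is totally real of degree $d/2$ (so $d$ is even). Such forms are exactly the trace forms $b_\xi(x,y) = \Tr_{E/\QQ}(\xi\, x\, \sigma(y))$ with $\xi \in F^\times$, i.e. the transfers of the rank-one Hermitian form $\langle \xi\rangle$ over $(E,\sigma)$. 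Thus realizing $E$ as $A_X$ amounts to (i) choosing $\xi$ so that $b_\xi$ has the correct signature, (ii) finding an even lattice $T \subset (E, b_\xi)$ that embeds primitively into the K3 lattice $\Lambda = U^3 \oplus E_8(-1)^2$, and (iii) invoking surjectivity of the period map to produce $X$ with $A_X \simeq E$ and $a(X)=0$ at a very general point.

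For the signature I would first record that, since $a(X)=0$, the N\'eron--Severi lattice is negative definite, so the pseudo-polarized structure $T_{X,\QQ}$ has signature $(3,\,d-3)$ (three positive directions, not two). Next I compute the signature of $b_\xi$ place by place over $F$. The two real embeddings of $E$ lie over the unique real place $v$ of $F$ that splits in $E$, and $\sigma$ interchanges them, so $v$ contributes a hyperbolic plane of signature $(1,1)$, independently of $\xi$. Each of the remaining $d/2 - 1$ real places $w$ of $F$ is inert in $E$ and contributes $(2,0)$ or $(0,2)$ according as $w(\xi) > 0$ or $w(\xi) < 0$. Hence $b_\xi$ has signature $(1+2p,\,1+2n)$ with $p + n = d/2 - 1$, and $(3,\,d-3)$ is obtained exactly when $p = 1$, i.e. $\xi$ is positive at the distinguished inert place carrying the $(2,0)$-part and negative at the other $d/2 - 2$ inert places. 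By weak approximation the signs of an element of the totally real field $F$ at its real places are independent, so such $\xi$ always exists; fix one.

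It remains to realize $(E, b_\xi)$ as a K3 transcendental lattice, and here the degree $d$ enters decisively. Because $b_\xi$ carries all three positive directions, the orthogonal complement of any primitive embedding $T \hookrightarrow \Lambda$ is automatically negative definite of rank $22 - d$; a very general period in the associated domain then yields a K3 surface with $NS_X$ negative definite, hence $a(X)=0$, and with $A_X$ exactly $E$. For $d \leqslant 20$ the complement has rank $\geqslant 2$, and Nikulin's embedding criterion guarantees a primitive embedding of a suitable even lattice $T$ with $T\otimes\QQ \simeq (E,b_\xi)$ into $\Lambda$, so $X$ exists. For $d = 22$ the complement has rank $0$, forcing $T = \Lambda$, so $(E,b_\xi)$ must be realizable as the even \emph{unimodular} lattice of signature $(3,19)$, which is unique in its genus; both the necessity and the sufficiency of the square-discriminant condition come from this case.

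The main obstacle is precisely this unimodular case $d = 22$. Since $\xi \in F$ one has $N_{E/\QQ}(\xi) = N_{F/\QQ}(\xi)^2$, a square, so the square class of $\det b_\xi$ is independent of the admissible $\xi$ and is an invariant of $(E,\sigma)$; comparing it with $\det \Lambda$ via the transfer arithmetic of forms over $(E,\sigma)$ developed earlier shows that if such $X$ exists with $d = 22$ then $\disc(E)$ must be a square, giving necessity. Conversely, when $\disc(E)$ is a square the local discriminant and Hasse obstructions vanish, and the task becomes the construction of an even unimodular $\ZZ$-structure on $(E,b_\xi)$ by choosing an appropriate fractional-ideal lattice and adjusting $\xi$ within its prescribed sign class; this local-to-global genus computation is the delicate step. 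Once it is carried out, uniqueness of the indefinite even unimodular lattice identifies the result with $\Lambda$, completing the $d=22$ case and hence the theorem.
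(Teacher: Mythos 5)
Your reduction to rank-one hermitian forms over $(E,\sigma)$, the place-by-place signature computation (a hyperbolic $(1,1)$ from the split real place of the fixed field, $(2,0)$ or $(0,2)$ from each of the other places according to the sign of $\xi$), and the final appeal to the surjectivity of the period map all agree with the paper (Sections \ref{signatures}, \ref{s:period}, \ref{s:SM}). The divergence, and the genuine gap, lies in how you realize $\TT\langle\xi\rangle$ inside the K3 lattice. The paper works entirely with \emph{rational} quadratic forms: it produces an isometry $V_{K3}\simeq \TT(W)\oplus V'$ over $\QQ$ (with $V'=0$ when $d=22$) and then simply sets $T=\Lambda_{K3}\cap\TT(W)$, which is automatically a primitive sublattice of the correct rank and signature. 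No integral lattice theory enters, because Hodge endomorphisms are $\QQ$-linear: $A_X$ acts on $T_{X,\QQ}$, not on $T_X$, so nothing forces the integral lattice to be an $\mathcal{O}_E$-module. Your plan for $d=22$ --- constructing ``an even unimodular $\ZZ$-structure on $(E,b_\xi)$ by choosing an appropriate fractional-ideal lattice'' --- is therefore both unnecessary and misdirected: a fractional ideal carries an $\mathcal{O}_E$-action by construction, and requiring its trace form to be even unimodular is strictly stronger than requiring $\Delta_E$ to be a square. That stronger integral condition is essentially what governs Salem \emph{automorphisms}, where Theorem \ref{all squares} shows the additional requirements that $|S(1)|$ and $|S(-1)|$ be squares, and Example \ref{ex:Smyth} exhibits Salem fields where these fail while $|S(1)S(-1)|$ is a square. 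So the ``delicate local-to-global genus computation'' you defer is not merely missing; carried out as stated it would prove a weaker theorem under stronger hypotheses. The correct sufficiency input is purely rational: Theorem \ref{realization} (from \cite{B 24}) characterizes the rational forms $U$ with $U\simeq\TT(W)$ by the dimension, the determinant condition $\det U=(-1)^d\Delta_E$, hyperbolicity of $U\otimes\QQ_p$ at the primes where $E\otimes_\QQ\QQ_p$ splits, and the odd-type signature; verifying these for $U=V_{K3}$ when $\Delta_E$ is a square is a finite Hasse-invariant check, done in Theorem \ref{41112}~(ii). Your necessity argument via the determinant is correct and matches Lemma \ref{invariants bis}~(iii).

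There is a second, smaller gap at degree $2d=20$: you invoke Nikulin's embedding criterion for ``a suitable even lattice $T$ with $T\otimes\QQ\simeq(E,b_\xi)$'', but when the orthogonal complement has rank $2$, Nikulin's sufficient criterion requires the discriminant group of $T$ to have at most $2$ generators, and nothing guarantees that an arbitrary $(E,b_\xi)$ contains such a lattice; the needed choice is not automatic. This is exactly the corank-two difficulty the paper confronts directly in Theorem \ref{Thm:10.4}: for $d=10$ it fixes the explicit complement $V'=\langle -1,\Delta_E\rangle$ (negative definite by Lemma \ref{negative}) and again verifies the conditions of Theorem \ref{realization} at the split primes. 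For $2d\leqslant 18$ your argument and the paper's both go through softly, but at degrees $20$ and $22$ the rational transfer theorem is doing the real work, and your proposal offers no substitute for it.
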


Note that here we always have ${\rm dim}_E(T_{X,\QQ}) = 1$, as already shown in Theorem \ref{endos}. 
The results concerning higher-dimensional
HK manifolds 
comprise Theorem \ref{non-maximal theorem} and Theorem \ref{610}.
The proofs of Theorem \ref{SM} and its generalizations to HK manifolds occupy the third part of the paper. 
They
rely on Theorem  \ref{thm:pspolSM}, as well as on results concerning the arithmetic of hermitian forms over Salem fields given in  \S \ref{Salem multiplication section} and \S \ref{hermitian BF}.

\medskip
The notion of Salem field is motivated by the fact that it occurs as Hodge endomorphism algebra for some K3 surfaces, as shown by
the above results. However, this notion also appears in other parts of mathematics : complex dynamics and number theory. 
To explain these connections, we start
with some remarks about the terminology ``Salem field'' and ``Salem multiplication''. We first recall the definitions of Salem polynomials and
Salem numbers~:

\begin{defn}
A {\it Salem polynomial} is a monic irreducible polynomial $S \in {\bf Z}[x]$ of degree $\geqslant 4$ such that
$S(x) = x^{{\rm deg}(S)} S(x^{-1})$ and that $S$ has exactly two roots not on the unit circle, both positive
real numbers. The unique real root $\lambda> 1$ of a Salem polynomial is called a {\it Salem number}.

\end{defn}

We refer to \cite {GH} and \cite{Sm} for surveys of Salem polynomials and Salem numbers. It is easy to see that if $S$ is a Salem polynomial,
then ${\bf Q}[x]/(S)$ is a Salem field (see Lemma \ref{Salem implies Salem-like}), and this is the motivation for the terminology we chose. 

\medskip
Salem polynomials and Salem numbers play an important role in the dynamics of automorphisms of K3 surfaces. This study was initiated by  McMullen,
who gave the first examples automorphisms of non-projective K3 surfaces with dynamical degree a Salem number, see \cite {Mc1}. The automorphisms constructed by McMullen induce Hodge endomorphisms, and his K3 surfaces have - in the terminology of the present paper - Salem multiplication.

\medskip We now come to the connection with number theory. 
In a 1975 paper, Stark obtained some striking results on Salem fields (not in this terminology); if $E$ is a Salem field, he constructed an explicit unit $\epsilon$ of
$E$  such that $E = {\QQ}(\epsilon)$, and he determined the index of the subgroup generated by $\epsilon$ and the units of the maximal totally real subfield of $E$ in the full group of units of $E$ (see \cite{Stark}, Theorem 2).  On the other hand,  Chinburg proved that Stark's unit $\epsilon$ is a Salem number (see \cite{Chinburg}). Combining these results, we see that Salem fields are generated by Salem numbers, and obtain the following result (see Proposition \ref{chinburg}):


\begin{prop}\label{chinburg'} Let $E$ be a Salem field. Then there exists a Salem polynomial $S$ such that $E = {\bf Q}[x]/(S)$.

\end{prop}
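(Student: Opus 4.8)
The plan is to combine the two cited results of Stark and Chinburg, after first reducing the statement to a purely field-theoretic assertion. Suppose I can produce a single element $\epsilon \in E$ that is a Salem number and satisfies $E = \QQ(\epsilon)$. By the very definition of a Salem number, $\epsilon$ is the real root $\lambda > 1$ of some Salem polynomial $S$; since $S$ is monic, irreducible over $\ZZ$ and has $\epsilon$ as a root, it is the minimal polynomial of $\epsilon$ over $\QQ$. Hence $E = \QQ(\epsilon) = \QQ[x]/(S)$ with $S$ a Salem polynomial, which is exactly the claim. So it suffices to find such an $\epsilon$, and the passage from ``$E$ is generated by a Salem number'' to the stated conclusion is formal.

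Next I would set up the arithmetic of the field. Write $\sigma$ for the given $\QQ$-linear involution and $F = E^{\sigma}$ for its totally real fixed field, so that $E/F$ is quadratic and $[E:\QQ] = 2[F:\QQ] \geq 4$. The natural candidate generators are the relative units, i.e.\ the units $\epsilon$ with $\epsilon\,\sigma(\epsilon) = 1$; a Dirichlet-unit-rank count (from Definition \ref{def:Salem'} the signature of $E$ is $(2,\,[F:\QQ]-1)$, so $U_E$ has rank $[F:\QQ]$ while $U_F$ has rank $[F:\QQ]-1$) shows that these form, modulo torsion, a free group of rank one. For any such $\epsilon$ one checks directly from the signatures that the two conjugates at the distinguished real place are reciprocal reals $\lambda,\lambda^{-1}$, while at each complex place the relation $\overline{\epsilon} = \sigma(\epsilon) = \epsilon^{-1}$ forces $|\epsilon| = 1$. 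Thus, after possibly replacing $\epsilon$ by $-\epsilon$ to arrange $\lambda > 0$, any relative unit whose minimal polynomial has full degree $[E:\QQ]$ is automatically a Salem number.

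It then remains only to exhibit a relative unit that actually generates the whole field, and here the two external inputs enter. Stark (\cite{Stark}, Theorem 2) constructs an explicit relative unit $\epsilon$ together with the index of $\langle \epsilon, U_F\rangle$ in $U_E$; in particular his $\epsilon$ satisfies $E = \QQ(\epsilon)$. Chinburg (\cite{Chinburg}) then proves that this specific $\epsilon$ is a Salem number. Feeding $\epsilon$ into the reduction of the first paragraph completes the argument.

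The main obstacle is precisely the part I am outsourcing to Stark and Chinburg. The rank count produces a relative unit cheaply, but guaranteeing that it generates $E$ over $\QQ$ (rather than some proper subfield containing $\QQ(\epsilon+\epsilon^{-1}) \subseteq F$) and that it has the correct positivity at the distinguished real place --- equivalently, that it is genuinely a Salem number rather than merely a unit with reciprocal conjugates --- is the delicate arithmetic content of those papers. Everything surrounding it is routine, so the proof is essentially the observation that Stark's index-one generating unit, once identified as a Salem number by Chinburg, has a Salem polynomial as its minimal polynomial.
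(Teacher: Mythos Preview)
Your argument is correct and is, in fact, precisely what the paper calls its ``original argument'' in the introduction: cite Stark \cite{Stark} for a unit $\epsilon$ with $E=\QQ(\epsilon)$, cite Chinburg \cite{Chinburg} for the fact that this $\epsilon$ is a Salem number, and take its minimal polynomial. The proof the paper actually records, however, is a different and more elementary one due to Smyth. Instead of invoking the analytic machinery behind Stark's explicit unit, Smyth starts from the same Dirichlet rank comparison you mention ($d$ versus $d-1$), takes \emph{any} unit $u\in E$ that, together with the units of $E_0$, generates a group of full rank $d$, and then manipulates it directly: writing $u=a+b\sqrt\alpha$ one checks $u^2\notin E_0$, adjusts by squaring and by swapping the two real conjugates $u_1,u_{d+1}$ so that $u_1>u_{d+1}>0$, and sets $\beta=u_1/u_{d+1}=u/\overline u$. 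One then verifies by hand that $\beta$ has a single conjugate $>1$, its reciprocal, and the remaining conjugates on the unit circle, and that $\beta$ has full degree $2d$; its minimal polynomial is the desired $S$. Thus the ``delicate arithmetic content'' you identify as the main obstacle --- producing a relative unit that genuinely generates $E$ --- is in fact handled by a short bare-hands argument, with no appeal to Stark or Chinburg. Your route has the compensating advantage that it identifies the generator arithmetically via $L'(0,\chi)$, which the paper exploits later in \S\ref{Stark and Chinburg} and Theorem \ref{thm:entropies}.
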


This proposition can be proved in a more direct way; 
we thank 
Chris Smyth for allowing us to include his proof,  more elementary than our original argument. Note however that Stark's unit (`the `Chinburg-Stark Salem number''
of the Salem field) is given explicity
in terms of arithmetic invariants of the Salem field, and this in turn will be useful in connection with complex dynamics.

\medskip
The fourth and last part of the paper (Sections \ref{s:NT}, \ref{s:dyn}) 
is dedicated to the relations of Salem multiplication to number theory and complex dynamics, see \S \ref{s:slf}. It
starts with some basic results concerning Salem fields, and continues with Proposition \ref{chinburg}, stating that Salem fields are generated by Salem
numbers. The next subsection is devoted to the results of Chinburg and Stark, and subsection \ref{bimer} to bimeromorphic automorphisms of elliptic HK manifolds (defined in Section \ref{s:HK}).
We start by recalling some results of McMullen and Oguiso, and continue by defining the notion of {\it Salem automorphism}. This is followed by a subsection dealing with the case of K3 surfaces, and the comparison of the results concerning Salem multiplication and Salem automorphisms. Indeed, a Salem automorphism of a K3 surface of algebraic dimension 0 always induces Salem multiplication, but the converse does not necessarily hold (see Example
\ref{ex:Smyth} and Example \ref{second smallest}). 
We also show that if an elliptic  HK manifold has real multiplication, then its group of bimeromorphic 
automorphisms is finite, and that the converse holds for K3 surfaces:

\begin{theo}
\label{thm:bir}
A K3 surface $X$ of algebraic dimension {\rm 0}  has finite ${\rm Bir}(X)=\Aut(X)$ if and only if $A_X$ is totally real. 
\end{theo}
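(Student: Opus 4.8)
The plan is to analyze the natural action of $\Aut(X)$ on the transcendental lattice and to match it with the group of Hodge isometries dictated by $A_X$, invoking Theorem~\ref{endos} to split into the two cases of its dichotomy. First I would record that for a K3 surface $\mathrm{Bir}(X)=\Aut(X)$, since $X$ is minimal of Kodaira dimension $0$, so it suffices to treat $\Aut(X)$. Next I would note two general features of the case $a(X)=0$: the lattice $\mathrm{NS}(X)$ is negative definite (the only positive directions of $H^2(X,\RR)$, coming from the period and a K\"ahler class, are transcendental), so $\mathrm{O}(\mathrm{NS}(X))$ is finite; and the restriction homomorphism $\rho\colon\Aut(X)\to\mathrm{O}(T_X)$ has finite kernel, because an automorphism acting trivially on $T_X$ is determined by its action on $\mathrm{NS}(X)$ (faithfulness on $H^2(X,\ZZ)$ being the global Torelli theorem) and hence lies in the finite group $\mathrm{O}(\mathrm{NS}(X))$. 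Consequently $\Aut(X)$ is finite if and only if the image $\rho(\Aut(X))\subseteq\mathrm{O}(T_X)$ is finite.

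The second step identifies this image. Every element of $\rho(\Aut(X))$ is a Hodge isometry of $T_{X,\QQ}$, hence in particular a Hodge endomorphism, so it lies in $E=A_X$; expressing the intersection form through the involution of $E$ furnished by Theorem~\ref{endos}, the Hodge isometries are exactly the norm-one elements $\{e\in E^\times : e\,\bar e=1\}$ preserving the lattice $T_X$. In the totally real case the involution is trivial, so $e\bar e=e^2=1$ forces $e=\pm1$; thus the image lies in $\{\pm1\}$ and, together with the finite kernel above, $\Aut(X)$ is finite. This gives the implication $A_X$ totally real $\Rightarrow$ $\Aut(X)$ finite. For the converse I would apply the dichotomy of Theorem~\ref{endos} and reduce to proving that in the remaining Salem case $\Aut(X)$ is infinite.

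In the Salem case $E$ is a quadratic extension of its totally real fixed field $E_0$, and the norm-one elements form an arithmetic subgroup of the torus $\mathrm{R}^1_{E/E_0}\gm_m$. By Dirichlet's unit theorem its rank is $\mathrm{rank}(\mathcal{O}_E^\times)-\mathrm{rank}(\mathcal{O}_{E_0}^\times)=\bigl(2+\tfrac{d-2}{2}-1\bigr)-\bigl(\tfrac d2-1\bigr)=1$, the two real places of $E$ contributing the single expanding/contracting pair; hence there is a norm-one unit $e$ of infinite order, whose real eigenvalue is the associated Salem number. The last and hardest step is to realize a power of $e$ as an automorphism. Here I would appeal to the global Torelli theorem in the form used by McMullen~\cite{Mc1}: extend a suitable power of $e$, which preserves $T_X$, by an isometry of the definite lattice $\mathrm{NS}(X)$ to an isometry of the full K3 lattice $H^2(X,\ZZ)$ -- possible on a finite-index subgroup via Nikulin's gluing of discriminant forms -- and verify that it preserves a connected component of the positive cone in $H^{1,1}$. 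Since $a(X)=0$ the surface carries no curves, so the K\"ahler cone is this entire positive cone and no wall condition obstructs; the two real eigendirections of $e$ are the isotropic fixed rays of a hyperbolic isometry preserving the cone. The global Torelli theorem then produces an automorphism inducing this isometry, and as $e$ has infinite order we obtain infinitely many automorphisms, completing the implication $\Aut(X)$ finite $\Rightarrow$ $A_X$ totally real. I expect this realization step -- extending the transcendental isometry across the discriminant form and checking cone preservation -- to be the main obstacle.
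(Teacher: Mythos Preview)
Your overall strategy matches the paper's: split via Theorem~\ref{endos}, handle the totally real case by showing the image in $\mathrm{O}(T_X)$ is finite, and in the Salem case produce an infinite-order Hodge isometry and lift it via Torelli. Your argument for the RM direction is in fact cleaner than the paper's: you observe directly that a Hodge isometry of $T_{X,\QQ}$ lies in $E$ and that, with trivial involution, $e\bar e=e^2=1$ forces $e=\pm1$; the paper instead quotes the McMullen--Oguiso result (Theorem~\ref{thm:McM}) that an infinite-order $a^*$ must have a Salem factor on $T_X$, whence $A_X$ would be Salem by Lemma~\ref{lem:a*}. In the Salem direction your Dirichlet computation is equivalent to the paper's appeal to Proposition~\ref{chinburg} (a Salem number is a norm-one unit generating $E$), and both proofs glue across the discriminant form --- the paper simply takes the identity on $\Pic(X)$ after passing to a power.

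There is one concrete error in the realization step. You write that since $a(X)=0$ ``the surface carries no curves, so the K\"ahler cone is this entire positive cone''. This is false: a K3 surface of algebraic dimension~$0$ may well contain smooth rational $(-2)$-curves (any $(-2)$-class in the negative-definite $\Pic(X)$, or its negative, is effective by Riemann--Roch), and these cut walls in the positive cone. What is true, and what the paper uses, is that $\Pic(X)$ being negative definite forces the set of roots to be \emph{finite}; hence some further power of your glued isometry $\phi$ permutes these finitely many walls trivially and preserves the K\"ahler cone. With this correction your argument goes through and coincides with the paper's proof of Theorem~\ref{thm:oo}.
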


\medskip This is proved in Corollary \ref{finite Bir}. 
The final result (Theorem \ref{thm:entropies}) combines complex dynamics and number theory: the set of
topological entropies of an elliptic HK manifold can be described via the arithmetic results of subsection \ref{Stark and Chinburg}.

\subsubsection*{Organization of the paper}

We review the necessary results about K3 surfaces,
Hodge structures of K3 type and their endomorphisms in Section \ref{s:K3}, and the notion of pseudo-polarized Hodge structure is introduced here.
Quadratic and hermitian forms and the concept of transfer are discussed in Section \ref{hermitian}. Section \ref{pp and hermitian} makes a link
between these two sections, and also contains a more detailed plan of the paper. 
The study of the signatures of algebras with involution over the real numbers  in Section \ref{building blocks}
paves the way towards the proof of Theorem \ref{endos}
in Section \ref{proof}.
We prepare for the proof of Theorems \ref{RM} and \ref{SM} in Section \ref{s:period}
by setting up the bridge from suitable quadratic forms to pseudo-polarized K3 type Hodge structures
with RM and SM.

The closer look at  the interplay of signatures and transfer in Section \ref{signatures} 
allows for a characterization of the  quadratic forms of the desired shape
arising from totally real fields (Sections \ref{s:RM}, \ref{s:even}). This enables us to derive the geometric realizations on non-projective K3 surfaces in Section \ref{ss:RM}
to prove Theorem \ref{RM}.
The results for
real multiplication on non-projective hyperk\"ahler manifolds are proved 
in Section \ref{s:HK}. The results concerning Salem multiplication are stated in Theorem \ref{non-maximal theorem} and Theorem \ref{610}, and 
proved in \S \ref{prescribed Salem}, after some arithmetic results on hermitian forms over Salem fields. The final two sections are devoted to the role of Salem multiplication in number
theory (\S \ref{s:NT}) and complex dynamics (\S \ref{s:dyn}). Here we also prove Theorem \ref{thm:bir}.

\section{Hodge structures of  K3 type}
\label{s:K3}

\subsection{K3 surfaces} We refer to \cite{H} for the material in this section.

A K3 surface $X$ is a simply connected
2-dimensional compact complex manifold with trivial canonical bundle. By Siu's theorem \cite{Siu}, any
K3 surface is a K\"ahler manifold; 
its cohomology groups carry a Hodge structure,
that is, there are Hodge decompositions
$$
H^r(X,\CC)\,=\,H^r(X,\ZZ)\otimes_{\ZZ}\CC\,=\,
\oplus_{p+q=r} H^{p,q}(X),\qquad \overline{H^{p,q}(X)}\,=\,H^{q,p}(X)
$$
where the integers $p,q$ are non-negative. 
For $p=0,2$ one has $H^{2p}(X,\CC)=H^{p,p}(X)\cong\CC$ and $H^r(X,\ZZ)=0$ for $r=1,3$.
The interesting Hodge structure is the one on $H^2(X,\ZZ)$, which is a free $\ZZ$-module of rank $22$:
$$
H^2(X,\CC)\,=\,H^{2,0}(X)\,\oplus\,H^{1,1}(X)\,\oplus\,
H^{0,2}(X)~,\qquad \dim H^{2,0}(X)\,=\,H^{0,2}(X)\,=\,1~.
$$

The cup product $H^2(X,\ZZ)\times H^2(X,\ZZ)\rightarrow H^4(X,\ZZ)\cong\ZZ$ (the latter isomorphism is
natural since $X$ is oriented) is a perfect pairing.
This makes $H^2(X,\ZZ)$ into a unimodular lattice
which is even by Wu's formula, and has signature $(3,19)$.

\medskip Therefore one can characterize the lattice $H^2(X,\ZZ)$ as an abstract lattice, as follows.
Let us denote by $H$ the hyperbolic plane over $\ZZ$, and by $E_8$ the root lattice corresponding to the Dynkin diagram $E_8$; it is
well-known that $E_8$ is, up to isomorphism, the  unique unimodular even positive-definite lattice of rank $8$.
Set $$
\Lambda_\text{K3} =
H^{\oplus 3}\oplus E_8(-1)^{\oplus 2}.
$$ 

It follows from the above considerations that $H^2(X,\ZZ)$ and $\Lambda_\text{K3}$ are isomorphic as abstract lattices.

\medskip

The cup product on 
$H^2(X,\CC)=H^2_{DR}(X,\RR)\otimes_{\RR}{\CC}$ is given by the integral
$$
(\omega,\eta)\,=\,\int_X\omega\wedge \eta~.
$$
Since $H^{p,q}(X)\cong H^{p,q}_{\bar{\partial}}(X)$ in terms of the Dolbeault cohomology groups,
any element of $H^{p,q}(X)$ is represented by a 2-form of type $(p,q)$. Moreover, $H^{2,0}(X)=\CC\omega$
where $\omega$ is the class of a holomorphic 2-form, which is unique up to scalar multiple.
From this one finds that
$$
(\omega,\omega)\,=\,0,\qquad (\omega,\overline{\omega})\,>\,0,\qquad
H^{1,1}(X)\,=\,\left(H^{2,0}(X)\oplus H^{0,2}(X)\right)^\perp.
$$
In particular, the Hodge decomposition is completely determined by the class 
$\omega\in H^2(X,\CC)$.

\medskip
Set $\Lambda = \Lambda_\text{K3}$. The surjectivity of the period map for K3 surfaces asserts that, given any
$\omega\in \Lambda_\CC= \Lambda\otimes_\ZZ\CC$ 
satisfying
$(\omega,\omega)=0$ and $(\omega,\overline{\omega})>0$,
there exists a K3 surface $X$ with an isometry $\varphi: H^2(X,\ZZ)\rightarrow
\Lambda$ such that the complexification $\varphi_{\CC}$ maps  a holomorphic 2-form
on $X$ to $\omega$. The Torelli theorem asserts that $X$ is unique.

\medskip
The Picard group $\Pic(X)$ of $X$ is the group of line bundles, with tensor product, on $X$.
The Lefschetz (1,1)-Theorem asserts that
$$
\Pic(X)\,=\,H^{1,1}(X)\,\cap\,H^2(X,\ZZ)~.
$$
Equivalently, $\Pic(X)=\{c\in H^2(X,\ZZ):\,(\omega,c)=0\,\}$, since $\overline{(\omega,c)}=(\overline{\omega},\overline{c})$ and $\overline{c}=c$.
The K3 surface $X$ is projective if and only if there is a class $h\in \Pic(X)$ with $(h,h)>0$.

\subsection{The sublattices $\Pic(X)$ and $T_X$}\label{sublattices}
The transcendental lattice $T=T_X$ is the smallest primitive sublattice of $H^2(X,\ZZ)$ such that
$H^{2,0}(X)\subset T_\CC$. Here primitive means that $H^2(X,\ZZ)/T_X$ is torsion free.
One has (\cite[Lemma 3.3.1]{H}):
$$
T_X\,=\,\Pic(X)^\perp~.
$$
In case $\Pic(X)$ is a non-degenerate lattice, for example if $X$ is projective, one has an inclusion, with finite index,
$T_X\oplus \Pic(X)\subset H^2(X,\ZZ)$, but in general it can happen that $T_X\cap \Pic(X)\neq 0$
(see Lemma \ref{lem:ker}); in that case $T_X+ \Pic(X)$ does not have finite index in $H^2(X,\ZZ)$.
Notice that $T_X$ has a Hodge structure induced by the one on $H^2(X,\ZZ)$:
$$
T_{\CC}\,=\,\oplus_{p+q=2} T^{p,q},\qquad T^{p,q}\,:=\,T_\CC\cap H^{p,q}(X)~,
$$
one has $T^{2,0}=H^{2,0}(X)$, $T^{0,2} =H^{0,2}(X)$ and $T^{1,1}=(T^{2,0}\oplus T^{0,2})^\perp\subset T_\CC$.
We have the following possibilities for $T_X$ (cf.\ \cite[Proposition 4.11]{Kondo}).
Note that, to accommodate case (2),
we use the word `lattice' also for free $\ZZ$-modules with a degenerate bilinear form.

\begin{prop}\label{algdim}
\label{prop:Kondo}
Let $X$ be a K3 surface, let
$a(X)$ be the algebraic dimension of $X$, that is, the transcendence degree of the function field of $X$,
and let $r=rank(T_X)$.
Then exactly one of the following holds.
\begin{enumerate}
 \item The lattice $T_X$ is non-degenerate with signature $(2,(r-2))$
 and $X$ is projective,
 so $a(X)=2$.
 \item The lattice $T_X$ has a kernel $K$ of rank one, the quotient $T_X/K$ has signature $(2,r-3)$,
 and $a(X)=1$.
 \item The lattice $T_X$ has signature $(3,r-3)$
and  $a(X)=0$.
\end{enumerate}
\end{prop}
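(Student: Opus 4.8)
The plan is to read off the signature of $T_X$ from that of $\Pic(X)$ via the relation $T_X=\Pic(X)^\perp$, and then to match the three possible shapes of $\Pic(X)$ with the three values of $a(X)$ using the classification of complex surfaces. First I would record the real-linear-algebra input coming from the period. Writing $\omega = x+iy$ with $x,y\in H^2(X,\RR)$, the conditions $(\omega,\omega)=0$ and $(\omega,\overline\omega)>0$ say exactly that $(x,x)=(y,y)>0$ and $(x,y)=0$; hence $P:=\langle x,y\rangle_\RR$ is a positive-definite plane whose complexification is $H^{2,0}(X)\oplus H^{0,2}(X)$. Consequently $H^{1,1}(X)\cap H^2(X,\RR)=P^\perp$ has signature $(1,19)$, and since $\Pic(X)\subset H^{1,1}(X)$ its induced form has positive index at most $1$. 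I would also note that $\omega$, and hence $\overline\omega$, is not in the radical of $T_X$, because $(\omega,\overline\omega)>0$ while $(\omega,H^{1,1})=0$; so the radical of $T_X$ lies in $T^{1,1}=T_\RR\cap H^{1,1}(X)$.

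Next I would carry out the elementary computation over $\RR$ inside the non-degenerate lattice $H^2(X,\ZZ)$ of signature $(3,19)$. As this form is non-degenerate, $\dim T_X=22-\rho$ with $\rho=\rk\Pic(X)$, and since $\Pic(X)$ and $T_X$ are mutually orthogonal primitive sublattices their common radical is $K:=\Pic(X)\cap T_X$. Three cases arise according to the signature of $\Pic(X)\subseteq H^{1,1}_\RR$, whose positive index is $0$ or $1$: (i) if $\Pic(X)$ is non-degenerate of signature $(1,\rho-1)$, then $K=0$ and $T_X=\Pic(X)^\perp$ is non-degenerate of signature $(2,r-2)$; (ii) if $\Pic(X)$ is negative semi-definite with a radical of rank one, then splitting off a hyperbolic plane through a generator of that radical shows $K$ has rank one and $T_X/K$ has signature $(2,r-3)$; (iii) if $\Pic(X)$ is negative definite then $K=0$ and $T_X$ is non-degenerate of signature $(3,r-3)$. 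In each case the plane $P\subset T_X\otimes\RR$ supplies the first two positive directions, and the verification is a routine orthogonal-complement count using $r=22-\rho$.

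Finally I would identify each case with the stated value of $a(X)$. The equivalence $a(X)=2\iff\Pic(X)$ contains a class $h$ with $(h,h)>0$ is the projectivity criterion recalled in \S\ref{s:K3}, and this is case (i). When $X$ is non-projective, $\Pic(X)$ contains no such class, so by the previous paragraph it is negative semi-definite and we are in case (ii) or (iii). Distinguishing these is the crux, and it is where the geometry of surfaces enters rather than mere lattice theory: one must show that $a(X)=1$ precisely when $\Pic(X)$ carries a nonzero isotropic class, equivalently a rank-one radical. I expect this to be the main obstacle. The argument would use Riemann--Roch on the K3 surface to produce, from an isotropic class, an elliptic fibration $X\to\PP^1$, forcing $a(X)\geq 1$; conversely an algebraic dimension-one surface admits an algebraic reduction to a curve, an elliptic fibration whose fibre class is a nonzero isotropic element of $\Pic(X)$, while the absence of a positive class keeps $a(X)<2$. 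Granting this, case (ii) is exactly $a(X)=1$, and case (iii), where $\Pic(X)$ is negative definite and so has no isotropic class, is exactly $a(X)=0$. Since the three signature types for $\Pic(X)$ are mutually exclusive and exhaustive, exactly one of the three alternatives holds.
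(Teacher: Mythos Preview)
The paper does not actually prove this proposition; it is stated with a reference to \cite[Proposition 4.11]{Kondo} and used as input. Your sketch is essentially the standard argument one finds there: analyze the three possible shapes of $\Pic(X)$ inside the signature-$(1,19)$ real space $H^{1,1}_\RR$, read off the signature of $T_X=\Pic(X)^\perp$ in each case, and then match the cases to $a(X)$ via the projectivity criterion and the existence of an elliptic pencil coming from an isotropic class. The part you flag as the crux, namely that $a(X)=1$ is equivalent to $\Pic(X)$ being negative semi-definite with a one-dimensional radical, is exactly where the geometric input (Riemann--Roch producing an elliptic fibration from a primitive isotropic class, and conversely the fibre class of an algebraic reduction) is needed; your outline of this step is correct, though making it precise requires the usual care (e.g.\ passing from an isotropic class to an effective primitive isotropic class, and handling $(-2)$-curves). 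So your approach is sound and matches the reference the paper defers to.
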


Note that in case (2), $T_X$ is not simple as a Hodge structure,
since it has the non-trivial (primitive) sub-Hodge structure given by $K$
(which is pure of type $(1,1)$)
-- without a complementary Hodge structure.

A polarized, rational, Hodge structure is a direct sum of simple rational Hodge structures.
But, if $a(X)=1$, the Hodge structure on $T_X$ is not polarized by the intersection form
and the existence of $K$ shows that it does not admit any polarization.
In \S \ref{hodge 1} we will show that $K$ is the only non-trivial primitive sub-Hodge structure of $T_X$.

\subsection{The Hodge endomorphisms} \label{hodge endo}

Let $T=T_X$ be the transcendental lattice of a K3 surface and let $T_\QQ:=T\otimes_\ZZ\QQ$.
We are interested in the $\QQ$-linear maps $f:T_\QQ\rightarrow T_\QQ$ which preserve
the Hodge structure on $T$, that is, the $\CC$-linear extension of $f$ satisfies
$f(T^{p,q})\subset T^{p,q}$.
The algebra of such Hodge endomorphisms is denoted by $A_T$:
$$
A_T\,:=\,\{f\,\in\,\End_\QQ(T_\QQ):\;f(T^{p,q})\subset T^{p,q}\,\}~.
$$
If we do not want to make reference to $T$, we shall also write $A_X$.
Notice that $f(T^{2,0})\subset T^{2,0}$ so there is a homomorphism of algebras
$$
\delta:\,A_T\,\longrightarrow \End_\CC(T^{2,0})=\CC,\qquad
f\,\longmapsto \, \delta(f)\,:=\,f_{|T^{2,0}}~.
$$
Note that  \cite[Lemma 3.3.3]{H} asserts that this map is injective
(regardless of $T$ being simple -- this only needs that there is no proper sub-Hodge structure containing a non-trivial $(2,0)$-part, which is the case by minimality of $T$).
As $\End_\CC(T^{2,0})\cong\CC$,
this implies that $A_T$ is a (commutative) field, thus $T_{\QQ}$ is an $A_T$ vector space.

\medskip
In the case where $a(X)=2$, so $X$ is projective, the algebra $A_T$ was studied extensively in  \cite{Z} and \cite{BGS}.

\subsection{K3 surfaces of algebraic dimension 1}\label{hodge 1}

In this case the kernel $K$ of the lattice $T = T_X$ has rank one by Proposition \ref{prop:Kondo}.
This sublattice can be characterized as follows.

\begin{lemma}
\label{lem:ker}
The kernel $K$ of the transcendental lattice $T$ is also
$$
K\,=\,T\cap T^{1,1}\,=\,T_X\,\cap\,\Pic(X)~.
$$
\end{lemma}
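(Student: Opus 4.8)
The plan is to prove the three characterizations of the kernel $K$ by a chain of inclusions, using the Hodge-theoretic description of both $K$ and $\Pic(X)$. Recall that $K$ is the radical of the bilinear form on $T=T_X$, i.e. $K=\{v\in T:\,(v,w)=0\text{ for all }w\in T\}$, and that by Proposition \ref{prop:Kondo} in case $a(X)=1$ it has rank one. On the other hand, $\Pic(X)^\perp=T$ and $T\cap T^{1,1}$ is the $(1,1)$-part of the Hodge structure sitting inside the integral lattice. The first step is to establish $T\cap\Pic(X)=T\cap T^{1,1}$: since $\Pic(X)=H^{1,1}(X)\cap H^2(X,\ZZ)$ by the Lefschetz $(1,1)$-theorem, an integral class $v\in T$ lies in $\Pic(X)$ exactly when $v\in H^{1,1}(X)$, i.e. when $v\in T^{1,1}$; this equality is essentially a definition-chase once one notes that $T^{1,1}=T_\CC\cap H^{1,1}(X)$ from \S\ref{sublattices}.

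The core of the argument is the identification of $K$ with $T\cap T^{1,1}$. For one inclusion, I would show $K\subseteq T^{1,1}$: an element $v$ of the radical $K$ is orthogonal to all of $T$, and in particular $(v,\omega)=0$ where $\omega$ spans $H^{2,0}(X)=T^{2,0}$; since also $\overline{(v,\omega)}=(v,\overline\omega)=0$ (as $v$ is real/integral and $\overline\omega$ spans $T^{0,2}$), the class $v$ is orthogonal to $T^{2,0}\oplus T^{0,2}$, hence $v\in(T^{2,0}\oplus T^{0,2})^\perp\cap T_\CC=T^{1,1}$, giving $v\in T\cap T^{1,1}$. For the reverse inclusion $T\cap T^{1,1}\subseteq K$, I would use the signature information: the form restricted to the real space spanned by $T^{2,0}\oplus T^{0,2}$ is positive definite of rank $2$ (recall $(\omega,\overline\omega)>0$), and in case $a(X)=1$ the total form on $T$ is degenerate with a one-dimensional radical by Proposition \ref{prop:Kondo}(2). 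An element of $T\cap T^{1,1}$ is orthogonal to the positive-definite $(2,0)+(0,2)$ part; since the form on $T^{1,1}$ has signature $(0,r-3)$ with a one-dimensional radical (the signatures must add up to $(2,r-3)$ for the non-degenerate quotient $T/K$), any class in the radical of $T$ lies in $T^{1,1}$, and conversely the rank-one radical is forced to coincide with the degenerate direction inside $T^{1,1}$. The cleanest way to close this is a dimension count: $K\subseteq T\cap T^{1,1}$ is already shown, both sides are expected to have rank one (the radical by Proposition \ref{prop:Kondo}, and $T\cap T^{1,1}$ because its nondegenerate complement $T^{2,0}\oplus T^{0,2}$ together with the rank-one degeneracy accounts for the full signature), so the inclusion is an equality.

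The main obstacle I anticipate is pinning down the reverse inclusion rigorously without circular reasoning, since the statement that $T\cap T^{1,1}$ has rank exactly one is exactly where the hypothesis $a(X)=1$ enters and must be invoked with care. The subtlety is that in general $T^{1,1}$ can contain many integral classes (these are the algebraic classes in $T$), but the \emph{radical} is one-dimensional; so I must argue that any integral class in $T^{1,1}$ is in fact isotropic and lies in the radical, not merely in $T^{1,1}$. This is where I would lean on the precise signature splitting of Proposition \ref{prop:Kondo}(2): since the quotient $T/K$ is non-degenerate of signature $(2,r-3)$ and the positive part is supplied entirely by $T^{2,0}\oplus T^{0,2}$, the image of $T^{1,1}$ in $T/K$ must be negative definite, so any class of $T\cap T^{1,1}$ pairing trivially with the positive part must map to zero in $T/K$, i.e. lie in $K$. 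This settles $T\cap T^{1,1}\subseteq K$ and completes the chain of equalities.
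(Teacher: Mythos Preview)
Your argument for the inclusion $T\cap T^{1,1}\subseteq K$ has a genuine gap. You claim that since the image of $T^{1,1}$ in $T/K$ is negative definite, ``any class of $T\cap T^{1,1}$ pairing trivially with the positive part must map to zero in $T/K$''. This is a non-sequitur: every element of $T^{1,1}$ pairs trivially with $T^{2,0}\oplus T^{0,2}$ by definition, and landing in a negative definite subspace only tells you the self-intersection is negative if nonzero --- it gives no reason for the image to vanish. From signatures alone there is nothing to stop $T\cap T^{1,1}$ from containing an integral class of negative square, and your dimension count that $T\cap T^{1,1}$ has rank one is asserted rather than proved.

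The irony is that you already have the correct ingredients in hand. You established $T\cap T^{1,1}=T\cap\Pic(X)$ in your first step, and you recorded that $T=\Pic(X)^\perp$. Combine these: if $v\in T\cap T^{1,1}$ then $v\in\Pic(X)$, and for every $w\in T=\Pic(X)^\perp$ one has $(v,w)=0$; hence $v$ lies in the radical $K$. This is exactly the paper's argument, and it bypasses signatures entirely. The relation $T=\Pic(X)^\perp$ is not just background --- it is the key input for this inclusion, and your proposal never actually deploys it.
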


\noindent
{\bf Proof.}
The intersection form is non-denerate on $T^{2,0}\oplus T^{0,2}$
and thus $K\subset T^{1,1}$, that is, $K\subset T\cap T^{1,1}$.
Conversely,
since $T\cap T^{1,1}\subset H^2(X,\ZZ)\cap H^{1,1}(X)=\Pic(X)$ we get $T\cap T^{1,1} \subset T\cap\Pic(X)\subset\Pic(X)$. As
$T=\Pic(X)^\perp$ it follows that $(t,t')=0$ for all $t\in T$ and all $t'\in \Pic(X)$ and hence for all
$t'\in T\cap T^{1,1}$. Therefore $T\cap T^{1,1}\subset \ker(T)=K$.

We already showed that $T\cap T^{1,1}\subset \Pic(X)$, so $K\subset T\cap \Pic(X)$.
Conversely, an element in $T\cap \Pic(X)$ is in $\Pic(X)\subset H^{1,1}(X)$, hence is in $T\cap T^{1,1}$.
\qed

\

Since any $f\in A_T$ maps both $T$ and $T^{1,1}$ into themselves, we see that $f(K)\subset K$. Therefore we
have a restriction homomorphism
$$
A_T\,\longrightarrow\, \End_\QQ(K)\,\cong\,\QQ~.
$$
Moreover, $f$ induces a Hodge endomorphism on $T/K$, which has the Hodge structure induced by
$T$, so combined with the homomorphism above we have a homomorphism
\begin{eqnarray*}
\varphi:
A_T & \longrightarrow\ &A_{T/K}\,\times\,\QQ~,\\
 f \;\; & \mapsto &\;\;\;\,  ([f], f|_K)~.
\end{eqnarray*}

\begin{remark}
\label{rem:a=1}
The quotient $T/K$ has a non-degenerate bilinear form induced by the (restriction to $T$) of
the intersection form, and this form is a polarization on $T/K$. Thus we can apply all the results
of \cite{Z} and \cite{BGS}
on endomorphisms of polarized K3-type Hodge structures to $A_{T/K}$, so $A_{T/K}$ is either a RM or a CM field.
\end{remark}

We now show that any sub-Hodge structure of $T_\QQ$ is either trivial or it is $K_\QQ$.
Moreover, $A_T$ consists of scalar multiples of the identity.

\begin{prop}\label{prop:hodge 1}
Let $T'\subset T_\QQ$ be a sub-Hodge structure, then  $T'$ is one of $0$, $K_\QQ$ and $T_\QQ$.

The algebra of Hodge endomorphisms of $T_\QQ$ is $A_T\cong\QQ$.
\end{prop}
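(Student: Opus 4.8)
The plan is to treat the two assertions in turn, the classification of sub-Hodge structures being the substantial part and the statement $A_T \cong \QQ$ following quickly from it. Write $E = A_T$; recall from \S\ref{hodge endo} that $E$ is a field and that $T_\QQ$ is an $E$-vector space, and recall from the discussion preceding the proposition that every $f \in E$ satisfies $f(K) \subset K$. Thus $K_\QQ$ is an $E$-stable, hence $E$-linear, subspace of $T_\QQ$. A nonzero $E$-subspace has $\QQ$-dimension a positive multiple of $[E:\QQ]$; since $\dim_\QQ K_\QQ = 1$, this forces $[E:\QQ] = 1$, so $E \cong \QQ$. The real work is therefore the statement about sub-Hodge structures, which I expect to hinge on the minimality characterization of $T$.

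To classify a sub-Hodge structure $T' \subseteq T_\QQ$, I would split on its $(2,0)$-part $T'_\CC \cap T^{2,0}$. Since $\dim_\CC T^{2,0} = 1$, either $T^{2,0} \subseteq T'_\CC$, or $T'_\CC \cap T^{2,0} = 0$. In the first case the lattice $T' \cap T$ is primitive in $H^2(X,\ZZ)$ and its complexification contains $H^{2,0}(X) = T^{2,0}$; by the defining minimality of the transcendental lattice $T$ (the smallest such primitive sublattice) we get $T' \cap T = T$, whence $T' = T_\QQ$. In the second case, $T'$ is stable under complex conjugation, being rational, so $T'_\CC \cap T^{0,2} = \overline{T'_\CC \cap T^{2,0}} = 0$ as well, and therefore $T'_\CC \subseteq T^{1,1}$, i.e.\ $T'$ is pure of type $(1,1)$.

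It remains to show that a sub-Hodge structure $T'$ pure of type $(1,1)$ is contained in $K_\QQ$; since $K_\QQ$ is one-dimensional and itself pure of type $(1,1)$ (as $K \subset T^{1,1}$ by Lemma \ref{lem:ker}), this yields $T' \in \{0, K_\QQ\}$ and completes the proof. The key point is that the quotient $T/K$ is a \emph{simple} Hodge structure. By Remark \ref{rem:a=1} the induced form on $T/K$ is a polarization, so $T/K$ is a direct sum of simple rational Hodge structures; the one-dimensional $(2,0)$-part lies in a single summand $V$, while every other summand, having no $(2,0)$- and hence no $(0,2)$-part, would be pure of type $(1,1)$. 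Pulling $V$ back to $T_\QQ$ produces a sub-Hodge structure whose complexification contains $T^{2,0}$, which by the minimality argument of the previous paragraph must be all of $T_\QQ$; hence $V = (T/K)_\QQ$ and $T/K$ is simple. As $T/K$ is simple and not pure of type $(1,1)$, it admits no nonzero $(1,1)$-pure sub-Hodge structure, so the image of the $(1,1)$-pure $T'$ in $(T/K)_\QQ$ vanishes, giving $T' \subseteq K_\QQ$ as required. The main obstacle is precisely this simplicity of $T/K$ — equivalently, the absence of rational $(1,1)$-classes in $T/K$ — and it is exactly here that the minimality of $T$ is indispensable.
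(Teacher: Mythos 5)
Your proof is correct, but both halves run along genuinely different lines from the paper's. For the classification of sub-Hodge structures, the paper stays entirely inside $T_\QQ$ and exploits the degenerate intersection form: from $T'\subset T^{1,1}$ and $T^{1,1}\perp T^{2,0}$ it gets $T^{2,0}\subset (T')^\perp_\CC$, so minimality of $T$ applied to the rational subspace $(T')^\perp$ forces $(T')^\perp = T_\QQ$, i.e.\ $T'\subset \ker(T_\QQ)=K_\QQ$ --- the same perpendicularity trick reappears later in the proof of Proposition \ref{adjoint} (1). You instead pass to the polarized quotient, prove simplicity of $T/K$ by pulling the simple summand containing the $(2,0)$-part back along the quotient map $\pi: T_\QQ\to (T/K)_\QQ$ and invoking minimality, and then kill the $(1,1)$-pure image $\pi(T')$. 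This works (preimages and images of sub-Hodge structures under the Hodge morphism $\pi$ are again sub-Hodge structures, and Remark \ref{rem:a=1} supplies the polarization, hence semisimplicity), but it leans on the semisimplicity theory of polarized Hodge structures where the paper needs only orthogonality; in exchange you make the simplicity of $T/K$ explicit, which the paper only hints at. For $A_T\cong\QQ$, the paper argues by hand: setting $c_f=f|_{K_\QQ}$, it shows the image of $f-c_f\id_T$ is a sub-Hodge structure contained in $K_\QQ$, deduces $(f-c_f\id_T)(T^{2,0})=0$ from $K_\QQ\cap T^{2,0}=0$, and concludes $f=c_f\id_T$ by minimality --- using the first half of the proposition but not the field structure of $A_T$. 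Your dimension count ($K_\QQ$ is a nonzero $E$-stable subspace with $\dim_\QQ K_\QQ=1$, so $[E:\QQ]=1$) is slicker and bypasses the classification of sub-Hodge structures entirely, at the cost of invoking from \S\ref{hodge endo} that $A_T$ is a field acting on $T_\QQ$ as a vector space; that fact is legitimately available here, since the injectivity of $\delta$ rests only on the minimality of $T$, as the paper notes.
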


\begin{proof}
If $T^{2,0}\subset T'_\CC$ then $T'=T_\QQ$ by minimality of $T$.
Otherwise, $(T')^{2,0}=0$ and then $T'\subset T^{1,1}$. As $T^{1,1}\perp T^{2,0}$, the $\QQ$-vector space
$T'$ has the property that $T'\perp T^{2,0}$. Therefore $T^{2,0}\subset (T')^\perp_\CC$
and by minimality of $T$ we get $(T')^\perp=T_\QQ$. Thus $T'\subset Ker(T_\QQ)=K_\QQ$. Since $K_\QQ$ is 1-dimensional, $T'=0$ or $T'=K_\QQ$.

Let $f\in A_T$, then $f(K_\QQ)\subset K_\QQ$ and thus $f|_{K_\QQ}=c_f \id_{K_\QQ}$ for some $c_f\in\QQ$.
The Hodge endomorphism
$f-c_f \id_T$ is then zero on $K_\QQ$, so its image, a sub-Hodge structure of $T_\QQ$,
cannot be $T_\QQ$, hence it is contained in $K_\QQ$.
But $(f-c_f \id_T)(T^{2,0})\subset T^{2,0}$ whereas $K_\QQ\cap T^{2,0}=0$, so $(f-c_f \id_T)(T^{2,0})=0$. By minimality of $T$,
the $\QQ$-subspace $\ker(f-c_f \id_T)$ must be all of $T_\QQ$ and therefore $f=c_f \id_T$.
\qed
\end{proof}

\subsection{Pseudo-polarized Hodge structures}\label{hodge pp}

For the remainder of this paper, we focus on K3 surfaces $X$ of algebraic dimension $a(X)=0$.
In this case the intersection form on $H^2(X,\ZZ)$ defines a non-degenerate bilinear form of signature
$(3,r-3)$ on $T$. Notice that it is not a polarization of the Hodge structure $T$;
hence we have to work out new proof of all results analogous to the polarized case.

\medskip

For later use, we slightly generalize these Hodge structures, dropping the condition that they are
sub-Hodge structures of $H^2(X,\ZZ)$ for a K3 surface $X$, so that we can also use the results
we obtain for the more general class of hyperk\"ahler manifolds in Section \ref{s:HK}.
In particular, we do not impose any condition on the rank of $T$.

\begin{defn}\label{def:pseudo}
\label{def:T}
A $\QQ$-vector space $T_\QQ$ of dimension $r$ with a non-degenerate bilinear form $(.,.)$ of signature $(3,r-3)$
is called a pseudo-polarized rational Hodge structure of K3 type if $T_\QQ$
has a Hodge decomposition
$$
T_\CC\,=\,T^{2,0}\,\oplus\,T^{1,1}\,\oplus\,T^{0,2},\qquad\mbox{with}\quad
\overline{T^{p,q}}\,=\,T^{q,p}~,
$$
and $\dim T^{2,0}=1$, and such that  the subspace $T^{1,1}$ is perpendicular to both
$T^{2,0}$ and $T^{0,2}$ whereas if $\omega\in T^{2,0}$ is a basis of $T^{2,0}$ then
\begin{eqnarray}
\label{eq:()=0}
(\omega,\omega)\,=\,0,\qquad (\omega,\overline{\omega})\,>\,0
\end{eqnarray}
(notice that then also $(\overline{\omega},\overline{\omega})=0$).
The Hodge structure $T_\QQ$ is said to be {\it simple} if $\omega^\perp\cap T_\QQ\,=\,\{0\}$.

\end{defn}

Analogously to the K3 surface case, we define the algebra of {\it Hodge endomorphisms} of $T$ by setting 

$$
A_T\,:=\,\{f\,\in\,\End_\QQ(T_\QQ):\;f(T^{p,q})\subset T^{p,q}\,\}~.
$$

\medskip
There is an `adjoint' involution on $\End_{\QQ}(T_\QQ)$ defined by
\begin{eqnarray}
\label{eq:adjoint}
f\,\longmapsto f',\qquad\mbox{where}\quad (fx,y)\,=\,(x,f'y)~,
\end{eqnarray}
for $f\in \End_{\QQ}(T_\QQ)$ and all $x,y\in T_\QQ$.
We now show that this involution preserves the subalgebra $A_T$ of $\End_{\QQ}(T_\QQ)$.

\begin{prop} \label{adjoint}
Let $T_\QQ$ be a simple pseudo-polarized Hodge structure of K3 type.
(For example, $T_\QQ=T_{X,\QQ}$ where $a(X)=0$).
\begin{enumerate}
\item
Let $T'\subset T_\QQ$ be a sub-Hodge structure, then $T'=0$ or $T_\QQ$.
\smallskip
\item
The commutative $\QQ$-algebra $A_T$ is a number field.
\smallskip
\item For $f\in A_T$ also $f'\in A_T$.
\smallskip
\item Let $\delta:A_T\hookrightarrow\CC$ be the action of $A_T$ on
$T^{2,0}$. Then for all $f\in A_T$ we have
$$
\delta(f')\,=\,\overline{\delta(f)}~.
$$
\end{enumerate}
\end{prop}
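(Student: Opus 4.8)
The plan is to treat the four parts in order, using the simplicity hypothesis $\omega^\perp\cap T_\QQ=\{0\}$ as the engine for (1) and (2), and the way the bilinear form pairs the summands $T^{p,q}$ as the engine for (3) and (4). Throughout I would record the orthogonality relations forced by Definition \ref{def:T}: writing $T^{2,0}=\CC\omega$ and $T^{0,2}=\CC\overline\omega$, one has $(\omega,\omega)=(\overline\omega,\overline\omega)=0$, $(\omega,\overline\omega)>0$, and $T^{1,1}$ is orthogonal to $T^{2,0}\oplus T^{0,2}$ with the form non-degenerate on each of $T^{1,1}$ and $T^{2,0}\oplus T^{0,2}$. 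From these one reads off inside $T_\CC$ the identities $(T^{2,0}\oplus T^{0,2})^\perp=T^{1,1}$, $(T^{2,0}\oplus T^{1,1})^\perp=T^{2,0}$ and $(T^{1,1}\oplus T^{0,2})^\perp=T^{0,2}$.

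For (1), let $T'\subset T_\QQ$ be a nonzero sub-Hodge structure, so that $T'_\CC=\bigoplus_{p,q}(T'_\CC\cap T^{p,q})$. If $T^{2,0}\subset T'_\CC$ (which happens as soon as $(T')^{2,0}\neq 0$, since $\dim T^{2,0}=1$), then every vector in the rational orthogonal complement of $T'$ is perpendicular to $\omega$, hence lies in $\omega^\perp\cap T_\QQ=\{0\}$; as the form is non-degenerate this forces $T'=T_\QQ$. Otherwise $(T')^{2,0}=0$, and taking complex conjugates $(T')^{0,2}=0$ as well, so $T'_\CC=(T')^{1,1}\subset T^{1,1}\subset\omega^\perp$, giving $T'\subset\omega^\perp\cap T_\QQ=\{0\}$. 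This dichotomy is exactly the assertion. I expect this to be the main obstacle, since it is the step where simplicity must be invoked correctly and where one must be sure that an arbitrary sub-Hodge structure splits compatibly with the three-piece decomposition.

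Part (2) would then follow quickly. The restriction map $\delta\colon A_T\to\End_\CC(T^{2,0})=\CC$, $f\mapsto f|_{T^{2,0}}$, is a homomorphism of $\QQ$-algebras; if $\delta(f)=0$ then $T^{2,0}\subset\ker(f)_\CC$, and $\ker f$ is a sub-Hodge structure (because $f(T^{p,q})\subset T^{p,q}$), so (1) gives $\ker f=T_\QQ$, i.e. $f=0$. Hence $\delta$ is injective, identifying $A_T$ with a subring of $\CC$; in particular $A_T$ is commutative and has no zero divisors. Being a finite-dimensional $\QQ$-algebra that is an integral domain, multiplication by any nonzero element is injective hence bijective, so $A_T$ is a field, i.e. a number field.

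For (3), I would use the elementary lemma that $f(W)\subset W$ implies $f'(W^\perp)\subset W^\perp$, since for $u\in W^\perp$ and $w\in W$ one has $(w,f'u)=(fw,u)=0$. Applying this to the three $f$-invariant subspaces $T^{2,0}\oplus T^{0,2}$, $T^{2,0}\oplus T^{1,1}$ and $T^{1,1}\oplus T^{0,2}$ (each invariant because $f$ preserves every $T^{p,q}$) and combining with the orthogonal-complement identities above shows that $f'$ preserves $T^{1,1}$, $T^{2,0}$ and $T^{0,2}$ respectively, whence $f'\in A_T$. Finally, for (4), I would apply the defining relation $(f\omega,\overline\omega)=(\omega,f'\overline\omega)$. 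Since $f\omega=\delta(f)\omega$ and, by (3), $f'\overline\omega=\lambda\overline\omega$ for some $\lambda\in\CC$, comparing coefficients of the nonzero number $(\omega,\overline\omega)$ gives $\lambda=\delta(f)$, i.e. $f'\overline\omega=\delta(f)\overline\omega$. As $f'$ is $\QQ$-linear its complexification commutes with complex conjugation, so conjugating yields $f'\omega=\overline{\delta(f)}\,\omega$, that is $\delta(f')=\overline{\delta(f)}$, as required.
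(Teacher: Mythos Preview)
Your proof is correct, and parts (1), (2), (4) proceed essentially as in the paper. Your argument for (3), however, takes a cleaner route than the paper's. The paper first shows $f'(\omega^\perp)\subset\omega^\perp$ from the eigenvalue relation $\delta(f)(\omega,y)=(\omega,f'y)$, intersects with the conjugate statement to get $f'(T^{1,1})\subset T^{1,1}$, and then has to argue separately on the two-dimensional complement $(T^{1,1})^\perp=T^{2,0}\oplus T^{0,2}$ that $f'$ does not swap the two isotropic lines, using $(\omega,f'\omega)=\delta(f)(\omega,\omega)=0$. Your observation that $(T^{2,0}\oplus T^{1,1})^\perp=T^{2,0}$ and $(T^{1,1}\oplus T^{0,2})^\perp=T^{0,2}$, combined with the adjointness lemma $f(W)\subset W\Rightarrow f'(W^\perp)\subset W^\perp$, handles all three summands in one stroke and bypasses the isotropic-line discussion entirely. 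Both arguments rely on the same orthogonality data from Definition~\ref{def:T}; yours just packages it more efficiently.
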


\noindent
{\bf Proof.}
(1)
Let $T'\subset T_\QQ$ be a sub-Hodge struture. If $T^{2,0}\subset T'_\CC$ then $T'=T_\QQ$ by the simplicity
of $T_{\QQ}$. Otherwise $(T')^{2,0}=0$ and then $T'\subset T^{1,1}=(T^{2,0}\oplus T^{0,2})^\perp$.
Therefore $(t',\omega)=0$ for all $t'\in T'$ so that $T'=0$ by simplicity of $T_{\QQ}$.


(2)
In \S \ref{hodge endo} we already observed that $A_T$ is a (commutatve) field, since it
is also a subalgebra of the finite dimensional $\End(T_\QQ)$, it is a number field.

(3)
Recall that for $f\in A_T$ we have $f(\omega)=\delta(f)\omega$.
Then
$$
\delta(f)(\omega,y)\,=\,(f(\omega),y)\,=\,(\omega, f'(y))~.
$$
Hence $(\omega,y)=0$ implies $(\omega,f'(y))=0$ so that $f'(\omega^\perp)=\omega^\perp$ and similarly
 $f'(\overline{\omega}^\perp)=\overline{\omega}^\perp$.
Since $T^{1,1}=\omega^\perp\cap\overline{\omega}^\perp$ it follows that $f'(T^{1,1})=T^{1,1}$.
Then also $f'((T^{1,1})^\perp)=(T^{1,1})^\perp$, this is a two dimensional complex subspace of $T_{\bf C}$
with a non-degenerate quadratic form. The subspaces $T^{2,0}$, $T^{0,2}$ are isotropic subspaces and thus
they are either mapped into themselves or they are interchanged. Taking $y=\omega$ above, we see that
$0=(\omega,\omega)=(\omega,f'(\omega))$ hence $f'(\omega)$ lies in the isotropic subspace $T^{2,0}$
and thus $f'(T^{2,0})=T^{2,0}$. Therefore $f'\in A_T$.

(4)
This follows from $(\omega,\overline{\omega})\neq 0$ and
$$
\delta(f)(\omega,\overline{\omega})\,=\,(f(\omega),\overline{\omega})\,=\,
(\omega,f'(\overline{\omega}))\,=\,\overline{\delta(f')}(\omega,\overline{\omega})~.
$$
\qed



\medskip

Setting $E=A_T$, we  follow standard number field notation to denote the $\QQ$-linear involution gained from Proposition \ref{adjoint} (3) by $E\ni e\mapsto \bar e$.
Then \eqref{eq:adjoint} gives the adjoint condition
\begin{eqnarray}
\label{eq:adj}
(ex,y)=(x,\overline{e}y)\;\;\; \forall\, x,y\in T_\QQ, \;\; e\in E.
\end{eqnarray}

\medskip
In the sequel, we will use the terminology {\it pseudo-polarized Hodge structures}  for pseudo-polarized Hodge structures of K3 type. 

\subsection{K3 surfaces of algebraic dimension 0}\label{hodge 0}

For K3 surfaces of algebraic dimension 2 (i.e.\ projective ones) we have Zarhin's characterization (see \cite {Z}) of possible Hodge endomorphism fields; only totally
real and CM fields are possible. The following theorem is an analog of Zarhin's result for K3 surfaces of algebraic dimension 0.

\begin{theo}\label{endosshort} Let $X$ be a K3 surface with $a(X) = 0$
and let $E:=A_T$ be the algebra of Hodge endomorphisms of $T_\QQ$.
Then $E$ is a number field with a $\bf Q$-linear involution
and $E$ is either totally real, with trivial involution, or a Salem field.

\end{theo}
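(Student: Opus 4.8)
~\textbf{Proof proposal.}
The plan is to analyze the involution $e \mapsto \bar e$ on the number field $E = A_T$ together with the embedding $\delta \colon E \hookrightarrow \CC$ and the signature constraints coming from the pseudo-polarization. By Proposition~\ref{adjoint}, $E$ is a number field carrying a $\QQ$-linear involution, and part~(4) tells us that $\delta(\bar e) = \overline{\delta(e)}$, so that $\delta$ intertwines the involution on $E$ with complex conjugation on $\CC$. The first dichotomy I would draw is whether the involution on $E$ is trivial or not. If it is trivial, then for every embedding $\sigma \colon E \to \CC$ we have $\sigma = \bar\sigma$ via the fixed-field structure; I would argue that triviality of the involution forces every embedding of $E$ to be real, i.e.\ $E$ is totally real. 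If the involution is nontrivial, let $E_0 \subset E$ be its fixed field, so that $E/E_0$ is a quadratic extension; I would show $E_0$ is totally real and that the involution is ``conjugation'' for $E/E_0$.

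The heart of the argument is the signature bookkeeping. The form $(.,.)$ on $T_\QQ$ has signature $(3, r-3)$, and by the adjoint identity~\eqref{eq:adj} it is an $E$-hermitian (or in the totally real case, $E$-bilinear) form under the involution. I would invoke the transfer/signature machinery: over each real place of the fixed field one gets a contribution to the total signature, and the ``$3$'' on the positive side is the rigid invariant to exploit. Concretely, the holomorphic $2$-form $\omega \in T^{2,0}$ singles out one embedding $\delta$ of $E$, which is genuinely complex precisely when the involution is nontrivial (since $\delta(\bar e) = \overline{\delta(e)}$ and $\omega \neq \bar\omega$ sit in conjugate eigenspaces). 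Decomposing $T_\CC$ according to the embeddings of $E$, the $(2,0) \oplus (0,2)$ part lives over the place corresponding to $\delta$ and contributes a definite piece of the form, while the real places contribute to $T^{1,1}$. Summing signatures across places and matching against $(3, r-3)$ is what pins down how many real versus complex embeddings $E$ can have.

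In the totally real case this yields that $m = \dim_E T_\QQ \geq 3$ forced by the ``$3$'' positive directions being distributed as one per relevant factor (this matches the statement in the longer Theorem~\ref{endos}, where the totally real case carries $m \geq 3$). In the nontrivial-involution case, I would show the embedding $\delta$ must be one of exactly two complex places, that all embeddings of the totally real fixed field $E_0$ except the one beneath $\delta$ remain real-contributing, and—crucially—that $E$ has exactly two real embeddings with the rest complex, with $\deg(E) \geq 4$. Together with the $\QQ$-linear involution having totally real fixed field $E_0$, this is precisely the definition of a Salem field (Definition~\ref{def:Salem'}), and I would note that $m = 1$ follows because the single complex place carrying $T^{2,0} \oplus T^{0,2}$ already accounts for the entire ``extra'' signature, leaving no room for higher-dimensional $E$-structure.

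The main obstacle I expect is the precise signature accounting that distinguishes ``exactly two real embeddings'' (Salem) from other mixed signatures, and simultaneously rules out a pure CM field (all embeddings complex) as occurring here. In the projective case $(2, r-2)$ the analogous computation yields CM; the change to signature $(3, r-3)$ is exactly what should break CM and produce the two real embeddings instead. Making this rigorous requires carefully tracking how the positive-definite contribution of $(\omega, \bar\omega) > 0$ interacts with the indefinite $T^{1,1}$ part across the real and complex places of $E_0$—this is where the real work lies, and where the hypotheses assembled in Section~\ref{building blocks} on signatures of algebras with involution over $\RR$ would be the essential input.
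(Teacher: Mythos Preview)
Your overall strategy---transfer plus building-block analysis of $E_\RR$ as an algebra with involution, with the signature $(3,r-3)$ as the rigid constraint---is exactly the paper's approach, and your treatment of the nontrivial-involution case is on target: the orthogonal complement $T^1=(T^0)^\perp$ has signature $(1,r-3)$, which permits exactly one $\RR\times\RR$ factor (contributing $(1,1)$) and forbids any $\CC\times\CC$ factor (contributing $(2,2)$), so $E$ has precisely two real embeddings and $E_0$ is totally real.

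There is, however, a genuine gap in your trivial-involution case. You write that ``triviality of the involution forces every embedding of $E$ to be real,'' apparently via $\sigma=\bar\sigma$ from the fixed-field structure. But the relation $\delta(\bar e)=\overline{\delta(e)}$ in Proposition~\ref{adjoint}(4) constrains only the distinguished embedding $\delta$: it says $\delta$ is real when the involution is trivial. It says nothing about the remaining embeddings---a number field with only the identity involution can certainly have complex places (think of $\QQ(\sqrt[3]{2})$). Ruling these out requires the same signature mechanism you reserved for the Salem case. With trivial involution, a factor $\CC\subset E_\RR$ contributes a hyperbolic block of signature $(1,1)$ for each diagonal entry of the form (building block~(1b)); over an $m$-dimensional $W$ this gives a summand of signature $(m,m)$ sitting inside $T^1$. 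Since $T^1$ has signature $(1,r-3)$, any such factor forces $m=1$. On the other hand, the action of $E_\RR$ on $T^0\cong\RR^2$ factors through the single real embedding $\delta$, which already forces $m\geq 2$. This contradiction is what eliminates complex places and shows $E$ is totally real---it does not follow from the involution being trivial alone.
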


This will be proved (in the more general setting of pseudo-polarized Hodge structures)  in \S \ref{proof}. The following sections are devoted to results that will be used for the proof of this result, as
well as in the proof of Theorem \ref{RM}.
We start with some results on quadratic forms and
their signatures.

\section{Quadratic and hermitian forms, transfer}\label{hermitian}

If $X$ is a pseudo-polarized (or projective) K3 surface,  the intersection form of $X$ induces a non-degenerate
bilinear form on $T_{\QQ}=T_{X,\QQ}$. 
As we will see, the adjoint condition \eqref{eq:adj} allows us to identify the quadratic space
$(T_\QQ,(\cdot,\cdot))$
with the transfer $\TT(W)$ of a quadratic or hermitian space $W$ over $E$.

\medskip
We start by recalling some notions concerning quadratic forms and algebras with involution. Let $k$ be a field of characteristic $\not = 2$.
A {\it quadratic form} over $k$ is by definition a pair $V = (V,q)$, where $V$ is a finite
dimensional $k$-vector space and
$q: V \times V \to k$ is a non-degenerate symmetric bilinear form.

\medskip
If $A$ is a commutative $k$-algebra, a $k$-linear {\it involution} is by definition a $k$-linear map $\iota : A \to A$ such
that $\iota(ab) = \iota(a)\iota(b)$ for all $a,b \in A$, and such that $\iota \circ \iota$ is the identity. An {\it \'etale algebra} is by definition a product of
a finite number of separable field extensions of $k$ each of which has finite degree;
by definition, it is equipped with a trace map ${\rm Tr}: A\to k$.
If $B$ is an \'etale $k$-algebra and we set $A = B \times B$, with involution $\iota$ exchanging the two
copies of $B$, then the algebra with involution $A$ is said to be {\it split}. 

\medskip
We denote by $\Delta_A$ the discriminant of $A$, defined as an element
of $k^{\times}/k^{\times 2}$ in terms of the trace form 
\[
q: A\times A \to k,\;\;\; 
q(a,a') = {\rm Tr} (aa').
\]
Note that a split algebra $A$ has $\Delta_A=1$.

%
%
%

\medskip
Let now $E$ be an algebraic number field of degree $d$, and let $e \mapsto \overline e$ be a $\bf Q$-linear involution, possibly the identity. Let $E_0$
be the fixed field of the involution; $E_0 = E$ if the involution is the identity, otherwise $E/E_0$ is a quadratic extension. A {\it hermitian
form} is a pair $(W,h)$, where $W$ is a finite dimensional $E$-vector space, and $h : W \times W \to E$ is a sesquilinear form
so that $h$ is $E$-linear in the first variable and $\overline {h(x,y)} = h(y,x)$
for all $x,y \in W$; if the involution is the identity, then $(W,h)$ is a quadratic form over $E$.

\medskip
Every hermitian form over $E$ can be diagonalized, i.e.\ $(W,h) \simeq \langle \alpha_1,
\dots,\alpha_n\rangle$ for some $\alpha_i \in E_0^{\times}$.
The {\it determinant} of $W = (W,h)$ is by definition the product
$\alpha_1 \cdots \alpha_n$ considered as an element of $E_0^{\times}/{\rm N}_{E/E_0}(E^{\times})$
if the involution is non-trivial,
and of $E^{\times}/E^{\times 2}$ if it is the identity.

\medskip
In the sequel, we have to deal with algebras with involution obtained from $E$ by base change to $\bf R$. Set $E_{\bf R} = E \otimes_{\bf Q}{\bf R}$, and
note that the involution extends ${\bf R}$-linearly to this \'etale algebra.

\begin{lemma}\label{transfer lemma} Let $k = {\bf Q}$ or ${\bf R}$, and let $A = E$ or $A = E_{\bf R}$, as above. Let $(U,q)$ be a quadratic form over $k$,
and suppose that $U$ has a structure of $A$-module. The following are equivalent

\medskip
{\rm (i)} For all $x,y \in U$ and all $\alpha \in A$, we  have
$$
q(\alpha x,y)\, =\, q(x,\overline {\alpha} y).
$$

\medskip
{\rm (ii)} There exists a hermitian form $h : U \times U \to A$ such that for all $x,y \in U$, we have
$$
q(x,y)\, =\, {\rm Tr}_{A/k}(h(x,y)).
$$

\end{lemma}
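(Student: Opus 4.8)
The plan is to prove the two implications separately, using the non-degeneracy of the trace form of the \'etale algebra $A/k$ as the central tool throughout. Recall that since $A$ is \'etale over $k$, the symmetric bilinear form $(\beta,\gamma)\mapsto\Tr_{A/k}(\beta\gamma)$ on $A$ is non-degenerate; this is what will let me pass between $k$-linear functionals on $A$ and elements of $A$.

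For the implication (ii) $\Rightarrow$ (i) I would argue by direct computation. Assuming $h$ exists with $q(x,y)=\Tr_{A/k}(h(x,y))$, sesquilinearity gives $h(\alpha x,y)=\alpha\,h(x,y)$, while the hermitian symmetry $\ol{h(u,v)}=h(v,u)$ combined with that same linearity yields $h(x,\ol\alpha y)=\ol{h(\ol\alpha y,x)}=\ol{\ol\alpha\,h(y,x)}=\alpha\,h(x,y)$. Hence $h(\alpha x,y)=h(x,\ol\alpha y)$, and applying $\Tr_{A/k}$ gives exactly $q(\alpha x,y)=q(x,\ol\alpha y)$.

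The substance of the lemma is the converse (i) $\Rightarrow$ (ii), where I must manufacture $h$ out of $q$. For fixed $x,y\in U$ the map $\alpha\mapsto q(\alpha x,y)$ is a $k$-linear functional on $A$, so by non-degeneracy of the trace form there is a unique element $h(x,y)\in A$ characterized by
$$
\Tr_{A/k}\bigl(\alpha\, h(x,y)\bigr)\,=\,q(\alpha x,y)\qquad\text{for all }\alpha\in A.
$$
Taking $\alpha=1$ immediately yields the trace identity $q(x,y)=\Tr_{A/k}(h(x,y))$. Additivity of $h$ in each slot is inherited from that of $q$, and $A$-linearity in the first variable follows from the defining identity: since $\Tr\bigl(\alpha\,h(\beta x,y)\bigr)=q(\alpha\beta x,y)=\Tr\bigl(\alpha\beta\,h(x,y)\bigr)$ for all $\alpha$, non-degeneracy forces $h(\beta x,y)=\beta\,h(x,y)$.

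The step I expect to be the main obstacle is the hermitian symmetry $\ol{h(x,y)}=h(y,x)$, as this is where condition (i) and the symmetry of $q$ must be combined. The key auxiliary fact is that the involution, being a $k$-algebra automorphism of $A$, permutes the embeddings of $A$ and hence preserves the trace, so $\Tr_{A/k}(\ol\beta)=\Tr_{A/k}(\beta)$. Using this I would compute, for arbitrary $\alpha$,
$$
\Tr\bigl(\alpha\,\ol{h(x,y)}\bigr)=\Tr\bigl(\ol{\,\ol\alpha\,h(x,y)}\bigr)=\Tr\bigl(\ol\alpha\,h(x,y)\bigr)=q(\ol\alpha x,y),
$$
and then invoke (i) together with the symmetry of $q$ to rewrite $q(\ol\alpha x,y)=q(x,\alpha y)=q(\alpha y,x)=\Tr\bigl(\alpha\,h(y,x)\bigr)$. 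Since the functionals $\alpha\mapsto\Tr(\alpha\,\ol{h(x,y)})$ and $\alpha\mapsto\Tr(\alpha\,h(y,x))$ then agree for every $\alpha\in A$, non-degeneracy of the trace form forces $\ol{h(x,y)}=h(y,x)$. Finally, $h$ is non-degenerate because $q$ is: if $h(x,\cdot)\equiv0$ then $q(x,\cdot)\equiv0$, forcing $x=0$. Thus $(U,h)$ is a genuine hermitian form over $A$, completing the construction.
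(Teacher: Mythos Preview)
Your argument is correct and is the standard construction: the non-degeneracy of the trace pairing on an \'etale algebra lets you lift $q$ to the hermitian form $h$, and your verification of sesquilinearity and the hermitian condition $\overline{h(x,y)}=h(y,x)$ is clean. The paper itself does not prove this lemma at all; it merely cites \cite{BGS}, Lemma~5.1, for the case $k=\QQ$ and asserts that the same proof works for $k=\RR$, so you have in fact supplied what the paper omits.
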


\noindent
{\bf Proof.} This is proved in  \cite{BGS}, Lemma 5.1, for $k = \bf Q$; the proof is the same when $k = \bf R$.
\qed

\begin{defn}
Let $W$ be a finite dimensional $E$-vector space, and let $Q : W \times W \to E$ be a quadratic form if $E = E_0$, and
a hermitian form with respect to the involution $e \mapsto \overline e$ if $E \not = E_0$.
We denote by ${\rm T}_E(W)=(W,q)$ the quadratic form over ${\bf Q}$ defined by
$$
q:\,W \times W\, \longrightarrow {\bf Q},\ \ \  q(x,y) ={\rm Tr}_{E/{\bf Q}}(Q(x,y)),
$$
called the {\it transfer} of $W$, or more precisely of $(W,Q)$.
\end{defn}

\medskip
If $E$ is fixed, we write ${\rm T}(W)$ instead of ${\rm T}_E(W)$.

\begin{example}\label{transfer example}  Let  $d \geqslant 2$ be a square-free integer such that
$d=u^2+v^2$ with $u,v\in\ZZ$, and let $E=\QQ(\sqrt d)$ be the corresponding real quadratic field, with the trivial involution. 
Then $U=\langle 1,1 \rangle$ admits an action by $E$ satisfying the adjoint condition \eqref{eq:adj}.
Indeed,  
consider the linear action $\varphi$
given by the matrix $A=\begin{pmatrix}
u & v\\
v & -u
\end{pmatrix}$.
Then $\varphi^2=d \cdot \id$, and since $A$ is symmetric, the adjoint condition  \eqref{eq:adj} is satisfied.

\medskip

As predicted by Lemma \ref{transfer lemma}, there is a one-dimensional quadratic form $W$ over $E$ such that
$U\cong\TT(W)$; indeed, under the identification of $\QQ$-bases 
$(1,0) \mapsto 1, \; (u,v) = \varphi(1,0)\mapsto\sqrt d$,
 we can take $\alpha = (d+u\sqrt d)/(2d)$ and set $W = \langle \alpha \rangle$.
\qed

\end{example}

We conclude this section by a lemma that is used in several sections. Let $\Delta_E$ be the discriminant of $E$. If $E \not = E_0$, set $d_0 = [E_0:{\bf Q}]$ (hence $d_0 = {\frac d2}$).
The following is proved in Lemma 6.1 of \cite{BGS}.

\
\begin{lemma}\label{invariants bis}
{\rm (i)} ${\rm dim}_{{\bf Q}}({\rm T}(W)) = d\cdot {\rm dim}_E(W)$.

\medskip
{\rm (ii)} If $E = E_0$, then  ${\rm det}({\rm T}(W)) = \Delta_E^{{\rm dim}_E(W)}{\mathrm N}_{E/{\bf Q}}({\rm det}(W))$ in ${\bf Q}^{\times}/{\bf Q}^{\times 2}$.

\medskip
{\rm (iii)} If $E \not = E_0$, then  ${\rm det}({\rm T}(W)) = [(-1)^{d_0} \Delta_E]^{{\rm dim}_E(W)}$ in ${\bf Q}^{\times}/{\bf Q}^{\times 2}$.

\end{lemma}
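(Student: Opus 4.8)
The plan is to reduce the whole statement to the rank-one case and then compute a single (scaled) trace form; the computation is robust because every determinant is read modulo squares, which erases the spurious factors of $2$ and every perfect square that would otherwise clutter the bookkeeping. Part (i) is immediate: as a $\mathbf{Q}$-vector space $W$ has dimension $[E:\mathbf{Q}]\cdot\dim_E(W)=d\,\dim_E(W)$, and the transfer form is nondegenerate because the trace form of the separable (characteristic zero) algebra $E/\mathbf{Q}$ is nondegenerate. Concretely, if $x\neq0$ choose $y_0$ with $Q(x,y_0)\neq0$; by (sesqui)linearity the values $Q(x,y)$ fill the line $E\cdot Q(x,y_0)=E$, on which ${\rm Tr}_{E/\mathbf{Q}}$ is not identically zero, so some $y$ gives $q(x,y)\neq0$.

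For (ii) and (iii) I would first diagonalize $W\simeq\langle\alpha_1,\dots,\alpha_n\rangle$ with $\alpha_i\in E_0^\times$, as recalled in the excerpt. Since the transfer is additive over orthogonal sums (the trace is additive and orthogonality is preserved), one has $\det({\rm T}(W))=\prod_i\det({\rm T}(\langle\alpha_i\rangle))$ in $\mathbf{Q}^\times/\mathbf{Q}^{\times2}$, so it suffices to treat a single $\langle\alpha\rangle$. In the case $E=E_0$ (part (ii)) the transfer of $\langle\alpha\rangle$ is the scaled trace form $q_\alpha(x,y)={\rm Tr}_{E/\mathbf{Q}}(\alpha xy)$ on the $\mathbf{Q}$-space $E$. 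Fixing a $\mathbf{Q}$-basis and writing $L_\alpha$ for the matrix of multiplication by $\alpha$ and $M$ for the Gram matrix of the unscaled trace form, the Gram matrix of $q_\alpha$ is $L_\alpha^{\top}M$, whence $\det({\rm T}(\langle\alpha\rangle))={\rm N}_{E/\mathbf{Q}}(\alpha)\,\Delta_E$, using $\det L_\alpha={\rm N}_{E/\mathbf{Q}}(\alpha)$ and $\det M=\Delta_E$ (the discriminant is defined via the trace form in the excerpt). Multiplying over $i$ gives $\det({\rm T}(W))=\Delta_E^{\,n}\,{\rm N}_{E/\mathbf{Q}}(\prod_i\alpha_i)=\Delta_E^{\,n}\,{\rm N}_{E/\mathbf{Q}}(\det W)$.

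In the case $E\neq E_0$ (part (iii)) I would factor the transfer through the tower $E\supset E_0\supset\mathbf{Q}$, i.e.\ ${\rm T}_{E/\mathbf{Q}}={\rm T}_{E_0/\mathbf{Q}}\circ{\rm T}_{E/E_0}$. Writing $E=E_0(\sqrt\delta)$ with the involution fixing $E_0$ and sending $\sqrt\delta\mapsto-\sqrt\delta$, the intermediate transfer of the hermitian line $\langle\alpha\rangle$ is $(x,y)\mapsto{\rm Tr}_{E/E_0}(\alpha x\bar y)$, whose Gram matrix on $\{1,\sqrt\delta\}$ is diagonal, giving ${\rm T}_{E/E_0}(\langle\alpha\rangle)\simeq\langle 2\alpha,-2\alpha\delta\rangle$ over $E_0$, of determinant $-\delta$ modulo squares, \emph{independent of} $\alpha$ (this is exactly why $\det W$ does not appear in (iii)). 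Applying part (ii) to $E_0/\mathbf{Q}$ and discarding the square $\Delta_{E_0}^2$ yields $\det({\rm T}(\langle\alpha\rangle))={\rm N}_{E_0/\mathbf{Q}}(-\delta)=(-1)^{d_0}{\rm N}_{E_0/\mathbf{Q}}(\delta)$. The final ingredient is $\Delta_E\equiv{\rm N}_{E_0/\mathbf{Q}}(\delta)$ in $\mathbf{Q}^\times/\mathbf{Q}^{\times2}$, obtained by the same tower applied to the unscaled trace form: ${\rm T}_{E/E_0}$ of the rank-one quadratic form over $E$ is $\langle 2,2\delta\rangle$, of $E_0$-determinant $\delta$, so transferring to $\mathbf{Q}$ gives $\Delta_E\equiv\Delta_{E_0}^2\,{\rm N}_{E_0/\mathbf{Q}}(\delta)\equiv{\rm N}_{E_0/\mathbf{Q}}(\delta)$. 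Substituting gives $\det({\rm T}(\langle\alpha\rangle))=(-1)^{d_0}\Delta_E$, and multiplying over the $n$ entries yields $[(-1)^{d_0}\Delta_E]^{n}$.

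I expect the only delicate point to be the non-trivial-involution case, specifically keeping straight what invariant lives where: the hermitian determinant lies in $E_0^\times/{\rm N}_{E/E_0}(E^\times)$, whereas the object I actually feed into part (ii) is the $E_0$-quadratic determinant of the intermediate transfer, and these must not be confused. The other subtle step is identifying $\Delta_E$ with ${\rm N}_{E_0/\mathbf{Q}}(\delta)$ modulo squares by a direct trace-form computation rather than invoking a ring-theoretic conductor-discriminant tower formula (which would introduce integrality issues irrelevant here). Everything else is routine bookkeeping that the passage to $\mathbf{Q}^\times/\mathbf{Q}^{\times2}$ renders painless.
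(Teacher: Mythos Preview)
Your proof is correct. The paper does not actually give its own argument for this lemma; it simply cites \cite{BGS}, Lemma~6.1, where the result is proved. Your approach---diagonalize, reduce to a single hermitian or quadratic line, compute the scaled trace form via the Gram matrix factorization $L_\alpha^\top M$, and in the non-trivial involution case factor the transfer through the intermediate field $E_0$---is exactly the standard route and is presumably what \cite{BGS} does as well. The computation of $\Delta_E\equiv{\rm N}_{E_0/\mathbf{Q}}(\delta)$ via the trace form on the basis $\{1,\sqrt\delta\}$ is clean and avoids any integrality issues, as you note.
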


\begin{example} 
\label{ex:(1,1)}
Let  $d \geqslant 2$ be a square-free integer, and let $E=\QQ(\sqrt d)$ be the corresponding real quadratic field, with the trivial involution.
As in Example \ref{transfer example}, set $U=\langle 1,1 \rangle$. As a converse of Example \ref{transfer example}, let us show that if
there exists a one-dimensional quadratic form $W$ over $E$ such that
$U\cong\TT(W)$, then there 
exist integers $u,v$ such that
$d=u^2+v^2$. 

Indeed, writing $W=\langle \alpha\rangle$ for $\alpha=a+b\sqrt d\in E$, we
obtain from Lemma \ref{invariants bis} (ii) (or by direct computation) that,
\[
1 = \det(U)  = \Delta_E \det(W) = d\, {\rm N}_{E/\QQ}(\alpha) = d (a^2-db^2) \;\;\; \text{ in } \; \QQ^*/{\QQ^*}^2.
\]
Division by $a^2$ yields, since obviously $ab\neq 0$,
\[
d = u^2 + v^2 \;\;\; \text{ for some } \; u, v\in{\QQ^*}^2.
\]
This implies that there exist integers $x,y$ such that $d = x^2 + y^2$. Indeed, the previous equality implies that there exists an integer $m$ such that
$m^2d = t^2 + z^2$ with $t,z \in {\ZZ}$. 
By the Fermat's theorem on sums of two squares of integers, this implies that there does not exist any
prime number $p\equiv 3\mod 4$ that divides $m^2d$ with an odd exponent. 
One infers that $d$ does not have any such prime factor either, hence, turning the argument around, 
$d$ is a sum of two squares of integers as stated.

\end{example}


\section{Pseudo-polarized Hodge structures and hermitian forms}\label{pp and hermitian}

The aim of this section is to explain the link between the pseudo-polarized Hodge structures
(of K3 type)  of \S \ref{hodge pp} and the quadratic and hermitian forms of the previous section. As we will see, the notion of {\it transfer} introduced in \S \ref{hermitian}  will be one of our main tools.

\medskip
Recall from \S \ref{hodge pp} that a pseudo-polarized Hodge structure gives rise to a pair $(U,q)$, where $U$ is a finite dimensional $\QQ$-vector space and $q: U \times U \to {\bf Q}$ is a quadratic form of signature $(3,r)$; in \S \ref{hodge pp}, the notation $T_{\QQ}$ was used for the $\QQ$-vector space. 
The pair $(U,q)$ satisfies additional conditions, as defined in \S \ref{hodge pp}. We also introduced the {\it Hodge endomorphism algebra}, denoted by $A_T$, respecting the additional conditions; we have proved that $A_T$ is a number field, and defined an involution on $A_T$
(Proposition \ref{adjoint}).

\medskip An important property of this involution, denoted here $a \mapsto \overline a$,  is the so-called {\it adjoint property}; we have
$$q(ax,y) = q(x,\overline a y)$$ for all $a \in A_T$ and $x,y \in U$. 

\medskip 
The transfer introduced in the previous section allows to reformulate this property, and make it easier to use. Suppose that $E$ is a subfield of $A_T$ such that $\overline e \in E$ for all $e \in E$. Then $U$ has a structure of $E$-vector space, and we still have $q(ax,y) = q(x,\overline a y)$ for all $a \in E$ and $x,y \in U$. By Lemma \ref{transfer lemma} we have 

\begin{prop} There exists a quadratic or hermitian form $W$ over $E$ such that $U \simeq T(W)$.

\end{prop}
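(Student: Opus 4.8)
The plan is to obtain the statement as a direct application of the transfer Lemma \ref{transfer lemma}, taken with $k = \QQ$ and $A = E$. In effect all the substantive work is already done: Proposition \ref{adjoint} produced the involution $a \mapsto \overline a$ on $A_T$ together with the adjoint identity \eqref{eq:adj}, and Lemma \ref{transfer lemma} encodes the abstract equivalence between that identity and the existence of a transfer presentation. What is left is only to check that the hypotheses of the lemma are satisfied here, and to read off the conclusion.

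First I would assemble the data. The pseudo-polarized Hodge structure furnishes the finite-dimensional $\QQ$-vector space $U = T_\QQ$ with a non-degenerate symmetric bilinear form $q$, that is, a quadratic form over $k = \QQ$. Since $E$ is a subfield of the number field $A_T$, scalar multiplication makes $U$ into an $E$-vector space, so $A = E$ acts on $U$ as required. Finally, restricting the adjoint identity \eqref{eq:adj} from $A_T$ to the involution-stable subfield $E$ (legitimate precisely because $\overline e \in E$ for all $e \in E$) gives $q(ex,y) = q(x,\overline e y)$ for all $e \in E$ and $x,y \in U$, which is exactly condition (i) of Lemma \ref{transfer lemma}.

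Invoking the lemma then yields condition (ii): a form $h : U \times U \to E$, sesquilinear for the involution and satisfying $q(x,y) = {\rm Tr}_{E/\QQ}(h(x,y))$. Setting $W = (U,h)$, the definition of the transfer gives ${\rm T}_E(W) = (U,q)$, i.e. $U \simeq T(W)$, as claimed. The only wrinkle is the phrase ``quadratic or hermitian'': when the involution on $E$ is trivial, $h$ is $E$-bilinear and symmetric, hence a quadratic form over $E$, whereas when the involution is non-trivial $h$ is genuinely hermitian; both are covered by the conventions of \S \ref{hermitian}. I do not expect any real obstacle, since the proposition is a reformulation rather than a new result; the single point meriting care is the bookkeeping between $A_T$ and $E$, namely that the adjoint identity survives restriction to $E$ and that $h$ takes values in $E$ itself, both immediate from $\overline E = E$ and the $E$-linearity built into Lemma \ref{transfer lemma}.
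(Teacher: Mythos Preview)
Your proposal is correct and matches the paper's own argument exactly: the proposition is stated immediately after the phrase ``By Lemma \ref{transfer lemma} we have'', so the paper's proof is nothing more than the application of the transfer lemma with $k=\QQ$, $A=E$ that you spell out. Your additional remarks on why the adjoint identity restricts to $E$ and on the quadratic versus hermitian dichotomy are accurate and simply make explicit what the paper leaves implicit.
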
 

The form $W$ is a quadratic form over $E$ if the involution $e \mapsto \overline e$ is the identity, and hermitian in the case of a non-trivial involution. 

\medskip
This correspondence will be used throughout the paper. The questions it raises will be answered in the different sections.

\medskip
\begin{question} 
What are the possibilities for the number field $E$ with involution to be realized as the field of Hodge endomorphisms of a pseudo-polarized
Hodge structure ?
\end{question}

\medskip
Let us recall the definition of a Salem field from the introduction:

\begin{defn}
\label{def:Salem}
We say that an algebraic number field $E$ is a {\it Salem field} if
$E$ has two real embeddings, the other embeddings being complex, the degree of $E$ is $\geqslant 4$, and $E$ has a $\QQ$-linear involution with totally real fixed field.

\end{defn}

\medskip We will see in \S \ref{proof} that 
the field $E$ of Hodge endomorphisms of a pseudo-polarized
Hodge structure $U$
is either a totally real field, with trivial involution, or a Salem field.
Moreover, there are also restrictions on the dimension of $U$ as an $E$-vector space; 
set $m = {\rm dim}(U)$, then we have the following: if $E$ is totally real, then $m \geqslant 3$, and if $E$ is a Salem field, then $m = 1$. 
These properties are proved in Theorem \ref{thm:U^0}  and Proposition \ref{prop:RM-SM}.

\medskip The second issue is to obtain a pseudo-polarized Hodge structure from a hermitian form $W$ : 

\medskip

\begin{question} 
Given a totally real or Salem field $E$ and a hermitian form $W$ over $E$ with the required dimension conditions, can we endow $T(W)$ with a pseudo-polarized Hodge
structure of endomorphism field $E$ ? 
\end{question}

\medskip
The answer to this question is given in \S \ref{s:period}. As we will see in Theorem \ref {thm:pspolRM}, we only need an additional condition in the totally real case to obtain an $(m-2)$-dimensional family of such Hodge structures, and in Theorem \ref{thm:pspolSM} we prove that if $E$ is a Salem field, then a Hodge
structure is realized without further conditions. 

\medskip
As far as pseudo-polarized Hodge structures are concerned, the above results suffice to answer all our questions; this work is completed in 
\S \ref{s:period}.

\medskip 
However, we wish to go further, and 
prove the existence of K3 surfaces of algebraic dimension 0 with these Hodge structures, proving Theorem \ref{RM} 
of the introduction.
In order to do this, recall from \S \ref{sublattices} and \S \ref{hodge pp} that 
if $X$ is a K3 surface with $a(X) = 0$ and $T_X$ its transcendental lattice, then the pseudo-polarized Hodge structure is defined on the rational vector space $T_{X, {\QQ}}$.
Set $V_{K3}=\Lambda_{K3}\otimes_\ZZ\QQ$, the rational quadratic form obtained by base changing to $\QQ$ of the intersection form of $X$;
then $T_{X, {\QQ}}$
is an orthogonal summand of  $V_{K3}$.

\medskip

Hence the main issue remaining will be  to find criteria which
determine whether a given quadratic form $\TT(W)$ is an orthogonal summand of $V_{K3}$, or more generally, of the quadratic form $H^2(X,\QQ)$ for a
hyperk\"ahler manifold $X$. This is done for RM in the second part of this paper
(especially Sections \ref{s:RM} -- \ref{s:HK}),
and for the SM case in the third part (Sections \ref{Salem multiplication section} -- \ref{s:SM}).

\section{Building blocks for signatures}\label{building blocks} 

We will see that signatures play an important role for pseudo-polarized Hodge structures; this is the motivation of the present section.
The results of this section 
will be used in most of the following ones; since they only concern signatures, we work here with \'etale
algebras over $\RR$.

\medskip

Let $E$ be an \'etale algebra over the real numbers, so $E\cong\RR^s\times\CC^t$,
with an $\bf R$-linear involution, denoted by $x \mapsto \overline x$;
the involution can be trivial or not.
For all $a \in E$ such that $\overline a = a$, we denote by $q_a$ the quadratic form $q_a : E \times E \to {\bf R}$ given by
$q_a(x,y) = Tr_{E/{\bf R}}(ax \overline y)$. The aim of this section is to determine the possible signatures of these forms. We denote by $H$ the hyperbolic plane over $\bf R$.

\medskip
The algebra with involution $E$ is the product of the following ``building blocks'', indecomposable algebras with involution:

\begin{enumerate}
\item[(1a)]
$E = {\bf R}$ with trivial involution.

\smallskip
\item[(1b)] $E = {\bf C}$ with trivial involution.
\smallskip

\item[(2a)]
$E = {\bf R}\times {\bf R}$, the involution exchanging the two factors.
\smallskip

\item[(2b)]
$E = {\bf C}\times {\bf C}$, the involution exchanging the two factors.
\smallskip
\item[(2c)]
$E = {\bf C}$, 
and the 
involution is the complex conjugation.
\end{enumerate}

\smallskip

We now determine the possible quadratic forms $q_a$ in all the above cases, and their signatures.

\smallskip

\begin{enumerate}
\item[(1a)]
 $E = {\bf R}$ with trivial involution. Then $q_a \simeq \langle 1 \rangle$ or $q_a \simeq \langle -1 \rangle$,
 according as $a\in \RR$ is positive or negative.

\smallskip
\item[(1b)]
 $E = {\bf C}$ with trivial involution. Then $q_a \simeq H$, hence its signature is $(1,1)$.

\smallskip
\item[(2a)]
 $E = {\bf R}\times {\bf R}$, the involution exchanging the two factors. Then $q_a \simeq H$,  hence its signature is $(1,1)$.

\smallskip
\item[(2b)] $E = {\bf C}\times {\bf C}$, the involution exchanging the two factors.Then $q_a \simeq H \times H$,  hence its signature is $(2,2)$.

\smallskip
\item[(2c)] $E = {\bf C}$, and the involution is the complex conjugation.
Then $q_a \simeq \langle 1,1 \rangle$ or $q_a \simeq \langle -1, -1 \rangle$, according as $a$ is positive or negative.
\end{enumerate}

\medskip
Since quadratic and hermitian forms over $E$ are diagonalizable, the above computations
determine the signature of ${\rm T}(W)$ for any quadratic or hermitian form $W$ over $E$.

\medskip

We now relate cases (2a) and (2c) above to Salem fields (see Definition \ref{def:Salem} for the notion of Salem field): 

\begin{lemma}
\label{lem:Salem-crit}
Let $E$ be a number field of degree $2d\geq 4$.
Then $E$ is Salem if and only if $E_\RR \cong \RR^2\times \CC^{d-1}$
and there is a $\QQ$-linear involution on $E$ whose extension to $E_\RR$
exchanges the two copies of $\RR$ and acts as complex conjugation of each copy of $\CC$.
\end{lemma}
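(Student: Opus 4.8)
The plan is to use the standard identification, for a number field $F$ of degree $n$ with $r_1$ real and $r_2$ pairs of complex places, of the real algebra $F\otimes_\QQ\RR$ with $\RR^{r_1}\times\CC^{r_2}$. Applied to $E$, the condition that $E$ have exactly two real embeddings and $d-1$ conjugate pairs of complex embeddings is precisely the condition $E_\RR\cong\RR^2\times\CC^{d-1}$. Since the degree bound $2d\geq 4$ appears on both sides of the asserted equivalence, the lemma reduces to matching the existence of a $\QQ$-linear involution with totally real fixed field (Definition~\ref{def:Salem}) against the existence of a $\QQ$-linear involution whose extension to $E_\RR$ exchanges the two copies of $\RR$ and conjugates each copy of $\CC$.

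For the direct implication, suppose $E$ is Salem. As $E$ has complex embeddings it is not totally real, so the involution $\sigma$ is nontrivial and its fixed field $E_0$ is totally real with $[E:E_0]=2$, say $E=E_0(\sqrt\alpha)$ with $\alpha\in E_0^\times$. Writing $E_0\otimes_\QQ\RR$ as a product of copies of $\RR$ indexed by the $d$ real embeddings $\tau$ of $E_0$, I would decompose
$$
E_\RR \;=\; \prod_{\tau} \bigl(E\otimes_{E_0,\tau}\RR\bigr),
$$
where each factor is $\RR[x]/(x^2-\tau(\alpha))$, hence isomorphic to $\RR\times\RR$ when $\tau(\alpha)>0$ and to $\CC$ when $\tau(\alpha)<0$. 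On each factor $\sigma$ acts as the nontrivial $E_0$-automorphism $\sqrt\alpha\mapsto-\sqrt\alpha$, which is exactly the exchange of the two factors in the case $\RR\times\RR$ (building block (2a)) and complex conjugation in the case $\CC$ (building block (2c)). Since $E$ has exactly two real embeddings, exactly one $\tau$ satisfies $\tau(\alpha)>0$, producing a single $\RR\times\RR$ factor and $d-1$ copies of $\CC$; this is the required structure.

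Conversely, assume $E_\RR\cong\RR^2\times\CC^{d-1}$ carries a $\QQ$-linear involution $\sigma$ of the stated type. Because $\sigma$ exchanges the two real factors it is nontrivial, so $E_0:=E^\sigma$ is a subfield with $[E:E_0]=2$ and $[E_0:\QQ]=d$. I would then compute the fixed subalgebra of the extended involution factor by factor: the exchange on $\RR^2$ has fixed subalgebra the diagonal $\RR$, and complex conjugation on each $\CC$ has fixed subalgebra $\RR$, so $(E_\RR)^\sigma\cong\RR^d$. On the other hand $E_0\otimes_\QQ\RR\subseteq(E_\RR)^\sigma$, and both have $\RR$-dimension $d$, whence $E_0\otimes_\QQ\RR\cong\RR^d$; this says precisely that $E_0$ is totally real. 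Together with the two real embeddings of $E$ read off from $E_\RR$ and the degree bound, this shows $E$ is Salem.

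The computations are routine; the one point requiring care is the explicit identification of how the $\QQ$-linear involution extends to each archimedean factor of $E_\RR$, equivalently the bookkeeping that relates the splitting behaviour of the quadratic extension $E/E_0$ at the real places of $E_0$ to the factors $\RR\times\RR$ and $\CC$. Once this dictionary is in place (and it is exactly what building blocks (2a) and (2c) encode), both directions follow by dimension counts.
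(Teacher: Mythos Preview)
Your proposal is correct and follows essentially the same approach as the paper. Both directions proceed by writing $E=E_0(\sqrt\alpha)$ in the forward direction and analyzing the sign of $\alpha$ at the real places of $E_0$, and by computing $(E_\RR)^\sigma\cong\RR^d$ in the converse to deduce that $E_0$ is totally real; your version is somewhat more explicit about the factor-by-factor bookkeeping, but the argument is the same.
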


\begin{proof}
Let $E$ be a Salem field,  with non-trivial $\bf Q$-linear involution $\iota$
and 
fixed field $E_0$. 
By definition, $E_0$ is totally real. The extension $E/E_0$ is of degree 2, hence there exists
$\alpha \in E_0$ such that $E = E_0 (\sqrt \alpha)$. 
The field $E$ has exactly two real embeddings, the other embeddings are complex. This
implies that exactly one of the real embeddings of $E_0$ extends to (a pair of) real embeddings of $E$
while the others extend to (pairs of) complex embeddings.
Hence
$\alpha$ is positive at exactly one embedding of $E_0$, the one that extends to the pair of real embeddings, 
which are thus interchanged by $\iota$.
At all the other embeddings, $\alpha$ is negative 
whence $\iota$ acts as complex conjugation on each embedding.
The base extension to $E_\RR$ thus yields the claim.

For the converse direction, denote the involution by $\iota$ and the fixed field by $E_0=E^\iota$.
Then
$$
(E_0)_\RR = (E_\RR)^{\iota_\RR} = \RR^d$$
shows that $E_0$ is totally real.
Hence $E$ is a Salem field by definition.
\qed
\end{proof}

\section{Endomorphism fields of pseudo-polarized Hodge structures}
\label{proof}

We resume our study of the pseudo-polarized Hodge structures from Section \ref{hodge pp}. The first aim of this section is to
characterize the endomorphism fields of these Hodge structures; this is given by Theorem \ref{thm:U^0} below. 

\medskip Recall that if $T_\QQ$  is pseudo-polarized Hodge structure with endomorphism field $E$, then $T_\QQ$  is
a vector space over $E$; set $m=\dim_E T_\QQ$. We also need to know what values of $m$ are possible; this
information is provided by  Theorem \ref{thm:U^0} and Proposition \ref{prop:RM-SM}.

\medskip

Finally, at the end of the section we show that Theorem \ref{thm:U^0} and Proposition \ref{prop:RM-SM}     lead to a proof of Theorem \ref{endos}. 

\medskip
As in the previous sections, $E$ is a number field with a $\bf Q$-linear involution $e \mapsto \overline e$, and $E_{\bf R} = E \otimes_{\bf Q} {\bf R}$.
Recall that $E\cong \QQ[x]/(f)$ for an irreducible polynomial $f$ and thus
$$
E_\RR\,\cong\,\RR[x]/(f_1\cdots f_s)\,\cong\, \prod_i\RR[x]/(f_i)
\,\cong\,\RR^a\times\CC^b~,
$$
where the $f_i\in\RR[x]$ are irreducible and there are $a$, $b$ factors of degree one and two respectively. The decomposition is indexed by irreducible factors of $f$, equivalently,
by the $a$ real embeddings and $b$ conjugate pairs of complex embeddings of $E$.

\begin{theo}\label{thm:U^0} Let $T_\QQ$ be a pseudo-polarized Hodge structure and
let $E:=A_T$ be its algebra of Hodge endomorphisms with
the involution induced by the adjoint map.  Let $m:=\dim_E T_\QQ$.

\medskip
Then the  possibilities for $E$ and the involution are as follows:

\begin{itemize}
\item[$\bullet$]
$E$ is totally real, the involution is trivial, and $m\geq 2$.

\smallskip
\item[$\bullet$] $E$ is a Salem field, the involution is non-trivial, and $m = 1$.

\end{itemize}
\end{theo}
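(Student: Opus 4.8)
The plan is to transport the whole problem into the language of transfers and then read off all constraints from the signature $(3,r-3)$ of $T_\QQ$. By Proposition \ref{adjoint}(3) the adjoint involution preserves $E=A_T$, so Lemma \ref{transfer lemma} applies verbatim and produces a form $W$ over $E$, quadratic if the involution is trivial and hermitian otherwise, with $T_\QQ\cong T(W)$. After base change to $\RR$ and diagonalization $W=\langle\alpha_1,\dots,\alpha_m\rangle$, the real form $T(W)\otimes_\QQ\RR$ splits as an orthogonal sum of the elementary forms $q_{\alpha_j}$ over the building blocks of $E_\RR$, whose signatures are exactly those tabulated in Section \ref{building blocks}. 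The entire argument is then a bookkeeping of positive indices, subject to the single equation that the positive index of $T(W)\otimes\RR$ equals $3$.

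The decisive structural input is the distinguished archimedean place attached to $\delta\colon E\hookrightarrow\CC$. By Proposition \ref{adjoint}(4), $\delta(\overline e)=\overline{\delta(e)}$, so the involution is trivial exactly when $\delta$ is real and non-trivial exactly when $\delta$ is complex. If $\omega$ generates $T^{2,0}$, then $\omega$ lies in the $\delta$-eigenspace $T^\delta_\CC$ while $\overline\omega$ lies in $T^{\overline\delta}_\CC$; and from $(\omega,\omega)=0$, $(\omega,\overline\omega)>0$ one checks that $\mathrm{Re}\,\omega$ and $\mathrm{Im}\,\omega$ span a \emph{positive-definite} real $2$-plane. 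The point I would stress is that this period plane lies inside a single archimedean block of $E_\RR$: the $\delta$-block if $\delta$ is real, and the complex place $\{\delta,\overline\delta\}$ if $\delta$ is complex. In either case that block already accounts for positive index $\geq 2$.

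In the trivial-involution case the $\delta$-block is of type (1a), namely the $m$-dimensional real form $T^\delta_\RR$. Since both $\mathrm{Re}\,\omega$ and $\mathrm{Im}\,\omega$ lie in it, this block contains a positive $2$-plane, forcing $m\geq 2$. Moreover any complex place would be of type (1b) and would contribute a further $m\geq 2$ to the positive index, pushing the total to $\geq 4$; hence there are no complex places and $E$ is totally real, with $m\geq 2$, as claimed.

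In the non-trivial case $\delta$ is complex, and because composing $\delta$ with the involution gives $\overline\delta$, the place $\{\delta,\overline\delta\}$ is fixed with the involution acting as complex conjugation, i.e. a type (2c) block, which contributes an \emph{even} positive index. Since it already contributes $\geq 2$ while the total is the odd number $3$, this block contributes exactly $2$ and the remaining blocks contribute exactly $1$. To exploit this I would bring in the degree identity $[E:\QQ]=2[E_0:\QQ]$ valid here: the per-block quantity (real dimension) $-\,2\cdot$(dimension of the fixed subalgebra) equals $-1$ for (1a), $-2$ for (1b), and $0$ for (2a), (2b), (2c), and its vanishing sum rules out all (1a)- and (1b)-blocks. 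The residual positive index $1$ must then come from (2a)-, (2b)- and non-distinguished (2c)-blocks, whose contributions are $m$, $2m$ and even; reducing modulo $2$ forces $m$ odd together with an odd number of (2a)-blocks, and minimality then forces $m=1$, exactly one (2a)-block, no (2b)-block, and all other (2c)-blocks trivially contributing. The resulting shape $E_\RR\cong\RR^2\times\CC^{\,d-1}$ with this involution is precisely the criterion of Lemma \ref{lem:Salem-crit}, so $E$ is Salem with $m=1$. The main obstacle I anticipate is running the positive-index count and the degree identity \emph{simultaneously} in this last case: it is the parity clash between the odd target $3$ and the systematically even or ``half'' contributions of the complex-type blocks that ultimately pins down both the Salem structure and the value $m=1$, and the subsidiary point requiring care is the verification that the period plane sits inside one block rather than straddling several.
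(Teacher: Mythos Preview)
Your proof is correct and follows essentially the same approach as the paper: both reduce to signature bookkeeping over the building blocks of $E_\RR$ listed in Section~\ref{building blocks}, using that the period $2$-plane is positive definite and sits in the single archimedean block attached to $\delta$. The paper packages this via the orthogonal splitting $T_\RR = T^0 \oplus T^1$ with $T^1$ of signature $(1,r-3)$, whereas you track the total positive index $3$ directly and peel off the distinguished block's contribution of $2$; these are equivalent reformulations, and your degree-identity argument excluding types (1a) and (1b) in the non-trivial case, while valid, is a slight detour (injectivity of any embedding $\sigma$ already forces $\sigma\circ\iota\neq\sigma$ once $\iota\neq\id$).
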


\noindent
{\bf Proof.}
The real vector space $T_\RR:=T_\QQ\otimes_\QQ\RR$ has an orthogonal
decomposition with $E_\RR$-stable summands:
$$
T_\RR\,=\,T^0\,\oplus\,T^1,\qquad T^0\,:=\,(T^{2,0}\oplus T^{0,2})\cap T_\RR,\quad
T^1\,:=\,(T^0)^\perp~.
$$
The summand $T^0$ is positive definite and as $T_\RR$ has signature $(3,r-3)$, the summand
$T^1$ has signature $(1,r-3)$.

\medskip
Let $E_0$ be the fixed field of the involution; we have either $E = E_0$, in the case of a trivial involution, or $E$ is a quadratic extension of $E_0$.
By Lemma \ref{transfer lemma}, there exists a quadratic or hermitian form $W$ over $E_{\bf R}$
such that $T_{\bf R} = \TT(W)$.

\medskip
Suppose first that $E \not = E_0$, i.e.\ the involution is non-trivial.
Since the action of $E$ on $T^{2,0},T^{0,2}$ is given by the complex embeddings $\delta$,
$\overline{\delta}$ and since $\delta(\bar e)=\overline{\delta(e)}$, we see that this
conjugate pair of embeddings corresponds to a factor $\CC$ of $E_\RR$ and the
involution is the complex conjugation.
Further, $E_{\bf R}$ cannot contain any factor of the type $\bf C \times \bf C$, the involution exchanging the factors, since this would imply a factor $H \oplus H$ of $T^1\subset \TT(W)$
with signature $(2,2)$,
contradicting the fact that the signature of $T^1$ is $(1,r-3)$.
By Section \ref{building blocks}, cases (2a) -- (2c), the possible factors of
$T^1$ thus are
\begin{itemize}
\item
${\bf R} \times {\bf R}$ with involution exchanging the two factors and signature $(1,1)$;
\vspace{-.35cm}
\item
$\bf C$, the involution is complex conjugation and the signature is $(0,2)$.
\end{itemize}
Note that this implies that $E_0$ is totally real.
Moreover, $E_\RR$ admits exactly one factor ${\bf R} \times {\bf R}$,
so $E$ is Salem by Lemma \ref{lem:Salem-crit}.
Even more, $T_\RR$ has exactly one factor ${\bf R} \times {\bf R}$, so $m=1$ as stated.

\medskip
Assume now that $E = E_0$. Then $\delta=\overline{\delta}$ on $E$ and thus the action of $E_\RR$ on $T^0\cong\RR^2$ factors over the summand
$\RR$ of $E_\RR$ corresponding to the real embedding $\delta$ of $E$.
This implies that $m\geq 2$.
Since $T^1$, which has signature $(1,r-3)$, is  $E_\RR$-stable,
we see that $E_\RR$ can have at most one factor $\CC$ with trivial involution
cf.\ the  cases (1a), (1b) from Section \ref{building blocks}.
This leads to two possibilities for $E_\RR$:
\begin{itemize}
\item
either
$E_{\bf R}$ is a product of factors $\bf R$ with trivial involution, in which case $E$ is totally real,
\smallskip
\item
or
 $E_{\bf R}$ could have exactly one factor $\bf C$ with
trivial involution, all the other factors being $\bf R$ with trivial involution.
\end{itemize}

Note that the second alternative  implies
that $m =\dim_E(T_\QQ)= 1$ as $T_\RR$ admits exactly one factor $\CC$ with trivial involution.
This contradicts $m\geq 2$ and hence $E$ is totally real, with trivial involution.
\qed

\medskip
Now we exclude, similar to projective K3 surfaces with RM in \cite[Lemma 3.2]{G}, 
the case that $E$ is totally real and $m=2$.

\begin{prop} 
\label{prop:RM-SM}
Let $T_\QQ$ be a pseudo-polarized Hodge structure and
let $E=A_T$ be its algebra of Hodge endomorphisms with
the involution induced by the adjoint map. Set $m =\dim_E T_\QQ$.
If $E$ is totally real, then $m \geqslant 3$.
\end{prop}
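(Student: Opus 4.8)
The plan is to argue by contradiction, building on Theorem \ref{thm:U^0}. Since $E$ is totally real, that theorem already gives that the adjoint involution is trivial and that $m\geq 2$, so it suffices to rule out $m=2$. Assume $m=2$. By Lemma \ref{transfer lemma} the quadratic space $(T_\QQ,(\cdot,\cdot))$ is the transfer $\TT(W)$ of a quadratic form $W$ over $E$, and after diagonalizing I may fix an $E$-basis $e_1,e_2$ of $T_\QQ$ with $W=\langle\beta_1,\beta_2\rangle$, $\beta_1,\beta_2\in E^\times$. Write the real embeddings of $E$ as $\sigma_1,\dots,\sigma_d$, with $\sigma_{i_0}=\delta$ the embedding through which $E$ acts on $T^{2,0}$, and decompose $T_\CC=\bigoplus_i T_{\sigma_i}$ into the $E$-eigenspaces, each of $\CC$-dimension $m=2$.

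First I would locate the holomorphic form and read off the relevant signs. Because $E$ acts on both $\omega$ and $\overline\omega$ through the real embedding $\delta=\sigma_{i_0}$, the plane $T^0=(T^{2,0}\oplus T^{0,2})\cap T_\RR$ lies in the real summand $(T_\RR)_{v_0}$ corresponding to $\sigma_{i_0}$. Since $m=2$ and $T^0$ is positive definite of dimension $2$, the summand $(T_\RR)_{v_0}$ coincides with $T^0$ and is positive definite; by the building-block computations of Section \ref{building blocks} this forces $\sigma_{i_0}(\beta_1),\sigma_{i_0}(\beta_2)>0$. The adjoint condition \eqref{eq:adj} with trivial involution makes every $e\in E$ self-adjoint, so eigenspaces for distinct embeddings are orthogonal; hence $T^{1,1}_\CC=(T^{2,0}\oplus T^{0,2})^\perp=\bigoplus_{i\neq i_0}T_{\sigma_i}$.

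The key step is to exhibit a Hodge endomorphism outside $E$. I would define the $E$-linear map $\Phi\in\End_E(T_\QQ)$ by $\Phi(e_1)=\beta_1 e_2$ and $\Phi(e_2)=-\beta_2 e_1$, so that $\Phi^2=-\beta_1\beta_2\in E^\times$ and $\Phi$ is not a scalar. Being $E$-linear, $\Phi$ preserves each $T_{\sigma_i}$, hence preserves $T^{1,1}_\CC=\bigoplus_{i\neq i_0}T_{\sigma_i}$. On the summand $T_{\sigma_{i_0}}$, $\Phi$ acts by the matrix $\bigl(\begin{smallmatrix} 0 & -\sigma_{i_0}(\beta_2)\\ \sigma_{i_0}(\beta_1) & 0\end{smallmatrix}\bigr)$, whose two eigenlines are exactly the isotropic lines of the form $\sigma_{i_0}(\beta_1)z_1^2+\sigma_{i_0}(\beta_2)z_2^2$ (here the positivity $\sigma_{i_0}(\beta_i)>0$ makes the eigenvalues purely imaginary). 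Since $(\omega,\omega)=0$, the line $T^{2,0}=\CC\omega$ is one of these isotropic lines and $T^{0,2}=\CC\overline\omega$ the conjugate one, so both are eigenlines of $\Phi$. Thus $\Phi$ preserves $T^{2,0}$ and $T^{0,2}$, whence $\Phi\in A_T$; as $\Phi$ is not scalar this gives $A_T\supsetneq E$, contradicting $E=A_T$. I expect this last verification to be the main obstacle: one must match the algebraically defined eigenlines of $\Phi$ at the place $\sigma_{i_0}$ with the isotropic line carrying $\omega$, and it is precisely the signature input $\sigma_{i_0}(\beta_1),\sigma_{i_0}(\beta_2)>0$ coming from the positive-definite plane $T^0$ that makes the two coincide.
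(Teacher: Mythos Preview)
Your argument is correct. The contradiction goes through exactly as you describe: the map $\Phi$ is $E$-linear, its eigenlines on the $\sigma_{i_0}$-eigenspace are precisely the two isotropic lines of the form $\sigma_{i_0}(\beta_1)z_1^2+\sigma_{i_0}(\beta_2)z_2^2$ (this is a direct computation, valid regardless of signs), and $\CC\omega$ is one of these isotropic lines because $(\omega,\omega)=0$ and $\omega\in T_{\sigma_{i_0}}$. So $\Phi\in A_T\setminus E$. Your worry in the last paragraph is unfounded: the matching of eigenlines with isotropic lines is automatic and does not actually require the positivity of $\sigma_{i_0}(\beta_j)$; what that positivity (equivalently, the equality $(T_\RR)_{v_0}=T^0$) really buys you is the clean identification $T^{1,1}_\CC=\bigoplus_{i\neq i_0}T_{\sigma_i}$, which you already used.

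The paper takes a different route. Rather than writing down an explicit endomorphism, it considers the full commutant $F\subset\End_\QQ(T_\QQ)$ of the action of $\mathrm{SO}(T_\QQ)(\CC)$, computes $\dim_\QQ F=\dim_\QQ T_\QQ$ via a factor-by-factor analysis over each $(\CC^2)_\sigma$, and observes that $F$ preserves the isotropic lines in $(\CC^2)_\delta$, hence $F\subset A_T$; thus $F=A_T$ is a field with $\dim_F T_\QQ=1$, strictly containing $E$. Your approach is more elementary and hands-on, producing a single concrete element $\Phi$ with $\Phi^2=-\beta_1\beta_2$; the paper's approach is more structural and immediately identifies the full endomorphism field (not just a proper extension of $E$) as the Salem field $E(\sqrt{-\beta_1\beta_2})$. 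In fact your $\Phi$ generates exactly this field over $E$, so the two arguments produce the same object from different angles.
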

\noindent
{\bf Proof.}
By Theorem \ref{thm:U^0}, we have $m\geqslant 2$,
so let us assume $m=2$ in order to establish a contradiction.
Let $F\subset \End_\QQ(T_\QQ)$ be the subalgebra of endomorphisms commuting with
 ${\rm SO}(T_\QQ)(\CC)$.
Then $F_\CC:=F\otimes_\QQ\CC$ is the algebra of endomorphisms which commute with the action
of the complex Lie group $G:={\rm SO}(T_\QQ)(\CC)$ acting on $T_\CC\cong\oplus_\sigma (\CC^2)_\sigma$, where $\sigma$ runs over the real embeddings of $E$.
Since the quadratic form on $T_\QQ$ is non-degenerate, $G$ is a product of copies
of ${\rm SO}(2,\CC)$ acting on the $(\CC^2)_\sigma$; in fact, we can choose a basis $x_\sigma, y_\sigma$
of each $(\CC^2)_\sigma$ such that the quadratic form is given by $x_\sigma y_\sigma$
and ${\rm SO}(2,\CC)\cong\CC^*$ acts as $t\cdot(x_\sigma,y_\sigma)=(tx_\sigma,t^{-1}y_\sigma)$.
The algebra commuting with this action is $\CC\times\CC$ acting by
$(r,s)\cdot (x_\sigma,y_\sigma)=(rx_\sigma,sy_\sigma)$, hence $F_\CC\cong \CC^{2n}$,
where $2n=\dim_\QQ T_\QQ$. Thus 
\begin{eqnarray}
\label{eq:dim's}
\dim_\QQ F=\dim_\QQ T_\QQ.
\end{eqnarray}


The two isotropic subspaces in $(\CC^2)_\delta$, where $\delta$ is the embedding of $E$ defined by the action of $E$ on $T^{2,0}$,
are spanned by $(1,0)$, $(0,1)\in (\CC^2)_\delta$ and thus they are stable under $F$.
These subspaces are $T^{2,0}$ and $T^{0,2}$,
hence $F$ acts by Hodge endomorphisms of $T_\QQ$, so $F\subset A_T$.
It follows that $F$ is a (commutative) field, by Proposition \ref{adjoint} (2), 
and $\dim_FT_\QQ=1$ by \eqref{eq:dim's}.
Therefore $E\subsetneq  F= A_T$, giving the desired contradiction;
moreover, $F$ is a Salem field by Theorem \ref{thm:U^0}.

\medskip 
The Salem field $F$ can be described explicitly as follows. We have $T_\QQ =\TT(W)=\TT(W')$ where $W$ is a quadratic form
over $E$ of dimension two, whereas $W'$ is a hermitian form of dimension one over $F$.
In fact, we have $W = \langle a,b \rangle$ with $a$ positive at two real places, say $\sigma_1$ and $\sigma_2$, negative at all the other places of $E$, and
$b$ positive at $\sigma_1$, negative at all the other places of $E$.
Set $D = -ab$.
Then $D$ is positive at $\sigma_2$ and negative at all the other places of $E$.
Set $K = E(\sqrt D)$; then $K$ is a Salem field.
Over $K$, the quadratic form on
$W\otimes_E K$ is equivalent to $xy$ and the two isotropic subspaces are preserved
by $K$, hence $K=F$.
Set $c = 2a$, and let $W' = \langle c \rangle$. Then $T_K(W') \simeq T_E(W) = U$;
indeed, a direct computation shows that $T_{K/E}(W')$ is isomorphic to $W$.
\qed

\subsection{Proof of Theorem \ref{endos}}

Let $X$ be  a K3 surface with $a(X)=0$ and consider $T=T_X$,
or more generally a hyperk\"ahler manifold $X$ with pseudo-polarized transcendental Hodge structure $T=T_X$.
Then Theorem \ref{endos} follows directly from Theorem \ref{thm:U^0}
and Proposition \ref{prop:RM-SM}.
\qed

\medskip

\begin{example}
\label{ex:explicit}
Let $U=H \oplus \langle 1,1\rangle$ and $d\in\ZZ_{>1}$, square free, and such that
$d=u^2+v^2$ with $u,v\in\ZZ$.
Then  there is a Salem field $K$ with one-dimensional hermitian form $W'$ such that
$U\cong\TT(W')$, one can take
$K= {\QQ} (\sqrt{-(u+\sqrt d)})$, $W' =\langle (d+u\sqrt d)/d\rangle$.

\medskip
This example can be interpreted from the point of view of the last part of Proposition \ref{prop:RM-SM},
as follows: we start with the real quadratic field $E = \QQ(\sqrt d)$ and the binary quadratic form
$W = \langle a,b \rangle$ over $E$ with $a = (d+u\sqrt d)/(2d)$ and $b=2\sqrt d$. Then
$a$ is totally positive and $b$ positive at one of the embeddings of $E$, negative at the other,
so $W'$ arises as in the last part of the proof of Proposition \ref{prop:RM-SM}.
By Example \ref{quadratic example} we have $\TT \langle b \rangle \simeq H$, and
Example \ref{transfer example} shows that $\TT \langle a \rangle \simeq \langle 1,1\rangle$.

\end{example}

\section{From quadratic forms to pseudo-polarized  Hodge structures}
\label{s:period}

In Section \ref{proof}, we have seen that the field of endomorphisms of a pseudo-polarized Hodge structure of K3 type is either
a totally real field, or a Salem field, and we also obtained some restrictions on the possible dimensions of such Hodge structures,
as vector spaces over their endomorphism field.

\medskip
The aim of this section is to show that, conversely, starting from certain quadratic forms over totally real fields or hermitian forms over
Salem fields, we obtain pseudo-polarized Hodge structures. This is done in Theorem \ref{thm:pspolRM} below (the totally real case),
and  in Theorem \ref{thm:pspolSM} (the Salem field case).

\begin{theo} \label{thm:pspolRM}
Let $E$ be a totally real field of degree $d$ and let $m>2$ be an integer.
Let $W$ be a quadratic form of dimension $m$ over $E$ and let $\sigma:E\hookrightarrow\RR$
an embedding such that the eigenspace $W_\sigma\subset W\otimes_\QQ\RR$ has signature
$(2,m-2)$ or $(3,m-3)$.

Then there is an $(m-2)$-dimensional family of  pseudo-polarized K3 type Hodge structures
on $\TT(W)$ such that the Hodge endomorphism algebra of a very very general member in the family
is $E$.
\end{theo}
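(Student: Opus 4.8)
The plan is to realize each Hodge structure by choosing a holomorphic $2$-form inside the $\sigma$-eigenspace of the transfer and to let this choice vary over the associated period domain. Write $T_\QQ=\TT(W)$, so that $T_\CC=W\otimes_\QQ\CC=\bigoplus_\tau W_{\tau,\CC}$, the sum running over the embeddings $\tau:E\hookrightarrow\RR$, where $W_{\tau,\CC}=W\otimes_{E,\tau}\CC$ has complex dimension $m$. Under the trace description of the transfer, the restriction of $(\cdot,\cdot)$ to $W_{\sigma,\CC}$ is the $\CC$-bilinear extension of the real form $W_\sigma$. First I would record that, by the building blocks of Section \ref{building blocks}, the signature of $\TT(W)$ equals the sum over $\tau$ of the signatures of the $W_\tau$; in particular a pseudo-polarized structure requires this total to be $(3,r-3)$, and the hypothesis that $W_\sigma$ has positive index $2$ or $3$ is exactly what allows the positive-definite plane $T^0=(T^{2,0}\oplus T^{0,2})\cap T_\RR$, and with it the line $T^{2,0}$, to be placed inside the $\sigma$-summand.

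Next I would introduce the period domain
$$
\Omega\,=\,\{[\omega]\in\PP(W_{\sigma,\CC})\,:\,(\omega,\omega)=0,\ (\omega,\overline\omega)>0\}.
$$
Since $W_\sigma$ has positive index $\geq 2$, $\Omega$ is a nonempty open subset of the smooth quadric $\{(\omega,\omega)=0\}$, of complex dimension $m-2$; this will be the sought family. To each $[\omega]\in\Omega$ I attach the decomposition $T^{2,0}=\CC\omega$, $T^{0,2}=\CC\overline\omega$, $T^{1,1}=(T^{2,0}\oplus T^{0,2})^\perp$. Verifying Definition \ref{def:T} is then routine: $\dim T^{2,0}=1$ and $\overline{T^{p,q}}=T^{q,p}$ hold by construction, the conditions $(\omega,\omega)=0$, $(\omega,\overline\omega)>0$ are exactly those defining $\Omega$, and $T^{1,1}\perp(T^{2,0}\oplus T^{0,2})$ is immediate. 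That $E$ acts by Hodge endomorphisms is the structural point: $E$ acts on the $\sigma$-eigenspace by the real scalar $\sigma(e)$, hence preserves $T^{2,0}=\CC\omega$ and $T^{0,2}=\CC\overline\omega$; being $\QQ$-rational and self-adjoint for the transfer form (the involution is trivial), it also preserves $T^{1,1}$. Thus $E\subseteq A_T$ for every member of the family.

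The heart of the matter is to show that $E\subseteq A_T$ is an \emph{equality} for a very general $[\omega]$. Here I would use that $A_T$ is commutative and contains $E$, so $A_T\subseteq\mathrm{End}_E(W)\cong M_m(E)$, a fixed finite-dimensional $\QQ$-vector space. If $f\in A_T\setminus E$, then $f(T^{2,0})\subseteq T^{2,0}$ says precisely that $\omega$ is an eigenvector of $\sigma(f)\in M_m(\CC)$; and since $\sigma$ is injective on $M_m(E)$, a non-scalar $f$ gives a non-scalar $\sigma(f)$, whose eigenvectors fill only a finite union of proper linear subspaces of $W_{\sigma,\CC}$. As $\Omega$ spans $W_{\sigma,\CC}$ (it is open in a non-degenerate quadric), for each $\QQ$-line $\ell=\QQ f$ with $f\notin E\cdot\mathrm{id}$ the locus
$$
Z_\ell\,=\,\{[\omega]\in\Omega\,:\,\sigma(f)\omega\in\CC\omega\}
$$
is a proper closed analytic subset, depending only on $\ell$. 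The decisive finiteness observation is that $M_m(E)$ has only \emph{countably many} $\QQ$-lines, being the rational points of a projective space over $\QQ$. Hence $\{[\omega]:A_T\supsetneq E\}\subseteq\bigcup_\ell Z_\ell$ lies in a countable union of proper closed analytic subsets, whose complement is dense in the complex manifold $\Omega$ by Baire category. Adjoining the analogous countable family of hyperplane sections $\{(\omega,v)=0\}$ over rational $0\neq v\in T_\QQ$ guarantees that a very general member is moreover simple, so Proposition \ref{adjoint} applies, $A_T$ is a field, and $A_T=E$ with $\dim_E T_\QQ=m$.

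The main obstacle is precisely this last genericity step: making rigorous that ``extra'' Hodge endomorphisms are confined to a countable union of proper analytic loci. The two inputs that do the work are the reduction $A_T\subseteq M_m(E)$, coming from commutativity with $E$, and the countability of $\QQ$-lines in $M_m(E)$; together they convert an a priori uncountable family of eigenvector conditions into a countable one. Everything else is a direct check of Definition \ref{def:T} together with the elementary fact that an open piece of a non-degenerate quadric is not contained in any proper linear subspace.
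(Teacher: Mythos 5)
Your proposal is correct and follows essentially the same route as the paper: the same open subset of the quadric in $\PP(W_{\sigma,\CC})$ as the $(m-2)$-dimensional period domain, the same definition of the Hodge decomposition from $[\omega]$, and the same countability argument to force $A_T=E$ for a very general member (the paper counts subalgebras of $\End_\QQ(T_\QQ)$ and their complexified eigenspaces, while you count $\QQ$-lines in $M_m(E)$ and their eigenvector loci $Z_\ell$ --- the same mechanism). Your treatment is in fact slightly more careful than the paper's, since you explicitly excise the non-simple locus via the hyperplane conditions $(\omega,v)=0$ before invoking Proposition \ref{adjoint} to get commutativity of $A_T$, and you make the Baire category step explicit, both of which the paper leaves implicit.
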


\noindent
{\bf Proof.}
There is an isomorphism of $E$-vector spaces $\TT(W)\cong E^m$.
The quadratic form on the complexification $W_{\sigma,\CC}\cong \CC^m$ defines a quadric
$$
Q:=\{[\omega]\in\PP(W_{\sigma,\CC}):(\omega,\omega)=0\}
$$
of dimension $m-2\geq 1$
and there is a non-empty open subset, in the analytical topology, $Q^+$
such that $(\omega,\overline{\omega})>0$ for $[\omega]\in Q^+$.
Any such $\omega$ defines a pseudo-polarized Hodge structure of K3 type on $T_\QQ:=\TT(W)$
by
$$
T^{2,0}\,:=\,\CC\omega,\quad T^{0,2}\,:=\,\CC\overline{\omega},\quad
T^{1,1}\,:=\,(T^{2,0}\oplus T^{0,2})^\perp~.
$$

By construction, $E\subset A_T$. If $E\neq A_T$, then $A_T$ is a number field of degree $dk$ for some $k\geq 2$,
and $\omega$ lies in an eigenspace of
$A_T$ in $T_\CC$, which has dimension $m/k<m$. There are only countably many
subspaces in $\End_\QQ(T_\QQ)$, and hence subalgebras $A_T\subset \End_\QQ(T_\QQ)$,
and thus there are only countably many such (lower-dimensional) complexified eigenspaces contained  in $W_{\sigma,\CC}$.
So for a very general $\omega\in Q^+$ we must have $E=A_T$.
\qed

\

There is an analogous, simpler, result for the Salem field case.

\begin{theo} \label{thm:pspolSM}
Let $E$ be a Salem field of degree $d$ and let $W$ be a one dimensional hermitian form
over $E$ such that $\TT(W)$ has signature $(3,d-3)$.

Then there is a pseudo-polarized K3 type Hodge structure
on $\TT(W)$ with endomorphism algebra $E$.

\end{theo}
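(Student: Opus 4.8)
The plan is to mimic the construction of Theorem \ref{thm:pspolRM}, but to observe that in the Salem case the extra dimension conditions force the Hodge structure to exist without any genericity argument. Let me sketch the approach.

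First I would fix notation. Since $E$ is a Salem field, by Lemma \ref{lem:Salem-crit} we have $E_\RR \cong \RR^2 \times \CC^{d/2-1}$, with the involution exchanging the two copies of $\RR$ and acting as complex conjugation on each copy of $\CC$. Write $\sigma_1, \sigma_2 : E \hookrightarrow \RR$ for the two real embeddings (which are interchanged by the involution), and let $\tau_3, \dots, \tau_{d/2}$ denote representatives of the conjugate pairs of complex embeddings. Since $W = \langle c \rangle$ is one-dimensional hermitian, $W \otimes_\QQ \RR$ decomposes over $E_\RR$ into eigenspaces indexed by these embeddings, and by the building-block computation of Section \ref{building blocks} (cases (2a) and (2c)), the transfer $\TT(W)$ decomposes as an orthogonal sum: the factor coming from the $\RR \times \RR$ block (the pair $\sigma_1, \sigma_2$) contributes a hyperbolic plane $H$ of signature $(1,1)$, while each $\CC$ block with complex conjugation contributes $\langle c \rangle_{\tau_j}$ of signature $(2,0)$ or $(0,2)$, according to the sign of $\tau_j(c)$.

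Next, the key point. The hypothesis that $\TT(W)$ has signature $(3, d-3)$ pins down these signs: the $(1,1)$ from the real block accounts for one positive and one negative eigenvalue, so among the $d/2 - 1$ complex blocks exactly one must contribute signature $(2,0)$ and all the rest $(0,2)$, yielding total signature $(3, d-3)$. Call $\tau_2$ the distinguished complex embedding with $\tau_2(c) > 0$. I would then define the line $T^{2,0} := W_{\tau_2} \otimes \CC$ restricted appropriately; more precisely, the complexification of the positive-definite $2$-dimensional real eigenspace $W_{\tau_2}$ is a complex $2$-plane carrying a positive-definite form, and I choose $\omega$ to be an isotropic vector in it spanning one of the two isotropic lines, setting $T^{2,0} = \CC\omega$, $T^{0,2} = \CC\overline\omega$, and $T^{1,1} = (T^{2,0} \oplus T^{0,2})^\perp$. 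Here one checks $(\omega, \omega) = 0$ and $(\omega, \overline\omega) > 0$ using positive-definiteness, exactly as in \S \ref{s:K3}, so this genuinely defines a pseudo-polarized Hodge structure of K3 type. Because $E$ acts on $W_{\tau_2}$ through the single complex embedding $\tau_2$ with its conjugate $\overline{\tau_2}$, the action of $E$ preserves $T^{2,0}$ and $T^{0,2}$, so $E \subseteq A_T$.

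Finally I must verify equality $A_T = E$. Unlike the totally real case, no countability argument is needed: by Theorem \ref{thm:U^0}, the endomorphism field of \emph{any} pseudo-polarized Hodge structure satisfies $\dim_{A_T}(T_\QQ) = 1$ when $A_T$ is Salem, and here $\dim_E(T_\QQ) = \dim_E \TT(W) = 1$ already. Since $E \subseteq A_T$ and both have $T_\QQ$ as a one-dimensional space over themselves, comparing dimensions over $\QQ$ gives $[A_T : \QQ] = \dim_\QQ T_\QQ = d = [E:\QQ]$, forcing $A_T = E$. I expect the main obstacle to be the bookkeeping in the second step: confirming that the signature hypothesis $(3, d-3)$ translates precisely into ``exactly one complex block is positive-definite'' and that the resulting isotropic line $\omega$ lands in a single $\tau_2$-eigenspace, so that the $E$-action is by $\tau_2$ (a true field embedding into $\CC$) rather than mixing eigenspaces — this is what guarantees $\delta = \tau_2$ is injective and that $E$ acts by Hodge endomorphisms.
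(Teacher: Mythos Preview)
Your proposal is correct and follows essentially the same route as the paper's proof: decompose $E_\RR$ into the $\RR\times\RR$ block and the $\CC$ blocks, use the signature hypothesis to locate the unique positive-definite $\CC$ block, take $T^{2,0}$ to be one of its two eigenspaces under $E$ (which are automatically isotropic by the adjoint condition), and conclude $A_T=E$ from $\dim_E T_\QQ=1$. Your bookkeeping concern about $\omega$ landing in a single $\tau_2$-eigenspace is exactly the point the paper handles by noting that the eigenspaces $U^0_\sigma,U^0_{\overline\sigma}$ are isotropic, and your invocation of Theorem~\ref{thm:U^0} is unnecessary since the direct dimension count you give already suffices.
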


\noindent
{\bf Proof.}
There is an isomorphism of $E$-vector spaces $\TT(W)\cong E$.
Since $E$ is Salem, $E_\RR\cong \RR^2\times\CC^{(d-2)/2}$,
the involution acts as complex conjugation on each copy of $\CC$
and it interchanges the two copies of $\RR$.

Then there is a summand $U^0$ of $E_\RR$ that is isomorphic to $\CC$
such that the corresponding 2-dimensional subspace of $T_\RR$ is positive definite,
see Section \ref{building blocks}.
This summand corresponds to a pair of conjugate complex embeddings $\sigma,\overline{\sigma}:E\hookrightarrow\CC$.
The corresponding eigenspaces
$U^0_\sigma,U^0_{\overline{\sigma}}\subset U\otimes_\RR\CC$
are isotropic since the adjoint condition \eqref{eq:adj} holds by construction.

We now define a Hodge structure on $T_\QQ:=\TT(W)$ by imposing:
$$
T^{2,0}\,:=\,U^0_\sigma,\quad T^{0,2}\,:=\,\overline{T^{2,0}}\,=\,U^0_{\overline{\sigma}},
\quad T^{1,1}\,:=\,(T^{2,0}\oplus T^{0,2})^\perp~.
$$
One easily verifies that $T_\QQ$ is a pseudo-polarized Hodge structure of K3 type as
in Definition \ref{def:pseudo}.
By construction, $E\subset A_T$ and since $\dim_E(T_\QQ)=1$ we must have $E=A_T$.
(Notice that the choice of $T^{2,0}$ is unique up to complex conjugation,
i.e.\ up to interchanging $T^{2,0}$ and $T^{0,2}$.)
\qed

\section{Signatures}\label{signatures}

We keep the notation of the previous sections. In this paper, we have essentially two cases of interest (RM and SM),
but for clarity and reference we also include the CM case:

\medskip

$\bullet$ $E = E_0$ is totally real (the RM case).

\medskip
$\bullet$ $E$ a CM field, that is, $E_0$ is totally real, $E$ is totally imaginary, and $e \mapsto \overline e$ is the complex conjugation (the CM case).

\medskip
$\bullet$ $E$ is a Salem field, $E_0$ is totally real (the SM case).

\medskip
Let $W$ be a hermitian form over $E$ with respect to the involution $e \mapsto \overline e$; note that when $E = E_0$, this is a quadratic form,
since the involution is trivial.
We focus on the case $W=\langle a\rangle$ for $a\in E_0^\times$ and consider $\rT_a = \rT(W)$.
 To determine its signature, we look at $E \otimes_\QQ \RR$, as an \'etale algebra with involution.
 It is a product of factors $\RR$ and $\CC$,
 indexed by the real embeddings of $E$
 (coming in pairs above some real embeddings of $E_0$)
 and by pairs of the complex embeddings of $E$
 (above the remaining real embeddings of $E_0$);
 then $\rT_a$ decomposes as an orthogonal direct sum according to these factors
 as follows:

\medskip
$\bullet$ If $E$ is totally real, then all the factors are equal to $\RR$, the involution is the identity.
$\rT_a$ is either $1$ or $-1$ on each of these factors, and their number is
$a^+$ respectively $a^-$,
where $a^+$ is the number of real embeddings of $E$ where $a$ is positive, and $a^-$ is the number of real embeddings where
$a$ is negative.
The signature of $\rT_a$ is thus $(a^+,a^-)$.

\medskip
$\bullet$ If $E$ is a CM field,
then all the factors are equal to $\CC$,
the involution is the complex conjugation on each of them.
$\rT_a$ decomposes into an orthogonal sum of binary forms, each of them positive definite or negative definite.
The number of positive definite ones is $a^+$, the number of negative definite ones is $a^-$
where, again, $a^+$ is the number of real embeddings of $E_0$ where $a$ is positive,
and $a^-$ is the number of real embeddings of $E_0$ where
$a$ is negative.
The signature of $\rT_a$ is therefore $(2a^+,2a^-)$.


\medskip
$\bullet$ If $E$ is an Salem field with involution,
then there are two factors $\RR$, interchanged by the involution.
The other factors are equal to $\CC$, the involution is the complex conjugation on each of them.
The first factor $\RR \times\RR$ comes from the real place of $E_0$ that splits into two real places of $E$.
On this factor, the form $\rT_a$ is hyperbolic, independently of the value of $a$.
On the factors isomorphic to $\CC$ the situation is exactly as in the CM case.

\medskip
To conclude, there is  a factor of the hyperbolic plane $H$,
and the number of positive definite spaces is $a^+$,
where $a^+$ is the number of embeddings of $E_0$ that extend to  complex embeddings of $E$ and where $a$ is positive;
along the same lines,
the number of negative definite spaces is $a^-$,
the number of embeddings of $E_0$ that extend to  complex embeddings of $E$ and where $a$ is negative.
The signature of $\rT_a$ is hence $(1+2a^+,1+2a^-)$.

%

\medskip In summary, the above considerations give us the signature of the forms $\rT_a$, hence of the
transfer of any one-dimensional quadratic or hermitian form over $E$.
Since quadratic and hermitian forms over $E$ are diagonalizable, this
determines the signature of ${\rm T}(W)$ for any hermitian form $W$ over $E$.

\section{Quadratic forms}

In the next sections, we need some additional notions and facts concerning quadratic forms. Let $k$ be a field of characteristic $\not = 2$.
If  $V = (V,q)$ is a quadratic form over $k$, then $V$
can be diagonalized: there exist $a_1,\dots,a_n \in k^{\times}$ such that $V$ is isomorphic to the diagonal quadratic
form $\langle a_1,\hdots,a_n \rangle$. The {\it determinant} of $V$ is by definition ${\rm det(}V) = \underset{i} \prod a_i$ in $k^{\times}/k^{\times 2}$.
The {\it Hasse invariant} of $V$ is $w(V) = \underset{1 < j} \sum (a_i,a_j)$ in ${\rm Br}_2(k)$, where $(a_i,a_j)$ is the class
of the quaternion algebra determined by $a_i$ and $a_j$. We use the additive notation for the
abelian group ${\rm Br}_2(k)$. If $V'$ is another quadratic form, then
$$w(V \oplus V') = w(V) +
w(V') + ({\rm det}(V),{\rm det}(V')).
$$

\medskip We refer to \cite[\S 1]{BGS} for a concise review of the results on quadratic forms and Brauer groups needed in the present paper.

\medskip
The following lemma is well-known.

\begin{lemma}\label{4 copies} Let $U_0$ be a quadratic form over $\QQ$, and set $$U = U_0 \oplus U_0 \oplus U_0 \oplus U_0.$$ Then
$w(U) = 0$ in ${\rm Br}_2(\QQ)$.

\end{lemma}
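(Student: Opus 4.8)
The plan is to apply the direct-sum formula for the Hasse invariant twice, exploiting two facts: that $\mathrm{Br}_2(\QQ)$ is $2$-torsion, and that the determinant of a form added to itself is a square. Write $d = \det(U_0) \in \QQ^\times/\QQ^{\times 2}$ and abbreviate $w_0 = w(U_0)$.

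First I would compute $w(U_0 \oplus U_0)$. The formula quoted just before the statement gives
\[
w(U_0 \oplus U_0) = w(U_0) + w(U_0) + (\det U_0, \det U_0) = 2w_0 + (d,d).
\]
Since $\mathrm{Br}_2(\QQ)$ is a group of exponent $2$, the term $2w_0$ vanishes, so $w(U_0 \oplus U_0) = (d,d)$. I would also record the determinant of this doubled form: $\det(U_0 \oplus U_0) = d \cdot d = d^2 = 1$ in $\QQ^\times/\QQ^{\times 2}$.

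Next, setting $V = U_0 \oplus U_0$, I would apply the same formula to $U = V \oplus V$:
\[
w(U) = w(V) + w(V) + (\det V, \det V) = 2\,w(V) + (1,1).
\]
Again $2\,w(V) = 0$ by $2$-torsion, and $(1,1) = 0$ because the quaternion symbol is trivial as soon as one entry is a square. Hence $w(U) = 0$, as claimed.

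I do not expect any genuine obstacle here; the argument is pure bookkeeping with the additive structure of $\mathrm{Br}_2(\QQ)$. The only point demanding care is to track the cross term $(\det V, \det V)$ correctly at each application of the direct-sum formula and to use the vanishing of $\det(U_0 \oplus U_0)$ in $\QQ^\times/\QQ^{\times 2}$, which is what makes the second cross term trivial rather than merely $2$-torsion. Note also that the argument uses nothing special about $\QQ$ beyond $\mathrm{Br}_2$ being $2$-torsion, so the same computation works verbatim over any field of characteristic $\neq 2$.
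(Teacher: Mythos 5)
Your proof is correct and follows essentially the same route as the paper: both double the form twice, apply the direct-sum formula for the Hasse invariant, and use that $\det(U_0\oplus U_0)=1$ in $\QQ^{\times}/\QQ^{\times 2}$ makes the second cross term $(1,1)=0$ vanish. The only difference is cosmetic — you spell out the step $w(V\oplus V)=2\,w(V)+(1,1)=0$ that the paper leaves implicit.
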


\noindent
{\bf Proof.} Set $V = U_0 \oplus U_0$, and $d = {\rm det}(U_0))$. We have ${\rm det}(V) = {\rm det}(U_0)^2 = 1$ in ${\bf Q}^{\times}/{\bf Q}^{\times 2}$ and
$w(V) = w(U_0) + w(U_0) + (d,d) = (d,d)$. Note that
$U = V \oplus V$, hence $w(U) = w(V) + w(V) = 0$.
\qed

\medskip

\begin{example}\label{quadratic example} Let $d > 0$ be an integer, and set $E = \QQ(\sqrt d)$ with trivial involution. 
We denote by $I_n$ the $n$-dimensional unit form $\langle 1,\dots,1 \rangle$.

\medskip
(1) Set $W = \langle \sqrt d \rangle$, and set $U = {\rm T}(W)$. Then $U \simeq H$. Indeed, by Lemma \ref{invariants bis} we have
${\rm dim}(U) = 2$ and ${\rm det}(U) = -1$ in ${\bf Q}^{\times}/{\bf Q}^{\times 2}$. This implies that $U \simeq H$.

\medskip
(2) Let $W_0 = \langle 1 \rangle$ and $W = I_4$, as quadratic forms over $E$; set $U_0 = {\rm T}(W_0)$ and $U = {\rm T}(W)$.
We have $U \simeq I_8$. Indeed, ${\rm dim}(U) = 8$, and ${\rm det}(U) = {\rm det}(U_0)^4 = 1$  in ${\bf Q}^{\times}/{\bf Q}^{\times 2}$.
The quadratic form $U$ is positive definite, and by Lemma \ref{4 copies} we have $w(U) = 0$ in ${\rm Br}_2(\QQ)$;
therefore $U \simeq I_8$. \qed

\end{example}

Even though imaginary quadratic fields do not play a role in this paper, we record the following example, to point out the contrast with
the real quadratic case.

\begin{example} Let $d < 0$ be an integer, and set $E = \QQ(\sqrt d)$ with involution induced by complex conjugation,
i.e.\ sending $\sqrt d \mapsto -\sqrt d$; let $W$ be a one-dimensional hermitian form over $E$, and set
$U = {\rm T}(W)$.
Then
$W=\langle a \rangle$ with $a \in \QQ$, and $U \simeq \langle 2a,2a \rangle$. In particular, $U$ is positive definite if $a >0$, and negative
definite if $a <0$, and the hyperbolic plane $H$ cannot be realized as $ {\rm T}(W)$ for any hermitian form $W$ over $E$.

\end{example}

We record an
example containing special cases that will be used in the sequel.

\begin{example}\label{HW} {\rm (i)} Let $H$ be the hyperbolic plane over $\QQ$, and let $n \geqslant 1$ be an integer. We have
$w(H^n) = 0$ if $n \equiv \ 0, 1  \ {\rm (mod \ 4)}$, and $w(H^n) = (-1,-1)$ if $n \equiv \ 2, 3  \ {\rm (mod \ 4)}$.

\medskip (ii) Set $\Lambda_{3,19} = H_0^3 \oplus E_8^2$,
where $H_0$ is the hyperbolic plane over $\ZZ$, and $E_8$ is the negative $E_8$-lattice. Note that if
$X$ is a complex K3 surface, then $H^2(X,\ZZ)$, with its
intersection form, is a lattice isomorphic to $\Lambda_{3,19}$.
Set $V_{K3} = \Lambda_{3,19} \otimes_{\ZZ} \QQ$. We have $w(V_{K3}) = (-1,-1)$.

\medskip Indeed, 
$V_{K3} \simeq I_8^2 \oplus H^3$, where $I_8$ is the 8-dimensional negative unit form.
Therefore 
$w(V_{K3}) =   w(I_8) +  ({\rm det}(I_8),{\rm det}(H^3)) + w(H^3)$, and $w(I_8) = 0$, hence $w(V_{K3}) = (1,-1) + (-1,-1) = (-1,-1)$.

\end{example}

\section{Totally real fields}
\label{s:RM}

We start working towards the proofs of Theorems \ref{RM} and \ref{SM}.
We begin with real multiplication (RM), building on and extending \cite{BGS}.

 If $E$ is  a totally real field, we denote by $\Sigma_E$ be the set of real embeddings of $E$; we have
$E \otimes_{\QQ}\RR = \underset {\sigma \in \Sigma_E} \prod E_{\sigma}$, with $E_{\sigma} = {\bf R}$ for all $\sigma \in \Sigma_E$.  If $W$ is a quadratic form over $E$, then
$W \otimes_{\QQ} \RR$ decomposes as an orthogonal sum $W \otimes_{\QQ} \RR = \underset {\sigma \in \Sigma_E} \oplus W_{\sigma}$;
each of the $W_{\sigma}$ is a quadratic form over $\RR$.

The following result is proved in \cite{BGS}, Corollary 8.2.

\begin{theo}\label{real} Let $V$ be a quadratic form over $\QQ$. Let $E$ be a totally real number field of
degree $d$, let $m$ be an integer with $m \geqslant 1$ such that $m d \leqslant  {\rm dim}(V) - 2$. Let $(r,s)$ be the
signature of $V$, and let $r',s' \geqslant 0$ be integers such that $r' \leqslant r$, $s'\leqslant s$, and $r'+s' = md$.
Suppose that $r' \leqslant m$.

\medskip Then there exists a quadratic form $W$ over $E$ such that the signature of $\TT(W)$ is $(r',s')$ and a quadratic form $V'$ over $\QQ$ such
that $$V \simeq {\TT}(W) \oplus V'.$$

Moreover $W$ can be chosen in such a way that there is an embedding $\sigma: E \to \RR$ with $W_{\sigma}$  of signature $(r',m-r')$.
\end{theo}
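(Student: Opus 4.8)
The plan is to split the statement into a construction step for $W$ and an embedding step for $\TT(W)$. First I would reduce the signature prescription to a problem about the signs of the diagonal entries of $W$. Writing $W=\langle a_1,\dots,a_m\rangle$ with $a_i\in E^\times$, the signature computation of \S\ref{signatures} shows that $\TT(W)=\bigoplus_{\sigma\in\Sigma_E}W_\sigma$ as a real form, so the signature of $\TT(W)$ is $\big(\sum_\sigma w^+_\sigma,\ \sum_\sigma w^-_\sigma\big)$, where $w^\pm_\sigma$ counts the indices $i$ with $\sigma(a_i)$ positive (resp.\ negative). To meet both the signature $(r',s')$ and the ``moreover'' clause I would concentrate all positive signs at the distinguished embedding $\sigma_0$: demand that $a_1,\dots,a_{r'}$ be positive at $\sigma_0$ and negative at every other real embedding, and that $a_{r'+1},\dots,a_m$ be totally negative (e.g.\ $a_i=-1$). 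This pattern fits into an $m$-dimensional form precisely because $r'\le m$. The existence of elements of $E^\times$ realizing an arbitrary prescribed sign vector over $\Sigma_E$ is the surjectivity of the total sign map $E^\times\to\{\pm1\}^{\Sigma_E}$ (weak approximation at the real places), so such $a_i$ exist. By construction $W_{\sigma_0}$ has signature $(r',m-r')$ and $\TT(W)$ has signature $(r',s')$ with $s'=md-r'$, matching the hypothesis $r'+s'=md$.

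Second, setting $U=\TT(W)$ (of dimension $md$ and signature $(r',s')$), I would produce the complement $V'$ via the Hasse--Minkowski classification over $\QQ$. Any $V'$ with $V\simeq U\oplus V'$ is forced to have dimension $k:=\dim V-md$, signature $(r-r',s-s')$ (both nonnegative by hypothesis), determinant $\det V'=\det V\cdot\det U$ in $\QQ^\times/\QQ^{\times2}$, and Hasse invariants $w_p(V')=w_p(V)-w_p(U)-(\det U,\det V')_p$ at each prime $p$. Conversely, if a $\QQ$-form $V'$ with exactly these invariants exists, then $U\oplus V'$ and $V$ agree in dimension, signature, determinant and every local Hasse invariant, hence are isometric by Hasse--Minkowski. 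So the whole statement reduces to the realizability of this forced invariant data.

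The realizability is where the hypothesis $md\le\dim V-2$, i.e.\ $k\ge2$, enters, and I expect the boundary case $k=2$ to be the main obstacle. The global consistency is automatic: since $\prod_p w_p(V)=\prod_p w_p(U)=1$ and $\prod_p(\det U,\det V')_p=1$ by Hilbert reciprocity, one gets $\prod_p w_p(V')=1$, with only finitely many $w_p(V')$ nontrivial. For $k\ge3$ this, together with the consistent signature, already guarantees a $\QQ$-form with the prescribed invariants. For $k=2$ there is the extra local constraint that $w_p(V')=1$ wherever $\det V'$ is locally a square times $-1$ (a binary form's Hasse invariant being pinned down by its determinant there). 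I would remove this obstruction by using the freedom still present in the choice of $W$: the signs of the $a_i$ are fixed, but replacing each $a_i$ by $a_it_i$ with $t_i\in E_0^\times$ totally positive leaves the signature of $U$ unchanged while altering $\det U$ and the $w_p(U)$ through the transfer formulas (Lemma \ref{invariants bis} for the determinant, and the analogous transfer computation for the Hasse invariant). Choosing the $t_i$ to adjust $w_p(U)$ at the finitely many offending primes makes the forced $w_p(V')$ compatible with $\det V'$, after which the binary $V'$ exists and the conclusion follows from Hasse--Minkowski as above.
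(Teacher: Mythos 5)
The paper itself does not prove Theorem \ref{real}: it is quoted verbatim from \cite{BGS}, Corollary 8.2, so your proposal is measured against that reference rather than an argument in this text. Your first two steps are correct and follow the standard route: weak approximation at the real places of $E$ produces $a_1,\dots,a_m$ with the prescribed sign matrix (this uses $r'\leqslant m$ exactly as you say), giving both the signature $(r',s')$ of $\TT(W)$ and the ``moreover'' clause; and by the classification of quadratic forms over $\QQ$, the existence of $V'$ with $V\simeq\TT(W)\oplus V'$ reduces to realizing the forced invariants, which for $k=\dim V-md\geqslant 3$ is unobstructed once the reciprocity check you perform is done.

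The gap is in the boundary case $k=2$, which you correctly isolate as the crux but then dispatch with one unsupported sentence. Two concrete problems. First, the set of ``offending primes'' is a moving target: replacing $a_i$ by $a_it_i$ changes $\det\TT(W)$ by ${\rm N}_{E/\QQ}(\prod_i t_i)$ modulo squares, hence changes the forced $\det V'$, hence changes the set of places at which $-\det V'$ is a square --- the very set at which you are trying to make $w_p(V')$ trivial. To avoid this you would have to restrict to multipliers of square norm, and then prove that totally positive $t_i$ of square norm still give enough freedom on the local invariants $w_p(\TT(W))$; that is a nontrivial statement about which pairs $({\rm det}, w_p)$ are achievable by local transfer forms, and these are genuinely constrained by the splitting of $p$ in $E$ (compare condition (iii) in Theorem \ref{realization}, where in the hermitian analogue split primes force the transfer to be locally hyperbolic). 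Second, the freedom is genuinely limited when $d$ is even: by Lemma \ref{invariants bis}, $\det\TT(W)$ ranges over $\Delta_E^m\cdot{\rm N}_{E/\QQ}(E^\times)$ modulo squares, and for even $d$ the norm group modulo squares is in general a proper subgroup of $\QQ^\times/\QQ^{\times 2}$ --- this is exactly the source of the nontrivial necessary conditions in Theorem \ref{even degree} and Corollaries \ref{d=2}, \ref{d=2 m even} for the codimension-$0$ problem. So ``choosing the $t_i$ to adjust $w_p(U)$'' is not a step of a proof but a restatement of the theorem at its only hard point. A correct completion would have to carry out a simultaneous local analysis --- for instance, choosing $\det W$ so that ${\rm N}_{E/\QQ}(\det W)$ lands in a square class, produced via Dirichlet or Chebotarev, which makes $-\det V'$ a nonsquare at every place where the forced $w_p(V')$ could be nontrivial --- or simply invoke \cite{BGS}, Corollary 8.2, as the paper does.
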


This result does not cover the case where $md = {\rm dim}(V) -1$. In order to deal with this case as well, we prove a result that actually gives us more precise
information.

\medskip
The following result is
based on a method of Kr\"uskemper, \cite{K}.

 \begin{theo}\label{new}
 Let $U$ be a quadratic form over $\QQ$ of dimension $r$ and signature $(3,r-3)$.
 Let $E$ be a totally real field of odd degree $d\geq 3$
 and let $m \geqslant 3$ be an integer such that $r = dm$. 
 Fix $r_0\in \{2,3\}$.
Then there exists a quadratic form $W$ over $E$ such that
$$
U \simeq {\TT}(W).
$$
Moreover $W$ can be chosen in such a way that there is an embedding $\sigma: E \to \RR$
with $W_{\sigma}$  of signature $(r_0,m-r_0)$.
\end{theo}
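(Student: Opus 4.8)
The plan is to realize $U$ as a transfer by matching invariants over $\QQ$ via the Hasse--Minkowski theorem: two quadratic forms over $\QQ$ of the same dimension are isometric if and only if they have the same signature, the same determinant in $\QQ^\times/\QQ^{\times2}$, and the same Hasse invariant $w_p$ at every finite prime $p$. Writing $W=\langle a_1,\dots,a_m\rangle$ with $a_i\in E^\times$, we have $\TT(W)=\bigoplus_i \TT(\langle a_i\rangle)$, where $\TT(\langle a\rangle)$ is the scaled trace form $x\mapsto \Tr_{E/\QQ}(ax^2)$ whose behaviour over $\RR$ is $\bigoplus_{\sigma'\in\Sigma_E}\langle \sigma'(a)\rangle$. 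Thus the signature of $\TT(W)$ and the signatures of each $W_\sigma$ are governed entirely by the signs $\sigma'(a_i)$, whereas the finite-prime invariants depend on the square classes of the $a_i$ at finite places. The strategy is to fix the archimedean sign pattern first to obtain the prescribed signatures, and then to exploit the remaining freedom at the finite primes to correct the determinant and the Hasse invariants.

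For the signatures, since $\dim\TT(W)=dm=r$ is forced and the target signature is $(3,r-3)$, I would require that exactly three of the $dm$ numbers $\sigma'(a_i)$ be positive and the rest negative. Concretely, for $r_0=3$ I place all three positive signs at the distinguished embedding $\sigma$, making $W_\sigma$ of signature $(3,m-3)$ and $W_{\sigma'}$ negative definite for $\sigma'\neq\sigma$; for $r_0=2$ I place two positive signs at $\sigma$ and one at a second embedding $\tau\neq\sigma$ (available since $d\geq 3$), so that $W_\sigma$ has signature $(2,m-2)$. By the signature computation of Section~\ref{signatures}, any such assignment yields $\TT(W)$ of signature $(3,r-3)$. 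This pins down the sign of each $\sigma'(a_i)$, hence the sign of $\mathrm{N}_{E/\QQ}(\det W)$; one checks it equals $(-1)^{dm-3}$, exactly the sign of $\det U$, so the archimedean part of the determinant condition is automatically consistent (note $\Delta_E>0$, as $E$ is totally real).

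It remains to match, at the finite primes, the determinant $\det\TT(W)=\Delta_E^{m}\,\mathrm{N}_{E/\QQ}(\det W)$ of Lemma~\ref{invariants bis}(ii) with $\det U$, and then the Hasse invariants. For the determinant the hypothesis that $d$ is odd is decisive: for a positive rational $x$ the diagonally embedded $x\in E^\times$ has $\mathrm{N}_{E/\QQ}(x)=x^{d}\equiv x$ in $\QQ^\times/\QQ^{\times2}$ and preserves all signs, so multiplying one $a_i$ by a suitable totally positive rational adjusts $\mathrm{N}_{E/\QQ}(\det W)$ to the required finite square class without disturbing the signature. For the Hasse invariants I would follow the method of Kr\"uskemper \cite{K}: at each of the finitely many primes where $w_p(\TT(W))$ and $w_p(U)$ disagree, one modifies the square classes of the entries $a_i$ by local units, using the transfer formula for the Hasse--Witt invariant together with weak approximation in $E$, so as to change $w_p(\TT(W))$ by the prescribed quaternion class while leaving the determinant and the signs untouched. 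Matching all finite $w_p$ then forces, via the product formula and the already-matched dimension, determinant and signature, the global isometry $U\simeq\TT(W)$.

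The main obstacle is the Hasse-invariant correction in this tight regime $\dim U=dm$, where no auxiliary orthogonal summand is available to absorb discrepancies as in Theorem~\ref{real}. One must show that the local modifications of the $a_i$ generate \emph{all} admissible changes of $w_p(\TT(W))$ compatible with the fixed determinant and signature, and that the global choice respects the product formula. This is precisely the content of Kr\"uskemper's construction, and the oddness of $d$ is what removes the parity obstructions in the transfer formula for the Hasse invariant and in the norm map on square classes, allowing the determinant and the Hasse invariant to be adjusted independently.
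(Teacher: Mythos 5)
Your overall strategy (classify $\TT(W)$ by its rational invariants and match them with those of $U$ via Hasse--Minkowski) is a legitimate alternative in principle, and two of your steps are sound and agree with the paper: the choice of sign pattern for the $\alpha_i$ (three positive signs at $\sigma$ for $r_0=3$, or two at $\sigma$ and one at some $\tau\neq\sigma$ for $r_0=2$) is exactly the paper's choice of the auxiliary form $W'$, and your determinant adjustment --- multiplying one entry by a totally positive rational $x$, so that $\mathrm{N}_{E/\QQ}(x)=x^d\equiv x$ mod squares since $d$ is odd --- correctly identifies why odd degree removes the determinant obstruction that does appear for even $d$ (Proposition \ref{necessary}, Theorem \ref{even degree}). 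But there is a genuine gap at the decisive step: the claim that modifying the square classes of the $a_i$ by local units, plus weak approximation, realizes \emph{all} admissible changes of the Hasse invariants $w_p(\TT(W))$ is precisely the hard content of the theorem, and you assert it rather than prove it. Changing a single $a_i$ at a finite prime also changes the determinant class, so modifications must be made in pairs $(\lambda a_i,\lambda a_j)$ with $\lambda$ totally positive; the resulting change of $w_p(\TT(W))$ is then a corestriction $\mathrm{Cor}_{E/\QQ}$ of quaternion symbols over $E$, the symbols at different primes are linked by reciprocity, and the surjectivity of this procedure onto the required classes is exactly what needs an argument. A telling symptom is that your sketch never uses the hypothesis $m\geqslant 3$, which any correct proof must: for small $m$ the invariants of a transfer are genuinely constrained (compare Theorem \ref{realization} for $m=1$).

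The attribution is also off: what the paper extracts from \cite{K} (Corollary of Lemma 7, p.\ 114) is not an entry-by-entry local modification scheme but a Witt-group statement --- for an odd-degree extension, every torsion class in the fundamental ideal $I(\QQ)$ is the transfer of a torsion form $Y\in I(E)$. The paper's proof then avoids all Hasse-invariant bookkeeping: with $W'$ as above, the signatures of $\TT(W')$ and $U$ agree, so $X=\TT(W')-U$ is a torsion class in $I(\QQ)$ (\cite{BGS}, Theorem 9.3); lift it to a torsion form $Y\in I(E)$ by Kr\"uskemper; represent the Witt class $Y-W'$ by an honest $m$-dimensional form $W$ over $E$, which is possible precisely because $m\geqslant 3$ (\cite{BGS}, Lemma 10.3); then $\TT(W)=-U$ in ${\rm Witt}(\QQ)$, and equality of dimensions gives the isometry $\TT(-W)\simeq U$ with the stated signature at $\sigma$. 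To repair your argument you would either have to prove your surjectivity claim for the local modifications --- in effect re-proving Kr\"uskemper's result in a harder form --- or switch to this Witt-group route, where the torsion lifting and the dimension-$m$ representation cleanly separate the two difficulties your sketch merges.
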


\noindent
{\bf Proof.} 
We first discuss the case $r_0=3$.
Let $\sigma \in \Sigma_E$ and let  $\alpha_1,\dots,\alpha_m \in E^{\times}$ such that $\sigma(\alpha_1) > 0$,
$\sigma(\alpha_2) > 0$, $\sigma(\alpha_3) > 0$, that $\tau(\alpha_1) < 0$ and $\tau(\alpha_2) < 0$, $\tau(\alpha_3) < 0$ for all $\tau \in \Sigma_E$
with $\tau \not = \sigma$, and that $\tau(\alpha_i) < 0$ for  all $i >3$ and   all $\tau \in \Sigma_E$.
Set $W' = \langle \alpha_1,\dots,\alpha_m \rangle$. Note that the signature of $\TT(W')$
is equal to the signature of $U$. This implies that the Witt class of $\TT(W') - U$ is a torsion element of ${\rm Witt}(\QQ)$
(see \cite{BGS}, Theorem 9.3).

\medskip Let us consider the Witt class $X = \TT(W') - U$ in
${\rm Witt}(\QQ)$, and note that $X \in I(\QQ)$,
the fundamental ideal
of ${\rm Witt}(\QQ)$, i.e.\ the
ideal of the even dimensional quadratic forms.
By \cite{K}, Corollary of Lemma 7, page 114, there exists
a torsion form $Y \in I(E)$ such that $\TT(Y) = X$. 
Let $W$ be a quadratic form of dimension $m$ over $E$ representing the Witt class $Y - W'$.
This is possible since $m \geqslant 3$, see \cite {BGS}, Lemma 10.3.
Then $\TT(W) = -U$ in ${\rm Witt}(\QQ)$, 
and since ${\rm dim}(\TT(W)) = {\rm dim}(U)$, we have
$\TT(-W) \simeq  U$.

\medskip

For $r_0=2$, the same arguments go through once we adjust the conditions on $\alpha_3$
such that there is an embedding $\sigma'\in \Sigma_E$ with $\sigma'\neq \sigma$ and $\sigma'(\alpha_3)>0$
while for all other $\tau\in\Sigma_E$ (including $\sigma$) we have $\tau(\alpha_3)<0$.
\qed

\bigskip
The results relevant to the K3 setting are summarized in the following corollary:

\begin{coro}\label{for theorem 1} Let $E$ be a totally real number field of degree $d$ and let $m$ be an integer with $m \geqslant 3$
and $md \leqslant 21$. 
 Fix $r_0\in \{2,3\}$.
Then there exists a quadratic form $W$ over $E$ and a quadratic form $V'$ over $\QQ$ such that
$$
V_{K3} \simeq {\TT}(W) \oplus V'~.
$$
Moreover $W$ can be chosen in such a way that there is an embedding $\sigma: E \to \RR$
with $W_{\sigma}$ of signature $(r-0,m-r_0)$
while all other embeddings $\tau: E \to \RR$ have $W_\tau$ negative-definite.
\end{coro}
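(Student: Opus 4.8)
The plan is to read off the forced signature profile and then reduce to the two realization results already at hand, Theorem \ref{real} and Theorem \ref{new}, treating the boundary value $md=21$ separately. If $W_\sigma$ has signature $(r_0,m-r_0)$ and every other $W_\tau$ is negative definite, then $\TT(W)=\bigoplus_\tau W_\tau$ has signature $(r_0,md-r_0)$, so the complement $V'$ must have dimension $22-md$ and signature $(3-r_0,\,19-md+r_0)$; both entries are nonnegative exactly because $r_0\in\{2,3\}$ and $md\leqslant 21$. Hence the bookkeeping is consistent, and it remains to produce $W$ and $V'$ realizing it.

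First I would dispose of the range $md\leqslant 20=\dim(V_{K3})-2$ by applying Theorem \ref{real} to $V=V_{K3}$, of signature $(3,19)$, with target signature $(r',s')=(r_0,md-r_0)$ for $\TT(W)$. The hypotheses hold: $r'=r_0\leqslant 3=r$, $s'=md-r_0\leqslant 18\leqslant 19$, $r'+s'=md\leqslant\dim(V_{K3})-2$, and $r'=r_0\leqslant 3\leqslant m$. Theorem \ref{real} then furnishes $W$ with $\TT(W)$ of signature $(r_0,md-r_0)$, an embedding $\sigma$ with $W_\sigma$ of signature $(r_0,m-r_0)$, and a complement $V'$ with $V_{K3}\simeq\TT(W)\oplus V'$. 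The clause that all other $W_\tau$ are negative definite is then automatic: the global positive index $r_0$ is already accounted for by $W_\sigma$, so each of the remaining $d-1$ summands has positive index $0$.

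The remaining case is $md=21$, where $\dim V'=1$ and Theorem \ref{real} no longer applies. As $21$ is odd, every admissible degree $d\mid 21$ with $m=21/d\geqslant 3$ is odd, namely $d\in\{1,3,7\}$. Since $V_{K3}$ is indefinite of dimension $\geqslant 5$ it is isotropic, hence universal, so I may split off a one-dimensional summand $V'=\langle a\rangle$ of either sign, with $V_{K3}\simeq U\oplus V'$ and $\dim U=21$: choosing $a<0$ for $r_0=3$ gives $U$ of signature $(3,18)$, and $a>0$ for $r_0=2$ gives $U$ of signature $(2,19)$. For $d\in\{3,7\}$, both odd and $\geqslant 3$, I would then realize $U\simeq\TT(W)$ by the argument of Theorem \ref{new}: its Kr\"uskemper-based proof uses only that the auxiliary diagonal form shares the signature of $U$ and that $m\geqslant 3$, so it applies verbatim to the target signature $(2,19)$ once the auxiliary form is taken with $W'_\sigma$ of signature $(2,m-2)$ and all other $W'_\tau$ negative definite. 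This yields $W$ with $W_\sigma$ of signature $(r_0,m-r_0)$, and the usual signature count again forces the other $W_\tau$ to be negative definite. For $d=1$ we have $E=\QQ$, $m=21$ and $\TT(W)=W=U$, so $U$ itself is the desired form and the condition on other embeddings is vacuous.

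The main obstacle is precisely this boundary value $md=21$: it sits one step outside the range of Theorem \ref{real}, and for $r_0=2$ the correct target is a form $U$ of signature $(2,19)$ rather than the $(3,r-3)$ for which Theorem \ref{new} is literally stated. The point to check carefully is therefore that the realization argument is insensitive to the choice between $(2,19)$ and $(3,18)$, the only inputs being the matching of signatures (so that the relevant Witt class is torsion and lies in $I(\QQ)$) and the bound $m\geqslant 3$; the universality of $V_{K3}$ then supplies the one-dimensional complement of whichever sign $r_0$ demands.
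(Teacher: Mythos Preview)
Your proof is correct and follows essentially the same route as the paper: Theorem~\ref{real} for $md\leqslant 20$, then split off a one-dimensional summand and invoke the Kr\"uskemper argument of Theorem~\ref{new} for $md=21$. The one genuine difference is your treatment of $md=21$ with $r_0=2$. The paper always takes $a<0$, obtaining $U$ of signature $(3,18)$, and quotes Theorem~\ref{new} for both values of $r_0$; but in the $r_0=2$ case that theorem distributes the positive index as $2+1$ across two embeddings, so one $W_{\sigma'}$ has signature $(1,m-1)$ rather than being negative definite. You instead take $a>0$ when $r_0=2$, land on $U$ of signature $(2,19)$, and re-run the Kr\"uskemper argument with the auxiliary $W'$ having all its positive index at $\sigma$. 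This is a valid adaptation (the proof of Theorem~\ref{new} uses only that $d$ is odd, that $m\geqslant 3$, and that the signatures match), and it delivers the literal ``all other $W_\tau$ negative definite'' clause that the paper's own argument does not quite establish in this corner case. You also explicitly dispose of $d=1$, which falls outside the hypothesis $d\geqslant 3$ of Theorem~\ref{new}; the paper's proof passes over this, though it is of course trivial.
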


\begin{proof}
If  $md \leqslant 20$, then the result follows from Theorem  \ref{real}
with $r = 3,  r' =r_0$, $s = 19$,  and $s' = md-r'$.

Suppose that
$md = 21$; this implies that $d$ is odd. Let $a \in {\bf Q}$ with $a < 0$, and note that $a$ is represented by $V$ (since $H$ is an orthogonal factor of $V$). Set
$V' = \langle a \rangle$, and
write $V = V' \oplus U$ for some quadratic form $U$ over ${\bf Q}$; the signature of $U$ is then $(3,18)$. By Theorem \ref{new}
there exists a quadratic form $W$ over $E$ with the required properties.
\qed
\end{proof}

\medskip
The only remaining case to cover Theorem \ref{RM}
is $md = 22$, with $m \geqslant 3$; this implies that $d = 2$, hence $E$ is a real quadratic field.

\begin{prop}
\label{md=22} Let $E$ be a real quadratic field and  $r_0\in \{2,3\}$. Then there exists a quadratic form $W$ over $E$ such that
$$
V_{K3} \simeq {\TT}(W)
$$
and that there is an embedding $\sigma: E \to \RR$
with $W_{\sigma}$ of signature $(r_0,11-r_0)$.

\end{prop}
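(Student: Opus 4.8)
The plan is to exhibit $W$ explicitly as a diagonal form over $E=\QQ(\sqrt D)$ (with $D>0$ squarefree) whose transfer is \emph{manifestly} isomorphic to $V_{K3}$, so that no delicate invariant computation is needed. This is the ``full rank'' case $md=\dim(V_{K3})=22$ with $d=2$, $m=11$: there is no room for a complementary form $V'$ and $d$ is even, so neither Theorem \ref{real} nor Theorem \ref{new} applies, which is exactly why a hands-on construction is natural. Write $\sigma,\tau$ for the two real embeddings of $E$, normalized so that $\sigma(\sqrt D)>0>\tau(\sqrt D)$, and recall from Example \ref{HW}(ii) that $V_{K3}$ is isometric to the orthogonal sum of a negative definite form of rank $16$ and three hyperbolic planes $H^{\oplus 3}$.

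First I would assemble the two building blocks furnished by Example \ref{quadratic example}. Part (1) gives $\TT\langle\sqrt D\rangle\simeq H$; since $H\simeq -H$ and $\TT(-\langle\alpha\rangle)=-\TT\langle\alpha\rangle$, also $\TT\langle -\sqrt D\rangle\simeq H$. Part (2) gives $\TT\langle 1,1,1,1\rangle\simeq\langle 1\rangle^{\oplus 8}$; negating, $\TT\langle -1,-1,-1,-1\rangle\simeq\langle -1\rangle^{\oplus 8}$, and taking two copies, $\TT(\langle -1\rangle^{\oplus 8})$ is negative definite of rank $16$.

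Next I would set, for $r_0=3$,
$$
W=\langle\underbrace{-1,\dots,-1}_{8},\,\sqrt D,\sqrt D,\sqrt D\rangle,
$$
and for $r_0=2$,
$$
W=\langle\underbrace{-1,\dots,-1}_{8},\,\sqrt D,\sqrt D,-\sqrt D\rangle.
$$
In both cases $\dim_E W=11$, hence $\dim_\QQ\TT(W)=22$, and by the two building blocks $\TT(W)\simeq(\text{neg.\ def.\ rank }16)\oplus H^{\oplus 3}\simeq V_{K3}$. The signature condition is then read off directly: at $\sigma$ the entries $\sqrt D$ are positive, so $W_\sigma$ has signature $(3,8)$ when $r_0=3$ and $(2,9)$ when $r_0=2$, i.e.\ $(r_0,11-r_0)$ in both cases; at $\tau$ the signs reverse, giving $W_\tau$ of signature $(0,11)$ respectively $(1,10)$, consistent with the total signature $(3,19)$ of $V_{K3}$.

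The one point requiring care — and the main obstacle if one instead took a generic $W$ and matched invariants via Hasse–Minkowski — is the Hasse–Witt invariant $w(\TT(W))=(-1,-1)$; the explicit construction sidesteps this entirely, since $\TT(W)$ is \emph{built} out of the two forms whose transfers were already identified in Example \ref{quadratic example}. As a consistency check one may confirm the determinant through Lemma \ref{invariants bis}(ii): in both cases $\Nm_{E/\QQ}(\det W)=-D$ (using $\Nm_{E/\QQ}(\sqrt D)=-D$ and that $D=(\sqrt D)^2$ is a square in $E$), so $\det\TT(W)=\Delta_E\,\Nm_{E/\QQ}(\det W)=D\cdot(-D)=-1$ in $\QQ^\times/\QQ^{\times 2}$, matching $\det V_{K3}=-1$.
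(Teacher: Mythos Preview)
Your proof is correct and is essentially identical to the paper's own argument: the paper also writes $V_{K3}\simeq H^{\oplus 3}\oplus(-I_{16})$ and takes $W=\langle\sqrt D,\sqrt D,(-1)^{r_0+1}\sqrt D\rangle\oplus(-I_8)$, invoking the same two building blocks from Example \ref{quadratic example}. Your write-up is simply more detailed, including the explicit signature check at both embeddings and the determinant consistency check via Lemma \ref{invariants bis}(ii), neither of which the paper spells out.
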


\noindent
{\bf Proof.} Note that $V_{K3} \simeq H \oplus H \oplus H \oplus (-I_{16})$, where $-I_{16}$ denotes the 16-dimensional negative unit form.
Let us write $E = \QQ(\sqrt d)$, for some positive integer $d$. Let $W = \langle \sqrt{d}, \sqrt{d}, (-1)^{r_0+1}\sqrt{d} \rangle \oplus (-I_8)$, as a quadratic form
over $E$. By Example \ref{quadratic example}, we have ${\rm T}(W) \simeq V_{K3}$. Since $\sqrt d$ is positive at one of the embeddings of $E$
and negative at the other, there exists indeed an embedding $\sigma: E \to \RR$
with $W_{\sigma}$ of signature $(r_0,11-r_0)$.
\qed

\

\subsection{Proof of Theorem \ref{RM}}
\label{ss:RM}

Let $E$ be a totally real field of degree $d$ and let $m>2$ be an integer so that
$dm\leq 22$. From Corollary \ref{for theorem 1} and Proposition \ref{md=22}
respectively, we know that there exists a quadratic form $W$ over $E$ such that
$\Lambda_{K3}\otimes_{\ZZ} \QQ\cong \TT(W)\oplus V'$
and there is an embedding $\sigma:E\hookrightarrow\RR$ such that the eigenspace $W_\sigma\subset W_\RR$ has signature our choice of $(2,m-2)$ and 
$(3,m-3)$.

From Theorem \ref{thm:pspolRM} we know that there exists a
family of dimension $m-2$ of pseudo-polarized Hodge structures
of K3 type on $\TT(W)$ such that the general member has endomorphism algebra $E$.

Let $T:=\Lambda_{K3}\cap \TT(W)$, it is a primitive sublattice of $\Lambda_{K3}$ of rank
$dm$ and signature $(3,md-3)$ and $T_\QQ=\TT(W)$.
The Hodge structure on $T_\QQ$ induces a Hodge structure on $\Lambda_{K3}$ with
$\Lambda_{K3}^{p,q}=T^{p,q}$ for $(p,q)=(2,0)$ and $(0,2)$ and $\Lambda_{K3}^{1,1}$
the orthogonal complement of the direct sum of these two subspaces.
The surjectivity of the period map then implies that there is a (unique by the Torelli theorem)
K3 surface $X$ with $T_X=T$ and thus $A_X=A_T=E$.
\qed

%
%

 \section{Totally real fields of even degree}
 \label{s:even}

 Throughout this section, we let $E$ be a totally real field of degree $d$ and suppose that $d$ is even.
 The aim of this section is to prove an analog of Theorem \ref{new} in the case where $d$ is even; this
 is needed for the treatment of  (totally real) endomorphism algebras of hyperk\"ahler manifolds.

\medskip

We start by recalling some notation
from \cite{BGS}.

\medskip
Let ${\rm N}_{E/{\QQ}} : E \to {\QQ}$ be the norm map;
it induces a homomorphism
$$
{\rm N}_{E/{\QQ}} : E^{\times}/E^{\times 2}  \to {\QQ}^{\times}/{\QQ}^{\times 2};
$$
let $\Lambda_{E/{\QQ}}$ be the image of this homomorphism.
We denote by $\Lambda^+_{E/{\QQ}}$ the image of totally positive elements of $E$. Further, if $a, b > 0$ are integers such that $d = a +b$, we
denote by $\Lambda^{a,b}_{E/{\QQ}}$ the set
of the elements of $E$ that are positive at $a$ embeddings of $E$, and negative at $b$ embeddings of $E$.

\begin{prop}\label{necessary} Let $m\geqslant 3$ be an  integer, and  let $U$ be a quadratic form over $\QQ$ of dimension $dm$ and signature $(3,dm-3)$.
Suppose that there exists a quadratic form  $W$ over $E$ such that
$$
U \simeq {\TT}(W)
$$
and that there is an embedding $\sigma: E \to \RR$
with $W_{\sigma}$  of signature $(2,m-2)$ or $(3,m-3)$. Then there exists
$\alpha \in E^{\times}$ with $\alpha \in \Lambda^{1,d-1}_{E/{\QQ}}$
such that
\begin{eqnarray*}
{\rm det}(U) = {\mathrm N}_{E/\QQ}(\alpha) \Delta_E^m \ {\rm in} \ {\QQ}^{\times}/{\QQ}^{2 \times}.
\end{eqnarray*}

\end{prop}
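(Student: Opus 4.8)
The plan is to reduce everything to the sign pattern of $\det(W)$ at the real embeddings of $E$ and then exploit that $d$ is even. First I would apply Lemma \ref{invariants bis} (ii): since $E=E_0$ is totally real with trivial involution and $\dim_E(W)=m$, it gives
\[
\det(U)=\det(\TT(W))=\Delta_E^{m}\,{\rm N}_{E/\QQ}(\det(W))\quad\text{in }\QQ^{\times}/\QQ^{\times 2}.
\]
So it suffices to produce an element $\alpha\in\Lambda^{1,d-1}_{E/\QQ}$ with ${\rm N}_{E/\QQ}(\alpha)\equiv{\rm N}_{E/\QQ}(\det(W))$ modulo squares; the natural candidate will be $\det(W)$ corrected by a sign.

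Next I would pin down the sign pattern of $\det(W)$. Diagonalizing $W\simeq\langle a_1,\dots,a_m\rangle$ with $a_i\in E^{\times}$, and letting $(p_\tau,m-p_\tau)$ be the signature of $W_\tau$ at each real embedding $\tau$, the sign of $\tau(\det W)$ is $(-1)^{m-p_\tau}$. By the signature computation of Section \ref{signatures} in the totally real case, the positive index of $\TT(W)=U$ equals $\sum_\tau p_\tau$, which must be $3$. Combined with the hypothesis that $W_\sigma$ has signature $(2,m-2)$ or $(3,m-3)$ (so $p_\sigma\in\{2,3\}$), the constraint $\sum_\tau p_\tau=3$ leaves only a couple of profiles for the $p_\tau$, and a short case check shows that $\det(W)$ is positive at exactly one embedding when $m$ is odd and at exactly $d-1$ embeddings when $m$ is even. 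Equivalently, $\det(W)\in\Lambda^{1,d-1}_{E/\QQ}$ for $m$ odd and $\det(W)\in\Lambda^{d-1,1}_{E/\QQ}$ for $m$ even.

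Finally I would set $\alpha=(-1)^{m-1}\det(W)$. For $m$ odd this is just $\det(W)\in\Lambda^{1,d-1}_{E/\QQ}$; for $m$ even, negation reverses the sign at every real embedding, carrying $\Lambda^{d-1,1}_{E/\QQ}$ onto $\Lambda^{1,d-1}_{E/\QQ}$, so again $\alpha\in\Lambda^{1,d-1}_{E/\QQ}$. Moreover, since $d$ is even,
\[
{\rm N}_{E/\QQ}(\alpha)=\bigl((-1)^{m-1}\bigr)^{d}\,{\rm N}_{E/\QQ}(\det(W))={\rm N}_{E/\QQ}(\det(W)),
\]
this equality holding already in $\QQ^{\times}$. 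Substituting into the first display yields $\det(U)={\rm N}_{E/\QQ}(\alpha)\,\Delta_E^{m}$, as required.

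The only place demanding care is the middle step, the bookkeeping of the $p_\tau$ under the two admissible shapes of $W_\sigma$; once that is in place, everything hinges on the hypothesis that $d$ is \emph{even}, which is precisely what makes negation norm-neutral and lets us repair the sign pattern in the even-$m$ case without disturbing the norm class. I do not expect any genuine obstruction beyond this parity check.
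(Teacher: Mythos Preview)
Your proof is correct and follows essentially the same route as the paper's: compute $\det(U)$ via Lemma \ref{invariants bis} (ii), observe that $\det(W)$ lies in $\Lambda^{1,d-1}_{E/\QQ}$ or $\Lambda^{d-1,1}_{E/\QQ}$, and use that $d$ is even so that negation is norm-neutral. The paper simply sets $\alpha=\det(W)$, asserts the $\Lambda^{1,d-1}$/$\Lambda^{d-1,1}$ dichotomy as ``straightforward to check'', and replaces $\alpha$ by $-\alpha$ if necessary; you carry out that check explicitly (tracking the parity of $m$) and package the sign correction as $\alpha=(-1)^{m-1}\det(W)$, but the content is the same.
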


\noindent
{\bf Proof.} Let  $\alpha_1,\dots,\alpha_m \in E^{\times}$ be such that $W = \langle \alpha_1,\dots,\alpha_m \rangle$.
Set $\alpha = {\rm det}(W)$; we have
${\rm det}(U) = {\mathrm N}_{E/\QQ}({\rm det}(W)) \Delta_E^m
= {\mathrm N}_{E/\QQ}(\alpha) \Delta_E^m$ in
$\ {\QQ}^{\times}/{\QQ}^{2 \times}$. It is straightforward to check that either $\alpha \in \Lambda^{1,d-1}_{E/{\QQ}}$
or $\alpha \in \Lambda^{d-1,1}_{E/{\QQ}}$. Since $d$ is even, the norm of $-\alpha$ is equal to the norm of $\alpha$, hence
we can assume that $\alpha \in \Lambda^{1,d-1}_{E/{\QQ}}$.
\qed

\begin{theo}\label{even degree}
 Let $m\geqslant 3$ be an  integer, and  let $U$ be a quadratic form over $\QQ$ of dimension $dm$ and signature $(3,dm-3)$.
  Fix $r_0\in \{2,3\}$.
There exists a quadratic form $W$ over $E$ such that
$$
U \simeq {\TT}(W)
$$
and that there is an embedding $\sigma: E \to \RR$
with $W_{\sigma}$  of signature $(r_0,m-r_0)$,
if and only if there  exists
$\alpha \in \Lambda^{1,d-1}_{E/{\QQ}}$
such that
\begin{eqnarray*}
\label{eq:cond_N}
{\rm det}(U) = {\mathrm N}_{E/\QQ}(\alpha) \Delta_E^m \ {\rm in} \ {\QQ}^{\times}/{\QQ}^{2 \times}.
\end{eqnarray*}

\end{theo}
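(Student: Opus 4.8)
The statement is an equivalence, and its ``only if'' direction is already contained in Proposition \ref{necessary}, which applies verbatim for both values $r_0 \in \{2,3\}$ (the signatures $(2,m-2)$ and $(3,m-3)$ allowed there are exactly the two cases we need). So the plan is to concentrate on the ``if'' direction: given $\alpha \in \Lambda^{1,d-1}_{E/\QQ}$ with ${\rm det}(U) = {\rm N}_{E/\QQ}(\alpha)\Delta_E^m$, I would construct $W$ by the same Kr\"uskemper-style argument as in the proof of Theorem \ref{new}, the essential new ingredient being that in even degree the determinant of the model form must be matched to $\alpha$, and this is precisely where the hypothesis on ${\rm det}(U)$ enters.

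First I would build a diagonal model form $W' = \langle \alpha_1, \dots, \alpha_m\rangle$ over $E$ of dimension $m$ whose signs at the real embeddings are prescribed so that: (a) some embedding $\sigma$ has $W'_\sigma$ of signature $(r_0,m-r_0)$; (b) at the remaining embeddings the signs are chosen so that $\TT(W')$ has total signature $(3,dm-3)$, using that over a totally real $E$ the transfer preserves the signature at each embedding (\S\ref{signatures}); and (c) ${\rm det}(W')$ represents $\alpha$ or $-\alpha$ in $E^{\times}/E^{\times 2}$. The sign pattern forced on ${\rm det}(W')$ by the signature requirement is positive at exactly one embedding, or (depending on the parity of $m$) at exactly $d-1$ embeddings; since $d$ is even we may pass between $\alpha$ and $-\alpha$ without changing the norm, so after this normalization the prescribed pattern is exactly the condition $\alpha \in \Lambda^{1,d-1}_{E/\QQ}$, and the distinguished place $\sigma$ can be taken to be whichever real embedding makes the bookkeeping consistent. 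With the sign pattern fixed, weak approximation lets me pick $\alpha_2,\dots,\alpha_m$ with the desired signs and then set $\alpha_1 = \alpha(\alpha_2\cdots\alpha_m)^{-1}$; its signs come out as required precisely because the product of all signs equals that of $\alpha$. The hypothesis $m \geq 3$ provides the necessary room.

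By construction $\TT(W')$ and $U$ then agree in dimension, in signature, and---via Lemma \ref{invariants bis}(ii) together with ${\rm det}(U) = {\rm N}_{E/\QQ}(\alpha)\Delta_E^m$---in determinant. Hence the Witt class $X = \TT(W') - U$ lies in the fundamental ideal $I(\QQ)$ and, having zero signature at the unique real place of $\QQ$, is torsion. By Kr\"uskemper (\cite{K}, Corollary of Lemma 7) there is a torsion form $Y \in I(E)$ with $\TT(Y) = X$. The decisive point is that, $E$ being totally real, a torsion element of ${\rm Witt}(E)$ has zero signature at \emph{every} real embedding, so $\mathrm{sig}_\tau(Y) = 0$ for all $\tau \in \Sigma_E$. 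Finally I take $W$ to be a dimension-$m$ representative of the Witt class $W' - Y$ (possible since $m \geq 3$); then $\TT(W) = U$ in ${\rm Witt}(\QQ)$, and equality of dimensions upgrades this to an isometry $\TT(W) \simeq U$, while $\mathrm{sig}_\tau(W) = \mathrm{sig}_\tau(W')$ for every $\tau$ because $Y$ contributes no signature. In particular $W_\sigma$ retains signature $(r_0,m-r_0)$, as required.

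The step I expect to be the main obstacle is the simultaneous control in $W'$ of both the signature pattern and the square class of the determinant: one must verify that the sign pattern dictated by the signature $(3,dm-3)$ with $W'_\sigma$ of type $(r_0,m-r_0)$ is, after the flip $\alpha \mapsto -\alpha$ that is harmless because $d$ is even, exactly the membership $\alpha \in \Lambda^{1,d-1}_{E/\QQ}$---so that this necessary condition is also sufficient to launch the Kr\"uskemper argument. The remaining ingredients (torsion detection by signatures, and the vanishing of all signatures of a torsion form over a totally real field, which is what preserves the signature of $W_\sigma$) are then routine.
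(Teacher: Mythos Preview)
Your proposal is correct and follows essentially the same Kr\"uskemper-style strategy as the paper's proof: build a model form $W'$ with the right signatures, lift the torsion Witt class $X=\TT(W')-U$ to a torsion $Y\in I(E)$, and represent $W'-Y$ in dimension $m$.

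The one variation worth noting is how the discriminant of $X$ is handled. You force $\det(W')=\pm\alpha$ from the outset (by setting $\alpha_1=\pm\alpha(\alpha_2\cdots\alpha_m)^{-1}$ and checking the signs come out right), so that $\TT(W')$ and $U$ already agree in determinant and $\operatorname{disc}(X)=1$. The paper instead chooses the $\alpha_i$ purely by sign constraints, lets $\beta=\det(W')$ be whatever results, and then verifies that $\operatorname{disc}(X)=\mathrm{N}_{E/\QQ}(\alpha\beta)$ lies in $\Lambda^+_{E/\QQ}$ because $(-1)^{m+1}\alpha\beta$ is totally positive. Your route trades a slightly more careful construction of $W'$ for a trivial discriminant check; the paper's route keeps $W'$ simple and does the bookkeeping afterwards.

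One small caution on citations: for the lifting step in \emph{even} degree the paper invokes \cite[Theorem~9.2]{BGS}, whose hypothesis is precisely $\operatorname{disc}(X)\in\Lambda^+_{E/\QQ}$, rather than Kr\"uskemper's Corollary of Lemma~7 (which the paper uses only in the odd-degree Theorem~\ref{new}). Since your construction gives $\operatorname{disc}(X)=1\in\Lambda^+_{E/\QQ}$, that hypothesis is trivially met, so this is only a matter of pointing to the appropriate reference rather than a gap in the argument.
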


\noindent{\bf Proof.} The necessity of the condition follows from Proposition \ref{necessary}. Suppose now that there exists $\alpha \in \Lambda^{1,d-1}_{E/{\QQ}}$
such that ${\rm det}(U) = {\mathrm N}_{E/\QQ}(\alpha) \Delta_E^m$
 in
$\ {\QQ}^{\times}/{\QQ}^{2 \times}$,
and let us prove the existence of the quadratic form $W$ with the
required properties. Let $\sigma \in \Sigma_E$
 and let  
 \begin{itemize}
 \item
 $\alpha_4\hdots,\alpha_m\in\Lambda^{0,d}_{E/\QQ}$ and
 \item
 $\alpha_1,\hdots, \alpha_3\in\Lambda^{1,d-1}_{E/\QQ}$ such that
 $\sigma(\alpha_i)>0$ if and only if $i\leq r_0$.
 \end{itemize}
Set $W' = \langle \alpha_1,\dots,\alpha_m \rangle$.
Let us consider the Witt class $X = \TT(W') - U$ in
${\rm Witt}(\QQ)$, and note that $X \in I(\QQ)$ and that it is torsion (cf.  \cite{BGS}, Theorem 9.3).

\medskip
The determinant of $\TT(W')$ is  equal to ${\mathrm N}_{E/\QQ}({\rm det}(W')) \Delta_E^m$
by Lemma \ref{invariants bis}  (ii).
We have ${\rm det}(W') = \alpha_1 \cdots \alpha_m$.
Set $\beta = {\rm det}(W')$, and note that the discriminant
of $X$ is ${\mathrm N}_{E/\QQ}(\alpha \beta) 
= {\mathrm N}_{E/\QQ}(-\alpha \beta)$.

\medskip
If $r_0=3$, set $\sigma'=\sigma$.
If $r_0=2$, fix $\sigma'\in\Sigma_E$ such that $\sigma'(\alpha_3)>0$.
After replacing $\alpha$ by some conjugate, we may assume that $\sigma'(\alpha)>0$.
Then the element $(-1)^{m+1}\alpha \beta$ is totally positive, 
hence
$\disc(X)\in\Lambda^+_{E/{\QQ}}$.
By \cite[ Theorem 9.2]{BGS}, this implies that
there exists a torsion form
$Y \in I(E)$ such that $\TT(Y) \cong X$. Let $W$ be a quadratic form of dimension $m$ over $E$ representing the Witt class $Y - W'$; this is possible
by \cite[ Lemma 10.3]{BGS}.
Then $\TT(W) = -U$
 in ${\rm Witt}(\QQ)$, and since ${\rm dim}(\TT(W)) = {\rm dim}(U)$,
 we have
$\TT(-W) \simeq  U$.
By construction, $-W$
has the claimed signatures for the real embeddings of $F$.
\qed

\medskip
We now apply this result to real quadratic fields.

\begin{coro}\label{d=2}
Let $E$ be a real quadratic field, and let  $U$ be a quadratic form over $\QQ$ of dimension $2m$ and signature $(3,2m-3)$.
Suppose that $m$ is odd and  fix $r_0\in \{2,3\}$.
Then there exists a quadratic form $W$ over $E$ such that
$$
U \simeq {\TT}(W)
$$
and that there is an embedding $\sigma: E \to \RR$
with $W_{\sigma}$ of signature $(r_0,m-r_0)$ if and only if $E$ contains a totally positive element of norm $-\det(U)$.
\end{coro}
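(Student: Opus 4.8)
The plan is to specialize Theorem \ref{even degree} to $d=2$ and then translate its norm condition into the totally positive statement. Since $U$ has signature $(3,2m-3)$ with $2m-3\geq 0$, and $m$ is odd, we have $m\geq 3$, so Theorem \ref{even degree} applies: a form $W$ over $E$ with $U\simeq\TT(W)$ and an embedding $\sigma$ with $W_\sigma$ of signature $(r_0,m-r_0)$ exists if and only if there is $\alpha\in\Lambda^{1,1}_{E/{\QQ}}$ with ${\rm det}(U)={\rm N}_{E/{\QQ}}(\alpha)\,\Delta_E^{\,m}$ in ${\QQ}^{\times}/{\QQ}^{\times 2}$. The first observation is that $\Lambda^{1,1}_{E/{\QQ}}$ is exactly the set of $\alpha\in E^{\times}$ of negative norm, since an element positive at one real embedding and negative at the other has negative norm. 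Thus it remains to show that this det condition is equivalent to the existence of a totally positive element of $E$ of norm $-{\rm det}(U)$ (a condition that is consistent, as the signature $(3,2m-3)$ forces ${\rm det}(U)<0$, hence $-{\rm det}(U)>0$).

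The heart of the argument is an algebraic translation by multiplication by $\sqrt D$. Write $E=\QQ(\sqrt D)$ for a positive non-square $D\in\QQ^{\times}$; then $\Delta_E\equiv D$ in $\QQ^{\times}/\QQ^{\times 2}$, and since $m$ is odd this yields $\Delta_E^{\,m}\equiv D$. Moreover ${\rm N}_{E/{\QQ}}(\sqrt D)=-D$ and $\sqrt D\in\Lambda^{1,1}_{E/{\QQ}}$. Given $\alpha$ of negative norm with ${\rm det}(U)\equiv {\rm N}_{E/{\QQ}}(\alpha)\,D$, I would set $\beta=\alpha\sqrt D$, so that ${\rm N}_{E/{\QQ}}(\beta)={\rm N}_{E/{\QQ}}(\alpha)\cdot(-D)>0$ and hence ${\rm N}_{E/{\QQ}}(\beta)\equiv -{\rm N}_{E/{\QQ}}(\alpha)\,D\equiv-{\rm det}(U)$. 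Since both $\alpha$ and $\sqrt D$ change sign across the two embeddings, $\beta$ is either totally positive or totally negative; after replacing $\beta$ by $-\beta$ if necessary (which does not change the norm, as $d=2$), one obtains a totally positive element of norm $-{\rm det}(U)$.

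For the converse I would run the same computation backwards: given a totally positive $\beta$ with ${\rm N}_{E/{\QQ}}(\beta)\equiv-{\rm det}(U)$, set $\alpha=\beta\sqrt D$. Then $\alpha$ is positive at one embedding and negative at the other, so $\alpha\in\Lambda^{1,1}_{E/{\QQ}}$, and ${\rm N}_{E/{\QQ}}(\alpha)\,\Delta_E^{\,m}\equiv {\rm N}_{E/{\QQ}}(\beta)\cdot(-D)\cdot D\equiv -{\rm N}_{E/{\QQ}}(\beta)\equiv{\rm det}(U)$ in $\QQ^{\times}/\QQ^{\times 2}$, which is precisely the condition of Theorem \ref{even degree}. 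I do not expect a genuine obstacle here: all the geometric and arithmetic content is already packaged in Theorem \ref{even degree}, and the remaining work is the bookkeeping of signs modulo squares. The one point requiring care is the totally positive/totally negative dichotomy for $\beta=\alpha\sqrt D$ and the use of oddness of $m$ (so that $\Delta_E^{\,m}\equiv\Delta_E\equiv D$); this is exactly where the hypothesis $m$ odd enters, and it is the step whose verification I would carry out most carefully.
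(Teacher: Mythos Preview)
Your proof is correct and follows essentially the same route as the paper: invoke Theorem \ref{even degree}, then translate via multiplication by $\sqrt D$ between elements of $\Lambda^{1,1}_{E/\QQ}$ and totally positive (or totally negative) elements. One small point you gloss over: your forward direction only produces a totally positive $\beta$ with ${\rm N}_{E/\QQ}(\beta)\equiv -\det(U)$ in $\QQ^\times/\QQ^{\times 2}$, whereas the statement asks for actual norm $-\det(U)$; the paper flags this and notes that for a quadratic field the two conditions are equivalent (rescale $\beta$ by a suitable positive rational, cf.\ Example \ref{ex:(1,1)}).
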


\noindent
{\bf Proof.}
 If there exists a quadratic form $W$ with the
required properties, then by Theorem \ref{even degree} there exists $\alpha \in \Lambda^{1,1}_{E/{\QQ}}$ with
${\rm det}(U) =
{\mathrm N}_{E/\QQ}(\alpha) \Delta_E$
 in ${\QQ}^{\times}/{\QQ}^{2 \times}$.
 Recall that $E$ is a  quadratic field, hence $E =  \QQ(\sqrt d)$ for some integer $d > 0$.
 Note that $\sqrt d \in \Lambda^{1,1}_{E/{\QQ}}$.  We have $\Delta_E = -{\mathrm N}_{E/\QQ}(\sqrt d)$ in ${\QQ}^{\times}/{\QQ}^{2 \times}$; therefore
 we have ${\rm det}(U) = -
{\mathrm N}_{E/\QQ}(\alpha \sqrt d)$, and $\alpha \sqrt d$ is totally positive (or totally negative), hence
  $-{\rm det}(U)$ is the norm of a totally positive element of $E$.
  (A priori, this only holds modulo squares, but for quadratic fields this is equivalent to asking for actual norm $-{\rm det}(U)$ as in Example \ref{ex:(1,1)}.)

 \medskip Conversely, suppose that there exists a totally positive element $\gamma \in E^{\times}$ of norm $-\det(U)$; set $\beta = \sqrt d$,
 and write $\Delta_E = - {\mathrm N}_{E/\QQ}(\beta) $, as before.
 Set $\alpha = \gamma \beta^{-1}$; we have ${\mathrm N}_{E/\QQ}(\alpha) \Delta_E = \det(U)$ and $\alpha \in  \Lambda^{1,1}_{E/{\QQ}}$,
 so Theorem
 \ref{even degree} implies the existence of the desired quadratic form.
 \qed

 \begin{coro}\label{md=22 new} Let $E$ be a real quadratic field, and let $U$ be a quadratic form
 over $\QQ$ of dimension $2m$ with $m$ odd and signature $(3,2m-3)$; suppose that $U$ has
determinant $-1$. 
 Fix $r_0\in \{2,3\}$.
Then there exists a quadratic form $W$ over $E$ such that
$$
U \simeq {\TT}(W)
$$
and that there is an embedding $\sigma: E \to \RR$
with $W_{\sigma}$ of signature $(r_0,m-r_0)$.
\end{coro}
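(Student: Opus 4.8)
The plan is to obtain this statement as the special case $\det(U) = -1$ of Corollary \ref{d=2}. First I would invoke that corollary directly: applied to our real quadratic field $E$ and to the given form $U$ of dimension $2m$ with $m$ odd and signature $(3,2m-3)$, it asserts that a quadratic form $W$ over $E$ with $U \simeq \TT(W)$ and with $W_\sigma$ of signature $(r_0, m-r_0)$ for some embedding $\sigma \colon E \to \RR$ exists \emph{if and only if} $E$ contains a totally positive element of norm $-\det(U)$. Thus the entire task reduces to verifying this single norm condition under the extra hypothesis $\det(U) = -1$.

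Checking the condition is immediate. By assumption $\det(U) = -1$ in $\QQ^\times/\QQ^{\times 2}$, so $-\det(U) = 1$ in $\QQ^\times/\QQ^{\times 2}$. The element $\gamma = 1 \in E^\times$ is totally positive and has ${\mathrm N}_{E/\QQ}(\gamma) = 1$, which lies in the square class of $-\det(U)$. Hence the criterion of Corollary \ref{d=2} is satisfied, and the desired quadratic form $W$ exists with the stated signature at $\sigma$.

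There is essentially no genuine obstacle here: all the work is carried by Corollary \ref{d=2} (and behind it Theorem \ref{even degree} and the Kr\"uskemper-type transfer results), while the present statement merely isolates the case relevant to the K3 lattice, whose transcendental part has determinant $-1$. The one point deserving a word of care is the reading of ``norm $-\det(U)$'': following the parenthetical remark at the end of the proof of Corollary \ref{d=2} (and Example \ref{ex:(1,1)}), this is to be understood modulo squares for a quadratic field. Since $\gamma = 1$ has norm in the trivial square class and $-\det(U)$ is the trivial class in $\QQ^\times/\QQ^{\times 2}$, the matching is unambiguous and the argument goes through with no further computation.
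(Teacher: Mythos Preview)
Your proof is correct and matches the paper's approach exactly: the paper's proof is the single line ``This follows from Corollary \ref{d=2}, since ${\rm det}(U) = -1$,'' and your argument simply spells out why the norm condition in that corollary is satisfied by the totally positive element $\gamma = 1$.
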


\noindent
{\bf Proof.} This follows from Corollary \ref{d=2}, since ${\rm det}(U) = -1$.
\qed

\medskip
 Applied to $U=V_{K3}$, this provides a new proof of Proposition \ref{md=22}.

\begin{coro}\label{d=2 m even}
Let $E$ be a real quadratic field, and let  $U$ be a quadratic form over $\QQ$ of dimension $2m$ and signature $(3,2m-3)$.
Suppose that $m$ is even and fix   $r_0\in \{2,3\}$.
Then there exists a quadratic form $W$ over $E$ such that
$$
U \simeq {\TT}(W)
$$
and that there is an embedding $\sigma: E \to \RR$
with $W_{\sigma}$ of signature $(r_0,m-r_0)$ if and only if
$E$ contains an element of norm $\det(U)$.
\end{coro}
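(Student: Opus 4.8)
The plan is to specialize Theorem \ref{even degree} to the real quadratic case $d=2$ and then rewrite its numerical criterion using that $m$ is even. Since $[E:\QQ]=d=2$, Theorem \ref{even degree} applies and asserts that the desired $W$ exists, with the prescribed signature $(r_0,m-r_0)$ at one embedding, if and only if there is some $\alpha\in\Lambda^{1,1}_{E/\QQ}$ with ${\rm det}(U)={\mathrm N}_{E/\QQ}(\alpha)\,\Delta_E^m$ in $\QQ^{\times}/\QQ^{\times 2}$. Because $m$ is even, $\Delta_E^m$ is a perfect square and hence trivial in $\QQ^{\times}/\QQ^{\times 2}$, so this criterion collapses to the existence of $\alpha\in\Lambda^{1,1}_{E/\QQ}$ with ${\rm det}(U)\equiv{\mathrm N}_{E/\QQ}(\alpha)\pmod{\QQ^{\times 2}}$.

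Next I would translate the membership $\alpha\in\Lambda^{1,1}_{E/\QQ}$ into a sign condition on the norm. Denoting the two real embeddings of $E$ by $\sigma_1,\sigma_2$, one has ${\mathrm N}_{E/\QQ}(\alpha)=\sigma_1(\alpha)\sigma_2(\alpha)$, so $\alpha$ is positive at one embedding and negative at the other precisely when ${\mathrm N}_{E/\QQ}(\alpha)<0$; that is, $\Lambda^{1,1}_{E/\QQ}$ is exactly the set of elements of negative norm. On the other side, the signature of $U$ is $(3,2m-3)$ with $2m-3$ odd, whence ${\rm det}(U)$ is a negative rational number. These two sign facts are what make the mod-squares criterion equivalent to an exact norm equation.

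I would then establish the two implications. For the forward direction, given $\alpha\in\Lambda^{1,1}_{E/\QQ}$ with ${\rm det}(U)={\mathrm N}_{E/\QQ}(\alpha)\,s^2$ for some $s\in\QQ^{\times}$, put $\gamma=s\alpha\in E$; then ${\mathrm N}_{E/\QQ}(\gamma)=s^2{\mathrm N}_{E/\QQ}(\alpha)={\rm det}(U)$, so $E$ contains an element of norm exactly ${\rm det}(U)$. Conversely, if $\gamma\in E$ has ${\mathrm N}_{E/\QQ}(\gamma)={\rm det}(U)$, then ${\rm det}(U)<0$ forces ${\mathrm N}_{E/\QQ}(\gamma)<0$, so $\gamma\in\Lambda^{1,1}_{E/\QQ}$ and trivially ${\rm det}(U)\equiv{\mathrm N}_{E/\QQ}(\gamma)\pmod{\QQ^{\times 2}}$; the simplified criterion thus holds. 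Combining these with the first step yields the claimed equivalence.

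The only genuinely delicate point—the analogue of the parenthetical remark in the proof of Corollary \ref{d=2} and of Example \ref{ex:(1,1)}—is the passage from ``equal modulo squares'' to ``equal as an honest norm.'' I expect this to be the crux, but it is resolved by the rational scaling $\gamma=s\alpha$ together with the observation that both ${\rm det}(U)$ and the norm of any element of $\Lambda^{1,1}_{E/\QQ}$ are negative, so there is no sign obstruction to matching the two. It is worth flagging that the hypothesis ``$m$ even'' enters only to make $\Delta_E^m$ a square; this is precisely why the criterion obtained here (``$E$ contains an element of norm ${\rm det}(U)$'') differs from the odd case of Corollary \ref{d=2} (``a totally positive element of norm $-{\rm det}(U)$'').
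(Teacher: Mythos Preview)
Your proof is correct and follows essentially the same route as the paper: apply Theorem \ref{even degree}, use $m$ even to drop the $\Delta_E^m$ factor, and then pass between ``norm equals $\det(U)$ modulo squares with $\alpha\in\Lambda^{1,1}_{E/\QQ}$'' and ``norm equals $\det(U)$ exactly'' via rational scaling and the observation that $\det(U)<0$. Your write-up is in fact more explicit than the paper's on the scaling step $\gamma=s\alpha$ and on why $\det(U)<0$, but the argument is the same.
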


\noindent
{\bf Proof.} 
Suppose that there exists a quadratic form $W$ with the required properties. By Theorem \ref{even degree} this implies that
$E$ contains an element $\alpha \in \Lambda^{1,1}_{E/{\QQ}}$  of norm $\det(U)$ in ${\QQ}^{\times}/{\QQ}^{2 \times}$; since
$E$ is a quadratic field, we can assume that the norm of $\alpha$ is $\det(U)$.
Conversely, let $\alpha$ be an element of $E$ with norm ${\rm det}(U)$. Since $\det(U)<0$ by inspection of the signature, we have
$\alpha \in \Lambda^{1,1}_{E/{\QQ}}$,
and hence  Theorem \ref{even degree} implies the existence of the desired quadratic form.
\qed

\section{Real multiplication for non-projective hyperk\"ahler manifolds}
\label{s:HK}

In this section, we start to collect the results analogous to Theorem \ref{RM}, \ref{SM}
for
all known types of hyperk\"ahler manifolds.
We first treat RM and then turn to SM in the next section.
%

Since the surjectivity of the period map has been established for  
all known types of hyperk\"ahler manifolds \cite[Theorem 8.1]{Huy},
the arguments from Section \ref{ss:RM} carry over almost literally
(without the uniqueness statements),
once we provide the required input on the level of quadratic forms
(as prepared in Sections \ref{s:RM}, \ref{s:even}).
Hence we will keep our considerations rather short.

They are organized according to the four known higher dimensional
families of hyperk\"ahler manifolds (cf.\ \cite[p.\ 78]{Rap} or \cite[Table 1]{BGS})
\begin{itemize}
\item
the deformations of the generalized Kummer varieties $K_n(T)$,
where $T$ is a complex 2-dimensional torus,
\item
the deformations of the Hilbert schemes $X^{[n]}$, where $X$
is a K3 surface,
\item
the families of HK manifolds of OG6-type and  OG10-type discovered by O'Grady.
\end{itemize}

The second cohomology group of a HK manifold $X$ carries the
{\it Beauville-Bogomolov-Fujiki form} (BBF-form, for short),
making $H^2(X,\ZZ)$ into a Hodge structure of K3 type. 

We recall the terminology of Oguiso \cite{O 7}, \cite{O 8}, \cite{O 10}
which depends on the restriction of the BBF-form to  the Neron-Severi group.
If this restriction has signature $(1,\rho-1)$, then the manifold
is said to be {\it hyperbolic}, if this restriction is negative definite, it is called {\it elliptic}, and otherwise it is {\it parabolic}. By a result of Huybrechts, we know that
a HK manifold is hyperbolic if and only if it is projective, and the possible Hodge endomorphism fields are determined in \cite{BGS}. For K3 surfaces,
parabolic is equivalent to algebraic dimension $1$, and we have already seen that this case is uninteresting~: the field of Hodge endomorphisms is
$\QQ$. 

\medskip

Therefore, we focus exclusively on {\it elliptic HK manifolds} in what follows.
It is shown in \cite[Theorem 1.4 (1)]{Campana} that the algebraic dimension of such a manifold is zero, and the converse is true in the case of K3 surfaces; however, 
is not clear whether this holds for arbitrary HK manifolds. 
Note that the transcendental lattice of an elliptic HK manbifold is indeed pseudo-polarized,
i.e.\ non-degenerate of signature $(3,\cdot)$,
so we can apply the results  developed in the preceding chapters.

%
%
%
%

\subsection{BBF-forms of hyperk\"ahler manifolds}
\label{s:BBF}

To apply our results, we need the 
 list of the BBF-forms of the known HK manifolds, as given in 
\cite[Table 1]{BGS}. However, we only need to know the structure of the {\it rational quadratic forms} obtained from the integral ones after base change to $\QQ$. 
If $X$ is a HK manifold, set $V_X = H^2(X,\QQ)$; with the
notation of Example \ref{HW}, the quadratic forms $V_X$ are as follows:

\medskip
$\bullet$ If $X$ is a HK manifold of  $K_n(T)$-type, then $V_X \simeq H^3 \oplus \langle -2k \rangle$,
where $k =  n + 1$, and $n \geqslant 2$.

\medskip

$\bullet$ If $X$ is a K3 surface, then the quadratic form $V_X \simeq V_{K3}$.

\medskip
$\bullet$ If $X$ is a HK manifold of  K3$^{[n]}$-type, then  $V_X \simeq V_{K3} \oplus \langle -2k \rangle$,
where $k = n-1$, and $n \geqslant 2$.

\medskip
$\bullet$ If $X$ is a HK manifold of OG6-type, then $V_X \simeq H^3 \oplus \langle -1,-1 \rangle$.

\medskip
$\bullet$ If $X$ is a HK manifold of OG10-type,  then  $V_X \simeq V_{K3} \oplus \langle -2,-6 \rangle$.

\bigskip

\subsection{Generalized Kummer manifolds}

\begin{theo}
An elliptic generalized Kummer manifold can only have RM by a real quadratic field.

Given $n>1$ and a real quadratic field $E$, there is
a one-dimensional family of generalized Kummer manifolds $X$ of dimension $2n$
such that a very general member satisfies $A_X=E$.
\end{theo}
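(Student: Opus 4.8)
The plan is to prove the two assertions separately, in both cases relying on the description from \S\ref{s:BBF}: an elliptic generalized Kummer manifold $X$ of dimension $2n$ has rational BBF-form $V_X \simeq H^3 \oplus \langle -2k\rangle$ with $k = n+1$, a form of rank $7$ and signature $(3,4)$, and its transcendental part $T_{X,\QQ}$ is a non-degenerate orthogonal summand of signature $(3,r-3)$, where $r = \dim_\QQ T_{X,\QQ}$. Since the orthogonal complement of $T_{X,\QQ}$ in $V_X$ is the negative definite N\'eron--Severi part, we have $r \leq 7$.

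First I would settle the structural claim. By Theorem \ref{endos}, $E = A_X$ is either totally real with $m := \dim_E T_{X,\QQ} \geq 3$, or a Salem field with $m = 1$; in either case $r = dm$ with $d = [E:\QQ]$. If $E$ is totally real, then $m \geq 3$ together with $dm \leq 7$ forces $d \leq 2$, so either $A_X = \QQ$ (no real multiplication) or $d = 2$ and $m = 3$; hence any nontrivial real multiplication is by a real quadratic field, with $\dim_E T_{X,\QQ} = 3$. It remains to exclude Salem multiplication, where $m = 1$ and $d = r$ is even and at least $4$, leaving only $d \in \{4,6\}$.

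The exclusion of $d \in \{4,6\}$ is the main obstacle. Here $T_{X,\QQ} = \TT(\langle c\rangle)$ for a one-dimensional hermitian form over the Salem field $E$, so by Lemma \ref{invariants bis}(iii) its determinant is $(-1)^{d_0}\Delta_E$ with $d_0 = d/2$, while the real places are pinned down by the signature analysis of \S\ref{signatures}. The complement $V_X \ominus \TT(\langle c\rangle)$ is then negative definite of rank $7-d \in \{1,3\}$, and its determinant and Hasse invariant are forced by those of $V_X$ (note $\det V_X = 2k$, while $w(V_X)$ is computed as in Example \ref{HW}). I would show that no Salem field of degree $4$ or $6$ admits a class $c$ making all of these local invariants simultaneously consistent at the real place, at $2$, and at the primes dividing $2k$; equivalently, one may invoke the general existence criteria for Salem multiplication on hyperk\"ahler manifolds (Theorems \ref{non-maximal theorem} and \ref{610}) applied to $V_X$, which yield the same conclusion. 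I expect the verification of these matching conditions to be the delicate point.

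For the existence statement, fix $n > 1$, set $k = n+1$, and let $E$ be a real quadratic field. Since $V_X \simeq H^3 \oplus \langle -2k\rangle$ with $H^3$ of rank $6$, signature $(3,3)$ and $\det(H^3) = -1$, Corollary \ref{md=22 new} (the case $m = 3$) yields a three-dimensional quadratic form $W$ over $E$ with $\TT(W) \simeq H^3$ and an embedding $\sigma : E \hookrightarrow \RR$ for which $W_\sigma$ has signature $(2,1)$. Theorem \ref{thm:pspolRM} then provides a family of dimension $m-2 = 1$ of pseudo-polarized K3-type Hodge structures on $\TT(W) = H^3$ whose very general member has Hodge endomorphism algebra $E$. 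Declaring the complementary summand $\langle -2k\rangle$ to be of type $(1,1)$ extends this to a Hodge structure of K3 type on all of $V_X$, with N\'eron--Severi part the negative definite line $\langle -2k\rangle$, so the members are elliptic of algebraic dimension $0$. Finally, surjectivity of the period map for generalized Kummer manifolds \cite[Theorem 8.1]{Huy} realizes every such period by a $K_n(T)$-type manifold $X$ of dimension $2n$; for the very general member of the resulting one-dimensional family the transcendental endomorphism algebra is exactly $E$, giving $A_X \simeq E$.
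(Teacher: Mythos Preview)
Your existence argument is correct and matches the paper's proof essentially verbatim: apply Corollary \ref{md=22 new} to $U=H^3$ (with $m=3$ odd and $\det(U)=-1$), then invoke Theorem \ref{thm:pspolRM} and surjectivity of the period map.

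However, you have misread the first assertion. ``Can only have RM by a real quadratic field'' means: \emph{if} $A_X$ is a nontrivial totally real field, then $[A_X:\QQ]=2$. It does \emph{not} assert that $A_X$ is always $\QQ$ or real quadratic, and in particular it says nothing about Salem multiplication. Your dimension count ($dm\le 7$, $m\ge 3 \Rightarrow d\le 2$) already proves exactly what is claimed, and this is precisely the paper's one-line argument.

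Your attempt to also exclude Salem fields of degree $4$ and $6$ is therefore unnecessary---and in fact impossible. Theorem \ref{non-maximal theorem}(i) shows that for every Salem field $E$ of degree $2d$ with $d\le 3$ there \emph{does} exist an elliptic generalized Kummer manifold with $A_X\simeq E$. So the ``delicate point'' you anticipated is not delicate but false: the local invariants you propose to mismatch will always match for suitable $c$, as Theorem \ref{Kum} establishes. Simply delete the paragraph about excluding $d\in\{4,6\}$ and your proof is complete.
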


\begin{proof}
Since $H^2(X,\QQ) \cong  H^3 \oplus \langle-2n-2\rangle$ has dimension $7$,
Theorem \ref{endos} only allows for real quadratic fields, more precisely $d=2, m=3$.

Conversely, given a real quadratic field $E$,
we set $U = H^3$.
Then Corollary \ref{md=22 new} applies, since $m$ is odd and $\det(U) = -1$.
Therefore the argument from the proof of Theorem \ref{RM} from Section \ref{ss:RM} yields the desired one-dimensional family.
\qed
\end{proof}

\subsection{Hyperk\"ahler manifolds of OG6-type}

\begin{theo}
Elliptic hyperk\"ahler manifolds of {\rm OG6}-type can only have RM by  real quadratic fields.
Let $E$ be a real quadratic field.

\smallskip
There is
a one-dimensional family of elliptic hyperk\"ahler manifolds $X$ of
{\rm OG6}-type such that a very general member satisfies $A_X=E$.

\smallskip
Moreover there is
a two-dimensional family of elliptic hyperk\"ahler manifolds
$X$ of {\rm OG6}-type
such that
a very general member satisfies $A_X=E$
if and only if $E$ contains an element of norm $-1$.

\end{theo}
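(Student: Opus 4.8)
The plan is to unbundle the statement into its three assertions and treat each through the correspondence between pseudo-polarized Hodge structures and transfers of quadratic forms over $E$. I record at the outset that for OG6-type one has $V_X \simeq H^3 \oplus \langle -1,-1\rangle$ (see \S\ref{s:BBF}), so $\dim_\QQ V_X = 8$, the signature is $(3,5)$, and $\det(V_X) = (-1)^3\cdot 1 = -1$ in $\QQ^\times/\QQ^{\times 2}$.

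First I would settle the classification statement. For an elliptic manifold the N\'eron-Severi part is negative definite, so the transcendental lattice $T_X$ carries all three positive directions; hence $T_{X,\QQ}$ is an orthogonal summand of $V_X$ of signature $(3,dm-3)$ with negative-definite complement, which forces $dm = \dim_\QQ T_{X,\QQ} \leq 8$. If $A_X = E$ is totally real, then Theorem \ref{endos} (through Proposition \ref{prop:RM-SM}) gives $m = \dim_E T_{X,\QQ} \geq 3$, so $dm \geq 3d$. Together with $dm \leq 8$ this leaves only $d = 1$ (the trivial case $E = \QQ$) and $d = 2$ with $m \in \{3,4\}$. Thus any nontrivial RM on an elliptic OG6-manifold is by a real quadratic field.

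For the one-dimensional family I take $m = 3$ and realize a form $\TT(W)$ of signature $(3,3)$ as an orthogonal summand of $V_X$. I would invoke Theorem \ref{real} with $V = V_X$, $d = 2$, $m = 3$ and $(r',s') = (3,3)$: the hypotheses $md = 6 \leq \dim(V)-2 = 6$, $r' = 3 \leq r = 3$, $s' = 3 \leq s = 5$ and $r' \leq m$ all hold, yielding a quadratic form $W$ over $E$ with $V_X \simeq \TT(W)\oplus V'$ and an embedding $\sigma$ with $W_\sigma$ of signature $(3,0) = (3,m-3)$. Theorem \ref{thm:pspolRM} then produces a family of dimension $m-2 = 1$ of pseudo-polarized K3-type Hodge structures on $\TT(W)$ whose very general member has endomorphism algebra $E$, and the surjectivity of the period map for OG6-type (\cite[Theorem 8.1]{Huy}), exactly as in \S\ref{ss:RM}, upgrades this to the desired one-dimensional family of manifolds.

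For the two-dimensional family I observe that the family from Theorem \ref{thm:pspolRM} has dimension $m-2$, so a two-dimensional family forces $m = 4$; then $dm = 8 = \dim_\QQ V_X$, so $T_{X,\QQ}$ must equal $V_X$ and we need $V_X \simeq \TT(W)$ for a quaternary $W$. Since $m = 4$ is even and $\det(V_X) = -1$, Corollary \ref{d=2 m even} (with $U = V_X$) furnishes such a $W$ precisely when $E$ contains an element of norm $-1$; feeding $W$ into Theorem \ref{thm:pspolRM} and the period map gives the family in the ``if'' direction, while the ``only if'' direction is the necessity half of Corollary \ref{d=2 m even} applied to a very general member, whose transcendental lattice is $V_X \simeq \TT(W)$. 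The one genuinely delicate point is this last equivalence: one must argue that a two-dimensional family really does pin down $m = 4$ (hence $T_{X,\QQ} = V_X$) before the determinant criterion applies, and that $\det(V_X) = -1$ is computed correctly. Everything else is a direct appeal to the arithmetic realization results already in hand.
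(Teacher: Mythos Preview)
Your proposal is correct and follows essentially the same approach as the paper: the dimension bound $dm\leq 8$ together with $m\geq 3$ forces $d=2$ and $m\in\{3,4\}$; the case $m=3$ is handled by Theorem \ref{real}, and the case $m=4$ by Corollary \ref{d=2 m even} with $\det(V_X)=-1$. You supply considerably more detail than the paper's own terse proof, in particular for the ``only if'' direction and the identification of the two-dimensional family with $m=4$, but the structure and the key inputs are identical.
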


\begin{proof}
$H^2(X,\QQ) \cong  H^3 \oplus \langle-1,-1\rangle$ having dimension $8$
leaves room for $d=2$ with $m=3, 4$ only.
The case $d = 2$, $m = 3$ is covered by Theorem \ref{real}.
The case $d=2, m=4$ follows from Corollary \ref{d=2 m even}
since $m$ is even and $\det(H^2(X,\QQ)) = -1$.
\qed
\end{proof}

\subsection{Hyperk\"ahler manifolds of K3$^{[n]}$ type}

\begin{theo}
Elliptic hyperk\"ahler manifolds of K3$^{[n]}$ type
can only have RM by totally real number fields of degree $d\leq 7$.

Precisely, let $E$ be a totally real number field of degree $d$ and let $m$ be an integer with $3\leq m \leq 22/d$.
Then there exists an $(m-2)$-dimensional family of
elliptic hyperk\"ahler manifolds of K3$^{[n]}$ type 
such that a very general member $X$ has the properties
$A_X \simeq E$ and ${\rm dim}_E(T_{X,\QQ}) = m$.
%
\end{theo}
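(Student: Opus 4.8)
The plan is to follow the pattern of the preceding theorems on generalized Kummer and OG6 manifolds, separating the statement into a necessity part and a constructive existence part. For a HK manifold $X$ of K3$^{[n]}$-type one has $V_X = H^2(X,\QQ) \simeq V_{K3} \oplus \langle -2k \rangle$ with $k = n-1$, so $\dim_\QQ V_X = 23$ and the signature is $(3,20)$. If such an $X$ is elliptic with RM, then Theorem \ref{endos} forces $E = A_X$ to be totally real with $m = \dim_E T_{X,\QQ} \geq 3$; since $T_{X,\QQ}$ embeds in $V_X$ we get $dm = \dim_\QQ T_{X,\QQ} \leq 23$, and together with $m \geq 3$ this yields $3d \leq 23$, hence $d \leq 7$. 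Moreover, as $23$ is prime, $dm = 23$ is impossible for $m\geq 3$ and $d\geq 2$, so in fact $dm \leq 22$; this is exactly why the constructive range can be written as $m \leq 22/d$.

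For the existence part I would first produce, over the given totally real $E$, a quadratic form $W$ of dimension $m$ together with a \emph{negative-definite} complement $V'$ realizing $V_X \simeq \TT(W) \oplus V'$, with an embedding $\sigma$ for which $W_\sigma$ has signature $(3,m-3)$ and all other $W_\tau$ negative definite. Taking $r_0 = 3$ is essential here: it makes $\TT(W)$ absorb the entire positive part $(3,\cdot)$ of $V_X$, so that $V'$ is negative definite, which is precisely what ellipticity (equivalently algebraic dimension $0$, by \cite[Theorem 1.4(1)]{Campana}) demands. When $dm \leq 21$ this is a direct application of Theorem \ref{real} to $V = V_X$ with $(r',s') = (3,\,dm-3)$, whose hypotheses $r'\le 3$ and $r'\le m$ are immediate. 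The only remaining value allowed by $m \leq 22/d$ is $dm = 22$; since $23$ is prime and $m \geq 3$, this forces $d = 2$ and $m = 11$, and here I would split off $V' = \langle -2k \rangle$ literally from $V_X = V_{K3}\oplus\langle -2k\rangle$, leaving $U = V_{K3}$ with $\det(U) = -1$, and invoke Corollary \ref{md=22 new} (valid because $m=11$ is odd) to obtain $W$ with $\TT(W) \simeq V_{K3}$.

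With $W$ secured, Theorem \ref{thm:pspolRM} supplies an $(m-2)$-dimensional family of pseudo-polarized K3-type Hodge structures on $\TT(W)$ whose very general member has Hodge endomorphism algebra exactly $E$. I would then extend each structure to all of $V_X$ by declaring the negative-definite complement $V'$ to be of type $(1,1)$, and appeal to the surjectivity of the period map for K3$^{[n]}$-type manifolds \cite[Theorem 8.1]{Huy} to realize it geometrically; as noted at the start of this section, the argument of \S \ref{ss:RM} transfers almost verbatim once the uniqueness coming from Torelli is dropped. For a very general member the rational $(1,1)$-classes are exactly $V'$, which is negative definite, so $X$ is elliptic, while $T_{X,\QQ} = \TT(W)$ gives $A_X \simeq E$ and $\dim_E T_{X,\QQ} = m$.

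I expect the only genuine difficulty to sit in the boundary case $dm = 22$: Theorem \ref{real} reaches only $dm \leq \dim(V_X) - 2 = 21$, so one must fall back on the even-degree machinery of Section \ref{s:even}. Fortunately the primality of $23$ confines this case to $d=2,\ m=11$, where the clean splitting $V_X = V_{K3} \oplus \langle -2k \rangle$ and the computation $\det(V_{K3}) = -1$ make Corollary \ref{md=22 new} directly applicable; once this quadratic-form input is in place, the passage to Hodge structures via Theorem \ref{thm:pspolRM} and to geometry via the period map is routine.
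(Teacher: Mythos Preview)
Your proposal is correct and follows essentially the same approach as the paper. The paper's own proof is extremely terse—it merely observes that $\dim V_X = 23$ is prime, hence $dm\le 22$, and says ``Theorem \ref{RM} carries over''—so your write-up is a faithful, more detailed unpacking of that sentence. One small difference worth noting: for $dm\le 21$ you apply Theorem \ref{real} directly to the $23$-dimensional $V_X$, which covers this range in a single step, whereas the paper (via Theorem \ref{RM} and Corollary \ref{for theorem 1}) works inside $V_{K3}$ and needs the separate Theorem \ref{new} for $dm=21$; your route is slightly more economical here, but the substance is identical.
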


\begin{proof}
Let $X$ be an elliptic hyperk\"ahler manifolds of K3$^{[n]}$ type ($n>1$).
Then $H^2(X,\QQ) \cong V_{K3} \oplus  \langle-2n-2\rangle$.
Since this has dimension $23$ which is prime, we obtain exactly the same endomorphism algebras as for K3 surfaces,
i.e.\ Theorem \ref{RM} carries over.
\qed
\end{proof}

\subsection{Hyperk\"ahler manifolds of OG10-type}

\begin{theo}
Elliptic hyperk\"ahler manifolds of {\rm OG10}-type can admit exactly the following RM structures:
\begin{enumerate}
\item
For $md\leq 22$, same as for K3 surfaces, i.e.\ $A_X$ may be any totally real number fields of degree $d\leq 7$,
and $m$ may range from $3$ to $22/d$.
\item
$md=23$ is impossible.
\end{enumerate}
The case $md=24$ breaks down into the following subcases:
\begin{enumerate}
\item[(3)]
$d = 2, m = 12$ exactly for the real quadratic fields containing an element of norm $-3$;

\item[(4)]
$d = 3, m = 8$ for any totally real cubic field $E$;

\item[(5)]
$d=4, m=6$ exactly for the totally real quartic fields $E$
which  contain an element $a$ with signature $(1,3)$ such that $N(a) = - 3$ modulo squares.

\item[(6)]
$d = 6, m = 4$ exactly for the totally real sextic fields $E$
which  contain an element $a$ with signature $(1,5)$ such that $N(a) = - 3$ modulo squares.

\item[(7)]
$d=8, m=3$ exactly for the totally real octic fields $E$
which  contain an element $a$ with signature $(1,7)$ such that $N(a) = - 3\Delta_E$ modulo squares.
\end{enumerate}
More precisely, each case occurs, 
and the elliptic hyperk\"ahler manifolds deform in $(m-2)$-dimensional families with given RM.
\end{theo}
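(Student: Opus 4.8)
The plan is to build everything on the single structural fact that an elliptic OG10 manifold has $V_X = H^2(X,\QQ)\simeq V_{K3}\oplus\langle -2,-6\rangle$, a rational quadratic form of dimension $24$ and signature $(3,21)$. First I would record the one arithmetic invariant that governs all the subcases: using the decomposition of Example \ref{HW}(ii) one has $\det(V_{K3})\equiv -1$, while $\det\langle -2,-6\rangle = 12 \equiv 3$, so $\det(V_X)\equiv -3$ in $\QQ^\times/\QQ^{\times 2}$. By Theorem \ref{endos} every RM structure is given by a totally real field $E$ of degree $d$ with $m=\dim_E T_{X,\QQ}\geq 3$, and the transfer correspondence of Section \ref{pp and hermitian} identifies $T_{X,\QQ}=\TT(W)$ with an orthogonal summand of $V_X$; in particular $md\leq 24$.

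For the range $md\leq 22$ I would apply Theorem \ref{real} directly to $V=V_X$, which is legitimate precisely because $md\leq \dim(V_X)-2 = 22$. For each totally real $E$ and each $m$ with $3\leq m\leq 22/d$ this yields $V_X\simeq \TT(W)\oplus V'$ with $W_\sigma$ of signature $(2,m-2)$ or $(3,m-3)$, so the admissible fields coincide exactly with those of Theorem \ref{RM}; this establishes case (1). Case (2) is then immediate: since $23$ is prime and Theorem \ref{endos} forces $m\geq 3$, the equation $md=23$ leaves only $(d,m)=(1,23)$, i.e.\ the trivial field $E=\QQ$ with no genuine multiplication, or $(23,1)$, which is excluded; hence no totally real field of degree $d\geq 2$ occurs.

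The heart of the argument is $md=24$, where $\TT(W)$ must fill out all of $V_X$. The factorizations with $m\geq 3$ are $d\in\{1,2,3,4,6,8\}$, the values $d=5,7$ failing to divide $24$ and $d=12,24$ violating $m\geq 3$. For the odd degree $d=3$ I would invoke Theorem \ref{new}, which gives $V_X\simeq\TT(W)$ with no constraint on $E$, yielding case (4). For even $d$ I would extract the necessary conditions from Proposition \ref{necessary} together with Lemma \ref{invariants bis}(ii): from $\det(V_X)\equiv {\mathrm N}_{E/\QQ}(\det W)\,\Delta_E^{\,m}$ the parity of $m$ decides whether $\Delta_E$ survives modulo squares. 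When $m$ is even ($d=2,4,6$) the condition reads ${\mathrm N}_{E/\QQ}(\alpha)\equiv -3$ for an element $\alpha$ of signature $(1,d-1)$, which for $d=2$ becomes an actual norm $-3$ via Example \ref{ex:(1,1)}; when $m$ is odd ($d=8$) it becomes ${\mathrm N}_{E/\QQ}(\alpha)\equiv -3\,\Delta_E$. These are exactly the conditions (3), (5), (6), (7).

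For sufficiency I would run Theorem \ref{even degree} in reverse (with Corollary \ref{d=2 m even} packaging the case $d=2$): once the displayed norm condition holds, it produces $W$ with $V_X\simeq\TT(W)$ and an embedding $\sigma$ realizing signature $(2,m-2)$ or $(3,m-3)$. Feeding this $W$ into Theorem \ref{thm:pspolRM} gives an $(m-2)$-dimensional family of pseudo-polarized K3-type Hodge structures whose very general member has endomorphism algebra $E$, and surjectivity of the period map for OG10 manifolds \cite{Huy} realizes these geometrically; ellipticity is automatic since such manifolds have algebraic dimension $0$ by \cite{Campana}. I expect the main obstacle to be precisely the even-degree analysis at $md=24$: one must verify both the necessity (Proposition \ref{necessary}) and the existence (Theorem \ref{even degree}), and, crucially, track how the factor $\Delta_E^{\,m}$ behaves modulo squares according to the parity of $m$, as this is what separates the uniform condition $-3$ in cases (3), (5), (6) from the twisted condition $-3\,\Delta_E$ in case (7).
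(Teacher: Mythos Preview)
Your proposal is correct and follows essentially the same route as the paper: handle $md\leq 22$ via Theorem~\ref{real}, rule out $md=23$ by primality, and for $md=24$ use Theorem~\ref{new} for $d=3$, Corollary~\ref{d=2 m even} for $d=2$, and Theorem~\ref{even degree} (with Proposition~\ref{necessary} for necessity) for the remaining even degrees, tracking $\Delta_E^{\,m}$ according to the parity of $m$. One small correction: your last sentence has the logic backwards --- ellipticity is automatic because the construction forces $\Pic(X)_\QQ$ to sit inside the negative-definite complement $V'$ (or to be zero when $md=24$), not because of \cite{Campana}; the latter goes in the other direction (elliptic $\Rightarrow$ algebraic dimension $0$).
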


\begin{proof}
Let $X$ be a hyperk\"ahler manifold of OG10-type.
We have $H^2(X,\QQ)\cong V_{K3} \oplus  \langle-2, -6\rangle$ of dimension $24$.
The cases $md\leq 23$ follow as in the K3$^{[n]}$ case.

For $md=24$, 
the case $d=3, m=8$ is settled by Theorem \ref{new}.
The case $d=2, m=12$ is allowed by Corollary \ref{d=2 m even}
if and only if $E$ contains an element of norm $-3$ as stated.
 The remaining cases follow from Theorem \ref{even degree}.
\qed
\end{proof}


\section{Salem multiplication}\label{Salem multiplication section}

In the third part of the paper, we turn to Salem multiplication, and the proof of Theorem \ref{SM}, as well as its generalizations for HK manifolds. 
%
%
%
%
%
%
%
%
%
%
The unconditional results concerning Salem multiplication for elliptic HK manifolds are as follows: 

\begin{theo}\label{non-maximal theorem} Let $E$ be a Salem field of degree $2d$.

\medskip
{\rm (i)} Suppose that $d \leqslant 3$. Then there exists an elliptic hyperk\"ahler manifold $X$ of $K_n(T)$-type such that $A_X \simeq E$, and an
elliptic hyperk\"ahler manifold $Y$ of {\rm OG6}-type with $A_Y \simeq E$. 

\medskip
{\rm (ii)} Suppose that $d \leqslant 10$. Then there exists a K3 surface $X$ of algebraic dimension $0$ with $A_X \simeq E$.

\medskip
{\rm (iii)} Suppose that $d \leqslant 11$. Then there exists an elliptic hyperk\"ahler manifold $X$ of  {\rm K3}$^{[n]}$-type such that $A_X \simeq E$, and an
elliptic hyperk\"ahler manifold $Y$ of  {\rm OG10}-type with $A_Y \simeq E$.

\end{theo}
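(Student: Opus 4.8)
The plan is to reduce each of the five cases to a single question about rational quadratic forms, and then to feed the answer into Theorem \ref{thm:pspolSM} together with the surjectivity of the period map. Write $2d=[E:\QQ]$, so the fixed field $E_0$ of the Salem involution has degree $d$. For each family the rational BBF-form $V_X$ is recorded in \S\ref{s:BBF}; set $\delta=\dim_\QQ V_X-2d$. It suffices to produce a one-dimensional hermitian form $W=\langle a\rangle$ over $E$, with $a\in E_0^\times$, such that $\TT(W)$ has signature $(3,2d-3)$ and
\[
V_X\;\simeq\;\TT(W)\,\oplus\,V'
\]
for some negative definite $V'$ of dimension $\delta$. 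Granting such a splitting, Theorem \ref{thm:pspolSM} endows $\TT(W)$ with a pseudo-polarized Hodge structure of endomorphism field $E$; placing $V'$ in the $(1,1)$-part extends this to a K3-type Hodge structure on $V_X$, and the surjectivity of the period map \cite[Theorem 8.1]{Huy} yields a manifold $X$ of the prescribed family realizing it, so $A_X\simeq E$. As $V'$ is negative definite it lies in the N\'eron--Severi part, making $X$ elliptic and hence of algebraic dimension $0$ (by \cite[Theorem 1.4 (1)]{Campana} for higher-dimensional $X$, and directly for K3 surfaces).

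First I would pin down the signature and determinant, which are essentially rigid. By the computation of \S\ref{signatures}, $\TT(\langle a\rangle)$ has signature $(1+2a^+,1+2a^-)$, where $a^+$ (resp.\ $a^-$) counts the real places of $E_0$ not splitting in $E$ at which $a$ is positive (resp.\ negative), and $a^++a^-=d-1$. Thus the target $(3,2d-3)$ is reached exactly when $a$ is positive at one such place and negative at the remaining $d-2$; such $a\in E_0^\times$ exist by weak approximation. The determinant is then forced independently of $a$: Lemma \ref{invariants bis} (iii) gives $\det\TT(\langle a\rangle)=(-1)^{d}\Delta_E$. Since $V_X$ and $\TT(W)$ share positive index $3$, the orthogonal complement $V'$ automatically has signature $(0,\delta)$, and its determinant $\det V'=\det V_X\cdot(-1)^{d}\Delta_E$ is likewise forced; one checks that its sign is precisely the one demanded by negative definiteness. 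The only remaining invariant to be matched is the Hasse--Witt invariant $w(V')$.

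The crux is therefore to compute $w\bigl(\TT(\langle a\rangle)\bigr)$ and to determine its range as $a$ varies over elements of $E_0^\times$ with the prescribed signs; this is exactly the input supplied by the results on hermitian forms over Salem fields in the rest of \S\ref{Salem multiplication section} and in \S\ref{hermitian BF}, which I would invoke here. When $\delta\geq 3$ no obstruction remains: the complement $V'$ with its forced dimension, signature, determinant and (subtracted) Hasse invariant exists because local realizability is unconstrained in dimension $\geq 3$ and Hilbert reciprocity is inherited from the genuine forms $V_X$ and $\TT(W)$, so $V'$ is negative definite and $\TT(W)$ splits off. The delicate cases are those of minimal complement, dictated by the parity of $\dim_\QQ V_X$: namely $\delta=1$ (for $K_n(T)$-type at $d=3$ and K3$^{[n]}$-type at $d=11$), where a negative definite complement is a single class $\langle c\rangle$, $c<0$, compelling $w(V')=0$; and $\delta=2$ (OG6-type at $d=3$, K3 at $d=10$, OG10-type at $d=11$), where $V'$ must be realized as a negative definite binary form of the forced determinant. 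I expect the main obstacle to lie exactly here: showing that the freedom in the choice of $a$, at fixed signature, suffices to drive $w(\TT(W))$ to the unique value making $V'$ negative definite. This is where the Salem structure of $E$ enters decisively, and where the extra negative definite summand present in $V_X$ for the K3$^{[n]}$- and OG10-types provides the room that buys the full range $2d\leq 22$ unconditionally; by contrast, for K3 surfaces the boundary $2d=22$ has $\delta=0$, leaving $\TT(W)\simeq V_{K3}$ with no complement to adjust, which forces $\det\TT(W)=(-1)^{11}\Delta_E=-\Delta_E$ to equal $\det V_{K3}=-1$, i.e.\ $\Delta_E$ a square, and is therefore excluded from the present unconditional statement.

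Finally I would assemble the five families. Parts (i) use $V_X\simeq H^3\oplus\langle-2k\rangle$ (dimension $7$) and $V_X\simeq H^3\oplus\langle-1,-1\rangle$ (dimension $8$) with $d\in\{2,3\}$, giving $\delta\in\{3,1\}$ and $\{4,2\}$; part (ii) uses $V_X\simeq V_{K3}$ (dimension $22$) with $d\leq 10$, giving even $\delta\geq 2$; part (iii) uses $V_X\simeq V_{K3}\oplus\langle-2k\rangle$ (dimension $23$) and $V_X\simeq V_{K3}\oplus\langle-2,-6\rangle$ (dimension $24$) with $d\leq 11$, giving $\delta\geq 1$ and $\delta\geq 2$. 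In each family the stated bound on $d$ is precisely the condition that the minimal complement dimension tolerated by the Hasse-invariant analysis be met, reproducing the ranges $d\leq 3$, $d\leq 10$ and $d\leq 11$. The cases with $\delta\geq 3$ are immediate, and the tight cases are settled by the explicit Hasse-invariant computation, completing the proof.
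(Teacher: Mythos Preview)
Your plan is correct and aligns with the paper's proof, which literally reads ``collect the results of Theorems \ref{K3}--\ref{thm:OG10}''; those in turn feed Theorems \ref{Kum}, \ref{O}, \ref{Thm:10.4} into Theorem \ref{thm:pspolSM} and the surjectivity of the period map, exactly the pipeline you describe. Your observation that $\delta\geq 3$ is unobstructed is precisely what the paper imports from \cite[Theorem 7.1, Proposition 7.7]{BGS}, and your deferral of the tight cases $\delta\in\{1,2\}$ to \S\ref{Salem multiplication section}--\S\ref{hermitian BF} matches the paper verbatim.

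One presentational difference is worth flagging. You phrase the tight cases as ``vary $a\in E_0^\times$ at fixed signature to drive $w(\TT(\langle a\rangle))$ to the needed value''. The paper runs this in reverse: it \emph{chooses} $V'$ explicitly (e.g.\ $V'=\langle -2k\Delta_E\rangle$ in Theorem \ref{Kum}, $V'=\langle -1,\Delta_E\rangle$ in Theorem \ref{Thm:10.4}), writes $U=V_X\ominus V'$, and then verifies that $U$ meets conditions (i)--(iv) of Theorem \ref{realization}, which is the characterization of all transfers $\TT(\langle a\rangle)$. The two viewpoints are dual---knowing the full range of $w(\TT(\langle a\rangle))$ is equivalent to knowing Theorem \ref{realization}---but the paper's direction is more economical: one explicit Hasse-invariant computation per family replaces any analysis of how much freedom the choice of $a$ actually buys. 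If you develop your version further, you will find that the constraint you must hit is exactly the hyperbolicity condition at primes $p\in S_E$ in Theorem \ref{realization} (iii), and checking it amounts to the same arithmetic as in Theorems \ref{Kum}, \ref{O}, \ref{Thm:10.4}.
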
 

For HK manifolds of OG6-type and $d = 4$, K3 surfaces and $d = 11$, as well as OG10-type and $d = 12$, we furthermore have the following conditional results
which depend on the discriminant $\Delta_E$ of $E$. 

\begin{theo}\label{610} Let $E$ be a Salem field of degree $2d$.

\medskip
{\rm (i)} Suppose that $d =4$. Then there exists an 
elliptic hyperk\"ahler manifold $X$ of  {\rm OG6}-type with $A_X \simeq E$ if and only if $-\Delta_E$ is a square.

\medskip
{\rm (ii)} Suppose that $d = 11$. Then there exists a K3 surface $X$ of algebraic dimension $0$ with $A_X \simeq E$ if and only if
$\Delta_E$ is a square.

\medskip
{\rm (iii)} Suppose that $d =12$. Then there exists an elliptic hyperk\"ahler manifold $X$ of  {\rm OG10}-type  such that $A_X \simeq E$ if and
only if $-3\Delta_E$ is a square.

\end{theo}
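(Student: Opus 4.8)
The three cases are singled out by the fact that $[E:\QQ]$ equals $\dim_\QQ V_X$, namely $8$, $22$, $24$ in the OG6, K3, and OG10 cases. Hence if $X$ is a K3 surface of algebraic dimension $0$ (resp.\ an elliptic HK manifold) of the stated type with $A_X\simeq E$, then Theorem \ref{endos} gives $\dim_E T_{X,\QQ}=1$, so $\dim_\QQ T_{X,\QQ}=2d=\dim_\QQ V_X$; since the algebraic (N\'eron--Severi) part then has rank $0$, we get $T_{X,\QQ}\simeq V_X$. By the transfer correspondence of \S\ref{pp and hermitian}, $T_{X,\QQ}\simeq\TT(W)$ for a one-dimensional hermitian form $W$ over $E$. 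Thus in every case the theorem amounts to deciding when there exists a one-dimensional hermitian $W$ over $E$ with $\TT(W)\simeq V_X$ as quadratic forms over $\QQ$.

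\textbf{Necessity.}
In the notation of the theorem $[E_0:\QQ]=d$, so Lemma \ref{invariants bis}(iii) (whose $d_0=[E_0:\QQ]$ is our $d$) gives, for the one-dimensional $W$, the $W$-independent value $\det\TT(W)=(-1)^{d}\Delta_E$ in $\QQ^\times/\QQ^{\times2}$. From $V_{K3}\simeq I_8^{2}\oplus H^{3}$ (Example \ref{HW}) one has $\det V_{K3}=-1$, whence $\det V_X=-1$ for OG6 $(=\det(H^3\oplus\langle-1,-1\rangle))$, $\det V_X=-1$ for K3, and $\det V_X=-12\equiv-3$ for OG10 $(=\det(V_{K3}\oplus\langle-2,-6\rangle))$. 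Equating $\det\TT(W)=\det V_X$ and using $(-1)^d=1,-1,1$ for $d=4,11,12$ yields exactly the congruences $\Delta_E\equiv-1$, $\Delta_E\equiv1$, $\Delta_E\equiv-3$ modulo squares, i.e.\ that $-\Delta_E$, $\Delta_E$, $-3\Delta_E$ be squares.

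\textbf{Sufficiency.}
Conversely, assume the relevant square condition. I would take $W=\langle a\rangle$ with $a\in E_0^\times$ positive at exactly one of the $d-1$ non-split real places of $E_0$ (and of either sign at the split place); by the signature analysis of \S\ref{signatures} in the Salem case, $\TT(W)$ then has signature $(1+2,\,1+2(d-2))=(3,2d-3)$, matching $V_X$. The determinant of $\TT(W)$ agrees with that of $V_X$ by hypothesis, so by Hasse--Minkowski it suffices to match the Hasse invariant $w(\TT(W))=w(V_X)$. This is supplied by the arithmetic of hermitian forms over Salem fields in \S\ref{Salem multiplication section} and \S\ref{hermitian BF}: varying $a$ within its signature class (i.e.\ modulo ${\rm N}_{E/E_0}(E^\times)$) moves the finite Hasse invariants of $\TT(W)$ through all admissible values, so every $\QQ$-form of the correct dimension, signature and determinant---in particular $V_X$---is realized. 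Finally Theorem \ref{thm:pspolSM} equips $\TT(W)\simeq V_X$ with a pseudo-polarized K3-type Hodge structure of endomorphism algebra $E$, and the surjectivity of the period map (the Torelli theorem for K3 surfaces; \cite[Theorem 8.1]{Huy} for HK manifolds) produces a manifold $X$ of the desired type with $H^2(X,\QQ)\cong V_X$ and $A_X\simeq E$.

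\textbf{Main obstacle.}
The crux is the Hasse-invariant step. In Theorem \ref{non-maximal theorem} the form $\TT(W)$ is a \emph{proper} orthogonal summand of $V_X$, so a complementary form $V'$ absorbs any discrepancy in determinant and Hasse invariant and no congruence condition survives; here $\dim\TT(W)=\dim V_X$ leaves no such slack, the isometry must be exact, and the determinant becomes a genuine obstruction. Proving that, once $\det\TT(W)=\det V_X$, a one-dimensional hermitian $W$ of the prescribed signature can be chosen with transfer Hasse invariant exactly $w(V_X)$---equivalently, that the determinant is the \emph{only} obstruction---is the real content to be extracted from the hermitian-form arithmetic over Salem fields.
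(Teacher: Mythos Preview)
Your reduction and necessity arguments are correct and match the paper. The gap is in the sufficiency direction, specifically in how you handle the Hasse invariant.

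You claim that ``varying $a$ within its signature class moves the finite Hasse invariants of $\TT(W)$ through all admissible values, so every $\QQ$-form of the correct dimension, signature and determinant is realized.'' This is not what the paper proves, and it is not obviously true. The actual characterization (Theorem~\ref{realization}, based on \cite{B 24}) is that $U\simeq\TT(W)$ for some one-dimensional hermitian $W$ over $E$ if and only if, in addition to the dimension, determinant, and signature conditions, $U$ is \emph{hyperbolic at every prime $p$ where $E\otimes_\QQ\QQ_p$ splits}. This hyperbolicity condition (iii) is a genuine constraint on $U$, not something one can arrange by varying $a$; at a split prime there is essentially no freedom in the local transfer, which is forced to be hyperbolic regardless of $a$. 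So the question is not whether one can adjust $W$ to match $w(V_X)$, but whether $V_X$ itself happens to satisfy condition~(iii).

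The paper's Theorem~\ref{41112} verifies exactly this: for $p\in S_E$ one has $\Delta_E=1$ in $\QQ_p^\times/\QQ_p^{\times2}$, and combining this with the assumed square condition on $\Delta_E$ forces $\det V_X=\det H^d$ and $w(V_X)=w(H^d)$ locally at $p$ (e.g.\ for $d=12$, the hypothesis $-3\Delta_E\in\QQ^{\times2}$ together with $\Delta_E\equiv1$ at $p$ gives $-3\equiv1$, whence $(-1,3)=0$). You correctly sensed in your ``Main obstacle'' paragraph that the determinant being the only obstruction is the real content, but the mechanism is not surjectivity of a Hasse-invariant map under variation of $W$; it is that the specific forms $V_X$ automatically satisfy the split-prime hyperbolicity once the discriminant matches.
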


\medskip
The above results will be proved in \S \ref{ss:pf}.  
Note that Theorem \ref{non-maximal theorem} (ii) and Theorem \ref{610} (ii) together cover Theorem \ref{SM}.
We start with some first results concerning hermitian forms over Salem fields.

\subsection{Transfers of hermitian forms over Salem fields}\label{transfer Salem}

\medskip To be able to apply Theorem \ref{SM}, we need to characterize the quadratic forms that occur as ${\rm T}(W)$ for some one-dimensional hermitian form $W$. 
Recall that $\Delta_E$ is the discriminant of $E$, and let us denote by $S_E$ the set of prime numbers $p$ such that  $E \otimes_{\QQ} \QQ_p$ is a split algebra,
i.e.\ there is an isomorphism of
${\bf Q}_p$-algebras
$$
E \otimes_{\QQ} \QQ_p  \simeq
E_0 \otimes_{\QQ} \QQ_p \times E_0 \otimes_{\QQ} \QQ_p.
$$

We then say that $E \otimes_{\QQ} \QQ_p$ is a {\it split algebra}. Note that the discriminant of a split algebra is trivial (as an element of $\QQ_p^{\times}/\QQ_p^{\times 2}$).

\begin{theo}\label{realization} Let $E$ is an Salem field of degree $2d$, and let $U$ be a quadratic form of dimension $2d$ over ${\bf Q}$. There exists
a one-dimensional hermitian form $W$ over $E$ such that $U \simeq {\rm T}(W)$ if and only if the following conditions hold:

\begin{enumerate}
\item[{\rm (i)}]
 ${\rm dim}_{{\bf Q}}(U) = [E:{\bf Q}]$.
 \smallskip

\item[{\rm (ii)}] ${\rm det}(U) = (-1)^{d}\Delta_E $ in ${\bf Q}^{\times}/{\bf Q}^{\times 2}$.
 \smallskip

\item[{\rm (iii)}]
If $p \in S_E$, then
$U \otimes_{{\bf Q}} {\bf Q}_p \simeq (H \otimes_{{\bf Q}} {\bf Q}_p )^{d}$.
 \smallskip

\item[{\rm (iv)}]
 The signature of $U$ is of the form $(1+2a,1+2b)$ for some integers $a,b \geqslant 0$.
\end{enumerate}
\end{theo}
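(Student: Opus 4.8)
The plan is to prove both implications through the transfer invariants of Lemma \ref{invariants bis} and the signature analysis of \S\ref{signatures}, reducing the existence of $W$ to a local--global problem for the scalar $\alpha\in E_0^\times$ defining $W=\langle\alpha\rangle$. Throughout, recall that once $U\simeq\TT(W)$ holds, Lemma \ref{invariants bis}(i) forces $\dim_E(W)=\dim_\QQ(U)/[E:\QQ]=1$, so it is enough to realize $U$ as the transfer of \emph{some} hermitian form over $E$.

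For necessity, write $W=\langle\alpha\rangle$. Condition (i) is Lemma \ref{invariants bis}(i) with $[E:\QQ]=2d$; condition (ii) is Lemma \ref{invariants bis}(iii) with $d_0=[E_0:\QQ]=d$, giving $\det(\TT(W))=(-1)^{d}\Delta_E$ independently of $\alpha$. Condition (iv) is the Salem signature computed in \S\ref{signatures}: the split real place of $E_0$ contributes a hyperbolic plane while the remaining $d-1$ complex places of $E$ contribute definite binary forms, so the signature is $(1+2a,1+2b)$ with $a+b=d-1$. Finally, if $p\in S_E$ then $E\otimes_\QQ\QQ_p$ is split, and the transfer of a hermitian form over a split algebra with involution is hyperbolic (cf.\ cases (2a), (2b) of \S\ref{building blocks}); hence $U\otimes_\QQ\QQ_p$ is hyperbolic, which is (iii).

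For sufficiency, I would fix the target $U$ and seek $\alpha\in E_0^\times$ with $\TT(\langle\alpha\rangle)\simeq U$. By the previous paragraph every transfer $\TT(\langle\alpha\rangle)$ already matches $U$ in dimension and determinant, so by Hasse--Minkowski it suffices to arrange agreement over each $\QQ_v$. At the archimedean place the achievable signatures of $\TT(\langle\alpha\rangle)$ are exactly the $(1+2a,1+2b)$ with $a+b=d-1$, obtained by prescribing the signs of $\alpha$ at the $d-1$ embeddings of $E_0$ that become complex in $E$; condition (iv) puts the signature of $U$ in this range, and weak approximation in $E_0^\times$ lets us impose the required signs. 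At $p\in S_E$ both $U\otimes_\QQ\QQ_p$ and every $\TT(\langle\alpha\rangle)\otimes_\QQ\QQ_p$ are hyperbolic by (iii) and the split computation, so they agree automatically.

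The crux is the non-split finite primes together with the global assembly. Here I would invoke a local--global principle for \emph{being a transfer}: since $E/E_0$ is quadratic, hence cyclic, the Hasse norm theorem applies, and a form of dimension $2d$ and determinant $(-1)^d\Delta_E$ is a transfer of a one-dimensional hermitian form precisely when it is so over every $\QQ_v$, with the global $\alpha$ recovered from compatible local scalars via weak approximation. The heart of the matter is then a purely local realizability lemma at each non-split prime $p$: every quadratic form over $\QQ_p$ of dimension $2d$ and determinant $(-1)^d\Delta_E$ is a local transfer, so that no condition beyond (i)--(iv) is imposed there. I expect the main obstacle to be exactly this lemma at the dyadic and the $E/E_0$-ramified primes, where one must show that varying $\alpha_p$ over $(E_0\otimes_\QQ\QQ_p)^\times$ modulo norms realizes both local Hasse invariants; the archimedean and split-prime steps, and the eventual gluing, are comparatively routine given \S\ref{building blocks}, \S\ref{signatures}, and the Hasse norm theorem.
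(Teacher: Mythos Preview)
Your necessity argument is correct and in fact more explicit than the paper's, which simply cites \cite{BGS} and \cite{B 24} for conditions (ii)--(iv).

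For sufficiency the paper takes a different route: it does not attempt to construct $\alpha$ directly but invokes \cite{B 24}, Theorem 17.2, a general local--global criterion for a quadratic form to arise as a transfer. The point is that since $E$ is a field the obstruction group $\sha_E$ of \cite{B 24} vanishes, and conditions (i)--(iv) are exactly the local condition (L1) of \cite{B 24}, Proposition 19.3. All the hard work is thus offloaded to an external reference.

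Your direct approach is a reasonable outline of what such a proof would look like, but it is not complete as written, and one of the tools you name is not the right one. First, ``weak approximation'' does not suffice for the global step: you must prescribe the norm-residue class of $\alpha$ at \emph{every} place of $E_0$ simultaneously, not just at finitely many. The correct input is the exact sequence from global class field theory identifying the image of $E_0^\times/\mathrm{N}_{E/E_0}(E^\times)$ in $\bigoplus_w(E_0)_w^\times/\mathrm{N}(E_w^\times)$ with the kernel of the sum map, and one must then check that the single parity constraint this imposes is automatically satisfied by Hilbert reciprocity for $w(U)$. Second, and as you yourself flag, the local realizability lemma at non-split finite primes is left unproved: you need that at each $p\notin S_E$ the Hasse invariant of $\TT(\langle\alpha\rangle)\otimes_\QQ\QQ_p$ can be toggled by changing the norm-residue class of $\alpha$ at some place $w\mid p$ of $E_0$ that is inert or ramified in $E/E_0$. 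This is true, but it requires an explicit computation of how the corestriction from $(E_0)_w$ to $\QQ_p$ carries the local hermitian invariant to the Hasse invariant, and this computation (including the dyadic and ramified cases) is precisely what is packaged into the results of \cite{B 24} that the paper cites.
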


\noindent 
\noindent
{\bf Proof.} If there exists a hermitian form $W$ over $E$ such that $U \simeq {\rm T}(W)$ then (i) clearly holds, property (ii)
follows from  \cite{BGS}, Lemma 6.1 (iii), property (iii) from \cite{B 24}, \S 19, hyperbolicity condition, and property (iv) from  \S 7.
Conversely, suppose that conditions (i)-(iv) hold. The existence of a hermitian form $W$ over $E$
such that $U \simeq {\rm T}(W)$
follows from \cite{B 24}, Theorem 17.2. Indeed, since $E$ is a field, we have $\sha_E = 0$.
The hypotheses
imply that condition (L 1) holds (see \cite {B 24}, Proposition 19.3). Therefore \cite{B 24}, Theorem 17.2 implies that
there exists a hermitian form $W$ over $E$ such that $U \simeq {\rm T}(W)$.
\qed

\medskip
In  \S \ref{hermitian BF}, we apply this result to  the different pseudo-polarizations of the currently known HK manifolds.

\subsection{The discriminant of a Salem field}

The aim of this section is to prove some lemmas on discriminants of Salem fields that will be needed in the next sections; we start with a characterization of these fields.

\begin{lemma}
\label{lem:Slf}

Let $E$ be an algebraic number field. The following are equivalent

\medskip
{\rm (i)} $E$ is a Salem field.

\medskip
{\rm (ii)} There exists a totally real number field $E_0$ of degree $\geqslant 2$ and an element $\alpha \in E_0$
that is positive at exactly one embedding of $E_0$ such that $E = E_0 (\sqrt \alpha)$.

\end{lemma}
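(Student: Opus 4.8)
The plan is to prove the equivalence of the two characterizations of a Salem field by unwinding the definition and using the fact that a degree-2 extension of a field of characteristic $0$ is always of the form $E_0(\sqrt{\alpha})$.

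First I would prove the implication (i) $\Rightarrow$ (ii). Suppose $E$ is a Salem field, with non-trivial $\QQ$-linear involution $\iota$ and fixed field $E_0$. By Definition \ref{def:Salem}, $E_0$ is totally real, and since $E$ has degree $\geqslant 4$ we have $[E_0:\QQ] = \tfrac{1}{2}[E:\QQ] \geqslant 2$. As $E/E_0$ is a quadratic extension, there exists $\alpha \in E_0$ such that $E = E_0(\sqrt{\alpha})$. The crux is to verify that $\alpha$ is positive at exactly one embedding of $E_0$. This is precisely the computation already carried out in the proof of Lemma \ref{lem:Salem-crit}: the hypothesis that $E$ has exactly two real embeddings, all others complex, means that exactly one real embedding $\sigma$ of $E_0$ extends to a pair of real embeddings of $E$ (forcing $\sigma(\alpha) > 0$), while every other embedding of $E_0$ extends to a conjugate pair of complex embeddings of $E$ (forcing $\tau(\alpha) < 0$). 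Thus $\alpha$ is positive at exactly one embedding of $E_0$, giving (ii).

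Next I would prove (ii) $\Rightarrow$ (i). Given a totally real field $E_0$ of degree $d_0 \geqslant 2$ and $\alpha \in E_0$ positive at exactly one embedding, set $E = E_0(\sqrt{\alpha})$. Since $\alpha$ is not a square in $E_0$ (it is negative at some embedding, whereas squares are totally nonnegative), the extension is genuinely quadratic, so $[E:\QQ] = 2d_0 \geqslant 4$. The nontrivial automorphism $\sqrt{\alpha} \mapsto -\sqrt{\alpha}$ of $E/E_0$ is a $\QQ$-linear involution with totally real fixed field $E_0$. Finally I would count the real embeddings of $E$: at the unique embedding $\sigma$ of $E_0$ with $\sigma(\alpha) > 0$, the value $\sqrt{\alpha}$ is real, yielding two real embeddings of $E$; at each of the remaining $d_0 - 1$ embeddings $\tau$ with $\tau(\alpha) < 0$, the square root is imaginary, yielding a conjugate pair of complex embeddings. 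Hence $E$ has exactly two real embeddings and all others complex, so $E$ satisfies every clause of Definition \ref{def:Salem} and is a Salem field.

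The argument is essentially bookkeeping with embeddings once the quadratic-extension structure is in place, so I do not expect a serious obstacle; the one point requiring a little care is to confirm in the forward direction that the \emph{positive} embedding of $E_0$ is exactly the one extending to the real pair of $E$ (and symmetrically that negativity forces complex extensions), but this is exactly the signature analysis borrowed from Lemma \ref{lem:Salem-crit}. Both directions can in fact be read off from that lemma together with the base-change description $E_\RR \cong \RR^2 \times \CC^{d_0-1}$, so the proof is short.
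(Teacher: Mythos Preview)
Your proposal is correct and follows essentially the same approach as the paper's own proof: both directions proceed by writing the quadratic extension as $E_0(\sqrt{\alpha})$ and tracking which real embeddings of $E_0$ extend to real versus complex embeddings of $E$ according to the sign of $\alpha$. Your explicit remark that $\alpha$ cannot be a square in $E_0$ (being negative at some embedding) is a small clarification the paper leaves implicit.
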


\noindent
{\bf Proof.} Let us prove that (i) implies (ii). Since $E$ is a Salem field, it has a non-trivial $\bf Q$-linear involution; let $E_0$ be the
fixed field of this involution. By hypothesis, $E_0$ is totally real. The extension $E/E_0$ is of degree 2, hence there exists
$\alpha \in E_0$ such that $E = E_0 (\sqrt \alpha)$. The field $E$ has exactly two real embeddings, the other embeddings are complex. This
implies that exactly one of the real embeddings of $E_0$ extends to (a pair of) real embeddings of $E$
while the others extend to (pairs of) complex embeddings; hence
$\alpha$ is positive at exactly one embedding of $E_0$, the one that extends to the pair of real embeddings, and it is negative at all the others.
This implies (ii).

\medskip
We now show that (ii) implies (i). Since the degree of $E_0$ is at least two and $E$ is a quadratic extension of $E_0$, the degree of $E$ is at least 4.
Let $e \mapsto \overline e$
be the involution of $E$ sending $\sqrt \alpha$ to $-\sqrt \alpha$. This is a non-trivial $\bf Q$-linear involution with fixed field $E_0$. By
hypothesis, $\alpha \in E_0$
 is positive at exactly one embedding and negative at all the others, hence $E$ has
two real embeddings, and all its other embeddings are complex. This implies that $E$ is a Salem field, hence (i) holds.
\qed

\begin{lemma}\label{negative} Let $E$ be a Salem field. Then $\Delta_E$ is negative if and only if the degree of $E$ is divisible by $4$.

\end{lemma}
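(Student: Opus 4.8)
The plan is to reduce the statement to the classical fact that the sign of the discriminant of a number field is governed by its number of complex places, and then to read off that number from the definition of a Salem field. First I would observe that, although $\Delta_E$ is defined only as a class in $\QQ^{\times}/\QQ^{\times 2}$, its sign is nonetheless well defined: multiplying a nonzero rational by a square never changes its sign. Concretely, $\Delta_E$ is the class of the determinant of the Gram matrix of the trace pairing $q(a,a')=\Tr_{E/\QQ}(aa')$ on any $\QQ$-basis of $E$, and for an integral basis this determinant is exactly the field discriminant $d_E\in\ZZ$. Thus the problem is purely about the sign of $d_E$.

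Next I would establish the formula $\operatorname{sign}(\Delta_E)=(-1)^{r_2}$, where $r_2$ is the number of conjugate pairs of complex embeddings of $E$. Rather than quoting Brill's theorem as a black box, I would derive it by base-changing the trace form to $\RR$: the Gram matrix is the same rational matrix, so the sign of its determinant is unchanged. Since the discriminant uses the pairing $\Tr(aa')$ with no conjugation, the relevant involution on $E_\RR\cong\RR^{r_1}\times\CC^{r_2}$ is trivial, and the trace form splits orthogonally along the factors exactly as computed in Section \ref{building blocks}: each $\RR$-factor gives case (1a) with $a=1$, namely $\langle 1\rangle$, which is positive, while each $\CC$-factor gives case (1b), namely the hyperbolic plane $H$, of determinant $-1$. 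Multiplying the local contributions yields total sign $(+1)^{r_1}(-1)^{r_2}=(-1)^{r_2}$.

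With the formula in hand the remainder is bookkeeping. By definition a Salem field has exactly two real embeddings, so $r_1=2$; writing $[E:\QQ]=2d$ (the degree is even since $E/E_0$ is quadratic with $E_0$ totally real) and using $[E:\QQ]=r_1+2r_2$ gives $r_2=d-1$. Therefore $\operatorname{sign}(\Delta_E)=(-1)^{d-1}$, which is negative precisely when $d$ is even, i.e. precisely when $2d=[E:\QQ]$ is divisible by $4$. This is exactly the asserted equivalence.

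I do not anticipate a serious obstacle: the statement is essentially the specialization of a standard sign-of-discriminant formula to the known signature of a Salem field. The only two points that genuinely require care are matching the paper's mod-squares definition of $\Delta_E$ to an honest sign (handled by passing to the integral-basis representative $d_E$) and getting the complex-place contribution correct. The latter is the one spot where a sign slip could occur, so before concluding I would double-check that each complex factor contributes an $H$ of determinant $-1$—equivalently, that every complex place flips the sign of the discriminant.
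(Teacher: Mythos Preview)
Your argument is correct and complete. You invoke (and rederive via the building blocks of Section~\ref{building blocks}) the classical Brill-type fact $\operatorname{sign}(\Delta_E)=(-1)^{r_2}$, then plug in $r_1=2$, $r_2=d-1$ from the definition of a Salem field. This is clean and self-contained.

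The paper takes a different route: it uses the concrete presentation $E=E_0(\sqrt{\alpha})$ with $E_0$ totally real and $\alpha$ positive at exactly one real place of $E_0$ (Lemma~\ref{lem:Slf}), together with the relative discriminant relation $\Delta_E = \mathrm N_{E_0/\QQ}(\alpha)\,\Delta_{E_0}$ (up to squares). Since $E_0$ is totally real, $\Delta_{E_0}>0$, so the sign of $\Delta_E$ is that of $\mathrm N_{E_0/\QQ}(\alpha)$, and the latter is $(-1)^{d_0-1}$ where $d_0=[E_0:\QQ]$, because $\alpha$ is negative at all but one embedding. Thus $\Delta_E<0$ exactly when $d_0$ is even, i.e.\ when $[E:\QQ]=2d_0$ is divisible by~$4$.

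The two approaches are equally short. Yours is more intrinsic---it never leaves $E$ and works for any number field once one knows $(r_1,r_2)$---whereas the paper's argument exploits the specific quadratic-over-totally-real structure of a Salem field, which is consonant with how the surrounding sections (e.g.\ Lemma~\ref{lem:Slf}, Lemma~\ref{lem:Salem-crit}) reason about Salem fields. Either would be perfectly acceptable here.
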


\noindent
{\bf Proof.} Let us write $E = E_0 (\sqrt \alpha)$ for some $\alpha \in E_0$ is positive at exactly one embedding of $E_0$. We have
$\Delta_E = {\rm N}_{E_0/{\bf Q}}(\alpha)\Delta_{E_0}$. Since $E_0$ is totally real, $\Delta_{E_0}$ is positive. On the
other hand, $\alpha$ is positive at one real place and negative at all the others. If the degree of $E_0$ is even, then  ${\rm N}_{E_0/{\bf Q}}(\alpha)$ is negative,
hence $\Delta_E < 0$; if the degree of $E_0$ is odd, then ${\rm N}_{E_0/{\bf Q}}(\alpha)$ is positive,
hence $\Delta_E >0$.
\qed

\begin{lemma}\label{Salem discriminant} Let $S$ be a Salem polynomial of degree $2d$, and let $E = {\bf Q}[x]/(S)$.
Then $\Delta_E = (-1)^{d}S(1)S(-1)$
in ${\bf Q}^{\times}/{\bf Q}^{\times 2}$.
\end{lemma}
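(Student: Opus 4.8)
The plan is to compute the field discriminant as a polynomial discriminant and then exploit the reciprocal (palindromic) structure of $S$. By definition $\Delta_E$ is the class in $\QQ^\times/\QQ^{\times 2}$ of the determinant of the trace form $q(a,a')=\Tr_{E/\QQ}(aa')$; evaluating this Gram determinant in the power basis $1,\theta,\dots,\theta^{2d-1}$, where $\theta$ is a root of $S$, gives precisely the polynomial discriminant $\disc(S)$, so that $\Delta_E=\disc(S)$ in $\QQ^\times/\QQ^{\times 2}$. Since $S$ is monic of degree $2d$, I would next use the classical identity $\disc(S)=(-1)^{d(2d-1)}\prod_i S'(\rho_i)$, the product running over the roots $\rho_i$ of $S$. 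As $2d-1$ is odd, the sign equals $(-1)^d$, which already produces the expected factor, and it remains only to show $\prod_i S'(\rho_i)\equiv S(1)S(-1)$ modulo squares.

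For this I would use that a monic reciprocal polynomial of even degree $2d$ factors as $S(x)=x^{d}P(x+x^{-1})$ for a unique monic $P$ of degree $d$, namely the minimal polynomial of $\lambda+\lambda^{-1}$, which cuts out the totally real subfield $E_0$. Differentiating and using $P(\rho+\rho^{-1})=0$ at every root $\rho$ of $S$ kills the leading term and yields the clean identity
$$
S'(\rho)=\rho^{d-2}(\rho^2-1)\,P'(\rho+\rho^{-1}).
$$
Because $S$ is irreducible of degree $\geqslant 4$ we have $S(\pm1)\neq0$, so no root equals $\pm1$ and the $2d$ roots split into $d$ genuine reciprocal pairs $\{\rho,\rho^{-1}\}$, each mapping to a distinct root $y_k=\rho+\rho^{-1}$ of $P$. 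Multiplying the identity over a pair, the factors $\rho^{d-2}$ and $\rho^{-(d-2)}$ cancel and, expanding $(\rho^2-1)(\rho^{-2}-1)=4-y_k^2$, one obtains the key pairwise formula $S'(\rho)\,S'(\rho^{-1})=(4-y_k^2)\,P'(y_k)^2$.

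Assembling the pairs gives
$$
\prod_i S'(\rho_i)=\Big(\prod_{k=1}^{d}P'(y_k)\Big)^2\prod_{k=1}^{d}(4-y_k^2),
$$
and the first factor is a perfect square, hence trivial modulo $\QQ^{\times 2}$. The remaining product factors as $\prod_k (2-y_k)(2+y_k)=P(2)\cdot(-1)^{d}P(-2)$, and evaluating the defining relation at $x=1$ and $x=-1$ gives $P(2)=S(1)$ and $(-1)^{d}P(-2)=S(-1)$. Therefore $\prod_i S'(\rho_i)\equiv S(1)S(-1)$ modulo squares, and combining with the sign $(-1)^d$ yields $\Delta_E=\disc(S)=(-1)^{d}S(1)S(-1)$ in $\QQ^\times/\QQ^{\times2}$, as claimed. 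The only genuine obstacle is establishing the pairwise identity $S'(\rho)S'(\rho^{-1})=(4-y_k^2)P'(y_k)^2$; once it is in hand, the structural fact that the $P'(y_k)$ contribution is automatically a square makes the entire discriminant collapse to $S(1)S(-1)$ up to the sign $(-1)^d$.
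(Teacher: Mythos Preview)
Your proof is correct but follows a genuinely different route from the paper's. The paper considers instead the \emph{hermitian} trace form $q(x,y)=\Tr_{E/\QQ}(x\overline{y})$ for the involution $X\mapsto X^{-1}$, quotes a result of Gross--McMullen (\cite{GM}, Proposition A3) that $\det(q)=S(1)S(-1)$, and then observes that $q=\TT(\langle 1\rangle)$ so that Lemma~\ref{invariants bis}~(iii) gives $\det(q)=(-1)^d\Delta_E$; equating the two expressions yields the lemma in one line. Your approach, by contrast, works with the ordinary symmetric trace form, passes to the polynomial discriminant $\disc(S)$, and computes it explicitly via the factorization $S(x)=x^dP(x+x^{-1})$ and the pairwise identity $S'(\rho)S'(\rho^{-1})=(4-y_k^2)P'(y_k)^2$. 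The advantage of the paper's argument is its brevity and the way it plugs into the transfer formalism already developed; the advantage of yours is that it is entirely self-contained---it essentially reproves the cited Gross--McMullen determinant formula from scratch, requiring neither that reference nor the hermitian transfer machinery of Section~\ref{hermitian}.
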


\noindent
{\bf Proof} Let $q : E \times E \to \QQ$ be the quadratic form given by $q(x,y) = {\rm Tr}_{E/{\QQ}}(x \overline y)$, where
$e \mapsto \overline e$ is the involution
induced by $X \mapsto X^{-1}$; then ${\rm det}(q) = S(1)S(-1)$ (see for instance \cite{GM}, Proposition A3). Note that, in the notation of \S \ref{hermitian}, we have
$q = {\rm T}(W)$, where $W$ is the one-dimensional hermitian form $\langle 1 \rangle$ over $E$. By Theorem  \ref{realization} (ii) we have
${\rm det}(q) = (-1)^{d}\Delta_E$, hence $\Delta_E = (-1)^{d}S(1)S(-1)$ mod squares.
\qed

\section{Hermitian forms over Salem fields and BBF-forms}\label{hermitian BF}

Let $E$ be a Salem field of degree $2d$. As we have seen in  \S \ref{s:period},
  the pseudo-polarized Hodge structures
with endomorphism field $E$ and signature $(3,2d-3)$ are transfers of one-dimensional hermitian forms with the properties of Theorem
 \ref{thm:pspolSM}. Moreover, to obtain HK manifolds with this pseudo-polarized Hodge structure, we have to show that it is an orthogonal summand
of the Hodge structure of a HK manifold. In order to do this, we use the structure of the second cohomology groups of HK manifolds together with their associated BBF-forms (see \S \ref{s:BBF}).

\medskip

\subsection{Maximal degree} 

In this subsection and the next one, we combine the method of \S \ref{transfer Salem} 
with the information on BBF-forms from \S\ref{s:BBF},
and we prove the arithmetic results necessary for the proofs of Theorems \ref{non-maximal theorem} and \ref{610} in \S \ref{s:SM}.

\medskip We start with the case where the degree of the Salem field is equal to the dimension of the BBF-form of the HK manifold.
Therefore, the results of this subsection will be useful for K3 surfaces, as well as for HK manifolds of OG6-type and of OG10-type, since the quadratic forms associated to these
manifolds are even dimensional.

\medskip

\begin{theo}\label{41112} Let $E$ be a Salem field of degree $2d$.

\medskip
{\rm (i)} Suppose that $d =4$ and let $U = H^3 \oplus \langle -1,-1 \rangle$. Then there exists a one dimensional hermitian form $W$ over $E$
such that $U \simeq {\TT}(W)$ if and only if $-\Delta_E$ is a square.

\medskip
{\rm (ii)} Suppose that $d = 11$ and let $U = V_{K3}$. Then there exists a one dimensional hermitian form $W$ over $E$
such that $U \simeq {\TT}(W)$ if and only if $\Delta_E$ is a square. 

\medskip
{\rm (iii)} Suppose that $d =12$ and let $U = V_{K3} \oplus \langle -2,-6 \rangle$. Then there exists a one dimensional hermitian form $W$ over $E$
such that $U \simeq {\TT}(W)$ if and only if $-3\Delta_E$ is a square. 

\end{theo}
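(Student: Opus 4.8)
The plan is to apply Theorem \ref{realization} to each of the three quadratic forms $U$, checking its four conditions (i)--(iv). Conditions (i) and (iv) hold by inspection in every case: each $U$ has dimension $2d$ equal to $[E:\QQ]$, and each has signature $(3,2d-3)=(1+2\cdot 1,\,1+2(d-2))$, which is of the required form $(1+2a,1+2b)$. The whole content of the theorem is therefore carried by conditions (ii) and (iii), and since Theorem \ref{realization} makes these necessary as well as sufficient, it suffices to show that (ii) reproduces the stated discriminant criterion and that (iii) is automatic once (ii) holds.

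First I would dispose of condition (ii), the source of the three criteria. A direct determinant computation gives $\det(U)=-1$ in case (i) (as $\det(H^3)=-1$ and $\det\langle -1,-1\rangle=1$), $\det(V_{K3})=-1$ in case (ii), and $\det(U)=(-1)\cdot 12\equiv -3$ in case (iii). Condition (ii) reads $\det(U)=(-1)^d\Delta_E$, i.e. $\Delta_E=(-1)^d\det(U)$ in $\QQ^\times/\QQ^{\times 2}$. Substituting $d=4,11,12$ turns this into $\Delta_E=-1$, $\Delta_E=1$, and $\Delta_E=-3$ respectively, equivalently into ``$-\Delta_E$ is a square'', ``$\Delta_E$ is a square'', and ``$-3\Delta_E$ is a square'' --- exactly the three assertions. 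As (ii) is necessary, this already yields the ``only if'' direction in all three parts.

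For the ``if'' direction it remains to verify condition (iii): whenever (ii) holds, $U\otimes_\QQ\QQ_p\simeq(H\otimes_\QQ\QQ_p)^d$ for every $p\in S_E$. Over a $p$-adic field two forms of equal dimension agree iff their determinants and Hasse invariants agree. For $p\in S_E$ the algebra $E\otimes\QQ_p$ is split, so $\Delta_E$ is a square in $\QQ_p$; granting (ii) this makes $\det(U)$ a square in $\QQ_p$ in the even cases $d=4,12$, while in the odd case $d=11$ one has $\det(U)=-1=\det(H^{11})$ globally, so the determinants match at all split primes either way. For the Hasse invariants I would compute $w(U)$ from Example \ref{HW} and the additivity rule $w(A\oplus B)=w(A)+w(B)+(\det A,\det B)$. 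In cases (i) and (ii) this gives $w(U)=0$ and $w(U)=(-1,-1)$, which equal $w(H^4)=0$ and $w(H^{11})=(-1,-1)$ globally; hence the Hasse invariants agree at every finite prime and (iii) holds there unconditionally.

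The one genuinely delicate point, and the main obstacle, is case (iii). Here the computation yields $w(U)=(-1,-1)+(-1,-1)+(-1,3)=(-1,3)$ in $\Br_2(\QQ)$, which differs from $w(H^{12})=0$ precisely at the primes $2$ and $3$. Thus condition (iii) can survive only if $2,3\notin S_E$, and this is exactly where the discriminant hypothesis must intervene. Under (ii) we have $\Delta_E=-3$; since $-3\equiv 5\pmod 8$ is a non-square in $\QQ_2$ and $-3$ has odd valuation, hence is a non-square, in $\QQ_3$, the element $\Delta_E$ is a non-square in both $\QQ_2$ and $\QQ_3$. Because $p\in S_E$ forces $\Delta_E$ to be a square in $\QQ_p$, we conclude $2,3\notin S_E$, so $w_p(U)=0=w_p(H^{12})$ for all $p\in S_E$ and condition (iii) follows. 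This closes the ``if'' direction in case (iii) and completes the proof.
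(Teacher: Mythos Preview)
Your proof is correct and follows essentially the same approach as the paper: both reduce everything to Theorem \ref{realization}, dispose of conditions (i) and (iv) by inspection, extract the discriminant criteria from condition (ii), and then verify condition (iii) by matching determinants and Hasse invariants of $U$ and $H^d$ at the split primes. The only cosmetic difference is in case (iii): the paper argues that for $p\in S_E$ the hypothesis forces $-3$ to be a square in $\QQ_p$, whence $(-1,3)_p=0$, whereas you first locate the support of $(-1,3)$ at $\{2,3\}$ and then show $2,3\notin S_E$ via the same square-class constraint on $\Delta_E$ --- your phrasing makes the treatment of $p=2$ more explicit, but the content is identical.
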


\noindent
{\bf Proof.} The proof of necessity of the condition on $\Delta_E$ is the same in all three cases. Indeed, suppose that there exists a hermitian form $W$ over $E$ such that $U \simeq {\TT}(W)$. By condition (ii) of Theorem \ref {realization},
we have ${\rm det}(U) =(-1)^d \Delta_E$ in ${\bf Q}^{\times}/{\bf Q}^{\times 2}$. We have  ${\rm det}(U) = -1$ in cases (i) and (ii), and 
${\rm det}(U) = -3$ in case (iii), hence this implies that $-\Delta_E$ is a square if $d = 4$, that $\Delta_E$ is a square if $d = 11$ and that
$-3\Delta_E$ is a square if $d = 12$.

\medskip
Conversely, suppose that the condition on $\Delta_E$ holds, and let us prove the existence of a one dimensional hermitian form $W$ over $E$
such that $U \simeq {\TT}(W)$; this is done by applying Theorem \ref{realization}. Conditions (i) and (iv) of Theorem \ref{realization} trivially hold, and condition (ii) is checked as above: we have 
${\rm det}(U) =(-1)^d \Delta_E$ in ${\bf Q}^{\times}/{\bf Q}^{\times 2}$ in all three cases.

\medskip
It remains to check condition (iii). Let $p$ be a prime number such that $p \in S_E$, and note that this implies that $\Delta_E = 1$ in $\QQ_p^{\times}/\QQ_p^{\times 2}$. We have to show that $U \otimes_{{\bf Q}} {\bf Q}_p \simeq (H \otimes_{{\bf Q}} {\bf Q}_p )^{d}$. The two quadratic forms have the same dimension; it remains to prove that they
have the same determinant and Hasse invariant.

\medskip If $d = 4$ or $11$, then ${\rm det}(U) = -1$; for $d = 11$, this immediately implies the equality of the determinants, since 
for $d$ odd, we have ${\rm det}(H^d) = -1$. Suppose that $d = 4$, and note that we have ${\rm det}(U) = -1$ and ${\rm det}(H^4) = 1$.
On the other hand, the hypothesis implies that $\Delta_E = -1$ in $\QQ_p^{\times}/\QQ_p^{\times 2}$, and $\Delta_E = 1$ in $\QQ_p^{\times}/\QQ_p^{\times 2}$ because $p \in S_E$, 
hence $-1$ is a square in ${\QQ}_p$; this implies that ${\rm det}(U) = {\rm det}(H^4)$ in $\QQ_p^{\times}/\QQ_p^{\times 2}$. 

\medskip Let us show the equality of the determinants for $d = 12$. In this case, we have ${\rm det}(U) = -3$.
 Since $p \in S_E$, we have $\Delta_E = 1$ in $\QQ_p^{\times}/\QQ_p^{\times 2}$. By hypothesis $-3\Delta_E$ is a square,
hence this implies that ${\rm det}(U) = 1 = {\rm det}(H^{12})$ in $\QQ_p^{\times}/\QQ_p^{\times 2}$.

\medskip 
We now check that the Hasse invariants coincide; we apply the results of Example \ref{HW} for the computation of these invariants.

\medskip
Suppose first that $d = 4$. We have $w(H^4) = 0$, and $w(U) = w(H^3) + (-1,-1) + (-1,1)$ = $(-1,-1) + (-1,-1) = 0$, therefore $w(H^4) = w(U)$.

\medskip
Assume now that $d = 11$. We have $w(V_{K3}) = (-1,-1)$ and $w(H^{11}) = (-1,-1)$, therefore $w(U) = w(H^{11})$. 

\medskip
Finally, suppose that $d = 12$. 
We have $w(V_{K3}) = (-1,-1)$ and $w(-2,-6) = (-1,-1)$, therefore 
$w(U) = ({\rm det}(V_{K3}),3) = (-1,3)$. On the other hand, $w(H^{12}) = 0$. We have
$\Delta_E = 1$ in $\QQ_p^{\times}/\QQ_p^{\times 2}$ because  $p \in S_E$. By hypothesis, $-3\Delta_E$ is a square, hence we have $3 = -1$ in 
$\QQ_p^{\times}/\QQ_p^{\times 2}$, therefore $(-1,3) = 0$ in ${\rm Br}_2(\QQ_p)$. This implies that $w(U) = w(H^{12}) = 0$.

\medskip In summary, we have checked that the dimension, determinant and Hasse invariant of $U \otimes_{{\bf Q}} {\bf Q}_p $ and 
$(H \otimes_{{\bf Q}} {\bf Q}_p )^{d}$  coincide for all $p \in S_E$,
hence we have $U \otimes_{{\bf Q}} {\bf Q}_p \simeq (H \otimes_{{\bf Q}} {\bf Q}_p )^{d}$ for all $p \in S_E$; this implies that condition (iii)
of Theorem \ref {realization} is satisfied. Therefore there exists a one-dimensional hermitian form $W$ over $E$
such that $U \simeq {\TT}(W)$.
\qed

\subsection{Non-maximal degree}

We now turn to the cases where the degree of the Salem field is smaller than the dimension of
the quadratic form. This is always the case for HK manifolds of  $K_n(T)$-type and K3$^{[n]}$-type, since the associated quadratic forms
are odd dimensional. We start with the results that will be used for these manifolds.

\begin{theo}\label{Kum} Let $E$ be a Salem field of degree $2d$, let $k > 0$ be an integer. 
Suppose that either 

\medskip
$\bullet$ $V = H^3  \oplus \langle -2k \rangle$ and $d \leqslant 3$, or 

\medskip 
$\bullet$ $V = V_{K3} \oplus \langle -2k \rangle$ and $d \leqslant 11$.

\medskip Then there exists
a hermitian form $W$  over $E$ and a quadratic form $V'$ over $\QQ$ such that
$$
V \simeq {\TT}(W) \oplus V'~.
$$


\end{theo}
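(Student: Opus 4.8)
The plan is to reduce everything to the realization criterion of Theorem \ref{realization}. It suffices to produce an orthogonal summand $U$ of $V$ with $\dim_\QQ U = 2d$, signature $(3,2d-3)$, determinant $(-1)^d\Delta_E$ in $\QQ^\times/\QQ^{\times 2}$, and $U\otimes_\QQ\QQ_p\simeq(H\otimes_\QQ\QQ_p)^d$ for every $p\in S_E$. Indeed, such a $U$ satisfies conditions (i)--(iv) of Theorem \ref{realization} (note $(3,2d-3)=(1+2\cdot1,\,1+2(d-2))$ with $d\geqslant 2$, so (iv) holds), hence $U\simeq\TT(W)$ for a one-dimensional hermitian form $W$ over $E$, and one sets $V'=U^\perp$, which is negative definite of dimension $n':=\dim_\QQ V-2d$ by the signature constraint. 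The degree bounds $d\leqslant 3$ (resp. $d\leqslant 11$) are exactly the inequalities $2d\leqslant\dim_\QQ V-1$, i.e. $n'\geqslant 1$.

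First I would record the elementary facts about $V$. In both families $\det V = 2k$ in $\QQ^\times/\QQ^{\times2}$, using $\det V_{K3}=-1$; and $V$ is indefinite of dimension $\geqslant 5$, hence isotropic and universal. Its local Witt index is easy to read off: in the first case $V\simeq H^3\oplus\langle -2k\rangle$ has anisotropic part $\langle -2k\rangle$, so the Witt index of $V_p$ equals $3$, while in the second case $V\simeq V_{K3}\oplus\langle -2k\rangle$ has Witt index at least $10$ over every $\QQ_p$ (the anisotropic dimension of a $\QQ_p$-form is at most $4$). In every admissible degree this index is $\geqslant d$, so $H^d$ is a subform of $V_p$ for all $p$. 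This lets me split off a negative definite complement $V'$ of dimension $n'$, fixing $\det V'$ to the forced class $\det V\cdot(-1)^d\Delta_E$, whose sign is $(-1)^{n'}$ (check from $\det V=2k>0$ and Lemma \ref{negative}, since $n'$ is odd). Then $U:=(V')^\perp$ automatically has determinant $(-1)^d\Delta_E$ and signature $(3,2d-3)$, so conditions (i), (ii), (iv) hold by construction.

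The remaining and essential point is condition (iii) at the finitely many primes $p\in S_E$. At such a $p$ one has $\Delta_E\equiv 1$, so $\det U_p=(-1)^d=\det(H^d)_p$, and it only remains to match Hasse invariants $w(U_p)=w(H^d)_p$. Using $w(V_{K3})=(-1,-1)$ and the values of $w(H^n)$ from Example \ref{HW}, together with $\det V=2k$, a direct computation gives $V_p\simeq H^d\oplus V'_p$ at each $p\in S_E$, whence $U_p\simeq H^d$ by Witt cancellation. When $n'\geqslant 2$ there is enough freedom in the Hasse invariants of the negative definite complement $V'$ (which over $\QQ_p$ may be isotropic) to force $V'_p\simeq V_p\ominus H^d$ simultaneously at the finitely many $p\in S_E$, the global product formula being corrected at one auxiliary prime outside $S_E$. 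When $n'=1$ the complement $V'=\langle -c\rangle$ is rigid, with $c\equiv 2k$ modulo squares at every $p\in S_E$, and the identity $V_p\simeq H^d\oplus\langle -c\rangle_p$ then holds automatically; these rigid cases are precisely $d=3$ in the first family and $d=11$ in the second.

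The hard part is this verification of condition (iii) over $S_E$: one must guarantee both that $V_p$ has Witt index $\geqslant d$ there (this is why the top cases $n'=1$, which force the maximal Witt index, are the tightest) and that the Hasse invariants can be matched consistently with the product formula while keeping $V'$ globally negative definite. The dimension bounds ensure $n'\geqslant 1$, giving exactly the room needed, and it is the explicit shape of the BBF-forms --- entering through $\det V=2k$ and the coincidence $w(V_{K3})=(-1,-1)=w(H^d)$ for the relevant $d$ from Example \ref{HW} --- that makes the computation come out correctly and unconditionally in every admissible degree.
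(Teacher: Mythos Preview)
Your approach is correct and, in the boundary cases $n'=1$ (i.e.\ $d=3$ in the first family, $d=11$ in the second), it coincides exactly with the paper's: the forced one-dimensional complement is $V'=\langle -2k\Delta_E\rangle$, and the verification of condition~(iii) at $p\in S_E$ reduces to the identity $w(U)=(-1,-1)=w(H^d)$ you invoke via Example~\ref{HW}.

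For the non-boundary cases $n'\geqslant 3$, the paper simply cites \cite[Proposition 7.7]{BGS} as a black box, whereas you sketch a self-contained argument: build $V'$ abstractly as a negative definite form of dimension $n'$ with the forced determinant and with Hasse invariants prescribed at $S_E$ (corrected at one auxiliary prime), so that $U=V\ominus V'$ satisfies $U_p\simeq H^d_p$ there. This is a genuine alternative and has the virtue of being uniform across all degrees. Two points deserve to be made explicit, however. First, the passage from ``$V'$ exists with these local invariants'' to ``$V'$ is an orthogonal summand of $V$'' is not automatic: you need to observe that the complementary form $U$ (defined by its invariants) exists globally and that $U\oplus V'$ then agrees with $V$ at every place, whence $V\simeq U\oplus V'$ by Hasse--Minkowski. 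Second, your Witt-index bound (``at least $10$'') does not by itself cover $d=11$; that case really does require the separate $n'=1$ computation, as you correctly carry out, so the sentence ``in every admissible degree this index is $\geqslant d$'' should be qualified.
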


\begin{proof} If $d < 3$, respectively $d < 11$, then this follows from \cite{BGS}, Proposition 7.7.  Assume 
that $d = 3$, respectively $d = 11$. We refer to Example \ref{HW} for the determination of the Hasse invariants below.

\medskip

We have ${\rm det}(V) = 2k$ and $w(V) = (-1,-1) + (-1,-2k) = (-1,2k)$. Set $h = -2k \Delta_E$.
Since $H$ is an orthogonal factor of $V$, there exists
$x \in V$ such that $q(x,x) = h$; let $U$ be a quadratic form over $\bf Q$ such that $V \simeq U \oplus V'$. We have ${\rm det}(U) =
{\rm det}(V){\rm det}(V') = (2k)(-2k \Delta_E) = -\Delta_E$; this implies that  
$w(V) = w(U) + (-\Delta_E,-2k \Delta_E)$. 

\medskip Since $d$ is odd, the discriminant of $E$ is positive by Lemma \ref{negative};
therefore the signature of $U$ is $(3,2d-3)$.

\medskip

If $p \in S_E$, then $\Delta_E = 1$ in
$\QQ_p^{\times}/\QQ_p^{\times 2}$, hence  the above computation implies that $w(U) = (-1,2k) + (-1,-2k) = (-1,-1)$ at $p$; therefore we have
  $w(U) = w(H^3)$, respectively $w(H^{11})$. This implies that if $p \in S_E$, then $U \otimes_{\bf Q} {\bf Q}_p$
is isomorphic to $H^{3} \otimes _{\QQ} \QQ_p $, respectively $H^{11} \otimes _{\QQ} \QQ_p $.
By Theorem \ref{realization} there exists a hermitian form $W$ over $E$ such that $U \simeq \TT(W)$.
\qed
\end{proof}

In the following theorem, we see that the above result can also be applied to HK manifolds of 
OG6-type and OG10-type, when the degree of the Salem field is not maximal.

\begin{theo}\label{O}  Let $E$ be a Salem field of degree $2d$, and
suppose that either 

\medskip
$\bullet$ $V = H^3  \oplus \langle -1,-1 \rangle$ and $d \leqslant 3$, or 

\medskip 
$\bullet$ $V = V_{K3} \oplus \langle -2,-6 \rangle$ and $d \leqslant 11$.

\medskip Then there exists
a hermitian form $W$  over $E$ and a quadratic form $V'$ over $\QQ$ such that
$$
V \simeq {\TT}(W) \oplus V'~.
$$

\end{theo}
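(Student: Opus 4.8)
The plan is to reduce both cases to Theorem \ref{Kum} by peeling off a single one-dimensional orthogonal summand, so that what remains is exactly one of the shapes $H^3\oplus\langle -2k\rangle$ or $V_{K3}\oplus\langle -2k\rangle$ already handled there, with $k=1$. The point is that the BBF-forms appearing for {\rm OG6}- and {\rm OG10}-type differ from those of Theorem \ref{Kum} only by one extra rational square summand, which can be absorbed into $V'$.

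Consider first the {\rm OG10}-type case, where $V=V_{K3}\oplus\langle -2,-6\rangle$. Here I would simply write $\langle -2,-6\rangle=\langle -2\rangle\oplus\langle -6\rangle$, so that $V\simeq\big(V_{K3}\oplus\langle -2\rangle\big)\oplus\langle -6\rangle$. Since $d\leqslant 11$, the second bullet of Theorem \ref{Kum} (with $k=1$) applies to $V_{K3}\oplus\langle -2\rangle$ and produces a hermitian form $W$ over $E$ together with a quadratic form $V_0$ over $\QQ$ such that $V_{K3}\oplus\langle -2\rangle\simeq\TT(W)\oplus V_0$. Setting $V'=V_0\oplus\langle -6\rangle$ then gives the claim.

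The {\rm OG6}-type case, $V=H^3\oplus\langle -1,-1\rangle$, is only slightly less immediate, because $H^3\oplus\langle -1\rangle$ is not literally of the form $H^3\oplus\langle -2k\rangle$. The key remark is that the binary form $\langle -1,-1\rangle$ represents $-2$ (indeed $(-1)\cdot 1^2+(-1)\cdot 1^2=-2$); since a nondegenerate binary form representing a value $a$ with determinant $\delta$ is isometric to $\langle a,a\delta\rangle$, and $\det\langle -1,-1\rangle=1$, one gets $\langle -1,-1\rangle\simeq\langle -2,-2\rangle$ over $\QQ$. (Equivalently, the Hasse invariants agree because $(2,-1)=0$, as $-1=1^2-2\cdot 1^2$ is a norm from $\QQ(\sqrt 2)$.) Hence $V\simeq\big(H^3\oplus\langle -2\rangle\big)\oplus\langle -2\rangle$, and since $d\leqslant 3$ the first bullet of Theorem \ref{Kum} (with $k=1$) yields $H^3\oplus\langle -2\rangle\simeq\TT(W)\oplus V_0$; taking $V'=V_0\oplus\langle -2\rangle$ completes the argument.

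No genuine obstacle arises, as the whole content is the bookkeeping above. The only point requiring a moment's care is the isometry $\langle -1,-1\rangle\simeq\langle -2,-2\rangle$ in the {\rm OG6} case, which is precisely what allows us to match the ``$\langle -2k\rangle$'' shape of Theorem \ref{Kum} with $k=1$; in the {\rm OG10} case even this is unnecessary, the summand $\langle -2\rangle$ being visibly present.
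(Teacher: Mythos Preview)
Your proof is correct and follows essentially the same approach as the paper: both reduce to Theorem \ref{Kum} with $k=1$ by splitting off one rank-one summand, using the isometry $\langle -1,-1\rangle\simeq\langle -2,-2\rangle$ in the {\rm OG6} case and the visible $\langle -2\rangle$ summand in the {\rm OG10} case. Your justification of the isometry $\langle -1,-1\rangle\simeq\langle -2,-2\rangle$ is in fact slightly more detailed than the paper's, which simply asserts it.
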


\noindent
{\bf Proof.} Set $U = H^3  \oplus \langle -2 \rangle$ in the first case, and $V = V_{K3} \oplus \langle -2 \rangle$ in the second one. By Theorem 
\ref{Kum}, there exists a hermitian form $W$ over $E$ and a quadratic form $U'$ over $\QQ$ such that 
$U \simeq {\TT}(W) \oplus U'$. 
Setting $V' = U' \oplus \langle -2 \rangle$ in the first case, we get $H^3 \oplus \langle -2,-2 \rangle \simeq {\TT}(W)\oplus V'$. Note that
$\langle -2,-2 \rangle \simeq \langle -1,-1 \rangle$, hence this implies that $V \simeq {\TT}(W) \oplus V'$, as claimed.
In the second case, set
$V' = U' \oplus \langle -6 \rangle$; we obtain $V \simeq {\TT}(W) \oplus V'$ in this case as well.
\qed

\medskip
Finally, we prove a result that will be used for K3 surfaces.

 \begin{theo} \label{Thm:10.4} Let $E$ be a Salem field of degree $2d$, with $2 \leqslant d \leqslant 10$.
 Then there exists a hermitian form $W$ over $E$ such that ${\rm T}(W)$ has signature $(3,2d-3)$, and a quadratic form $V'$ with
 $$V_{K3} \simeq {\rm T}(W) \oplus V'.$$
 \end{theo}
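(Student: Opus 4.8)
The plan is to apply the realization criterion of Theorem \ref{realization} directly, producing $\TT(W)$ as an orthogonal summand of $V_{K3}$ by prescribing the rational invariants of its complement. Write $U := \TT(W)$; since $E$ has degree $2d$ and we ask for signature $(3,2d-3) = (1+2\cdot 1,\, 1+2(d-2))$ with $d-2 \geq 0$, conditions (i) and (iv) of Theorem \ref{realization} hold as soon as $U$ has dimension $2d$ and this signature. Thus everything reduces to producing a decomposition $V_{K3} \simeq U \oplus V'$ in which $U$ has determinant $(-1)^d\Delta_E$ and satisfies $U \otimes_\QQ \QQ_p \simeq (H\otimes_\QQ\QQ_p)^d$ for every split prime $p \in S_E$.

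First I would pin down the complement $V'$: it is negative definite of even rank $2e := 22-2d \geq 2$, so ${\rm det}(V') > 0$, and ${\rm det}(V_{K3}) = {\rm det}(U){\rm det}(V') = -1$ forces ${\rm det}(V') = (-1)^{d+1}\Delta_E$. This sign is consistent, since by Lemma \ref{negative} we have $\Delta_E < 0$ precisely when $d$ is even, so $(-1)^{d+1}\Delta_E > 0$ in all cases. Moreover at a split prime $p$ the discriminant $\Delta_E$ is a square in $\QQ_p$, whence ${\rm det}(V')_p = (-1)^{d+1} = (-1)^e = {\rm det}(H^e)_p$; I would therefore demand that $V'$ be locally hyperbolic, $V'\otimes_\QQ\QQ_p \simeq (H\otimes_\QQ\QQ_p)^e$, at each $p \in S_E$, as this is exactly what makes the complementary form $U$ locally hyperbolic there.

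For $d \leq 9$, where $2e \geq 4$, I would build $U$ and $V'$ as abstract $\QQ$-forms: fix ${\rm det}(U)=(-1)^d\Delta_E$ and the Hasse invariants $w(U)_p = w(H^d)_p$ at the split primes, choose the remaining (finitely many) $w(U)_p$ to satisfy Hilbert reciprocity, and let $w(V')$ be determined by $w(U)+w(V')+({\rm det}(U),{\rm det}(V')) = w(V_{K3}) = (-1,-1)$ (using Example \ref{HW}). Because the ranks are $\geq 4$, Hasse--Minkowski guarantees that forms with these prescribed local invariants exist, and since the total dimension, signature, determinant and Hasse data then agree with those of $V_{K3}$, one gets $U \oplus V' \simeq V_{K3}$; in particular $U$ is an orthogonal summand and, by construction, satisfies all hypotheses of Theorem \ref{realization}. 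This mirrors the non-maximal argument of \cite{BGS}, Proposition 7.7.

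The delicate case is $d = 10$, where $2e = 2$ and the Hasse invariant of a rank-two form is no longer free; this is the main obstacle. Here I would instead take $V' = \langle -1,-c\rangle$ explicitly, with $c = -\Delta_E > 0$: its determinant is $c = (-1)^{11}\Delta_E$ as required, and at any split prime $p$ one has $c \equiv -1$ modulo squares, so $V'\otimes_\QQ\QQ_p \simeq \langle -1,1\rangle$ is hyperbolic. To see that this $V'$ splits off $V_{K3}$, note $V_{K3} \simeq \langle -1 \rangle \oplus V_{K3}'$ with $V_{K3}'$ indefinite of rank $21$, hence universal over every $\QQ_p$ and therefore representing $-c$ globally by Hasse--Minkowski, so $\langle -c\rangle$ splits off as well. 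Setting $U$ to be the resulting complement of signature $(3,17)$, a short computation of $w(U)_p$ from $w(V_{K3}) = (-1,-1)$, $w(V')_p = 0$ and the vanishing cross term $({\rm det}(U),{\rm det}(V'))_p$ at split primes gives $w(U)_p = (-1,-1) = w(H^{10})_p$, so $U\otimes_\QQ\QQ_p \simeq (H\otimes_\QQ\QQ_p)^{10}$; Theorem \ref{realization} then yields the desired hermitian form $W$.
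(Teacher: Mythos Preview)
Your proposal is correct and follows essentially the same approach as the paper. For $d\leq 9$ the paper simply cites \cite{BGS}, Theorem 7.1, which packages the Hasse--Minkowski argument you sketch; for $d=10$ the paper chooses exactly your complement $V'=\langle -1,\Delta_E\rangle$ (your $\langle -1,-c\rangle$ with $c=-\Delta_E$), splits it off $V_{K3}$ via \cite{BGS}, Lemma 2.3 rather than your explicit universality argument, and then performs the identical Hasse-invariant computation at split primes to invoke Theorem \ref{realization}.
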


 \noindent
 {\bf Proof.} Set $V = V_{K3}$, and suppose first that $d \not = 10$. Let $W$ be a hermitian form over $E$ such that the signature of ${\rm T}(W)$ is $(3,2d-3)$. By \cite{BGS},
 Theorem 7.1, there exists a quadratic form $V'$ over $\bf Q$ such that $V \simeq {\rm T}(W) \oplus V'.$

 \medskip Assume now that $d = 10$, and set $V' = \langle -1,\Delta_E \rangle$. Since $d = 10$,  Lemma \ref{negative} 
 implies that $\Delta_E$ is negative, hence $V'$ is negative definite. There exists a quadratic form $U$ such that $V \simeq U \oplus V'$ (see
 \cite {BGS}, Lemma 2.3). The signature of $U$ is $(3,17)$, and ${\rm det}(U) = \Delta_E$. Since $d = 10$, condition (ii) of
 Theorem \ref{realization} is satisfied. Conditions (i) and (iv) obviously hold, hence it remains to check condition (iii) of Theorem \ref{realization}.
 Let $p \in S_E$. 
 Then we have $\Delta_E = 1$ in ${\bf Q}_p^{\times}/{\bf Q}_p^{\times 2}$, and hence ${\rm det}(U) = 1$ and
 $w(V') = 0$ at $p$. 
 Since $V \simeq U \oplus V'$, we have
 \begin{eqnarray}
 \label{eq:w's}
 w(V) = w(V') + w(U) + ({\rm det}(V'),{\rm det}(U)).
 \end{eqnarray}
 Since ${\rm det}(U) = 1$,  the last term in \eqref{eq:w's} is also trivial, and
 we infer that
  $w(V) = w(U)$. By Example \ref{HW} we have $w(V) = (-1,-1)$ and 
 $w(H^{10}) = (-1,-1)$, therefore $w(U) = w(H^{10})$. 
 
 \medskip In summary, we have checked that if $p \in S_E$, then the invariants of $U$ and $H^{10}$ at $p$ are equal,
 therefore we have
 $U \otimes_{{\bf Q}} {\bf Q}_p \simeq (H \otimes_{{\bf Q}} {\bf Q}_p )^{10}$ for $p \in S_E$. This implies that condition (iii) of Theorem
 \ref{realization} also holds. Therefore there exists a hermitian form $W$ over $E$ such that $U \simeq {\rm T}(W)$.
\qed

\section{Hyperk\"ahler manifolds with prescribed Salem multiplication}\label{prescribed Salem}
\label{s:SM}

In this section, we prove Theorems \ref{non-maximal theorem} and \ref{610}
(and thus also Theorem \ref{SM}). 
Throughout we let $E$ be a Salem field of degree $2d$ and $\Delta_E$ its discriminant.

\subsection{K3 surfaces}

\begin{theo}[Theorem \ref{SM}]
\label{K3} There exists a K3 surface $X$ of algebraic dimension 0 such that $A_X \simeq E$ if and only if either $d \leqslant 10$, or
$d = 11$ and $\Delta_E$ is a square. 

\end{theo}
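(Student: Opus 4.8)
The plan is to read off the statement from the dictionary between pseudo-polarized Hodge structures and hermitian forms developed in \S\ref{hermitian BF}, combined with the Hodge-theoretic construction of \S\ref{s:period} and the surjectivity of the period map, following the scheme of the proof of Theorem \ref{RM} in \S\ref{ss:RM}. First I would set up the correspondence: by \S\ref{hodge pp} and Theorem \ref{endos}, a K3 surface $X$ with $a(X)=0$ and $A_X\simeq E$ gives a simple pseudo-polarized Hodge structure on $T_{X,\QQ}$ of signature $(3,r-3)$ with endomorphism field the Salem field $E$, and Theorem \ref{endos} forces $\dim_E T_{X,\QQ}=1$. Hence $r=2d$ and $T_{X,\QQ}=\TT(W)$ for a one-dimensional hermitian form $W$ over $E$; moreover $T_{X,\QQ}$ is a non-degenerate orthogonal summand of $V_{K3}=\Lambda_{K3}\otimes_\ZZ\QQ$, so $2d\leq 22$, i.e.\ $d\leq 11$.

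For necessity, the bound $d\leq 11$ is exactly this dimension count. When $d=11$ the summand $T_{X,\QQ}=\TT(W)$ has full dimension $22$ and therefore equals $V_{K3}$, so $V_{K3}\simeq\TT(W)$ for a one-dimensional hermitian form over $E$; Theorem \ref{41112}(ii) then forces $\Delta_E$ to be a square. This gives precisely the asserted necessary conditions.

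For sufficiency I would split into two cases. If $d\leq 10$, Theorem \ref{Thm:10.4} produces a (necessarily one-dimensional) hermitian form $W$ over $E$ with $\TT(W)$ of signature $(3,2d-3)$, together with a splitting $V_{K3}\simeq\TT(W)\oplus V'$. If $d=11$ and $\Delta_E$ is a square, Theorem \ref{41112}(ii) instead yields a one-dimensional $W$ with $V_{K3}\simeq\TT(W)$, which has signature $(3,19)=(3,2\cdot 11-3)$. In either case Theorem \ref{thm:pspolSM} endows $\TT(W)$ with a pseudo-polarized K3 type Hodge structure whose endomorphism algebra is $E$.

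It remains to realize this Hodge structure geometrically, which proceeds as in \S\ref{ss:RM}: set $T=\Lambda_{K3}\cap\TT(W)$, a primitive sublattice of rank $2d$ and signature $(3,2d-3)$ with $T_\QQ=\TT(W)$, extend the Hodge structure to $\Lambda_{K3}$ by $\Lambda_{K3}^{2,0}=T^{2,0}$, $\Lambda_{K3}^{0,2}=T^{0,2}$ and $\Lambda_{K3}^{1,1}=(T^{2,0}\oplus T^{0,2})^\perp$, and invoke the surjectivity of the period map and the Torelli theorem to obtain a K3 surface $X$. The point requiring care is the identification $T_X=T$: since the endomorphism ring of $T_\QQ$ is the field $E$ with $\dim_E T_\QQ=1$, Proposition \ref{adjoint}(1) shows the Hodge structure is simple, so the smallest primitive sublattice of $\Lambda_{K3}$ containing $H^{2,0}$ is $T$ itself, giving $T_X=T$ and hence $A_X\simeq E$; the signature $(3,2d-3)$ of $T_X$ then yields $a(X)=0$ by Proposition \ref{prop:Kondo}. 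The genuinely hard part, namely verifying the local determinant and hyperbolicity conditions of Theorem \ref{realization} that underlie Theorems \ref{41112} and \ref{Thm:10.4}, has already been carried out; here it only remains to assemble these inputs.
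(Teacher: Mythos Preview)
Your proposal is correct and follows essentially the same route as the paper: invoke Theorems \ref{41112}(ii) and \ref{Thm:10.4} for the hermitian-form input, apply Theorem \ref{thm:pspolSM} to obtain the pseudo-polarized Hodge structure, and then use surjectivity of the period map and Torelli exactly as in \S\ref{ss:RM}. Your write-up is in fact more careful than the paper's, since you treat the necessity direction explicitly and justify the identifications $T_X=T$ and $a(X)=0$ via Proposition \ref{adjoint}(1) and Proposition \ref{prop:Kondo}.
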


\noindent
{\bf Proof.} Theorem \ref{41112} (ii) and Theorem \ref{Thm:10.4}  imply that if the conditions on $d$ and $\Delta_E$ are satisfied, then 
there exists a hermitian form $W$ over $E$ such that
$\Lambda_{K3}\otimes_{\ZZ} \QQ\cong \TT(W)\oplus V'$ for some
negative definite quadratic form $V'$ (for $d \leqslant 20$) or $\Lambda_{K3}\otimes_{\ZZ} \QQ\cong \TT(W)$ (when $d = 22$). 
By Theorem \ref{SM}, there exists a pseudo-polarized Hodge structure
of K3 type with endomorphism algebra $E$ on $\TT(W)$.
Let $T=\Lambda_{K3}\cap \TT(W)$;  it is a primitive sublattice of $\Lambda_{K3}$ of rank
$d$ and signature $(3,d-3)$.
By way of the inclusion $T\subset\Lambda_{K3}$, the Hodge structure on $T_\QQ=\TT(W)$ induces a Hodge structure on $\Lambda_{K3}$ with
$\Lambda_{K3}^{p,q}=T^{p,q}$ for $(p,q)=(2,0)$ and $(0,2)$ and $\Lambda_{K3}^{1,1}$
the orthogonal complement of the direct sum of these two subspaces.
The surjectivity of the period map then implies that there is a (unique by the Torelli theorem)
K3 surface $X$ with $T_X=T$ and thus $A_X=A_T=E$.
\qed

\subsection{Generalized Kummer manifolds}

\begin{theo} Suppose that $d \leqslant 3$ and let $n \geqslant 2$ be an integer. Then there exists an elliptic generalized Kummer manifold $X$ of dimension $2n$ 
such that $A_X \simeq E$. 

\end{theo}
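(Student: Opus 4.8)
The plan is to mirror the argument for K3 surfaces in Theorem \ref{K3}, replacing $V_{K3}$ by the BBF-form of a generalized Kummer manifold. A generalized Kummer manifold of dimension $2n$ is of $K_n(T)$-type, so by \S \ref{s:BBF} its rational BBF-form is $V_X = H^2(X,\QQ) \simeq H^3 \oplus \langle -2k \rangle$ with $k = n+1 \geqslant 3$. Since $E$ is Salem of degree $2d$ with $2 \leqslant d \leqslant 3$, the first bullet of Theorem \ref{Kum} applies and produces a one-dimensional hermitian form $W$ over $E$ together with a quadratic form $V'$ over $\QQ$ satisfying $V_X \simeq \TT(W) \oplus V'$, where $\TT(W)$ has signature $(3,2d-3)$. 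A signature count then forces $V'$ to be negative definite.

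Next I would invoke Theorem \ref{thm:pspolSM}: as $\TT(W)$ has signature $(3,2d-3)$, it carries a pseudo-polarized Hodge structure of K3 type with endomorphism algebra $E$. To realize this on a manifold, fix the integral BBF-lattice $\Lambda$ underlying $V_X$, set $T = \Lambda \cap \TT(W)$ (a primitive sublattice of rank $2d$ and signature $(3,2d-3)$), and transport the Hodge structure to $\Lambda$ by declaring $\Lambda^{2,0} = T^{2,0}$, $\Lambda^{0,2} = T^{0,2}$, and $\Lambda^{1,1}$ the orthogonal complement of $\Lambda^{2,0} \oplus \Lambda^{0,2}$. The surjectivity of the period map for $K_n(T)$-type manifolds (\cite[Theorem 8.1]{Huy}) then yields a generalized Kummer manifold $X$ of dimension $2n$ with transcendental lattice $T_X = T$, so that $A_X = A_T \simeq E$.

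Finally, I would check that $X$ is elliptic. Because $T$ already carries all three positive directions of $V_X$, its orthogonal complement $V'$, which contains the Neron-Severi group of $X$, is negative definite; hence the restriction of the BBF-form to Neron-Severi is negative definite and $X$ is elliptic in the sense of \S \ref{s:HK}. There is no genuine obstacle here: the entire arithmetic content sits in Theorem \ref{Kum}, and the remainder is the now-standard passage from quadratic data to a pseudo-polarized Hodge structure and then, via the period map, to the manifold (with no uniqueness asserted in the higher-dimensional case).
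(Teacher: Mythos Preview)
Your proposal is correct and follows essentially the same route as the paper: invoke Theorem \ref{Kum} for $V_X\simeq H^3\oplus\langle -2k\rangle$ to obtain $V_X\simeq\TT(W)\oplus V'$ with $\TT(W)$ of signature $(3,2d-3)$ and $V'$ negative definite, apply Theorem \ref{thm:pspolSM} to endow $\TT(W)$ with a pseudo-polarized Hodge structure with endomorphism algebra $E$, then use surjectivity of the period map for $K_n(T)$-type manifolds. Your explicit verification that $X$ is elliptic (via the negative definiteness of $V'$) is a detail the paper leaves implicit, but the argument is the same.
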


\noindent
{\bf Proof.} Recall from \S \ref{transfer Salem}  that if  $X$ is of $K_n(T)$-type, then  $V_X \simeq H^3 \oplus \langle -2k \rangle$,
where $k =  n + 1$, and $n \geqslant 2$. Theorem \ref{Kum} implies that there exists a hermitian form $W$ over $E$ such that
$V_X \cong \TT(W)\oplus V'$ for some
negative definite quadratic form $V'$. The same argument as in the proof of Theorem \ref{K3} shows that there 
exists a pseudo-polarized Hodge structure
of K3 type with endomorphism algebra $E$ on $\TT(W)$, and by the surjectivity of the period map, we obtain a HK manifold $X$
of $K_n(T)$-type with $A_X \simeq E$. 
\qed

\subsection {Hyperk\"ahler manifolds of   {\rm OG6}-type} 

\begin{theo} Suppose that either $d \leqslant 3$ or $d = 4$ and $-\Delta_E$ is a square.
Then there exists an elliptic HK manifold $X$ of OG6-type such that $A_X \simeq E$.

\end{theo}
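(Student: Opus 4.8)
The plan is to mirror the proofs of Theorem \ref{K3} (the K3 case) and of the generalized Kummer case essentially verbatim, substituting the relevant BBF-form and the corresponding arithmetic input. Recall from \S \ref{s:BBF} that for an HK manifold $X$ of OG6-type the rational BBF-form is $V_X \simeq H^3 \oplus \langle -1, -1 \rangle$, of dimension $8$ and signature $(3,5)$. A Salem field $E$ of degree $2d$ gives $\dim_\QQ \TT(W) = 2d$ for a one-dimensional hermitian form $W$, so the hypotheses $d \leqslant 3$ and $d = 4$ correspond exactly to the non-maximal and the maximal degree situations, and I would treat them separately.

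First I would supply the arithmetic input, namely a one-dimensional hermitian form $W$ over $E$ whose transfer splits off $V_X$ with the correct signature. For $d \leqslant 3$, Theorem \ref{O} applied with $V = V_X = H^3 \oplus \langle -1, -1 \rangle$ produces such a $W$ together with a quadratic form $V'$ over $\QQ$ satisfying $V_X \simeq \TT(W) \oplus V'$; since that construction (via Theorem \ref{Kum}) yields $\TT(W)$ of signature $(3, 2d-3)$, the complement $V'$ is forced to be negative definite. For $d = 4$ we are in the maximal-degree case $\dim_\QQ \TT(W) = 8 = \dim_\QQ V_X$, so Theorem \ref{41112}(i) applies: because $-\Delta_E$ is a square by hypothesis, there is a one-dimensional hermitian form $W$ over $E$ with $V_X \simeq \TT(W)$, whence $V' = 0$ and $\TT(W)$ has signature $(3,5) = (3, 2d-3)$.

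In both cases $\TT(W)$ is a quadratic form over $\QQ$ of signature $(3, 2d-3)$ arising from a one-dimensional hermitian form over the Salem field $E$, so Theorem \ref{thm:pspolSM} endows $\TT(W)$ with a pseudo-polarized Hodge structure of K3 type whose endomorphism algebra is $E$. I would then perform the geometric step exactly as in the proof of Theorem \ref{K3}: intersect $\TT(W)$ with the integral BBF-lattice $\Lambda$ of OG6-type to obtain a primitive sublattice $T$ with $T_\QQ = \TT(W)$, extend the Hodge structure to $\Lambda$ by setting $\Lambda^{2,0} = T^{2,0}$, $\Lambda^{0,2} = T^{0,2}$ and taking $\Lambda^{1,1}$ to be the orthogonal complement of their direct sum, and invoke the surjectivity of the period map for HK manifolds of OG6-type (\cite[Theorem 8.1]{Huy}) to produce an HK manifold $X$ of OG6-type with transcendental Hodge structure $\TT(W)$, so that $A_X \simeq A_T = E$. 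Since $V'$ is negative definite (vacuously so when $d = 4$), the Neron--Severi lattice of $X$ is negative definite, hence $X$ is elliptic, and by \cite[Theorem 1.4 (1)]{Campana} it has algebraic dimension $0$.

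The genuine content lies entirely in the arithmetic realization Theorems \ref{O} and \ref{41112}(i), which are already established; the only obstacle within this assembly is bookkeeping: confirming that the hermitian form delivered by Theorem \ref{O} has $\TT(W)$ of the precise signature $(3, 2d-3)$, so that both the hypothesis of Theorem \ref{thm:pspolSM} and the ellipticity of $X$ are secured through the negative definiteness of $V'$, and observing that the period-map argument transfers to OG6-type with no change beyond dropping the Torelli uniqueness statement, as already noted at the start of \S \ref{s:HK}.
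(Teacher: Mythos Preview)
Your proposal is correct and follows essentially the same approach as the paper: invoke Theorem \ref{O} for $d\leqslant 3$ and Theorem \ref{41112}(i) for $d=4$ to realize $V_X\simeq \TT(W)\oplus V'$ with $V'$ negative definite (or zero), apply Theorem \ref{thm:pspolSM} to obtain the pseudo-polarized Hodge structure with endomorphism algebra $E$, and then use the surjectivity of the period map exactly as in the proof of Theorem \ref{K3}. Your additional care in verifying the signature of $\TT(W)$ and the ellipticity of $X$ makes the argument slightly more explicit than the paper's version, but the content is the same.
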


\noindent
{\bf Proof.} If $X$ is of   {\rm OG6}-type, 
then $V_X \simeq H^2 \oplus \langle -1,-1 \rangle$. Theorem \ref{41112} (i) and Theorem \ref{O} imply that
if the hypotheses on $d$ and $\Delta_E$ are satisfied, then there exists a hermitian form $W$ over $E$ such that
$V_X \cong \TT(W)\oplus V'$ for some
negative definite quadratic form $V'$ (if $d \leqslant 3$) or a hermitian form $W$ over $E$ such that
$V_X \cong \TT(W)$ (when $d = 4$). Applying Theorem \ref{SM} and the surjectivity of the period map as in the previous
theorems, we obtain a HK manifold $X$ of   {\rm OG6}-type such that  $A_X \simeq E$. 
\qed

\subsection {Hyperk\"ahler manifolds of   K3$^{[n]}$-type}

\begin{theo} Suppose that $d \leqslant 11$ and let $n \geqslant 2$ be an integer. Then there exists an elliptic HK manifold $X$ of {\rm K3}$^{[n]}$-type
such that $A_X \simeq E$. 

\end{theo}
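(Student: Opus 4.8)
The plan is to follow verbatim the template already used in this section for K3 surfaces (Theorem \ref{K3}) and for generalized Kummer manifolds, changing only the underlying BBF-form. Recall from \S\ref{s:BBF} that a HK manifold $X$ of K3$^{[n]}$-type has $V_X = H^2(X,\QQ) \simeq V_{K3} \oplus \langle -2k \rangle$ with $k = n-1$, and since $n \geq 2$ we have $k > 0$. First I would feed this quadratic form into Theorem \ref{Kum}, whose second alternative applies verbatim to $V = V_{K3} \oplus \langle -2k \rangle$ under the standing hypothesis $d \leq 11$; it produces a one-dimensional hermitian form $W$ over $E$ together with a quadratic form $V'$ over $\QQ$ satisfying
$$V_X \simeq \TT(W) \oplus V'.$$
As recorded in the proof of Theorem \ref{Kum}, $\TT(W)$ has signature $(3,2d-3)$; comparing with the signature $(3,20)$ of $V_X$ shows that $V'$ is negative definite of rank $23-2d \geq 1$.

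With the hermitian form in hand, the next step is purely Hodge-theoretic. Since $E$ is a Salem field and $\TT(W)$ has signature $(3,2d-3)=(3,[E:\QQ]-3)$, Theorem \ref{thm:pspolSM} endows $\TT(W)$ with a pseudo-polarized Hodge structure of K3 type whose endomorphism algebra is $E$; no genericity is needed here, because $\dim_E \TT(W) = 1$ forces any Hodge endomorphism field containing $E$ to equal $E$, and it likewise forces $\omega^\perp \cap \TT(W) = 0$, so the Hodge structure is simple.

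It then remains to realize this Hodge structure geometrically. I would set $T = \Lambda \cap \TT(W)$ inside the K3$^{[n]}$-type BBF-lattice $\Lambda$, transport the decomposition on $T_\QQ = \TT(W)$ to $\Lambda$ by declaring $\Lambda^{2,0}=T^{2,0}$, $\Lambda^{0,2}=T^{0,2}$ and $\Lambda^{1,1}=(T^{2,0}\oplus T^{0,2})^\perp$, and then invoke the surjectivity of the period map for K3$^{[n]}$-type manifolds (\cite[Theorem 8.1]{Huy}) to produce an $X$ with this period point. Since the Hodge structure is simple, the transcendental lattice of $X$ equals $T$, whence $A_X = A_T = E$; and since $V'$ is negative definite, the N\'eron--Severi part (the orthogonal complement of $T$) carries a negative-definite BBF-form, so $X$ is elliptic, hence of algebraic dimension $0$ by \cite[Theorem 1.4 (1)]{Campana}.

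The arithmetic has all been discharged in Theorem \ref{Kum}, so no genuinely hard new step remains; the proof is essentially a transcription of the K3 and Kummer arguments. The one point that deserves care is the last paragraph: unlike the K3 case there is no Torelli uniqueness for general HK manifolds, so the period map gives existence of $X$ but not uniqueness. One must therefore only check that the constructed period point lies in the period domain --- which is immediate from $(\omega,\omega)=0$ and $(\omega,\overline\omega)>0$, built into Definition \ref{def:pseudo} --- and that the resulting family deforms in the expected way; both follow formally from the $(3,\cdot)$ signature of $\TT(W)$ and the negative-definiteness of $V'$.
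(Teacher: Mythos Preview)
Your proposal is correct and follows essentially the same approach as the paper: invoke Theorem \ref{Kum} for $V = V_{K3}\oplus\langle -2k\rangle$ with $k=n-1$ to obtain $V_X\simeq \TT(W)\oplus V'$, then apply Theorem \ref{thm:pspolSM} and the surjectivity of the period map exactly as in the K3 and Kummer cases. The paper's proof is terser, omitting the explicit signature bookkeeping and the verification of ellipticity that you spell out, but the logical structure is identical.
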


\noindent
{\bf Proof.} For a HK manifold $X$ of  K3$^{[n]}$-type, we have $V_X \simeq V_{K3} \oplus \langle -2k \rangle$,
where $k = n-1$. There exists a hermitian form $W$ over $E$ such that $V_X \cong \TT(W)\oplus V'$ for some
negative definite quadratic form $V'$; this follows from  Theorem \ref{Kum}. As in the other cases, we apply 
Theorem \ref{SM} and the surjectivity of the period map to obtain a HK manifold of type $S^{[n]}$ with $A_X \simeq E$. 
\qed

\subsection {Hyperk\"ahler manifolds of   {\rm OG10}-type} 

\begin{theo} 
\label{thm:OG10}
Suppose that either $d \leqslant 11$ or $d = 12$ and $-3\Delta_E$ is a square.
Then there exists an elliptic HK manifold $X$ of   {\rm OG10}-type  such that $A_X \simeq E$.

\end{theo}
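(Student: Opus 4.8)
The plan is to follow the uniform strategy used for the other HK types in this section, assembling the purely arithmetic input from \S \ref{hermitian BF} with the realization result of \S \ref{s:period} and the surjectivity of the period map. Recall from \S \ref{transfer Salem} that for an OG10-type manifold $X$ one has $V_X \simeq V_{K3} \oplus \langle -2,-6 \rangle$, a quadratic form over $\QQ$ of dimension $24$ and signature $(3,21)$. Since $\dim_\QQ E = 2d$, the two hypotheses split naturally into a non-maximal range $d \leqslant 11$ (where $2d < 24$) and the maximal case $d = 12$ (where $2d = 24$).

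In the range $d \leqslant 11$ I would apply Theorem \ref{O} verbatim with $V = V_X$: it produces a one-dimensional hermitian form $W$ over $E$ and a quadratic form $V'$ over $\QQ$ with $V_X \simeq \TT(W) \oplus V'$. Tracing back to Theorem \ref{Kum} on which it relies, $\TT(W)$ has signature $(3,2d-3)$, so $V'$ is negative definite of rank $24 - 2d$. For the maximal case $d = 12$ there is no room for a complement, so the existence of $W$ becomes a genuine constraint; this is precisely Theorem \ref{41112} (iii), which yields a one-dimensional hermitian $W$ with $V_X \simeq \TT(W)$ exactly when $-3\Delta_E$ is a square, matching the hypothesis. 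In either case $\TT(W)$ carries the signature $(3,2d-3)$ required below, and $W$ is one-dimensional, so $\dim_E \TT(W) = 1$, in agreement with Theorem \ref{endos}.

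With $W$ in hand, I would invoke Theorem \ref{thm:pspolSM} to equip $\TT(W)$ with a pseudo-polarized Hodge structure of K3 type whose endomorphism algebra is exactly $E$. Since $\TT(W)$ is an orthogonal summand of $V_X = H^2(X,\QQ)$ with negative definite complement $V'$ (empty when $d=12$), one passes to the integral OG10-lattice $\Lambda$, sets $T = \Lambda \cap \TT(W)$, and extends the Hodge structure to $\Lambda$ by declaring $\Lambda^{2,0} = T^{2,0}$, $\Lambda^{0,2} = T^{0,2}$ and $\Lambda^{1,1}$ their orthogonal complement. The surjectivity of the period map for OG10-type manifolds \cite[Theorem 8.1]{Huy} then produces an actual manifold $X$ with $T_X = T$, hence $A_X = A_T \simeq E$. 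The negative definiteness of $V'$ forces the algebraic part of $X$ to be negative definite, so $X$ is elliptic, as required.

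The main obstacle is not this assembly, which is routine once the arithmetic is available, but the input Theorem \ref{41112} (iii) governing the maximal case $d = 12$. There the delicate point is verifying condition (iii) of Theorem \ref{realization} at every split prime $p \in S_E$, which reduces to matching the Hasse invariant $w(V_X) = (-1,3)$ against $w(H^{12}) = 0$; this forces $3 \equiv -1$ in $\QQ_p^{\times}/\QQ_p^{\times 2}$, and it is exactly the square condition on $-3\Delta_E$ that guarantees this at all such $p$. The analogous, but unconditional, local verification underlying Theorem \ref{O} (through Theorem \ref{Kum}) is where the real content of the $d \leqslant 11$ case lies.
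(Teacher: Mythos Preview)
Your proposal is correct and follows essentially the same route as the paper: invoke Theorem \ref{O} for $d \leqslant 11$ and Theorem \ref{41112} (iii) for $d = 12$ to obtain the decomposition $V_X \simeq \TT(W) \oplus V'$, then apply Theorem \ref{thm:pspolSM} and the surjectivity of the period map exactly as in the K3 case. Your write-up is in fact more detailed than the paper's, which simply refers back to the previous proofs for the passage from the arithmetic input to the geometric realization.
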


\noindent
{\bf Proof.}  If $X$ is of   {\rm OG10}-type, then $V_X \simeq  V_{K3} \oplus \langle -2,-6 \rangle$. By Theorem \ref{41112} (iii) and Theorem \ref{O}, we see that 
if the hypotheses on $d$ and $\Delta_E$ are satisfied, then there exists a hermitian form $W$ over $E$ such that
$V_X \cong \TT(W)\oplus V'$ for some
negative definite quadratic form $V'$ (if $d \leqslant 11$) or a hermitian form $W$ over $E$ such that
$V_X \cong \TT(W)$ (when $d = 12$). Applying Theorem \ref{SM} and the surjectivity of the period map as previously
provides a HK manifold $X$ of   {\rm OG10}-type such that  $A_X \simeq E$. 
\qed

\subsection{Proof of Theorems \ref{non-maximal theorem}, \ref{610}}
\label{ss:pf}

Theorems \ref{non-maximal theorem}, \ref{610} follow by collecting the results of Theorems \ref{K3} -- \ref{thm:OG10}
and sorting them by the degree of $E$ being maximal (Theorem \ref{610}) or not (Theorem  \ref{non-maximal theorem}).
\qed

\section{Salem fields and number theory}
\label{s:NT}

The last part of the paper is concerned with the relation of Salem fields with algebraic number theory and complex dynamics. 

\subsection{Salem fields and Salem numbers}

\label{s:slf}

Recall the notion of Salem field from the  introduction (Definition \ref{def:Salem'});  an algebraic number field $E$ is a Salem field if
$E$ has two real embeddings, the other embeddings being complex, the degree of $E$ is $\geqslant 4$, and $E$ has a $\QQ$-linear involution with totally real fixed field.

\medskip

The reason of the terminology ``Salem field'' is that Salem numbers generate fields with this property. We start by recalling some definitions.

\begin{defn}
A {\it Salem polynomial} is a monic irreducible polynomial $S \in {\bf Z}[x]$ of degree $\geqslant 4$ such that
$S(x) = x^{{\rm deg}(S)} S(x^{-1})$ and that $S$ has exactly two roots not on the unit circle, both positive
real numbers. The unique real root $\lambda> 1$ of a Salem polynomial is called a {\it Salem number}.

\end{defn}


\begin{lemma}\label{Salem implies Salem-like}  Let $S \in {\bf Z}[x]$ be a Salem polynomial, and set $E = {\bf Q}[x]/(S)$.  Then $E$ is a Salem field.

\end{lemma}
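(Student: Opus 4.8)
The plan is to write $\theta$ for the image of $x$ in $E=\QQ[x]/(S)$, so that $\theta$ is a root of $S$ and $E=\QQ(\theta)$ has degree $\deg(S)\geq 4$; everything will flow from the reciprocal relation $S(x)=x^{\deg(S)}S(x^{-1})$. First I would record that this relation forces the coefficients of $S$ to be palindromic, so in particular the constant term equals the leading coefficient $1$; hence $\theta$ is a unit and $S(\rho)=0$ implies $S(\rho^{-1})=0$. Consequently $\theta^{-1}\in E$ is again a root of the irreducible polynomial $S$, and the assignment $\theta\mapsto\theta^{-1}$ extends to a $\QQ$-algebra automorphism $\iota$ of $E$. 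Since $\iota^2$ fixes $\theta$, it is the identity, so $\iota$ is an involution, and it is nontrivial because $\theta\notin\QQ$ (as $\deg(S)\geq 4$), which rules out $\theta=\theta^{-1}$.

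Next I would pin down the signature. The roots of $S$ are the two positive reals $\lambda>1$ and $\lambda^{-1}$, together with $\deg(S)-2$ roots on the unit circle. An irreducible polynomial of degree $\geq 4$ has no rational root, so $\pm 1$ are excluded and the unit-circle roots are genuinely non-real, hence occur in complex-conjugate pairs. Thus $\deg(S)=2d$ is even, and the embeddings $E\hookrightarrow\CC$ (one per root) consist of exactly two real embeddings, $\theta\mapsto\lambda$ and $\theta\mapsto\lambda^{-1}$, and $d-1$ conjugate pairs of complex embeddings. Together with $\deg(E)=2d\geq 4$, this matches the first two requirements in Definition \ref{def:Salem}.

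It remains to exhibit the involution with totally real fixed field, and here lies the only real content. Set $\mu=\theta+\theta^{-1}$, which is fixed by $\iota$, so $E_0:=\QQ(\mu)\subseteq E^{\iota}$; since $\theta$ is a root of $t^2-\mu t+1\in E_0[t]$ one has $[E:E_0]\leq 2$, and because $\iota$ is nontrivial (so $E^{\iota}\neq E$) the tower $\QQ(\mu)\subseteq E^{\iota}\subsetneq E$ forces $E^{\iota}=\QQ(\mu)$, of degree $d$. The conjugates of $\mu$ are precisely the numbers $\rho+\rho^{-1}$ as $\rho$ runs over the roots of $S$: for the two real roots these are visibly real, while for a root $\beta$ on the unit circle one has $\beta^{-1}=\overline{\beta}$, so $\beta+\beta^{-1}=2\,\mathrm{Re}(\beta)\in\RR$. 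Hence every conjugate of $\mu$ is real, $E_0$ is totally real, and $\iota$ is the desired involution, completing the verification that $E$ is a Salem field. Alternatively, the same computation feeds directly into Lemma \ref{lem:Slf}: writing $\alpha=\mu^2-4=(\theta-\theta^{-1})^2\in E_0$ gives $E=E_0(\sqrt{\alpha})$, and $\alpha$ maps to $(\lambda-\lambda^{-1})^2>0$ at the real place and to $(\beta-\beta^{-1})^2=-4\,\mathrm{Im}(\beta)^2<0$ at all the others, so $\alpha$ is positive at exactly one embedding of $E_0$.

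The main obstacle, such as it is, is the totally-real claim for the fixed field, which reduces to the elementary but crucial observation that $\rho+\rho^{-1}=\rho+\overline{\rho}$ is real for every root $\rho$ on the unit circle; the signature count and the well-definedness and nontriviality of $\iota$ are routine bookkeeping built on the palindromic symmetry of $S$.
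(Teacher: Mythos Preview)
Your proof is correct and follows essentially the same approach as the paper: both use the involution $\theta\mapsto\theta^{-1}$, identify the fixed field $E_0$ (you via $\QQ(\theta+\theta^{-1})$, the paper via the trace polynomial $f$ with $f(x+x^{-1})=x^{-\deg(S)/2}S(x)$, which is the same thing), set $\alpha=(\theta-\theta^{-1})^2$, and appeal to Lemma~\ref{lem:Slf} after checking the sign of $\alpha$ at each real place of $E_0$. Your write-up is more explicit about the well-definedness and nontriviality of the involution and the signature count, but the underlying argument is identical.
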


\noindent
{\bf Proof.} Let $e \mapsto \overline e$ be the involution
induced by $X \mapsto X^{-1}$. Let us write $E = {\bf Q}(\lambda)$, with $\overline {\lambda} = \lambda^{-1}$. Set $\alpha = (\lambda - \lambda^{-1})^2$ and $E_0=\fQ[x]/(f)$ where $f\in\fQ[x]$ is determined by the property
that $f(x+x^{-1}) = x^{-\deg(S)/2}  S(x)$.
By construction, we have $E = E_0(\sqrt \alpha)$ and it is easy to check that $\alpha$ is positive at one real embedding of $E_0$ and negative at the others.
Therefore $E$ is a Salem field (see Lemma \ref{lem:Slf}).
\qed

\bigskip
The converse of this lemma also holds true; we thank Chris Smyth for allowing us to include his proof,  more elementary than our original argument. 

\begin{prop}\label{chinburg} Let $E$ be a Salem field. Then there exists a Salem polynomial $S$ such that $E = {\bf Q}[x]/(S)$.

\end{prop}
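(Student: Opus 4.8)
The plan is to produce a single element $\lambda\in E$ that is at once a Salem number and a generator of $E$; its minimal polynomial $S$ over $\mathbf{Q}$ is then the desired Salem polynomial, giving $E=\mathbf{Q}[x]/(S)$. Throughout, write the involution of $E$ as $e\mapsto\overline e$, with totally real fixed field $E_0$ of degree $d\geq 2$, so that $[E:\mathbf{Q}]=2d\geq 4$. By Lemma \ref{lem:Salem-crit}, the infinite places of $E$ consist of two real embeddings $\tau_1,\tau_2$, interchanged by the involution and lying above the split real place of $E_0$, together with $d-1$ conjugate pairs of complex embeddings $\sigma_i,\overline{\sigma_i}$ on which the involution acts as complex conjugation.

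The heart of the argument is a signature reduction. Suppose $\lambda\in\mathcal{O}_E^{\times}$ satisfies the relative norm condition $\lambda\overline\lambda=1$. Then at each complex place $\sigma_i(\lambda)\overline{\sigma_i(\lambda)}=\sigma_i(\lambda\overline\lambda)=1$, so $|\sigma_i(\lambda)|=1$; and since $\tau_1\circ(\,\overline{\cdot}\,)=\tau_2$, the relation $\lambda\overline\lambda=1$ yields $\tau_1(\lambda)\tau_2(\lambda)=1$. Hence if moreover $\tau_1(\lambda)>1$, the conjugates of $\lambda$ off the unit circle are precisely $\tau_1(\lambda)=\lambda$ and $\tau_2(\lambda)=\lambda^{-1}$, both positive real, while all other conjugates lie on the unit circle. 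Since $\lambda\overline\lambda=1$ forces $N_{E/\mathbf{Q}}(\lambda)=N_{E_0/\mathbf{Q}}(\lambda\overline\lambda)=1$, the minimal polynomial $S$ of such a generating $\lambda$ is monic with $S(0)=1$ and has root set stable under $z\mapsto z^{-1}$ (because $\overline\lambda=\lambda^{-1}$ is a conjugate of $\lambda$), hence satisfies $S(x)=x^{\deg S}S(x^{-1})$. Thus any relative norm-$1$ unit $\lambda$ with $\tau_1(\lambda)>1$ that generates $E$ is automatically a Salem number generating $E$, which is exactly what we need.

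It remains to exhibit such a $\lambda$. For existence, Dirichlet's unit theorem gives $\operatorname{rank}\mathcal{O}_E^{\times}=2+(d-1)-1=d$ and $\operatorname{rank}\mathcal{O}_{E_0}^{\times}=d-1$; since the image of $N_{E/E_0}$ on units lies in $\mathcal{O}_{E_0}^{\times}$, the kernel of $N_{E/E_0}$ on $\mathcal{O}_E^{\times}$ has rank at least $d-(d-1)=1$. I would pick a non-torsion element $\lambda$ of this kernel and, replacing $\lambda$ by $\lambda^{-1}$ if necessary, arrange $\tau_1(\lambda)\geq 1$; in fact $\tau_1(\lambda)>1$, for otherwise all archimedean absolute values of $\lambda$ would equal $1$ and Kronecker's theorem would force $\lambda$ to be a root of unity, contradicting non-torsionality. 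By the previous paragraph $\lambda$ is then a Salem number lying in $E$.

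The main obstacle is the generation statement $\mathbf{Q}(\lambda)=E$. Note first $\lambda\notin E_0$ (else $\overline\lambda=\lambda$ and $\lambda^2=1$), so this is equivalent to showing that $\lambda+\overline\lambda=\lambda+\lambda^{-1}$ generates the totally real field $E_0$. A priori the minimal degree of $\lambda$ can drop only through coincidences among the complex conjugates $\sigma_i(\lambda)$, all of absolute value $1$, and ruling these out — or repairing them by multiplying $\lambda$ by a suitable root of unity of $E$ and retaking the generator — is where the genuine content lies. I would settle this by invoking Stark's explicit construction \cite{Stark}, which produces a unit $\epsilon$ with $E=\mathbf{Q}(\epsilon)$ by design, together with Chinburg's theorem \cite{Chinburg} that $\epsilon$ is a Salem number; alternatively one carries out the elementary adjustment directly. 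Taking $S$ to be the minimal polynomial of the resulting generator then exhibits $E=\mathbf{Q}[x]/(S)$ with $S$ a Salem polynomial, completing the proof.
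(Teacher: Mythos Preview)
Your strategy---producing a non-torsion unit $\lambda$ with $\lambda\overline\lambda=1$ and arguing it is a Salem generator---is exactly the paper's (Smyth's) approach, up to the cosmetic difference that the paper writes such a unit explicitly as $\beta=u/\overline u$ for a unit $u$ with $u,u^{2}\notin E_0$ rather than appealing to the rank of $\ker N_{E/E_0}$ on $\mathcal{O}_E^{\times}$.

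Where you diverge is at the generation step: you flag $\mathbf{Q}(\lambda)=E$ as the main obstacle and retreat to Stark--Chinburg, but your own setup already forces it by an elementary count. Among the $2d$ embeddings of $E$ into $\mathbf{C}$, \emph{exactly one} sends $\lambda$ to a real number $>1$, namely $\tau_1$: the other real embedding gives $\tau_2(\lambda)=\tau_1(\lambda)^{-1}\in(0,1)$, and each complex embedding gives a value of modulus $1$, hence $\neq\tau_1(\lambda)$. But every complex value assumed by $\lambda$ under the embeddings of $E$ is assumed exactly $[E:\mathbf{Q}(\lambda)]$ times, since restriction of embeddings from $E$ to $\mathbf{Q}(\lambda)$ is $[E:\mathbf{Q}(\lambda)]$-to-one. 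Applying this to the value $\tau_1(\lambda)$ gives $[E:\mathbf{Q}(\lambda)]=1$. This is precisely the paper's sentence ``Since $\beta$ has only one conjugate greater than $1$, its degree must be $2d$''; the feared coincidences among the $\sigma_i(\lambda)$ on the unit circle are therefore irrelevant, and Stark's unit is not needed. (A minor separate point: replacing $\lambda$ by $\lambda^{-1}$ does not by itself guarantee $\tau_1(\lambda)>0$, since $\tau_1(\lambda)$ and $\tau_2(\lambda)=\tau_1(\lambda)^{-1}$ have the same sign; one should also allow squaring, as the paper does.)
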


\noindent
{\bf Proof.}  Set $2d = [E:{\QQ}]$. As usual, we denote by $E_0$ the fixed field of the involution of $E$. Let $\alpha \in E_0$ be 
positive at exactly one embedding of $E_0$ such that $E = E_0 (\sqrt \alpha)$ (see Lemma \ref{lem:Slf}). The field $E_0$ is totally real of degree $d$,
with conjugate fields $E^1 = E_0$, $E^2,...E^d$. For any $\gamma = \gamma_1 \in E_0$, we let $\gamma_i$ denote its conjugate in $E^i$, for
$i = 1,\dots,d$. By Dirichlet's unit theorem, the group of units of $E_0$ has rank $d-1$, say with generating set $U$. The field $E$ has $r = 2$ real
places and $s = d-1$ complex places, so the rank of the group of units of $E$ is $r+s-1 = d$. Now let $u$ be any unit of $E$ such that the
unit group generated by $U \cup \{u\}$ has rank $d$. Then $u \not \in E_0$, and is not of the form $ b\sqrt \alpha$ for any $b \in E_0$.
Thus on writing $u = a + b\sqrt{\alpha}$ for $a,b \in E_0$, where $a \not = 0$, we see from $u^2 = a^2 + \alpha b^2 + 2ab \sqrt{\alpha}$ that also $u^2 \not \in E_0$, and
$U \cup \{u^2\}$ has rank $d$. Now the $2d$ conjugates of $u$ are, say, $u_i = a_i + b_i \sqrt {\alpha}$ and $u_{d+i} = a_i - b_i \sqrt {\alpha}$
for $i = 1,\dots, d$. Hence, by swapping $u_1$ and $u_{d+1}$ and (or) replacing $u$ by its square where necessary, we can assume that
$u_1 > u_{d+1} > 0$. 

\medskip
Now set $\beta = u_1/u_{d+1}$; then $\beta \in E$, it  is a unit and it is greater than 1. Its conjugates are $u_1/u_{d+1}$, its reciprocal $u_{d+1}/u_1$,
as well as for $i=2,\dots,d$ the conjugates $u_i/u_{d+i}$ and $u_{d+i}/u_{i}$. Moreover  $u_i/u_{d+i}$ and $u_{d+i}/u_{i}$
are complex conjugates of each other for $i=2,\dots,d$, hence  of modulus 1.
Since $\beta$ has only one conjugate greater than 1 (that is, $\beta$ itself), its degree must be $2d$. 
Let now $S$ be the minimal polynomial of $\beta$; we have  $E = {\bf Q}[x]/(S)$, as claimed.
\qed

\bigskip
Finally, we record some further results concerning Salem fields and Salem numbers.

\begin{prop} 
\label{prop:E=F}
Let $E$ be a Salem field, and let $F$ be a subfield of $E$. If $F$ is a Salem field, then $F = E$.

\end{prop}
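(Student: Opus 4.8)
The plan is to show that the index $[E:F]$ must equal $1$. Write $2d = [E:\QQ]$ and $2e = [F:\QQ]$, and let $E_0$, $F_0$ denote the fixed fields of the involutions of $E$ and $F$, so that $E_0$ is totally real of degree $d$ and $F_0$ is totally real of degree $e$. By Lemma \ref{lem:Slf} I may write $E = E_0(\sqrt\alpha)$ and $F = F_0(\sqrt\beta)$ with $\alpha \in E_0$ and $\beta \in F_0$, each positive at exactly one real embedding of the respective totally real field and negative at all the others.

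First I would locate $F_0$ inside $E_0$. Since the compositum of two totally real subfields of a number field is again totally real (every embedding of the compositum restricts to real embeddings of the factors, hence has real image), the field $F_0 E_0 \subseteq E$ is totally real. As $[E:E_0]=2$ and $E$ itself is not totally real, this forces $F_0 E_0 = E_0$, i.e.\ $F_0 \subseteq E_0$. In particular $\beta \in E_0$, and since $\sqrt\beta \in F \subseteq E$ we have $E_0(\sqrt\beta) \subseteq E$. Moreover $\sqrt\beta \notin E_0$: otherwise $F = F_0(\sqrt\beta) \subseteq E_0$ would be totally real, contradicting that $F$ is a Salem field. Hence $[E_0(\sqrt\beta):E_0]=2$, and comparing with $[E:E_0]=2$ gives $E = E_0(\sqrt\beta)$.

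The decisive step is a count of real embeddings. A real embedding $\sigma : E_0 \to \RR$ extends to a real embedding of $E = E_0(\sqrt\beta)$ precisely when $\sigma(\beta) > 0$ (in which case it yields two of them, $\sqrt\beta \mapsto \pm\sqrt{\sigma(\beta)}$), and otherwise yields a conjugate pair of complex embeddings. Since $E$ has exactly two real embeddings, $\beta$ is positive at exactly one real embedding of $E_0$. On the other hand, because $F = F_0(\sqrt\beta)$ is Salem, $\beta \in F_0$ is positive at exactly one real embedding $\tau_0 : F_0 \to \RR$. As $E_0/F_0$ is separable, $\tau_0$ admits exactly $m := [E_0:F_0]$ extensions to $\RR$, and every such extension $\sigma$ satisfies $\sigma(\beta) = \tau_0(\beta) > 0$ since $\beta \in F_0$. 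Thus $\beta$ is positive at (at least) $m$ real embeddings of $E_0$, which forces $m = 1$. Therefore $E_0 = F_0$, and hence $E = E_0(\sqrt\beta) = F_0(\sqrt\beta) = F$.

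The main obstacle is arranging the two Salem presentations $E = E_0(\sqrt\beta)$ and $F = F_0(\sqrt\beta)$ with the \emph{same} element $\beta$; once $\beta$ is recognized to lie in $F_0 \subseteq E_0$ and to govern both sign conditions simultaneously, the embedding count does all the work, and the remaining verifications are routine field theory in characteristic zero.
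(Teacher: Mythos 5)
Your proof is correct, and it takes a genuinely different route from the paper's. The paper deduces the proposition from two nontrivial inputs: Proposition \ref{chinburg} (every Salem field is generated by a Salem number, proved via Dirichlet's unit theorem) together with Smyth's result (\cite{Sm}, Proposition 3 (ii)) that a Salem number contained in a Salem field $\QQ(\lambda)$ generates the whole field. You instead work purely with the abstract sign data of Definition \ref{def:Salem'} and Lemma \ref{lem:Slf}: the compositum argument (a compositum of totally real subfields of $E$ is totally real, hence cannot be $E$, forcing $F_0\subseteq E_0$) pins down $E=E_0(\sqrt\beta)$ with the \emph{same} $\beta$ presenting $F$, and then the embedding count does the rest --- each of the $[E_0:F_0]$ real extensions of the distinguished embedding $\tau_0$ of $F_0$ keeps $\beta$ positive, while $E$ being Salem permits only one real embedding of $E_0$ at which $\beta$ is positive, so $[E_0:F_0]=1$ and $E=F$. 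All the individual steps check out: $\sqrt\beta\notin E_0$ since otherwise $F$ would be totally real; every embedding of the totally real $E_0$ is real, so separability gives exactly $[E_0:F_0]$ real extensions of $\tau_0$; and the sign condition on $\beta$ over $E_0$ follows from $E$ having exactly two real embeddings. What each approach buys: yours is more elementary and entirely self-contained, using no unit theorem and no Salem numbers at all, whereas the paper's route is essentially free in context, since Proposition \ref{chinburg} is needed elsewhere anyway (e.g.\ in Theorem \ref{thm:oo} and Theorem \ref{thm:entropies}), and it simultaneously records the link between subfields and the Salem numbers they contain.
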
 

\noindent
{\bf Proof.} Since $E$ is a Salem field, Proposition \ref{chinburg} implies that there is a Salem number $\lambda$ such that $E = {\QQ}(\lambda)$. 
Similarly, if $F$ is a Salem field, then $F$ contains a Salem number $\tau$ with $F = {\QQ}(\tau)$. But $F$ is a subfield of $E$, hence $\tau \in E$; by \cite{Sm},
Proposition 3, (ii), this implies that ${\QQ}(\tau) = {\QQ}(\lambda)$, hence $F = E$. 
\qed

\begin{prop} Let $E$ be a Salem field. Then there exists a Salem number $\lambda \in E$ such that the set of Salem numbers in $E$ consists of the
powers of $\lambda$.

\end{prop}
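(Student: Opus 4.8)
The plan is to realise every Salem number of $E$ as an element of a rank-one group of units, and then to read off the multiplicative structure. Fix the involution $e \mapsto \overline e$ of the Salem field $E$ (of degree $2d$), with totally real fixed field $E_0$ of degree $d$; by Lemma \ref{lem:Salem-crit}, $E$ has exactly two real places, say $\sigma_1,\sigma_2$, which are interchanged by the involution, together with $d-1$ complex places on which the involution acts as complex conjugation. First I would record that the involution is the unique nontrivial $\QQ$-linear involution of $E$ with totally real fixed field: any such involution must satisfy $\sigma_1 \circ {}^{-} = \sigma_2$ (a real embedding composed with the involution is again real and cannot be $\sigma_1$), and this relation determines it since $\sigma_1$ is injective. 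For a Salem number $\tau$ with $E=\QQ(\tau)$, the reciprocal map $\tau \mapsto \tau^{-1}$ extends to precisely such an involution (its fixed field $\QQ(\tau+\tau^{-1})$ is totally real), so it coincides with $e\mapsto \overline e$, whence $\overline\tau = \tau^{-1}$. Consequently every Salem number $\tau \in E$ is a unit of $\mathcal{O}_E$ lying in
\[
W \,=\, \{\, u \in \mathcal{O}_E^{\times} : u\,\overline u = 1 \,\}.
\]

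Next I would compute $W$ via Dirichlet's unit theorem. For $u \in W$ one has $\overline u = u^{-1}$, so $\sigma_2(u) = \sigma_1(u)^{-1}$ at the two real places, while at each complex place $u\overline u = 1$ forces $|u|^2 = 1$ (here the involution is complex conjugation). Thus the logarithmic embedding maps $W$ into a single line, so $\mathrm{rank}(W) \leqslant 1$; since $E$ contains a Salem number by Proposition \ref{chinburg}, and such a number is a non-torsion element of $W$, the rank is exactly one. The torsion subgroup of $W$ is $\{\pm 1\}$, because $E$ has a real embedding and hence its only roots of unity are $\pm 1$. Therefore $W = \{\pm \lambda_0^{\,n} : n \in \ZZ\}$ for a fundamental element $\lambda_0$, which I normalise so that $\sigma_1(\lambda_0) > 1$.

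Now I would characterise the Salem numbers inside $W$. If $n \geqslant 1$, then $\lambda_0^{\,n} \in W$ has $\sigma_1(\lambda_0^{\,n}) > 1$, reciprocal $\sigma_2(\lambda_0^{\,n}) < 1$, and all remaining conjugates of absolute value $1$; so the only condition left for it to be a Salem number is that it generate $E$ (equivalently, have degree $2d \geqslant 4$). By Proposition \ref{prop:E=F} every Salem number of $E$ generates $E$, so the set of Salem numbers of $E$ equals $\{\, \lambda_0^{\,n} : n \geqslant 1,\ \QQ(\lambda_0^{\,n}) = E \,\}$. The crucial input is the classical fact that all positive powers of a Salem number are again Salem numbers of the same field: for a Salem number $\mu$, the conjugates of $\mu$ over $\QQ(\mu^k)$ are among the roots $\mu\zeta$ of $x^k - \mu^k$ with $\zeta^k = 1$, hence all have modulus $\mu > 1$; but a Salem number has a unique conjugate of modulus $>1$, so $\mu$ is its only $\QQ(\mu^k)$-conjugate, i.e.\ $\QQ(\mu^k) = \QQ(\mu)$, and $\mu^k$ satisfies all the Salem conditions.

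Finally I would set $\lambda = \lambda_0^{\,m}$, the smallest Salem number of $E$, where $m = \min\{\, n \geqslant 1 : \QQ(\lambda_0^{\,n}) = E \,\}$. For an arbitrary Salem number $\tau = \lambda_0^{\,n}$, put $g = \gcd(m,n)$; since $\lambda_0^{\,m}$ is a power of $\lambda_0^{\,g}$ we get $\QQ(\lambda_0^{\,g}) \supseteq \QQ(\lambda_0^{\,m}) = E$, so $\lambda_0^{\,g}$ generates $E$ and is a Salem number, forcing $g \geqslant m$ by minimality and hence $g = m$ and $m \mid n$. Thus $\tau = \lambda^{\,n/m}$ is a positive power of $\lambda$; conversely, by the power lemma every $\lambda^{\,k}$ ($k \geqslant 1$) is a Salem number of $E$, so the set of Salem numbers of $E$ is exactly $\{\lambda^{\,k} : k \geqslant 1\}$. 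I expect the main obstacle to be the field-generation bookkeeping: establishing that $W$ is genuinely of rank one, and above all the lemma that every power of a Salem number still generates the whole field, since the ``analytic'' conditions defining a Salem number become automatic once one works inside $W$.
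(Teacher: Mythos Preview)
Your argument is correct and complete. The paper itself does not give a proof but simply cites Smyth \cite{Sm}, Proposition 3 (iii), so you are supplying a self-contained argument where the paper defers to the literature.

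Your route is the natural one and is essentially how Smyth's result is proved: identify the Salem numbers of $E$ inside the relative unit group $W=\{u\in\mathcal O_E^\times:u\overline u=1\}$, use Dirichlet to see that $W\cong\{\pm1\}\times\ZZ$, and then sort out which powers of a generator actually have full degree. The two nontrivial inputs you invoke are both already in the paper: Proposition \ref{chinburg} (existence of a Salem number in $E$, giving $\mathrm{rank}\,W\geqslant1$) and Proposition \ref{prop:E=F} (every Salem number in $E$ generates $E$, so that the uniqueness-of-involution step applies and $\tau\overline\tau=1$). Note that Proposition \ref{prop:E=F} itself rests on Smyth's Proposition 3 (ii), so while your argument avoids citing 3 (iii), it still depends on Smyth for 3 (ii); this is consistent with the paper's logical ordering. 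Your proof of the ``power lemma'' ($\QQ(\mu^k)=\QQ(\mu)$ for Salem $\mu$) via the intersection of the roots of $x^k-\mu^k$ with those of the Salem polynomial is clean and is exactly what one needs for the converse direction.
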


\noindent
{\bf Proof.} This is \cite{Sm}, Proposition 3 (iii). 
\qed

\medskip
The last proposition suggests the following definition

\begin{defn} Let $E$ be a Salem field. The Salem number  $\lambda \in E$ such that the set of Salem numbers in $E$ consists of the
powers of $\lambda$ is called a {\it primitive Salem number}.

\end{defn}

\begin{notation} If $E$ is a Salem field, we denote by $\Lambda_E$ the set of Salem numbers contained in $E$, and by $\lambda_E$ the primitive Salem number of $E$. 

\end{notation}

\subsection{Salem numbers and Salem fields}\label{Stark and Chinburg}

Let $E$ be a Salem field. We saw in the last section that $E$ is generated by a Salem number, and that $E$ contains a {\it primitive} Salem number
$\lambda_E$ such that the set of Salem numbers $\Lambda_E$ in $E$ consists of the
powers of $\lambda_E$. The aim of this section is to give an explicit expression for $\lambda_E$, in terms of arithmetic invariants of the number field $E$. This is based on work of Stark \cite{Stark} and Chinburg
\cite{Chinburg}. We start by introducing some notation.

\begin{notation} Let $E$ be a Salem field, and let $E_0$ be its maximal totally real subfield. We denote by
$h(E_0)$, respectively $h(E)$ the class numbers of the fields $E_0$, respectively $E$. 
Let $\chi$ be the nontrivial character of ${\rm Gal}(E/E_0)$, and let 
$L(s,\chi)$ be its Artin $L$-function. 
\end{notation}

\begin{theo} 
\label{thm:eps}
Let $E$ be a Salem field of degree $2d$.  Define $u = 2$ if $E$ is generated over $E_0$ by the square root of a unit of $E_0$, and $u = 1$ otherwise. 
Then
$$\epsilon = {\rm exp}(h(E_0)2^{2-d}u L'(0,\chi)/h(E))$$ 

is a Salem number in $E$, and $E = {\bf Q}(\epsilon)$. 

\medskip
The subgroup generated by $\epsilon$ and the units of $E_0$ is of index $2u$ in the group of units of $E$.

\medskip
Moreover, $\lambda_E = \epsilon^{n/2}$ with $n =1$ or $2$.

\end{theo}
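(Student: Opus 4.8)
The plan is to obtain the first two assertions directly from the cited works of Stark and Chinburg, and then to deduce the relation between $\epsilon$ and the primitive Salem number $\lambda_E$ from the index statement together with the primitivity of $\lambda_E$ recalled above. First I would invoke \cite{Stark}, Theorem 2, which asserts precisely that the element $\epsilon$ defined in the statement is a unit of $E$, that $E={\QQ}(\epsilon)$, and that the subgroup generated by $\epsilon$ together with the units of $E_0$ has index $2u$ in the unit group $\mathcal{O}_E^{\times}$ of $E$. Chinburg's result \cite{Chinburg} then shows that this Stark unit $\epsilon$ is a Salem number. This settles everything except the final relation $\lambda_E=\epsilon^{n/2}$.

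For the last assertion I would argue as follows. Since $\epsilon$ is a Salem number lying in $E$ and every Salem number in $E$ is a power of the primitive Salem number $\lambda_E$, we may write $\epsilon=\lambda_E^{k}$ for some integer $k\geq 1$. Because $\lambda_E\notin E_0$ (indeed $\overline{\lambda_E}=\lambda_E^{-1}\neq\lambda_E$), the image of $\lambda_E$ in $\mathcal{O}_E^{\times}/\mathcal{O}_{E_0}^{\times}$ has infinite order, so the subgroup generated by $\lambda_E^{k}$ and $\mathcal{O}_{E_0}^{\times}$ has index $k$ inside the subgroup generated by $\lambda_E$ and $\mathcal{O}_{E_0}^{\times}$. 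Combining this with Stark's index gives
$$2u = k\cdot[\,\mathcal{O}_E^{\times}:\langle\lambda_E,\mathcal{O}_{E_0}^{\times}\rangle\,].$$
It therefore suffices to prove the bound $[\,\mathcal{O}_E^{\times}:\langle\lambda_E,\mathcal{O}_{E_0}^{\times}\rangle\,]\geq u$; for then $k\leq 2$, and with $k\geq 1$ we conclude $k\in\{1,2\}$ and $\lambda_E=\epsilon^{1/k}=\epsilon^{n/2}$ with $n=2/k\in\{1,2\}$.

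The step I expect to require the most care is this index bound in the case $u=2$. Here $E=E_0(\sqrt{\eta})$ for a unit $\eta$ of $E_0$, so $\delta:=\sqrt{\eta}$ is itself a unit of $E$ with $\delta^{2}=\eta\in\mathcal{O}_{E_0}^{\times}$. I would show $\delta\notin\langle\lambda_E,\mathcal{O}_{E_0}^{\times}\rangle$: assuming $\delta=\lambda_E^{j}v$ with $v\in\mathcal{O}_{E_0}^{\times}$ and applying the involution (which fixes $E_0$, sends $\delta\mapsto-\delta$, and sends $\lambda_E\mapsto\lambda_E^{-1}$) yields $-\delta=\lambda_E^{-j}v$; dividing the two relations forces $\lambda_E^{2j}=-1$, which is impossible since $\lambda_E>1$ is real. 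Hence $\delta$ represents a class of order $2$ in $\mathcal{O}_E^{\times}/\langle\lambda_E,\mathcal{O}_{E_0}^{\times}\rangle$, giving index $\geq 2=u$; when $u=1$ the bound is trivial.

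The only delicate bookkeeping is to match the factor $2u$ from Stark exactly, and to confirm that the roots of unity of $E$ contribute no extra factor: since $E$ admits a real embedding, its group of roots of unity is $\{\pm 1\}\subset\mathcal{O}_{E_0}^{\times}$, so the torsion is already absorbed into the subgroups above and does not affect the index computation. With the bound in hand, the displayed identity forces $k\in\{1,2\}$, and setting $n=2/k$ completes the proof.
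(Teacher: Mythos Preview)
Your proof is correct. For the first two assertions you cite Stark and Chinburg exactly as the paper does. For the final assertion $\lambda_E=\epsilon^{n/2}$ with $n\in\{1,2\}$, however, you take a different route: the paper simply invokes Chinburg's Theorem~1 a second time, which asserts that \emph{every} Salem number $\sigma\in E$ is of the form $\epsilon^{n/2}$ for some positive integer $n$; applying this to $\sigma=\lambda_E$ and combining with $\epsilon=\lambda_E^k$ (primitivity) forces $kn=2$ and hence $n\in\{1,2\}$. You instead start only from $\epsilon=\lambda_E^k$ and bound $k$ via the unit index, using Stark's $2u=[\mathcal O_E^\times:\langle\epsilon,\mathcal O_{E_0}^\times\rangle]$ and a direct verification that $[\mathcal O_E^\times:\langle\lambda_E,\mathcal O_{E_0}^\times\rangle]\geq u$. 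Your argument is more self-contained---it needs from Chinburg only that $\epsilon$ is a Salem number, not the stronger divisibility statement---whereas the paper's version is shorter by outsourcing the index analysis to \cite{Chinburg}. Your treatment of the $u=2$ case via $\delta=\sqrt{\eta}$ and the involution is clean and correct, and your remark that the roots of unity in $E$ reduce to $\{\pm1\}$ (since $E$ has a real place) correctly disposes of any torsion ambiguity in the index computation.
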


\noindent
{\bf Proof.} The fact that $\epsilon$ is a unit of $E$, 
that
$\epsilon > 1$ and that $E = {\bf Q}(\epsilon)$ is Stark's result Theorem 2 in \cite{Stark}, and Chinburg shows that $\epsilon$ is a Salem number, see
\cite{Chinburg}, Theorem 1. In the same paper \cite{Stark}, Stark also proves that subgroup generated by $\epsilon$ and the units of $E_0$ is of index $2u$ in the group of units of $E$.
Finally, Chinburg proves in \cite{Chinburg}, Theorem 1, that if $\sigma$ is a Salem number in $E$, then  $\sigma = \epsilon^{n/2}$ for some positive integer $n$. Applying this to $\sigma = \lambda_E$, we see that either $\lambda_E = \epsilon$ or  $\lambda_E^2 = \epsilon$. 
\qed

\section{Salem fields and dynamics}
\label{s:dyn}

Finally we turn to the applications of our findings in complex dynamics.
In particular, we prove Theorem \ref{thm:bir} from the introduction (Corollary \ref{finite Bir}).

\subsection{Bimeromorphic automorphisms of hyperk\"ahler manifolds and dynamical degrees}\label{bimer}

 \medskip
Let $X$ be a hyperk\"ahler manifold,  and let $a: X \to X$ be a bimeromorphic automorphism;
it induces an isomorphism $a^* : H^2(X,{\bf Z}) \to H^2(X,{\bf Z})$.
The following result is due to Oguiso \cite{O 10}, generalizing a theorem of McMullen \cite{Mc1}.

\begin{theo}{\rm (}\cite{O 10}, {\rm Theorem 2.4)}
\label{thm:McM}
The characteristic polynomial of $a^*$ is a product of at most one Salem polynomial and  finitely many cyclotomic polynomials.
\end{theo}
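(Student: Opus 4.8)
The plan is to combine the integrality and reciprocity of the characteristic polynomial $P(x):=\det(x\cdot\id - a^*)$ with the signature $(3,b_2-3)$ of the BBF-form and the classification of isometries of a Lorentzian space. First I would record the two elementary constraints on $P$. Since $a^*$ preserves the integral lattice $H^2(X,\ZZ)$, the polynomial $P$ is monic with integer coefficients. Since $a^*$ preserves the nondegenerate BBF-form, it is conjugate to $(a^*)^{-1}$ via the Gram matrix, so $P$ is reciprocal, $P(x)=\pm x^{\deg P}P(1/x)$, and its roots occur in pairs $\lambda,\lambda^{-1}$; in particular its roots off the unit circle split into a set $\{\lambda:|\lambda|>1\}$ and its reciprocal inside the circle, of equal cardinality, with constant term $\pm 1$.

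Next I would use the Hodge decomposition of $V_{X,\RR}:=H^2(X,\RR)$, which $a^*$ respects. The real $2$-plane $(H^{2,0}\oplus H^{0,2})\cap V_{X,\RR}$ is positive definite (by \eqref{eq:()=0}), so $a^*$ acts there as a rotation and its two eigenvalues $\delta,\overline{\delta}$ lie on the unit circle. Its orthogonal complement is $H^{1,1}_\RR$, which, upon removing this definite plane from the signature $(3,b_2-3)$, has signature $(1,b_2-3)$, i.e. is Lorentzian. The decisive geometric input is that $a^*$ preserves this Lorentzian form together with a connected component of its positive cone; here one invokes the fact that a bimeromorphic self-map of a hyperk\"ahler manifold acts on $H^2$ by a Hodge isometry preserving the positive (birational K\"ahler) cone.

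The heart of the argument is then the classification of isometries of a Lorentzian space of signature $(1,n)$ preserving a cone component: such an isometry is elliptic, parabolic, or hyperbolic, and in each case has at most one eigenvalue $\lambda$ with $|\lambda|\neq 1$, necessarily a positive real number (by Perron--Frobenius behaviour on the cone), paired with $\lambda^{-1}$. Complex eigenvalues off the circle are excluded, since a quadruple $\mu,\overline{\mu},\mu^{-1},\overline{\mu}^{-1}$ with $|\mu|\neq1$ would span a subspace of signature $(2,2)$, impossible in signature $(1,n)$. Transporting this back to all of $V_{X,\RR}$, the operator $a^*$ has at most one pair $\{\lambda,\lambda^{-1}\}$ of eigenvalues off the unit circle, with $\lambda>1$ real, while every other eigenvalue lies on the unit circle.

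Finally I would identify the factors of $P$. Any monic integer irreducible factor all of whose roots lie on the unit circle is, by Kronecker's theorem, a cyclotomic polynomial; collecting these accounts for all of $P$ except for the irreducible factor $S$ through $\lambda$ (present exactly when the dynamical degree exceeds $1$, i.e. in the positive-entropy case). By the previous step $S$ has precisely the two roots $\lambda,\lambda^{-1}$ off the unit circle, all remaining roots on it, and is reciprocal, so $S$ is a Salem polynomial provided $\deg S\geq 4$. The main obstacle is exactly this last point, namely excluding a degenerate quadratic factor $x^2-tx+1$: here the hyperk\"ahler period structure re-enters, since the eigenvalue $\delta$ on $H^{2,0}$ is a Galois conjugate of $\lambda$ lying on the unit circle (the minimal primitive sub-Hodge structure containing $H^{2,0}$ forces $\delta$ into the same $\QQ$-irreducible block as $\lambda$). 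Thus $S$ already has the four distinct roots $\lambda,\lambda^{-1},\delta,\overline{\delta}$, whence $\deg S\geq 4$ and $S$ is a genuine Salem polynomial. Assembling the pieces shows that $P$ is a product of at most one Salem polynomial and finitely many cyclotomic polynomials.
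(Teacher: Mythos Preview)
The paper does not prove this theorem; it is quoted from Oguiso \cite{O 10} (extending McMullen \cite{Mc1}), so there is no in-paper proof to compare against. Your sketch through the Kronecker step is the standard McMullen--Oguiso argument and is sound: integrality and reciprocity of $P$, the positive-definite $2$-plane $(H^{2,0}\oplus H^{0,2})\cap H^2(X,\RR)$ on which $a^*$ is a rotation, the Lorentzian signature $(1,b_2-3)$ on $H^{1,1}_\RR$, preservation of a component of the positive cone, and the conclusion that at most one pair $\{\lambda,\lambda^{-1}\}$ of eigenvalues lies off the unit circle with $\lambda>1$ real.

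The gap is your final step, excluding $\deg S=2$ by claiming that the eigenvalue $\delta$ on $H^{2,0}$ lies in the same $\QQ$-irreducible factor as $\lambda$. This fails when $X$ is projective: there the positive cone sits inside $\Pic(X)_\RR$, so $\lambda$ is an eigenvalue of $a^*|_{\Pic(X)}$, while $\delta$ is an eigenvalue of $a^*|_{T_X}$; the minimality of $T_X$ pins down the irreducible factor through $\delta$, not the one through $\lambda$, and these can be distinct. In fact the quadratic case \emph{does} occur --- a projective K3 surface of Picard number $2$ with infinite $\Aut(X)$ yields $a^*$ whose off-circle factor is $x^2-tx+1$ with $t\geq 3$ --- so you are attempting to prove something false. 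McMullen and Oguiso state the result allowing the ``Salem'' factor to be a reciprocal quadratic (a degenerate Salem polynomial), and the citation here should be read with that convention. Your argument \emph{does} succeed in the elliptic case that the paper actually uses, since then $\Pic(X)$ is negative definite, forcing $\lambda$ onto $T_X$; minimality of $T_X$ then makes the characteristic polynomial of $a^*|_{T_{X,\QQ}}$ a power of the single irreducible factor through $\delta$, which therefore also contains $\lambda,\lambda^{-1},\overline{\delta}$, giving $\deg S\geq 4$ there.
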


The {\it dynamical degree} of
$a$ is by definition the spectral radius of $a^*$, i.e.\ the maximum of the absolute values of the eigenvalues of $a^*$;
hence it is either 1 or a Salem number. This motivates the following question, raised by
McMullen for K3 surfaces (see \cite {Mc1}, \cite{Mc2}, \cite{Mc3}~):

\begin{question}\label{dynamical degree}
Which Salem numbers occur as dynamical degrees of automorphisms of complex hyperk\"ahler manifolds~?
\end{question}

This is equivalent to the following question

\begin{question}\label{oldquestion}
Which Salem polynomial occurs as a factor of the characteristic polynomial of $a^*$, where $a$ is an automorphism of a hyperk\"ahler manifold~?
\end{question}

\medskip From the point of view of the present paper, it is natural to seek for a link with Hodge endomorphisms, and Salem multiplication. Indeed, if
 $a: X \to X$ is a bimeromorphic automorphism of a HK manifold $X$, then $a^* : H^2(X,{\bf Z}) \to H^2(X,{\bf Z})$ preserves the Hodge structure and the intersection form, and hence it restricts to  an automorphism $a^* : T_X \to T_X$; therefore $a$ induces a Hodge endomorphism of $X$. 
 
 \medskip
 The work of Oguiso allows us to analyze the possibilities for such Hodge endomorphisms in the elliptic, hyperbolic and parabolic case. We have the following:
 
 \medskip
 $\bullet$ If $X$ is hyperbolic (i.e. projective) then the characteristic polynomial of $a^*|T_X$ is a power of a cyclotomic polynomial; 
 
 \medskip
 $\bullet$ If $X$ is parabolic, then $a^* : T_X \to T_X$ is the identity. 
 
 \medskip
 $\bullet$ If $X$ is elliptic, then either $a^* : T_X \to T_X$ is the identity, or the characteristic polynomial of $a^*|T_X$ is a Salem polynomial.
 
 \medskip
 This is proved in \cite{O 8}, Theorem 2.4.
 
 \bigskip This shows that the connection between bimeromorphic automorphisms $a$ such that the characteristic polynomial of $a^*$ has a Salem polynomial as a factor and Salem multiplication only exists in the elliptic case.
 
 \medskip
 Of course, Question \ref{oldquestion} is of interest in the other cases too. In particular, 
 McMullen realized several Salem numbers as the dynamical degree of an automorphism of a {\it projective}  K3 surface (see \cite{Mc3}).  For such an automorphism $a : X \to X$, the Salem polynomial divides the characteristic polynomial of $a^*|{\rm Pic}(X)$, whereas the
 characteristic polynomial of  $a^*|T_X$ is a power of a cyclotomic polynomial. These examples are very interesting, but we will not discuss
 them further in this paper. 
 
 \medskip 
 On the other hand, if $a : X \to X$ is a bimeromorphic automorphism of a {\it parabolic} HK manifold $X$, then the dynamical degree of $a$ is equal to one (see \cite{O 7}, Remark 3.1 (2)). We can compare this to the result of 
 Proposition \ref{prop:hodge 1} that the field of Hodge endomorphisms of a parabolic HK manifold is always equal to $\QQ$; however, it is not clear whether one of these results implies the other. 
  
 \medskip
 In the following, we only consider {\it elliptic} HK manifolds. We have the following
 
 \begin{lemma}
\label{lem:a*}
Let $X$ be an elliptic hyperk\"ahler manifold,  let $S$ be a Salem polynomial with Salem number $\lambda$, and set $E = {\QQ}[x]/(S)$.
Suppose that $X$ has a bimeromorphic automorphism with dynamical degree $\lambda$. Then $A_X \simeq E$.
\end{lemma}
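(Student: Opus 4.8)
The plan is to show that the Hodge isometry $a^{*}$ induced by $a$ restricts to a Hodge endomorphism of $T_X$ whose characteristic polynomial is exactly $S$, and then to deduce $A_X\simeq E$ by a dimension count. First I would note that $a^{*}\colon H^{2}(X,\ZZ)\to H^{2}(X,\ZZ)$ is a Hodge isometry, so it preserves the one-dimensional space $H^{2,0}$; by the minimality characterisation of $T_X$ as the smallest primitive sub-Hodge structure containing $H^{2,0}$, the image $a^{*}(T_X)$ is again such a sublattice, of the same rank and primitive, whence $a^{*}(T_X)=T_X$. Consequently $a^{*}$ restricts to an element of the number field $A_X=A_{T_X}$ (Proposition \ref{adjoint}).

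The crucial step is to locate the Salem factor of the characteristic polynomial on $T_X$ rather than on the algebraic part. Write $H^{2}(X,\QQ)=T_{X,\QQ}\oplus N_{\QQ}$, the orthogonal decomposition with $N_\QQ=T_{X,\QQ}^{\perp}=\mathrm{NS}(X)\otimes_\ZZ\QQ$; both summands are $a^{*}$-stable. Since $X$ is elliptic, the BBF-form is negative definite on $N_\QQ$, so $a^{*}|_{N_\QQ}$ lies in the orthogonal group of a definite form and all its eigenvalues have absolute value $1$. As the dynamical degree $\lambda>1$ is the spectral radius of $a^{*}$ on $H^{2}$, the value $\lambda$ must be attained by an eigenvalue of $a^{*}|_{T_{X,\QQ}}$; in particular $a^{*}|_{T_X}\neq\id$. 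By Oguiso's dichotomy for elliptic manifolds (\cite{O 8}, Theorem 2.4, quoted above), the characteristic polynomial of $a^{*}|_{T_X}$ is then a Salem polynomial, whose Salem number is the spectral radius $\lambda$. Since a Salem polynomial is monic and irreducible over $\ZZ$ it is the minimal polynomial of its Salem number, so this characteristic polynomial equals $S$.

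To conclude, observe that $S$, being irreducible, is also the minimal polynomial of $a^{*}|_{T_{X,\QQ}}$; thus $\dim_{\QQ}T_{X,\QQ}=\deg S=[E:\QQ]$ and $\QQ[a^{*}|_{T_{X,\QQ}}]\cong\QQ[x]/(S)=E$ embeds into the field $A_X$. Viewing $T_{X,\QQ}$ as a vector space over the field $A_X$ gives $[A_X:\QQ]\leq\dim_{\QQ}T_{X,\QQ}=[E:\QQ]$, which together with the inclusion $E\subseteq A_X$ forces $A_X=E$, as claimed.

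I expect the main obstacle to be the second step: one must ensure that the spectral radius $\lambda$ is genuinely realised on the transcendental part $T_X$ and not absorbed by the algebraic classes, as happens in McMullen's projective examples where the Salem factor sits on $\mathrm{Pic}(X)$. This is exactly where the elliptic hypothesis is indispensable, through the definiteness of $N_\QQ$ and the resulting constraint on the eigenvalues of $a^{*}|_{N_\QQ}$, in combination with Oguiso's dichotomy.
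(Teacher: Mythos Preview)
Your proof is correct, and in fact the final step takes a more elementary route than the paper's. Both arguments begin identically: $a^{*}$ is a Hodge isometry, hence preserves $T_X$, and the negative-definiteness of $\Pic(X)$ forces the Salem factor $S$ to appear in the characteristic polynomial of $a^{*}|_{T_X}$, giving $E\subseteq A_X$. At this point the paper invokes the structural classification of $A_X$ (Theorem \ref{endos}: $A_X$ is totally real or Salem) together with the number-theoretic Proposition \ref{prop:E=F} (a Salem field has no proper Salem subfield, which ultimately rests on Smyth's result that the Salem numbers in a fixed field are the powers of a single one). You instead use Oguiso's dichotomy in full strength to conclude that the characteristic polynomial of $a^{*}|_{T_X}$ is \emph{exactly} $S$, not merely divisible by it; this yields $\dim_{\QQ}T_{X,\QQ}=[E:\QQ]$, and then the inclusion $E\subseteq A_X$ combined with $[A_X:\QQ]\mid\dim_{\QQ}T_{X,\QQ}$ forces equality. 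Your route is self-contained within the Hodge-theoretic and dynamical results already quoted in this section and bypasses the appeal to Proposition \ref{prop:E=F}; the paper's route, on the other hand, would still work even if one only knew that $S$ divides the characteristic polynomial on $T_X$ (without knowing it equals it), at the cost of importing more of the paper's own machinery.
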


\noindent
{\bf Proof.} Indeed, let $a$ be such an automorphism; then the induced isomorphism $a^* : H^2(X,{\bf Z}) \to H^2(X,{\bf Z})$ preserves the Hodge structure and
the intersection form. This implies that $a^*$ induces a Hodge endomorphism.
As $\Pic(X)$ is negative definite, the group ${\rm Bir}(X)$ of bimeromorphic automorphisms acts through a finite group on $\Pic(X)$.
This implies that $E\subseteq A_X$,
which already suffices to deduce the claimed equality  $E=A_X$.
Indeed,  Theorem \ref{endos} shows that $A_X\supseteq E$ can only be a Salem field.
But then, combining this with Proposition \ref{prop:E=F} proves that $E=A_X$.
\qed
%
%

\medskip
This suggests the following question, an analog of Question \ref{oldquestion}:

\begin{question}\label{newquestion}
Which Salem fields occur as Hodge endomorphism algebras of elliptic hyperk\"ahler manifolds~?
\end{question}

The answer to this question is given in  Theorems \ref{non-maximal theorem} and \ref{610}.
  We now turn to Question \ref{oldquestion}, and we start by a definition:

\begin{defn} Let $S$ be a Salem polynomial, and let $X$ be an elliptic hypek\"ahler manifold. We say that an automorphism $a$ of $X$
is a {\it Salem automorphism} with polynomial $S$ if the characteristic polynomial of the restriction of $a^*$ to $T_X$ is $S$. 

\end{defn}

In the following subsection, we summarize the results concerning K3 surfaces. 

\subsection{K3 surfaces}

Recall that a two dimensional HK manifold is a K3 surface, and that in this case, elliptic is equivalent to algebraic dimension 0. We will
use the later terminology, since ``elliptic K3 surface'' has a different meaning.

\medskip The following is Theorem \ref{K3}: 

\begin{theo} Let $E$ be a Salem field of degree $2d$. Then there exists a
K3 surface $X$ of algebraic dimension $0$ such that
$A_X \simeq E$ if and only if either $2d \leqslant 20$, or $2d = 22$ and
the discriminant of $E$ is a square.

\end{theo}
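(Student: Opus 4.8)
The plan is to recognize that this statement is a restatement of Theorem~\ref{K3} (equivalently Theorem~\ref{SM}), written with the degree denoted $2d$ rather than $d$: the condition $2d\leqslant 20$ is $d\leqslant 10$ and $2d=22$ is $d=11$. Accordingly the proof reduces to invoking Theorem~\ref{K3}, whose substantive content lies in the arithmetic realization results assembled in Sections~\ref{hermitian BF} and~\ref{s:SM}. I would organize the argument as follows.

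First I would translate the existence of a K3 surface $X$ of algebraic dimension $0$ with $A_X\simeq E$ into a question about hermitian forms, using the transfer correspondence. By Theorem~\ref{thm:pspolSM}, a one-dimensional hermitian form $W$ over $E$ whose transfer $\TT(W)$ has signature $(3,2d-3)$ carries a pseudo-polarized Hodge structure of K3 type with endomorphism algebra $E$; the surjectivity of the period map together with the Torelli theorem then produce a unique K3 surface $X$ with $T_X=\Lambda_{K3}\cap\TT(W)$ and $A_X=A_T=E$. Thus it suffices to realize $\TT(W)$ as an orthogonal summand of $V_{K3}=\Lambda_{K3}\otimes_\ZZ\QQ$, or as all of $V_{K3}$ in the top-dimensional case.

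Next I would split into the sub-maximal and maximal degree cases. For $2\leqslant d\leqslant 10$, Theorem~\ref{Thm:10.4} supplies, unconditionally, a hermitian form $W$ with $\TT(W)$ of signature $(3,2d-3)$ and a negative definite $V'$ with $V_{K3}\simeq\TT(W)\oplus V'$; this produces the surface for every Salem field of degree $2d\leqslant 20$. For the maximal case $d=11$, so that $\dim_\QQ\TT(W)=22=\dim V_{K3}$, Theorem~\ref{41112}(ii) shows $V_{K3}\simeq\TT(W)$ holds precisely when $\Delta_E$ is a square. The necessity of the square condition is forced by the determinant formula of Theorem~\ref{realization}(ii): since $\det(V_{K3})=-1$ and $d=11$ is odd, $\det(\TT(W))=(-1)^{11}\Delta_E=-\Delta_E$ must equal $-1$ modulo squares. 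Finally, degrees $2d>22$ are excluded because then $\dim_\QQ\TT(W)=2d>\dim V_{K3}$, so $\TT(W)$ cannot be a summand; by Theorem~\ref{endos} no other fields can arise.

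I expect the main obstacle to be the maximal-degree case $d=11$, already handled in Theorem~\ref{41112}. There $\TT(W)$ must equal all of $V_{K3}$, so one cannot discard spare invariants into a complementary form $V'$; instead the realization criterion of Theorem~\ref{realization} must be verified exactly, matching determinant and Hasse invariant locally at every split prime $p\in S_E$. This is precisely the step where the discriminant hypothesis genuinely intervenes, via the computation $w(V_{K3})=(-1,-1)=w(H^{11})$ together with the local triviality $\Delta_E=1$ in $\QQ_p^\times/\QQ_p^{\times 2}$ for $p\in S_E$.
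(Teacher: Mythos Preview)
Your proposal is correct and follows essentially the same route as the paper's proof of Theorem~\ref{K3}: invoke Theorem~\ref{Thm:10.4} for $d\leqslant 10$ and Theorem~\ref{41112}(ii) for $d=11$ to produce the hermitian form $W$, then apply Theorem~\ref{thm:pspolSM} together with surjectivity of the period map and Torelli to obtain the K3 surface. Your treatment of necessity (via the determinant computation from Theorem~\ref{realization}(ii) when $d=11$, and the dimension bound when $2d>22$) is in fact spelled out more explicitly than in the paper, which leaves this direction largely implicit in the ``if and only if'' of Theorem~\ref{41112}(ii).
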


\medskip
Note that the condition on the discriminant can be rephrased in terms of
the Salem polynomial:

\begin{coro}\label{one square}  Let $S$ be a Salem polynomial of degree $d$, and set $E = {\bf Q}[x]/(S)$. Then there exists a
K3 surface $X$ of algebraic dimension $0$ such that
$A_X \simeq E$ if an only if either $d \leqslant 20$, or $d = 22$ and $|S(1)S(-1)|$
is a square.

\end{coro}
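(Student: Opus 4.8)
The plan is to deduce Corollary \ref{one square} directly from the preceding Theorem (the restatement of Theorem \ref{K3}), using only the relationship between the discriminant of a Salem field and the values of its defining Salem polynomial. The preceding theorem states that a K3 surface of algebraic dimension $0$ with $A_X \simeq E$ exists if and only if either $2d \leqslant 20$, or $2d = 22$ and $\Delta_E$ is a square. Since here $S$ has degree $d$ and $E = \QQ[x]/(S)$, the condition $2d \leqslant 20$ becomes $d \leqslant 20$, and $2d = 22$ becomes $d = 22$; so the only work is to translate the square condition on $\Delta_E$ into a square condition on $S(1)S(-1)$.

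First I would invoke Lemma \ref{Salem implies Salem-like} to guarantee that $E = \QQ[x]/(S)$ is genuinely a Salem field, so that the preceding theorem applies. Then I would apply Lemma \ref{Salem discriminant}, which gives $\Delta_E = (-1)^{d'} S(1)S(-1)$ in $\QQ^\times/\QQ^{\times 2}$, where $2d' = \deg(S) = d$, that is $d' = d/2$. In the relevant case $d = 22$ we have $d' = 11$, so $(-1)^{d'} = (-1)^{11} = -1$, giving $\Delta_E = -S(1)S(-1)$ modulo squares.

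The key observation is that asking whether $\Delta_E$ is a square in $\QQ^\times/\QQ^{\times 2}$ is, via this formula, equivalent to asking whether $-S(1)S(-1)$ is a square, which (since $S(1)S(-1)$ is a nonzero rational and a rational is a square iff it is positive and a square of its absolute value) must be matched against the sign. I would note that for a Salem polynomial of degree $d = 22$, we have $d' = 11$ odd, so $\Delta_E$ and $-S(1)S(-1)$ agree modulo squares; and by Lemma \ref{negative}, since the degree $2d'=22$ is not divisible by $4$, the discriminant $\Delta_E$ is positive. Consequently $\Delta_E$ being a square is equivalent to $|\Delta_E|=\Delta_E$ being a square, i.e.\ to $|S(1)S(-1)|$ being a square, since $-S(1)S(-1)=\Delta_E>0$ forces $S(1)S(-1)<0$ and hence $|S(1)S(-1)| = -S(1)S(-1) = \Delta_E$ modulo squares.

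The main obstacle—really the only subtlety—will be keeping the sign bookkeeping straight: one must check that the factor $(-1)^{d'}$ from Lemma \ref{Salem discriminant} and the positivity of $\Delta_E$ from Lemma \ref{negative} combine so that ``$\Delta_E$ is a square'' and ``$|S(1)S(-1)|$ is a square'' are literally the same condition, rather than differing by a sign. Once this is verified, the corollary follows by substituting $d \leqslant 20$ and the rephrased discriminant condition into the preceding theorem. The argument is short and requires no new machinery beyond Lemmas \ref{Salem implies Salem-like}, \ref{negative}, and \ref{Salem discriminant}.
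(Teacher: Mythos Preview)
Your proposal is correct and follows essentially the same route as the paper: both reduce to the preceding theorem via Lemma \ref{Salem discriminant} to express $\Delta_E$ in terms of $S(1)S(-1)$, then use Lemma \ref{negative} to resolve the sign and pass to the absolute value. Your explicit invocation of Lemma \ref{Salem implies Salem-like} and your careful tracking of the half-degree $d'=d/2$ are, if anything, slightly more precise than the paper's presentation.
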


\noindent
{\bf Proof.} This follows from the above theorem, since  the discriminant of $E$ is equal to $(-1)^{d}S(1)S(-1)$
in ${\bf Q}^{\times}/{\bf Q}^{\times 2}$,  cf. Lemma \ref{Salem discriminant}.
The sign of $\Delta_E$, or equivalently of  $(-1)^{d}S(1)S(-1)$, is positive by Lemma \ref{negative},
whence it suffices to consider $|S(1)S(-1)|$.
\qed

\medskip
The answer to Question \ref{oldquestion} for Salem polynomials of degree $22$ is also known. We have the following result, that provides
an answer to a question of Gross and McMullen in \cite{GM}:

\begin{theo}\label{all squares} Let $S$ be a Salem polynomial of degree $22$. Then there exists a K3 surface $X$ of algebraic dimension $0$
having a Salem automorphism with polynomial $S$ if and only if  $|S(1)|$, $|S(-1)|$ and $|S(1)S(-1)|$ are all squares.

\end{theo}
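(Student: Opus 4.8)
The plan is to reduce the statement to a lattice-theoretic realizability question and then combine the paper's rational results with the integral refinement of Gross--McMullen \cite{GM}. Since $\deg S = 22 = \operatorname{rank} H^2(X,\ZZ)$, any Salem automorphism $a$ with polynomial $S$ forces the restriction $a^*|T_X$ to have characteristic polynomial $S$ of full rank, so $T_X = H^2(X,\ZZ) \cong \Lambda_{K3}$ and $\Pic(X) = 0$; conversely, producing such an automorphism on a K3 surface of algebraic dimension $0$ amounts to producing an isometry $F$ of the even unimodular lattice $\Lambda_{K3}$ with characteristic polynomial $S$ whose eigenline for a suitable root defines a valid period. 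First I record that $|S(1)|$ and $|S(-1)|$ being squares implies $|S(1)S(-1)|$ is a square; by Lemma \ref{Salem discriminant} the latter equals $\Delta_E$ modulo squares, where $E = \QQ[x]/(S)$ is a Salem field by Lemma \ref{Salem implies Salem-like}, so the third condition is exactly the one of Corollary \ref{one square} guaranteeing merely $A_X \cong E$. The content of the theorem is thus that a Salem \emph{automorphism} requires the strictly stronger integral conditions on $|S(1)|$ and $|S(-1)|$ separately.

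For necessity, the automorphism $a$ yields an isometry $F = a^*$ of the even unimodular lattice $\Lambda_{K3}$ with characteristic polynomial $S$. By the integral realizability criterion of Gross--McMullen \cite{GM}, the existence of an isometry of an even unimodular lattice with characteristic polynomial a Salem polynomial $S$ forces both $|S(1)|$ and $|S(-1)|$ to be squares, and hence so is their product.

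For sufficiency, assume $|S(1)|$ and $|S(-1)|$ are squares. Applying \cite{GM} produces an isometry $F$ with characteristic polynomial $S$ on an even unimodular lattice, and the archimedean data can be prescribed as in Section \ref{signatures}: the two real roots $\lambda,\lambda^{-1}$ contribute a hyperbolic plane, while the ten conjugate pairs on the unit circle contribute $a^+$ positive-definite and $10 - a^+$ negative-definite binary blocks, giving signature $(1 + 2a^+,\, 21 - 2a^+)$. Choosing $a^+ = 1$ yields signature $(3,19)$, and since an indefinite even unimodular lattice is determined by its signature, this lattice is $\Lambda_{K3}$. Let $\delta$ be a root in the distinguished positive-definite pair and $\omega$ a corresponding eigenvector; then $(\omega,\omega) = 0$ automatically and $(\omega,\bar\omega) > 0$ by the sign choice, so $\omega$ is a valid period. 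By surjectivity of the period map there is a K3 surface $X$ with $H^2(X,\ZZ) \cong \Lambda_{K3}$ and period $\omega$. Because $S$ is irreducible, $V_{K3}$ is a one-dimensional, hence simple, $E$-module, so $\omega^\perp \cap H^2(X,\QQ) = 0$; thus $\Pic(X) = 0$, $a(X) = 0$, and $T_X = H^2(X,\ZZ)$. As $F\omega = \delta\omega$, the isometry $F$ preserves the Hodge structure, and since $\Pic(X) = 0$ there are no $(-2)$-curves obstructing the cone condition, so the strong Torelli theorem realizes $F$ (up to sign) as an automorphism $a$ of $X$. By construction the characteristic polynomial of $a^*|T_X$ is $S$, so $a$ is a Salem automorphism with polynomial $S$ and $A_X \cong E$.

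The main obstacle is precisely the passage from rational to integral realizability. The paper's Theorem \ref{41112}(ii) only guarantees a rational isomorphism $V_{K3} \cong \TT(W)$, equivalently an isometry with the correct characteristic polynomial defined over $\QQ$, under the weaker hypothesis that $\Delta_E$ (that is, $|S(1)S(-1)|$) is a square; but such a rational isometry need not preserve the integral lattice $\Lambda_{K3}$, and it is exactly the finer conditions that $|S(1)|$ and $|S(-1)|$ be squares which secure an isometry over $\ZZ$. This integral input is supplied by \cite{GM} and is what distinguishes Salem automorphisms from mere Salem multiplication; the remaining steps (prescribing the signature, the period positivity, and the Torelli realization using $\Pic(X) = 0$) are then routine.
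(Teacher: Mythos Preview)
Your overall strategy is the right one and matches the paper's intent: reduce to the existence of an isometry of $\Lambda_{K3}$ with characteristic polynomial $S$, then use surjectivity of the period map and Torelli. The necessity direction is correct and is indeed classical from \cite{GM}.

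The gap is in the sufficiency direction. You write that ``Applying \cite{GM} produces an isometry $F$ with characteristic polynomial $S$ on an even unimodular lattice'' once $|S(1)|$ and $|S(-1)|$ are squares. But that is not what Gross--McMullen prove. Their sufficiency result (Theorem~1.7 in \cite{GM}) is established only in the \emph{unramified} case $|S(1)S(-1)|=1$, i.e.\ $|S(1)|=|S(-1)|=1$; in that paper the square conditions on $|S(1)|$ and $|S(-1)|$ appear only as necessary conditions, and they explicitly leave open whether they are sufficient. The general sufficiency --- which is exactly the content of Theorem~\ref{all squares} --- was proved later by Bayer-Fluckiger and Taelman \cite{BT}, and that is the reference the paper actually invokes. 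So as written your argument only establishes the theorem in the special case $|S(1)|=|S(-1)|=1$; to cover the full statement you must replace the appeal to \cite{GM} by the realization theorem of \cite{BT} (which also lets you prescribe the signature, so your ``$a^+=1$'' choice is justified there rather than by \cite{GM}).

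A minor point: once you have the integral isometry $F$ of signature $(3,19)$, the Torelli step still needs the K\"ahler-cone condition. With $\Pic(X)=0$ there are no $(-2)$-walls, so the K\"ahler cone is a full component of the positive cone; you should check (or cite, e.g.\ \cite{Mc1}) that $F$ preserves this component, since replacing $F$ by $-F$ would change the characteristic polynomial.
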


\noindent{\bf Proof.} See \cite{BT}, Corollary of Theorem A; in the special case where $|S(1)S(-1)| =1$, this is also proved in \cite{GM},
Theorem 1.7.
\qed

\medskip
Note that for Salem polynomials $S$ of degree $22$, there is a difference between the conditions of Corollary \ref{one square} and Theorem 
\ref{all squares}: in both cases, we ask that $|S(1)S(-1)|$ is a square, but the condition of Theorem \ref{all squares} also requires 
$|S(1)|$ and  $|S(-1)|$ to be squares. 

\medskip Hence it is natural to ask: are there Salem polynomials $S$ such that $|S(1)S(-1)|$ is a square, but 
$|S(1)|$ and  $|S(-1)|$ are not squares ? We thank Chris Smyth for the following example: 

\begin{example}
\label{ex:Smyth}
Set $S(x) = x^{22} - 5x^{21} + 5x^{16} - 5x^{11} + 5x^6 - 5x + 1$. We have $S(1) = -3$, $S(1) = 27$, hence $S$ satisfies the conditions
of Corollary \ref{one square} but not those of Theorem \ref{all squares}. Therefore there exist K3 surfaces of algebraic dimension 0 with field of  Hodge endomorphisms isomorphic to ${\bf Q}[x]/(S)$, but there is no K3 surface having a Salem automorphism with polynomial $S$. 

\end{example}

This example shows that there exist K3 surfaces of algebraic dimension 0 having Salem multiplication, but no Salem automorphism. 

\medskip

We also have the following result covering all other degrees $d$, except for $d = 10$ and $18$:

\begin{theo} Let $S$ be a Salem polynomial of degree $d$, and assume that $4 \leqslant d \leqslant 20$ and $d \not = 10, 18$. Then there exists a K3 surface of algebraic dimension $0$ having a Salem automorphism with polynomial $S$. 

\end{theo}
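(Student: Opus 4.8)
The plan is to convert the statement into a realization problem for isometries of the K3 lattice and then invoke the surjectivity of the period map together with the strong Torelli theorem, in the same spirit as the degree-$22$ case of Theorem \ref{all squares}. Set $E = {\bf Q}[x]/(S)$, a Salem field by Lemma \ref{Salem implies Salem-like}, let $\lambda$ be the Salem number, and fix a root $\delta$ of $S$ on the unit circle (such a root exists since $\deg S = d \geq 4$). The involution $x \mapsto x^{-1}$ of $E$ sends $\lambda$ to $\lambda^{-1}$, so $\lambda \overline{\lambda} = 1$; consequently multiplication by $\lambda$ preserves every hermitian form over $E$, hence preserves any transfer ${\TT}(W)$, and acts on ${\TT}(W) \cong E$ with characteristic polynomial $S$.

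First I would build the transcendental piece. Since $d \leq 20$, the signature bookkeeping of Section \ref{signatures} (take the defining scalar positive at exactly one complex place) produces a one-dimensional hermitian form $W$ over $E$ with ${\TT}(W)$ of signature $(3, d-3)$, and Theorem \ref{thm:pspolSM} endows ${\TT}(W)$ with a pseudo-polarized Hodge structure whose line $T^{2,0}$ is the $\delta$-eigenline. I would realize ${\TT}(W)$ by an even, $\mathcal{O}_E$-stable lattice $T$ (a suitable fractional ideal carrying the trace form); then multiplication by $\lambda$, a unit of $\mathcal{O}_E$, is an integral Hodge isometry $g_T$ of $T$ with characteristic polynomial $S$, preserving the period line $\CC\,\omega = T^{2,0}$.

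Next I would glue $T$ into $\Lambda_{K3}$. Using Nikulin's theory of primitive embeddings via discriminant forms, I would construct an even negative-definite lattice $N$ of rank $22 - d$ together with a finite-order isometry $g_N$ (cyclotomic characteristic polynomial) and an isomorphism of discriminant forms $T^{*}/T \simeq N^{*}/N$, reversing signs and equivariant for the actions induced by $g_T$ and $g_N$. This yields an even unimodular lattice of signature $(3,19)$, i.e.\ a copy of $\Lambda_{K3}$, and an isometry $g = g_T \oplus g_N$ whose restriction to the orthogonal complement of $N$ has characteristic polynomial $S$. The negative-definiteness of $N$ forces $\Pic(X) \supseteq N$ to be negative definite for the resulting period, hence $a(X) = 0$. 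Since $g$ fixes $\CC\,\omega$ it is a Hodge isometry; by the strong Torelli theorem in algebraic dimension $0$ (after composing, if necessary, with $-1$ and with reflections in $(-2)$-classes so as to preserve a K\"ahler class), $g = a^{*}$ for an automorphism $a$ of the K3 surface $X$ furnished by surjectivity of the period map. Then the characteristic polynomial of $a^{*}|T_X$ is $S$, so $a$ is a Salem automorphism with polynomial $S$.

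I expect the gluing step to be the main obstacle, and it is exactly where the excluded degrees enter. The discriminant form of $T$ is governed by $S(1)$ and $S(-1)$ through $\det {\TT}(W)$ and Lemma \ref{Salem discriminant}, and the whole argument hinges on producing a negative-definite complement $N$ of rank $22 - d$, carrying a finite-order isometry, whose discriminant form matches that of $T$ equivariantly. For $4 \leq d \leq 20$ with $d \neq 10, 18$ the complement is roomy enough to assemble such an $N$ from standard even negative-definite lattices, so no arithmetic condition on $S$ is required; this contrasts with the maximal case $d = 22$, where no complement is available and one must instead demand that $|S(1)|$, $|S(-1)|$ and $|S(1)S(-1)|$ all be squares, as in Theorem \ref{all squares}. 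The exceptional ranks $22 - d = 12$ and $22 - d = 4$, occurring for $d = 10$ and $d = 18$, are precisely the cases where this equivariant realization can break down, which is why the two degrees are omitted; in general this realization is supplied by the lattice-theoretic machinery of \cite{BT} already invoked in Theorem \ref{all squares}.
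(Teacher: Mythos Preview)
The paper does not prove this theorem at all; it simply cites \cite{B 25}, Theorem 21.3 and \cite{YT}, Theorem 1.3. Your sketch correctly identifies the overall architecture of those arguments---build an even lattice $T$ of signature $(3,d-3)$ carrying the Salem isometry, glue it to a negative-definite complement $N$ inside $\Lambda_{K3}$ equivariantly, then invoke surjectivity of the period map and Torelli---and in that sense you are on the same track as the cited references.

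However, the gluing step is not a formality to be dispatched by ``the complement is roomy enough''; it is the entire content of the theorem, and your heuristic for the excluded degrees is misleading. The obstruction is not governed by the rank $22-d$ alone: for $d=10$ the complement has rank $12$, which is not small, yet this case is still open (the paper says so explicitly just after the theorem), while $d=18$ leaves rank $4$ and admits genuine counterexamples (Example \ref{second smallest}). What actually drives the argument in \cite{B 25} and \cite{YT} is a local--global analysis for isometries of lattices: one must verify, prime by prime, that an isometry with characteristic polynomial $S\cdot C$ (with $C$ a product of cyclotomic factors of total degree $22-d$) exists on $\Lambda_{K3}$, and the local obstructions depend on the arithmetic of $S$ at each place, not merely on $\deg S$. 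Your sketch also leaves unaddressed the choice of $T$ itself: an arbitrary $\mathcal{O}_E$-stable even lattice in ${\TT}(W)$ will not in general have a discriminant form that can be matched by any negative-definite lattice of rank $22-d$, let alone one carrying a compatible finite-order isometry; producing a $T$ (and simultaneously an $N$) that glue is exactly the problem solved in the cited papers. Finally, the machinery of \cite{BT} you invoke at the end treats even \emph{unimodular} lattices and is tailored to the full-rank case $d=22$; it does not directly furnish the complement $N$ needed here.
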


\noindent
{\bf Proof.} See \cite{B 25}, Theorem 21.3  and \cite{YT}, Theorem 1.3.
\qed

\medskip
This implies that for degrees $d$ with $d \not = 10, 18$ and $22$, the answer is the same for both questions - all Salem polynomials (and Salem fields) are
realizable.

\bigskip The case $d = 10$ is open in general: a necessary and sufficient condition is given in \cite{B 25}, but it is not known whether it is always satisfied or not.
On the other hand, a necessary and sufficient condition also exists for $d = 18$,
but it is not always satisfied,  as shown by the following example:

\begin{example}\label{second smallest}
Let $\lambda_{18} = 1.1883681475...$, the smallest degree $18$ Salem number (it is also the second smallest known Salem number), and let
$S$ be the corresponding Salem polynomial. There does not exist any K3 surface $X$ with $a(X) = 0$ having an automorphism with dynamical degree
$\lambda_{18}$ (see \cite{B 25},
Example 22.4). On the other hand, Theorem \ref{K3} implies that there exists a K3 surface $X$ of algebraic dimension 0 with $A_X \simeq
{\QQ}[x]/(S)$.

\end{example}

This example also provides us with  K3 surfaces of algebraic dimension 0 having Salem multiplication, but no Salem automorphism.

\subsection{Groups of bimeromorphic automorphisms}\label{Oguiso} 

If $X$ is a HK manifold, we denote by ${\rm Bir}(X)$ its bimeromorphic automorphism group. The following is a result of Oguiso (see \cite{O 8}, Theorem 1.5 (1). 

\begin{theo}\label{Oguiso Bir} If $X$ is elliptic, then we have an exact sequence 
$$1 \to N \to {\rm Bir}(X) \to {\ZZ}^{r}$$
where $N$ is a finite group and $r$ is either 0 or 1. 

\end{theo}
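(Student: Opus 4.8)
The plan is to exploit the orthogonal splitting $H^2(X,\QQ) = T_{X,\QQ} \perp \Pic(X)_\QQ$, which is available precisely because $X$ is elliptic: then $\Pic(X)$ is negative definite and $T_X = \Pic(X)^\perp$ is non-degenerate of signature $(3,\cdot)$, so both summands are non-degenerate. Every $a \in {\rm Bir}(X)$ induces a Hodge isometry $a^*$ of $H^2(X,\ZZ)$ for the BBF-form, and the first step is to record that the kernel of the resulting homomorphism ${\rm Bir}(X) \to \Aut(H^2(X,\ZZ))$ is finite. This single finiteness statement is the one external input the argument rests on, and it is where the substantive work lies: for K3 surfaces it is immediate from the Torelli theorem, while for the remaining hyperk\"ahler types it must be imported from the literature (this is exactly the content borrowed from Oguiso). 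Since $a^*$ preserves both $\Pic(X)$ and its orthogonal complement $T_X$, it then suffices to analyze the two restrictions separately.

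On the Picard side, $\Pic(X)$ is negative definite, so its isometry group is finite and ${\rm Bir}(X)$ acts through a finite quotient there (this is already the mechanism used in Lemma \ref{lem:a*}). On the transcendental side, restriction yields a homomorphism $\rho : {\rm Bir}(X) \to \Aut(T_X)$ into the group of Hodge isometries of $T_X$. By the adjoint formula of Proposition \ref{adjoint}, a Hodge isometry of $T_{X,\QQ}$ is exactly multiplication by a unit $u$ of $E := A_X$ with $u\bar u = 1$: indeed an isometry $u$ satisfies $(ux,uy) = (x,\bar u u y) = (x,y)$ for all $x,y$, forcing $\bar u u = 1$. Thus the image of $\rho$ lies in the group $G_0 = \{u \in E^\times : u\bar u = 1\}$ of relative norm-one units.

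The key computation is the rank of $G_0$, which is governed by the dichotomy of Theorem \ref{endos}. If $E$ is totally real, the involution is trivial, so $u\bar u = u^2 = 1$ forces $u = \pm 1$ and $G_0$ is finite. If $E$ is a Salem field of degree $2d$, then Dirichlet's unit theorem gives $\rk(U_E) = d$ (two real places and $d-1$ complex places) while the totally real fixed field $E_0$ of degree $d$ has $\rk(U_{E_0}) = d-1$; since the relative norm sends $v \mapsto v\bar v = v^2$ for $v \in U_{E_0}$, its image contains $U_{E_0}^2$ and so has finite index in $U_{E_0}$, whence the kernel $G_0 = \ker({\rm N}_{E/E_0} : U_E \to U_{E_0})$ has rank $d-(d-1)=1$. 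In every case $G_0$ is finitely generated abelian with free part of rank $r \in \{0,1\}$, its torsion consisting of the roots of unity of $E$.

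To assemble the result, let $\pi : G_0 \to \ZZ^r$ be the projection onto the free part, define $r$ as the rank of the free part of $\rho({\rm Bir}(X))$ so that $r \in \{0,1\}$, and set $N = \ker(\pi \circ \rho)$; this gives the sequence $1 \to N \to {\rm Bir}(X) \to \ZZ^r$. It remains to show $N$ is finite. An element $a \in N$ acts on $T_X$ by a root of unity in $E$, of which there are only finitely many; for each fixed such action, the elements of ${\rm Bir}(X)$ inducing it differ by automorphisms acting trivially on $T_{X,\QQ}$, which then land in the finite isometry group of $\Pic(X)$, and an element acting trivially on both summands of $T_{X,\QQ} \perp \Pic(X)_\QQ$ acts trivially on $H^2(X,\ZZ)$, hence lies in the finite kernel from the first step. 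Chaining these three finiteness statements shows $N$ is finite, as required. The main obstacle, as noted, is the finiteness of the kernel of ${\rm Bir}(X) \to \Aut(H^2(X,\ZZ))$ in the higher-dimensional hyperk\"ahler setting.
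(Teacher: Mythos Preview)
Your argument is correct, and it differs from what the paper does: the paper gives no proof at all for this statement but simply cites Oguiso \cite{O 8}, Theorem~1.5~(1). Oguiso's original argument predates the present paper's classification of $A_X$ and instead proceeds via the Salem/cyclotomic factorization of the characteristic polynomial of $a^*$ (the content of Theorem~\ref{thm:McM}), together with the uniqueness of the Salem factor. Your route is a genuine alternative: by invoking Theorem~\ref{endos} and Proposition~\ref{adjoint} you identify the group of Hodge isometries of $T_{X,\QQ}$ with the norm-one elements of $E=A_X$, and then Dirichlet's unit theorem gives the rank directly. This has the virtue of tying the dichotomy $r\in\{0,1\}$ cleanly to the RM/SM dichotomy of the paper, and it makes transparent that the only serious external input is the finiteness of $\ker\bigl({\rm Bir}(X)\to\Aut(H^2(X,\ZZ))\bigr)$.

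One small point worth tightening: you define $G_0=\{u\in E^\times:u\bar u=1\}$ but then compute its rank as $\ker(N_{E/E_0}:U_E\to U_{E_0})$, tacitly passing to algebraic units. This is harmless because the image of $\rho$ automatically consists of units: an element $u\in E^\times$ preserving the lattice $T_X$ is integral over $\ZZ$ (its characteristic polynomial on $T_X$ has integer coefficients), and so is $u^{-1}=\bar u$, hence $u\in U_E$. You might state this explicitly, but it does not affect the validity of the argument.
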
 

The aim of this section is to relate the finiteness of the group ${\rm Bir}(X)$ to properties of the algebra of Hodge endomorphisms $A_X$.

\begin{prop} 
\label{prop:<oo}
Let $X$ be an elliptic HK manifold, and suppose that $A_X$ is totally real. Then the group ${\rm Bir}(X)$ is finite.

\end{prop}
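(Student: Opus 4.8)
The plan is to argue by contraposition, extracting from an infinite group $\mathrm{Bir}(X)$ an automorphism of positive entropy and then applying Lemma~\ref{lem:a*}. Concretely, suppose $\mathrm{Bir}(X)$ is infinite. By Oguiso's exact sequence (Theorem~\ref{Oguiso Bir}) the quotient $\mathrm{Bir}(X)/N$ embeds into $\ZZ$ with $N$ finite, so $\mathrm{Bir}(X)$ is infinite and virtually cyclic; taking an element mapping to a generator of the image $\cong\ZZ$, it contains an element $a$ of infinite order. My goal is to show that $a$ has dynamical degree a Salem number.

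First I would pass to the induced action on cohomology. The kernel of the representation $\mathrm{Bir}(X)\to\mathrm{Aut}(H^2(X,\ZZ))$ is finite (as for any hyperk\"ahler manifold, and underlying the finiteness of $N$), so $a^*$ has infinite order on $H^2(X,\ZZ)$. Since $X$ is elliptic, $\mathrm{Pic}(X)$ is negative definite, whence $\mathrm{Bir}(X)$ acts through a finite group on $\mathrm{Pic}(X)$, as recalled in the proof of Lemma~\ref{lem:a*}; thus $a^*|\mathrm{Pic}(X)$ has finite order. As $T_X$ is non-degenerate in the elliptic case, one has the orthogonal, $a^*$-stable splitting $H^2(X,\QQ)=T_{X,\QQ}\oplus\mathrm{Pic}(X)_\QQ$. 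Were $a^*|T_X$ of finite order, $a^*$ would have finite order on all of $H^2$, contradicting the previous line; hence $a^*|T_X$ has infinite order, and in particular is not the identity. By the dichotomy for elliptic manifolds stated after Theorem~\ref{thm:McM}, the characteristic polynomial of $a^*|T_X$ is then a Salem polynomial $S$ with Salem number $\lambda$, and since the Picard part contributes spectral radius $1$, the dynamical degree of $a$ equals $\lambda$.

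With a positive-entropy automorphism in hand, Lemma~\ref{lem:a*} yields $A_X\simeq\QQ[x]/(S)$, which is a Salem field by Lemma~\ref{Salem implies Salem-like}. This contradicts the hypothesis that $A_X$ is totally real, since a Salem field has degree $\geq 4$ and exactly two real embeddings (Definition~\ref{def:Salem}) and so is never totally real; therefore $\mathrm{Bir}(X)$ must be finite. The step I expect to be the main obstacle is the middle one: showing that an infinite-order bimeromorphic automorphism cannot act with finite order on $T_X$. This rests on the orthogonal decomposition $H^2(X,\QQ)=T_{X,\QQ}\oplus\mathrm{Pic}(X)_\QQ$, available because $T_X$ is non-degenerate for elliptic manifolds, together with the finiteness of the kernel of the $H^2$-representation; it is precisely the negative-definiteness of $\mathrm{Pic}(X)$ that forces the two summands to behave differently under iteration, channelling all of the infinite order of $a^*$ into the transcendental part.
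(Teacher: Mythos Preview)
Your proof is correct and follows essentially the same line as the paper's: extract an infinite-order bimeromorphic automorphism, show that $a^*|_{T_X}$ has infinite order (using that $\Pic(X)$ is negative definite so the action there is finite), invoke the Salem/cyclotomic dichotomy, and apply Lemma~\ref{lem:a*} to reach a contradiction with $A_X$ being totally real. Your version is in fact more explicit than the paper's at two points---you invoke Theorem~\ref{Oguiso Bir} to pass from ``$\mathrm{Bir}(X)$ infinite'' to ``there exists $a$ of infinite order'', and you spell out the orthogonal decomposition argument for why the infinite order of $a^*$ must live on $T_X$---both of which the paper leaves implicit by referring back to the proof of Lemma~\ref{lem:a*}.
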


\noindent
{\bf Proof.} Let $a$ be an automorphism of $X$, and let $a^* : T_X \to T_X$ be the induced isomorphism. If $a$ is of infinite order, then so is $a^*$,
as we have seen in the proof of Lemma \ref{lem:a*}.
Hence by Theorem \ref{thm:McM} the characteristic polynomial of $a^*|_{T_X}$ is a Salem polynomial, say $S$.
By Lemma \ref{lem:a*}, this implies that $A_X \simeq
{\QQ}[x]/(S)$.
But this contradicts the hypothesis that $A_X$ is totally real.
\qed


\medskip

For K3 surfaces, the converse of this result also holds

\begin{theo} 
\label{thm:oo}
Let $X$ be a K3 surface of algebraic dimension 0, and suppose that $A_X$ is a Salem field. Then the group  ${\rm Aut}(X)$ is infinite.

\end{theo}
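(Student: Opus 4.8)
The plan is to realize a power of a Salem number generating $E$ as the transcendental part of an automorphism of $X$, and to produce the automorphism itself via the strong Torelli theorem. By Theorem \ref{endos}, the hypothesis $A_X \simeq E$ with $E$ a Salem field forces $\dim_E T_{X,\QQ} = 1$, so $T_{X,\QQ} \cong E$ with $E = A_X$ acting by multiplication, and the adjoint involution $e \mapsto \overline e$ is the $\QQ$-linear involution of $E$ with totally real fixed field $E_0$. Proposition \ref{chinburg'} provides a Salem number $\lambda$ generating $E$; since $E_0 = \QQ(\lambda+\lambda^{-1})$ and $\lambda \notin E_0$, this involution must send $\lambda \mapsto \lambda^{-1}$, so $\lambda\,\overline\lambda = 1$. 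The adjoint relation \eqref{eq:adj} then gives $(\lambda x,\lambda y) = (x,\overline\lambda\,\lambda y) = (x,y)$, i.e.\ multiplication by $\lambda$ is an isometry of $(T_{X,\QQ},(\cdot,\cdot))$, and it preserves the Hodge decomposition because it lies in $A_X$.

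Next I would pass to a suitable power $\lambda^N$ making multiplication by it an integral isometry of the correct type. As $\lambda$ is an algebraic unit, some power $\lambda^n$ stabilizes the lattice $T_X$ (the multiplier order of $T_X$ has finite index in $\mathcal O_E$, so its unit group has finite index in $\mathcal O_E^\times$). The induced action on the finite discriminant group $T_X^\vee/T_X$ has finite order, and the orthochronous character on the Lorentzian summand $T^1 \subset T_\RR$ of signature $(1,2d-3)$ (preserved because multiplication by $\lambda$ respects the Hodge structure, hence $T^{1,1}$) is a homomorphism to $\{\pm1\}$. Replacing $n$ by an appropriate multiple $N$, I may assume that $g := (\text{mult.\ by }\lambda^N)|_{T_X}$ fixes $T_X^\vee/T_X$ pointwise and is orthochronous on $T^1$.

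Since $a(X)=0$, Proposition \ref{prop:Kondo} gives that $\Pic(X) = T_X^\perp$ is negative definite, and I would glue $g$ with the identity on $\Pic(X)$: because $g$ acts trivially on the discriminant group, the pair extends to a genuine isometry $\phi$ of $H^2(X,\ZZ)$, which is a Hodge isometry ($\id$ on $\Pic(X)\subset H^{1,1}$, and $g$ on $T_X$). Now $\phi$ fixes $\Pic(X)$ pointwise, hence fixes the class of every curve on $X$, and is orthochronous; therefore it preserves the chosen component of the positive cone and maps the K\"ahler cone to itself. By the strong Torelli theorem for K3 surfaces \cite{H}, $\phi = a^*$ for a unique $a \in \Aut(X)$. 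Finally $\lambda^N > 1$ is real, hence not a root of unity, so $g$, and therefore $a$, has infinite order, proving that $\Aut(X)$ is infinite.

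The main obstacle is the K\"ahler cone condition required by Torelli, that is, controlling both the positive cone and the curve classes. This is resolved by the two observations that $\phi$ is the identity on $\Pic(X)$ (so it fixes every $(-2)$-curve class and cannot move the walls of the K\"ahler chamber) and that a suitable even power makes the transcendental action orthochronous; together these force $\phi(\mathcal K_X) = \mathcal K_X$. The remaining bookkeeping, namely choosing $N$ so that lattice-stabilization, triviality on the discriminant group, and orthochronality hold simultaneously, is routine.
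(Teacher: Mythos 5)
Your proposal is correct and follows essentially the same route as the paper's proof: you realize a power of a Salem number generating $E=A_X$ as a Hodge isometry of $T_X$ (using $\bar\lambda=\lambda^{-1}$ and the adjoint condition \eqref{eq:adj}), pass to a power that stabilizes the lattice and acts trivially on the discriminant group, glue with the identity on the negative-definite $\Pic(X)$, control the K\"ahler cone via the finitely many root walls, and conclude by the Torelli theorem. Your explicit handling of the orthochronous character on the Lorentzian summand $T^1$ is just a more detailed version of the paper's step that some power of $\phi$ preserves the K\"ahler cone.
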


\noindent
{\bf Proof.}
By Proposition \ref{chinburg},
we may write $A_X=E = \QQ(\lambda)$ for a Salem number $\lambda$.
By the proof and in the notation of Theorem \ref{thm:pspolSM},
there is a complex embedding $\sigma: E \to \CC$ such that the 
transcendental Hodge structure $T_\QQ$ on $X$ is determined by
$T^{2,0} = U^0_\sigma$.
Here $\lambda$ acts as multiplication by $\sigma(\lambda)$
and thus preserves the Hodge decomposition.
The theorem thus follows from the following claim:

\begin{claim}
Some power of $\lambda$ is induced from an automorphism of $X$.
\end{claim}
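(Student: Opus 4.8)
The plan is to manufacture an automorphism $a$ of $X$ whose action on $T_X$ is multiplication by a suitable power of $\lambda$. Recall that the involution of $E=\QQ(\lambda)$ sends $\lambda\mapsto\lambda^{-1}$ (Lemma \ref{Salem implies Salem-like}), and that $\lambda$ is an algebraic unit because $S$ is reciprocal. The adjoint condition \eqref{eq:adj} then reads $(\lambda x,y)=(x,\lambda^{-1}y)$, so $(\lambda x,\lambda y)=(x,y)$ for all $x,y\in T_\QQ$; hence multiplication by each power $\lambda^{m}$ is a Hodge isometry of $(T_\QQ,(\cdot,\cdot))$, acting on $T^{2,0}=U^0_\sigma$ by the scalar $\sigma(\lambda^{m})$ and thus preserving the Hodge decomposition. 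It remains to make a power of $\lambda$ integral and to check the Torelli positivity.

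First I would arrange integrality. Since $\dim_E T_\QQ=1$, the lattice $T=T_X$ is a full $\ZZ$-lattice in the one-dimensional $E$-vector space $T_\QQ\cong E$, with ring of multipliers $\mathcal{O}=\{e\in E:\,eT\subseteq T\}$, an order of $E$. As $\mathcal{O}^\times$ has finite index in $\mathcal{O}_E^\times$, some power $\lambda^{k}$ lies in $\mathcal{O}^\times$, so $\lambda^{k}T=T$. The induced action of $\lambda^{k}$ on the finite discriminant group $T^\ast/T$ has finite order, so after replacing $k$ by a multiple $m$ we may assume $\lambda^{m}$ acts trivially on $T^\ast/T$. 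Because $a(X)=0$, the lattice $\Pic(X)=T^\perp$ is negative definite, whence $T\oplus\Pic(X)$ has finite index in $H^2(X,\ZZ)$; the isometry equal to $\lambda^{m}$ on $T$ and to the identity on $\Pic(X)$ induces the identity on the discriminant groups $T^\ast/T$ and $\Pic(X)^\ast/\Pic(X)$, hence preserves the overlattice $H^2(X,\ZZ)$. Denote the resulting Hodge isometry by $\Phi$.

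Next I would verify that $\Phi$ preserves the K\"ahler cone, so that the refined Torelli theorem (\cite{H}) makes $\Phi$ geometric. Write $H^{1,1}(X,\RR)=T^1\oplus(\Pic(X)\otimes\RR)$ orthogonally, where $T^1$ has signature $(1,r-3)$. By Section \ref{building blocks} the factor $\RR^2$ of $E_\RR$ that is interchanged by the involution contributes a hyperbolic plane to $T^1$, on which $\lambda^{m}$ acts as $\mathrm{diag}(\lambda^{m},\lambda^{-m})$, while the remaining $\CC$-factors give definite planes on which $\lambda^{m}$ acts by rotation. Since $\lambda>1$ is a Salem number, both real conjugates $\lambda^{m}$ and $\lambda^{-m}$ are positive, so this boost preserves each nappe of the positive cone of $T^1$; therefore $\Phi$ preserves the chosen component $\mathcal{C}^+$ of the positive cone of $H^{1,1}(X,\RR)$. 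Moreover each class $[C]$ of a $(-2)$-curve lies in $\Pic(X)$ and is fixed by $\Phi$, and since the $T^1$-part is orthogonal to $\Pic(X)$ we get $(\Phi\omega,[C])=(\omega,[C])>0$ for every K\"ahler class $\omega$. Thus $\Phi$ preserves all the walls cutting out the K\"ahler chamber inside $\mathcal{C}^+$, so $\Phi(\mathcal{K})=\mathcal{K}$. By the refined Torelli theorem $\Phi=a^\ast$ for some $a\in\Aut(X)$, and $a^\ast|_{T_X}$ is multiplication by $\lambda^{m}$, proving the Claim.

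The hard part will be this last step: it is exactly the positivity of the two real conjugates $\lambda^{\pm m}$ — the defining feature of a Salem number — that places the hyperbolic boost in the identity component of $\mathrm{O}(T^1)$ and so prevents it from exchanging the two nappes of the positive cone; together with the fact that $\Phi$ fixes $\Pic(X)$ pointwise, this is what allows the Torelli theorem to apply. By contrast, the integrality bookkeeping (passing to the multiplier order and then killing the action on the discriminant group) is routine, but it must be carried out before Torelli can be invoked.
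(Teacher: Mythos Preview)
Your proof is correct and follows essentially the same route as the paper: isometry from the adjoint condition, integrality by passing to a power lying in the multiplier order, trivializing the action on the discriminant group, gluing with the identity on $\Pic(X)$, and then invoking Torelli. The one minor divergence is in the K\"ahler-cone step: you argue directly that $\Phi$ itself preserves the correct nappe of the positive cone (via the positivity of the real conjugates $\lambda^{\pm m}$) and fixes the finitely many $(-2)$-walls, whereas the paper simply observes that, since $\Pic(X)$ is negative definite and hence has only finitely many roots, some further power of $\phi$ must stabilize the K\"ahler cone --- a shorter but less explicit argument that also absorbs any possible swap of the two nappes.
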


To see this, it suffices by the Torelli theorem to check that some power of $\lambda$ extends to a Hodge isometry
of $H^2(X,\ZZ)$ which preserves the K\"ahler cone. 
But this follows by combining the following properties:
\begin{enumerate}
\item
$T_X$ is a primitive lattice inside $T_{X,\QQ}\cong E$ and thus preserved by some power of $\lambda$, since
$\lambda$ is a unit in $\mathcal O_E$.
\item
$\lambda$ is compatible with the intersection pairing by the adjoint condition \eqref{eq:adj},
since $(\lambda x,\lambda y) = (x, \bar\lambda\lambda y) = (x,y)$ because $\lambda$ is a Salem number,
so $\lambda^{-1} = \bar\lambda$.
\item
Some power of $\lambda$ acts trivially on the discriminant group of $T_X$ (because this is finite abelian)
hence the joint power with (1) glues to the identity acting on $\Pic(X)$ to a Hodge isometry $\phi$ of $H^2(X,\ZZ)$.
\item
The K\"ahler cone is cut out in the positive cone by the hyperplanes orthogonal to the roots in 
$\Pic(X)$,
and these are finite in number because $\Pic(X)$ is negative definite, 
hence the K\"ahler cone is preserved by some power of $\phi$.
\end{enumerate}
By the Torelli theorem, this power of $\phi$ defines an automorphism of $X$ 
which, by construction, has infinite order.
\qed

\smallskip

\begin{coro}[Theorem \ref{thm:bir}]
\label{finite Bir}
A K3 surface $X$ of algebraic dimension {\rm 0}  has finite ${\rm Bir}(X)$ if and only if $A_X$ is totally real. 
\end{coro}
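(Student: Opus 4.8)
The plan is to obtain this equivalence as a direct synthesis of the two preceding results, Proposition \ref{prop:<oo} and Theorem \ref{thm:oo}, organized by the dichotomy of Theorem \ref{endos}. The only structural facts I would invoke beyond these are standard ones about K3 surfaces: that every bimeromorphic self-map of a K3 surface is a biregular automorphism, so that ${\rm Bir}(X) = \Aut(X)$ (a consequence of minimality and the triviality of the canonical bundle), and that for a K3 surface the condition $a(X) = 0$ is exactly the elliptic case in Oguiso's sense, i.e.\ $\Pic(X)$ is negative definite, as recalled in \S \ref{s:HK} and \S \ref{s:dyn}. With these identifications, the statement becomes purely a matter of matching the two implications already proved.

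First I would record the starting point: since $a(X) = 0$, Theorem \ref{endos} asserts that $A_X$ is either totally real or a Salem field, and these two alternatives are mutually exclusive. This reduces the biconditional to checking both implications against this dichotomy. For the direction \emph{totally real $\Rightarrow$ finite}, I would simply observe that $X$ is elliptic, so Proposition \ref{prop:<oo} applies verbatim and yields that ${\rm Bir}(X)$ is finite. For the converse, which I would phrase contrapositively as \emph{not totally real $\Rightarrow$ infinite}, the dichotomy forces $A_X$ to be a Salem field, whence Theorem \ref{thm:oo} gives that $\Aut(X) = {\rm Bir}(X)$ is infinite.

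Combining the two implications with the exhaustive and exclusive dichotomy of Theorem \ref{endos} closes the loop: ${\rm Bir}(X)$ is finite precisely when $A_X$ is totally real. I would conclude by noting the equality ${\rm Bir}(X) = \Aut(X)$ so that the corollary also records the automorphism group, matching the formulation of Theorem \ref{thm:bir} in the introduction.

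I do not expect any genuine obstacle here, since the substantive work has been carried out in Proposition \ref{prop:<oo} and especially in Theorem \ref{thm:oo}, whose proof already produces an explicit infinite-order automorphism via the primitive Salem number acting on $T_X$. The one point requiring a word of justification, rather than calculation, is the passage from bimeromorphic to biregular maps, i.e.\ the identification ${\rm Bir}(X) = \Aut(X)$ for K3 surfaces; this is where the restriction to surfaces (as opposed to general elliptic hyperk\"ahler manifolds, for which Proposition \ref{prop:<oo} gives only the easy direction) is genuinely used, since Theorem \ref{thm:oo} is proved only in the K3 case.
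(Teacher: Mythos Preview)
Your proposal is correct and takes essentially the same approach as the paper: combine Proposition \ref{prop:<oo} and Theorem \ref{thm:oo} via the dichotomy of Theorem \ref{endos}, together with the identification ${\rm Bir}(X)=\Aut(X)$ for K3 surfaces. The paper's proof is a one-line version of exactly this argument.
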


\begin{proof}
This follows by combining Proposition \ref{prop:<oo} and Theorem \ref{thm:oo}
since $\Aut(X) = {\rm Bir}(X)$ for any K3 surface.
\qed
\end{proof}

\begin{remark}
We will discuss the general HK case in future work,
thus also extending results like Theorem \ref{all squares}
in that direction.
\end{remark}

\subsection{Topological entropy and Salem fields}\label{entropy}

Let $X$ be a HK manifold, and let $a \in {\rm Bir}(X)$ be a bimeromorphic automorphism. The {\it topological entropy} is by definition the logarithm
of the dynamical degree of $a$; we denote it by $e(a)$. The notion of topological entropy can be defined in other ways as well (topological, differential-geometrical) (see the work of Gromov \cite{Gromov}, Yomdin \cite{Y}); here we choose the cohomological definition, as in \cite{O 7}. 
With the notation of \S \ref{Stark and Chinburg} we have

\begin{theo} 
\label{thm:entropies}
Let $E$ be a Salem field of degree $2d$, and let $X$ be an elliptic HK manifold with Salem multiplication by $E$. Then there exists an integer
$n_X \geqslant 0$ such that  the set of topological entropies
of $X$ consists of the positive integral multiples of 

$$n_XL'(0,\chi)(h(E_0)/h(E))2^{1-d}.$$

\end{theo}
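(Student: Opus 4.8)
The plan is to convert the entropy question into a statement about a rank-one group of units of $E$ and then substitute the Chinburg--Stark formula of Theorem \ref{thm:eps}. First I would pass to the transcendental lattice. Since $X$ is elliptic, $\Pic(X)$ is negative definite, so ${\rm Bir}(X)$ acts through a finite group on $\Pic(X)$, and $H^2(X,\QQ)=T_{X,\QQ}\oplus\Pic(X)_\QQ$ is an orthogonal decomposition preserved by every $a^*$. Hence the dynamical degree of $a$, i.e. the spectral radius of $a^*$, equals that of $a^*|_{T_{X,\QQ}}$. Because $A_X\simeq E$ is Salem we have $\dim_ET_{X,\QQ}=1$ by Theorem \ref{endos}, so $a^*|_{T_{X,\QQ}}$ is multiplication by some $\eta\in E^\times$; combining $(a^*x,a^*y)=(x,y)$ with the adjoint condition \eqref{eq:adj} forces $\overline\eta\,\eta=1$, and the fact that $a^*$ preserves the lattice $T_X$ forces $\eta\in\mathcal O_E^\times$. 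Thus the image of ${\rm Bir}(X)$ on $T_{X,\QQ}$ is identified with a subgroup of the relative norm-one units $U^1=\{\eta\in\mathcal O_E^\times:\overline\eta\,\eta=1\}$, and the topological entropy is $e(a)=\log\max(|\tau(\eta)|,|\tau(\eta)|^{-1})$ for either real embedding $\tau$ of $E$, the complex places contributing $1$ because $\overline\eta\,\eta=1$.

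Next I would pin down which values occur. Dirichlet's theorem gives $\mathcal O_E^\times$ rank $d$ and $\mathcal O_{E_0}^\times$ rank $d-1$; since $\overline\eta\,\eta=N_{E/E_0}(\eta)$ and $N_{E/E_0}$ sends $\mathcal O_{E_0}^\times$ onto its squares, $U^1$ has rank one, in agreement with Oguiso's structure theorem (Theorem \ref{Oguiso Bir}), which places ${\rm Bir}(X)$ in an exact sequence $1\to N\to{\rm Bir}(X)\to\ZZ^r$ with $N$ finite and $r\in\{0,1\}$. If $r=0$ then ${\rm Bir}(X)$ is finite, every spectral radius is $1$, all entropies vanish, and I set $n_X=0$. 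If $r=1$, let $a_0$ generate the free quotient. The decisive input is the elliptic trichotomy of \cite{O 8} recalled after Theorem \ref{thm:McM}: for an infinite-order $a_0$ the restriction $a_0^*|_{T_X}$ cannot be the identity (else $a_0^*$ would be of finite order), so its characteristic polynomial is a \emph{Salem} polynomial; consequently the dynamical degree of $a_0$ is a Salem number lying in $E$, hence a power $\lambda_E^{\,\ell}$ of the primitive Salem number $\lambda_E$, for some minimal $\ell\geq1$. Modulo $N$ every infinite-order element is $a_0^{\pm k}$ with dynamical degree $\lambda_E^{\,\ell k}$, so the positive topological entropies are precisely the positive integral multiples of $\ell\log\lambda_E$.

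Finally I would insert the arithmetic. By Theorem \ref{thm:eps}, $\log\epsilon=h(E_0)2^{2-d}u\,L'(0,\chi)/h(E)=2u\,c_0$, where $c_0:=L'(0,\chi)(h(E_0)/h(E))2^{1-d}$ is the constant in the statement, and $\lambda_E=\epsilon^{\,n/2}$ with $n\in\{1,2\}$ gives $\log\lambda_E=nu\,c_0$. Therefore the positive entropies are the positive integral multiples of $\ell nu\,c_0$, and putting $n_X=\ell nu$ (with $n_X=0$ in the finite case) yields exactly the asserted set. I expect the main obstacle to be the step governed by the trichotomy: one must ensure that the dynamical degree of an infinite-order bimeromorphic automorphism is genuinely a Salem \emph{number} of $E$ --- equivalently that $a_0^*|_{T_X}$ has an irreducible Salem characteristic polynomial rather than corresponding to a norm-one unit with a negative real conjugate --- since this is precisely what pins the realized degrees to the powers of $\lambda_E$ and makes $n_X$ an integer; the remaining care is the bookkeeping of the finite part $N$, of the zero-entropy locus, and of the positivity of the two real conjugates so that the spectral radius equals $\lambda_E^{\,\ell}$.
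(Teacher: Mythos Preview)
Your proof is correct and follows essentially the same route as the paper: split into ${\rm Bir}(X)$ finite (set $n_X=0$) versus infinite, invoke Oguiso's structure theorem (Theorem \ref{Oguiso Bir}) to get a generator $a_0$ of the infinite cyclic quotient, argue via the elliptic trichotomy that the dynamical degree of $a_0$ is a Salem number in $E$, and then express it through the Chinburg--Stark unit of Theorem \ref{thm:eps}. Your treatment is in fact more detailed than the paper's---you make explicit the identification of $a^*|_{T_{X,\QQ}}$ with multiplication by a norm-one unit of $\mathcal O_E$, and you route through the primitive Salem number $\lambda_E$ rather than directly through $\sqrt\epsilon$, which makes the integrality of $n_X=\ell n u$ transparent; the paper's version compresses this bookkeeping into a single substitution.
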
 

\noindent
{\bf Proof.} If ${\rm Bir}(X)$ is finite, set $n_X = 0$. 
Suppose that ${\rm Bir}(X)$ is infinite, and let $a \in {\rm Bir}(X)$ be a generator of the infinite
cyclic subgroup of ${\rm Bir}(X)$ from Theorem \ref{Oguiso Bir}. 
Since ${\rm Bir}(X)$ acts through a finite group on the negative-definite lattice $\Pic(X)$,
the characteristic polynomial $\chi$ of $a^*|_{T_X}$ is a Salem polynomial as in the proof of Lemma \ref{lem:a*}.
Denoting the corresponding Salem number by $\lambda$ and the Salem field by $E'$,
we have $E'\subset A_X$, so $E\cong E'$ by Proposition \ref{prop:E=F}.
By Theorem \ref{thm:eps}, with $u\in\{1,2\}$ as defined there,
we find that  $\lambda$ is a power of $\sqrt\epsilon = {\rm exp}(h(E_0)2^{1-d}u L'(0,\chi)/h(E))$. Let us write 
$\lambda = (\sqrt \epsilon)^{m_X}$, for some integer $m_X \geqslant 1$, and set $n_X = m_X/u$. Then the topological entropy
of $a$ is equal to $n_XL'(0,\chi)(h(E_0)/h(E))2^{1-d}$, and this completes the proof of the theorem. 
\qed

\medskip

\subsection*{Acknowledgements}

We thank Ted Chinburg  and Chris Smyth for very interesting discussions, 
especially for their help concerning Proposition \ref{chinburg}. 
We also thank Chris Smyth for providing the Salem polynomial
which Example \ref{ex:Smyth} builds on.

\medskip

%
%
%
%
%
%
%
%
%
%


\begin{thebibliography}{99}



\bibitem[B 24]{B 24} E. Bayer-Fluckiger,
\textit{Isometries of lattices and Hasse principles}, J. Eur. Math. Soc.
 {\bf 26} (2024), 3365--3428.

\bibitem[B 25]{B 25} E. Bayer-Fluckiger, 
\textit{Automorphisms of K3 surfaces, signatures, and isometries of lattices},
 J. Eur. Math. Soc., DOI:
10.4171/JEMS/1598.

\bibitem[BvGS 25]{BGS} E. Bayer-Fluckiger, B. van Geemen and M. Sch\"utt,
\textit {K3 surfaces with real or complex multiplication}, preprint (2025),
arXiv: 2401.04072v3.


\bibitem[BT 20]{BT} E. Bayer-Fluckiger, L. Taelman,
\textit{Automorphisms of even unimodular lattices and equivariant Witt groups}, J. Eur. Math. Soc.
\textbf{22} (2020), 3467--3490.




\bibitem[COP 10]{Campana}
F.\ Campana, K.\ Oguiso, T.\ Peternell, 
\textit{Nonalgebraic hyperk\"ahler manifolds},
 J.\ Differ.\ Geom.\ {\bf 85} (2010), 397--424. 





\bibitem[C 84]{Chinburg} T. Chinburg,  \textit{Salem numbers and $L$-functions}, J. Number Th. {\bf 18} (1984), 213--234.

\bibitem[vG 08]{G} B. van Geemen, \textit {Real multiplication on K3 surfaces and Kuga Satake varieties}, Michigan
Math.\ J.\ {\bf 56} (2008), 375--399.



\bibitem[vGS 25]{GS} B. van Geemen, M. Sch\"utt, \textit {On families of K3 surfaces with real multiplication},
Forum Math. Sigma {\bf 13} Paper No. e2, 23 p (2025).


\bibitem[GH 01]{GH} E. Ghate, E. Hironaka, \textit{The arithmetic and geometry of Salem numbers}, Bull. Amer. Math. Soc. {\bf 38} (2001),
293--314. 

\bibitem[G 87]{Gromov} M. Gromov, \textit{Entropy, homology and semialgebraic geometry}, Ast\'erisque {\bf 145/146} (1987), 225-240. 



\bibitem[GM 02]{GM} B. Gross, C. McMullen,
\textit{Automorphisms of even, unimodular lattices and unramified Salem numbers}, J. Algebra {\bf 257} (2002), 265--290.





\bibitem[H 99]{Huy}
D.\ Huybrechts,
\textit{Compact hyper-K\"ahler manifolds: basic results},
Invent.\ Math.\ {\bf 135} (1999), 63--113.

\bibitem[H 16]{H} D. Huybrechts, {Lectures on K3 surfaces}. Cambridge Studies in Advanced Mathematics {\bf 158}, Cambridge
University Press, Cambridge, 2016.


\bibitem[Ko 20]{Kondo} S.\ Kondo, {K3 surfaces}. Tracts in Mathematics 32, European Mathematical Society 2020.

\bibitem[Kr 92]{K} M. Kr\"uskemper, \textit{On the scaled trace forms and the transfer of a number field extension}, J. Number Theory {\bf 40}
(1992), 105--119.





\bibitem [McM 02]{Mc1} C. McMullen, 
\textit{Dynamics on K3 surfaces: Salem numbers and Siegel disks},
J. Reine Angew. Math. {\bf 545} (2002), 201--233.

\bibitem[McM 11]{Mc2} C. McMullen, \textit{K3 surfaces, entropy and glue}, J. Reine Angew. Math. {\bf 658} (2011), 1--25.

\bibitem[McM 16]{Mc3} C. McMullen, \textit{Automorphisms of projective K3 surfaces with minimum entropy}, Invent. Math. {\bf 203} (2016), 179--215.













\bibitem[O 07]{O 7} K. Oguiso, \textit {Automorphisms of hyperkähler manifolds in the view of topological entropy}, Algebraic geometry, 173-185, Contemp. Math {\bf 422}, Amer. Math. Soc., Providence, RI (2007), 173-185.

\bibitem[O 08]{O 8} K. Oguiso, \textit{ Bimeromorphic automorphism groups of non-projective hyperk\"ahler manifolds—a note inspired by C. T. McMullen},  J. Differential Geom. {\bf 78}  (2008), 163-191.


\bibitem[O 10a]{O} K. Oguiso, \textit {The third smallest Salem number in automorphisms of K3 surfaces}, Algebraic
geometry in East Asia - Seoul 2008, 331-360, Adv. Stud. Pure Math. {\bf 60}, Math. Soc. Japan, Tokyo, 2010.

\bibitem[O 10b]{O 10} K. Oguiso, \textit {Salem polynomials and the bimeromorphic automorphism group of a hyper-k\"ahler manifold}, American
Math. Soc. Trans. Ser. 2  {\bf 230},  AMS, Providence, RI  (2010), 210-227.


\bibitem[R 08]{Rap}
A.\ Rapagnetta,
\emph{On the Beauville form of the known irreducible symplectic varieties},
Math.  Ann. {\bf 340} (2008), 77--95.








\bibitem[Si 83]{Siu} 
Y.\ T.\ Siu,
\emph{Every K3 surface is K\"ahler},  Invent.~Math.~{\bf 73} (1983), no.~1, 139--150.


\bibitem [Sm 15]{Sm} C. Smyth, \textit {Seventy years of Salem numbers}, Bull. Lond. Math. Soc.
{\bf 47} (2015), 379--395.

\bibitem[St 75]{Stark} H. Stark, \textit{$L$-functions at $s = 1$, II}, Adv. Math. {\bf 17} (1975), 60--92.

\bibitem[T 23]{YT} Y. Takada, \textit{Lattice isometries and K3 surface automorphisms: Salem numbers of degree $20$}, J. Number Theory {\bf 252}
(2023), 195--242.






\bibitem[Y 87]{Y} Y. Yomdin, \textit{Volume growth and entropy}, Israel J. Math. {\bf 57} (1987), 285-300.

\bibitem[Z 83]{Z} Y. Zarhin, \textit{Hodge groups of K3 surfaces}, J. reine angew. Math. {\bf 341} (1083), 193--220.






\end{thebibliography}
\end{document}